\def\@evenfoot{\rule{0pt}{20pt}[\today] \hfill [{\tt \jobname.tex}]}
\def\@oddfoot{\rule{0pt}{20pt}{[\tt \jobname.tex}]\hfill [\today]}
\newtheorem{theorem}{Theorem}[section]
\newtheorem{proposition}[theorem]{Proposition}
\newtheorem{lemma}[theorem]{Lemma}
\newtheorem{corollary}[theorem]{Corollary}
\theoremstyle{definition}
\newtheorem{definition}[theorem]{Definition}
\newtheorem{example}[theorem]{Example}
\newtheorem{remark}[theorem]{Remark}
\def\fAss{\underline{{\mathcal Ass}}}
\def\E#1{{}^{#1}\hskip -.2emE}
\def\scD{{\sf D}}
\def\eqv{
{
\unitlength=.4pt
\begin{picture}(60.00,20.00)(-10.00,4)
\thinlines
\put(40.00,15.00){\vector(2,-1){0.00}}
\put(40.00,5.00){\vector(2,1){0.00}}
\qbezier(0.00,5.00)(20.00,-5.00)(40.00,5.00)
\qbezier(0.00,15.00)(20.00,25.00)(40.00,15.00)
\end{picture}}
}
\def\timesred{\hbox{$\hskip .1em\times$}}
\def\Fib{{\it Fib}}\def\calK{{\mathscr K}}
\def\op{{\it op}}\def\calL{{\mathscr L}}
\def\ttDelta{{\tt \Delta}}
\def\cosA{\hbox {\AA}}
\def\opTm{{\hbox{$\mathcal T \hskip -.2em m$}}}
\def\sopTm{{\hbox{\scriptsize $\mathcal T \hskip -.2em m$}}}
\def\scE{{\mathscr E}}
\def\opK{{\mathcal K}}
\def\opTam{\hbox {$\mathcal T \hskip -.2em am$}}
\def\funny{\EuScript{S}p_2}
\def\qOmega{{\Omega}}
\def\frS{{\boldsymbol \EuS}}
\def\frT{{\boldsymbol \EuT}}
\def\frO{{\boldsymbol \EuO}}\def\frN{{\boldsymbol \EuN}}
\def\liota{\hbox{\large $\iota$}}\def\siota{\hbox{$\iota$}}
\def\bfjedna{{\boldsymbol 1}}
\def\bfU{{\boldsymbol \EuU}}
\def\bfS{{\boldsymbol \EuS}}
\def\bfT{{\boldsymbol \EuT}}
\def\EuN{{\EuScript N}}
\def\EuO{{\EuScript O}}
\def\EuU{{\EuScript U}}
\def\calP{{\mathcal P}}
\def\calO{{\mathcal O}}
\def\ittau{t}\def\itomega{w}\def\itkappa{s}
\def\itS{S}
\def\itT{T}
\def\Cplusone{\hbox{$\ttC \!+\!1$}}
\def\frC{{\mathfrak C}}
\def\OpCat{{\tt OpCat}}
\def\ttV{{V}}
\def\ttCat{{\tt Cat}}\def\CH{{\it CH}}\def\ttM{{\sf M}}
\def\cernybily{\psscalebox{.5 .5}
{\hskip -1em
\begin{pspicture}(0,-0.6257164)(4.146097,0.257164)
\pscustom[linecolor=black, linewidth=0.05]
{
\newpath
\moveto(8.9,-3.9742837)
}
\pscustom[linecolor=black, linewidth=0.05]
{
\newpath
\moveto(21.5,-2.6742837)
}

\psdots[linecolor=black, dotsize=0.49939746](3.9,-0.37428364)
\psline[linecolor=black, linewidth=0.05, arrowsize=0.05291666666666672cm 6.0,arrowlength=2.0,arrowinset=0.0]{<-}(1.2,-0.37389573)(3.6974843,-0.37389573)
\psdots[linecolor=black, fillstyle=solid, dotstyle=o, dotsize=0.49939746](1.0,-0.37428364)
\psdots[linecolor=black, dotstyle=otimes, dotsize=0.49939746](1.0,-0.37428364)
\end{pspicture}
}}
\def\pro{{{\vbox to .65em{\vss \hbox to
        1.6em{$\widetilde{\Tam}$}}}^\bbN_2}}
\def\pr{{{\vbox to .65em{\vss \hbox to 1.1em{$\widetilde{\Tm}$}}}^\bbN_2}}
\def\Tot{{\it Tot}}
\def\Nat{{\it Nat}}
\def\Alg{{\tt Alg\/}}
\def\ttLO{{\tt LO}}
\def\ttU{{\tt U}}\def\Lat{{\EuScript L}}
\def\ttC{{\tt C}}
\def\ttQ{{\tt Q}}\def\ttR{{\tt R}}
\def\Deltaalg{{\tt \Delta}_{\it alg}}
\def\ttOmega{{\hskip .1em\tt \Omega}}\def\Ord{{\tt Ord}}
\def\dash{{\hbox{\hskip .15em -\hskip .1em}}}
\def\Ordk{{{\Ord}_k}}\def\OrdkN{{{\Ord}^\bbN_k}}
\def\cerny{\hskip -.25em\raisebox{.0em}{
\psscalebox{.6 .6}
{
\begin{pspicture}(0,-0.19711548)(0.39423096,0.19711548)
\psdots[linecolor=black, dotsize=0.4](0.19711548,0.0)
\end{pspicture}
}}}
\def\tlusty{\hskip -.25em\raisebox{.0em}{
\psscalebox{.6 .6}
{
\begin{pspicture}(0,-0.20138916)(0.40277833,0.20138916)
\psdots[linecolor=black, dotstyle=o, dotsize=0.4](0.20138916,0.0)
\psdots[linecolor=black, dotstyle=otimes, dotsize=0.4](0.20138916,0.0)
\end{pspicture}
}}}
\def\bily{\hskip -.25em\raisebox{.0em}{
\psscalebox{.6 .6}
{
\begin{pspicture}(0,-0.20138916)(0.40277833,0.20138916)
\psdots[linecolor=black, dotstyle=o, dotsize=0.4](0.20138916,0.0)
\end{pspicture}
}}}
\def\ttP{{\tt P}}\def\ttO{{\tt O}}
\def\Tm{{\tt Tm}}\def\seq{{\mathbf {seq}}}
\def\coll#1{\{#1(n)\}_{n \geq 0}}
\def\rada#1#2{{#1,\ldots,#2}}
\def\tricases#1#2#3#4#5#6{
                  \left\{
                         \begin{array}{ll}
                           #1,\ &\mbox{#2}
                           \\
                           #3,\ &\mbox{#4}
                           \\
                           #5,\ &\mbox{#6}
                          \end{array}
                   \right.
}
\def\vlra{{\hbox{$-\hskip-1mm-\hskip-2mm\longrightarrow$}}}
\def\lra{\longrightarrow}
\def\uAss{\underline{\hbox{$\mathscr A \hskip -.2em ss$}}}
\def\doubless#1#2{{
\def\arraystretch{.5}
\begin{array}{c}
\mbox{\scriptsize $\scriptstyle #1$}
\\
\mbox{\scriptsize $\scriptstyle #2$}
\end{array}\def\arraystretch{1}
}}
\def\OmegaN{{\tt \Omega_2^\bbN}}\def\OmegakN{{{\tt \Omega}_k^\bbN}}
\def\Tam{{\tt Tam}}
\def\Ar{{\it Ar}}
\def\id{{\it id}}
\def\vdotsred{{\multiput(0,0)(0,-.7){3}{\scriptsize$\cdot$}}}
\def\krtecek#1#2{
\unitlength .22em
\linethickness{0.4pt}
\ifx\plotpoint\undefined\newsavebox{\plotpoint}\fi 
\begin{picture}(17,15)(6.25,1)
\thicklines
\multiput(13.8,5.5)(.9,0){3}{\makebox(0,0)[cc]{$\cdot$}}
\qbezier(11,7)(11.5,12.5)(14,12)
\qbezier(17,7)(16.5,12.5)(14,12)
\put(11,7){\line(1,0){6}}
\put(14,12){\vector(0,1){5.5}}
\put(11.2,7){\line(0,-1){2}}
\put(13.1,7){\line(0,-1){2}}
\put(16.7,7){\line(0,-1){2}}
\put(14.2,7.6){\makebox(0,0)[b]{\scriptsize $\tau^#1_{#2}$}}
\end{picture}
}
\def\krtecekmod#1#2{
\unitlength .22em
\linethickness{0.4pt}
\ifx\plotpoint\undefined\newsavebox{\plotpoint}\fi 
\begin{picture}(17,10)(6.25,9)
\thicklines
\multiput(14.3,5.5)(.9,0){3}{\makebox(0,0)[cc]{$\cdot$}}
\qbezier(11,7)(11.5,12.5)(14,12)
\qbezier(17,7)(16.5,12.5)(14,12)
\put(11,7){\line(1,0){6}}
\put(14,12){\vector(0,1){5.5}}
\put(11.2,7){\line(0,-1){2}}
\put(13.1,7){\line(0,-1){2}}
\put(16.7,7){\line(0,-1){2}}
\put(14.2,7.6){\makebox(0,0)[b]{\scriptsize $\tau^#1_{#2}$}}
\end{picture}
}
\def\krteceksfouskama#1#2{
\unitlength .22em
\linethickness{0.4pt}
\ifx\plotpoint\undefined\newsavebox{\plotpoint}\fi 
\begin{picture}(17,15)(6.25,1)
\thicklines
\multiput(14,5.5)(.9,0){3}{\makebox(0,0)[cc]{$\cdot$}}
\qbezier(11,7)(11.5,12.5)(14,12)
\qbezier(17,7)(16.5,12.5)(14,12)
\put(11,7){\line(1,0){6}}
\put(14,12){\vector(0,1){5.5}}
\put(11.3,7){\line(0,-1){2}}
\put(13.2,7){\line(0,-1){2}}
\put(16.7,7){\line(0,-1){2}}
\put(14.2,7.6){\makebox(0,0)[b]{\scriptsize $\tau^#1_{#2}$}}
\thinlines
\put(10,8.8){\makebox(0,0){\vdotsred}}
\put(17,8.8){\makebox(0,0){\vdotsred}}
\put(9.8,7.5){\line(1,0){1.2}}
\put(9.8,10.5){\line(1,0){1.7}}
\put(9.8,11.5){\line(1,0){2.4}}
\put(18.2,7.5){\line(-1,0){1.2}}
\put(18.2,10.5){\line(-1,0){1.7}}
\put(18.2,11.5){\line(-1,0){2.4}}
\end{picture}
}
\def\ctver#1{{
\unitlength .4em
\linethickness{0.4pt}
\ifx\plotpoint\undefined\newsavebox{\plotpoint}\fi 
\begin{picture}(13,13)(5.4,2.7)
\thicklines
\multiput(11.1,6.2)(.4,0){3}{\makebox(0,0)[cc]{$\cdot$}}
\put(9,7){\line(0,1){5}}
\put(9,12){\line(1,0){4}}
\put(13,12){\line(0,-1){5}}
\put(13,7){\line(-1,0){4}}
\put(11,12){\vector(0,1){2}}
\put(9.5,6){\line(0,1){1}}
\put(10.5,6){\line(0,1){1}}
\put(12.5,6){\line(0,1){1}}
\put(11,9){\makebox(0,0)[b]{\scriptsize $#1$}}
\end{picture}
}}
\def\except{\hskip .2em\raisebox{-.16em}{\rule{.8pt}{1em}}  \hskip .2em}
\def\kol#1{
\unitlength .3em
\linethickness{0.4pt}
\ifx\plotpoint\undefined\newsavebox{\plotpoint}\fi 
\begin{picture}(17,14)(7,4.2)
\thicklines
\put(15,11){\makebox(0,0)[cc]{$\circ$}}
\multiput(14.8,6.8)(.7,0){3}{\makebox(0,0)[cc]{$\cdot$}}
\put(15.5,11.6){\makebox(0,0)[lb]{\scriptsize$#1$}}
\put(0,-.5){
\put(14.75,11.1){\vector(1,2){.07}}\qbezier(13,7)(14,10.5)(15,11)
\put(15.25,11.1){\vector(-1,2){.07}}\qbezier(17,7)(16,10.5)(15,11)
\put(15,11){\vector(1,3){.07}}\qbezier(14,7)(14,8.5)(15,11)
}
\put(15,16){\vector(0,1){.07}}\qbezier(15,11.7)(15,12)(15,15)
\end{picture}
}
\def\bfone{{\mathfrak I}}
\def\Bq{{\tt Bq}}
\def\OrN{{\tt Ord}_2^{\bbN}}
\def\Wh{\raisebox{.0em}{\hglue -.2em\bily}}
\def\LTr{{\tt LTr}}\def\sLTr{\hbox{\scriptsize \tt LT\/r}}
\def\OpSet#1{{\tt Op}^{#1}(\Set)}\def\sFSet{{\tt sFSet}}
\def\OpV#1{{\tt Op}^{#1}(\ttV)}\def\ColV#1{{\tt Coll}^{#1}(\ttV)}
\def\ot{\otimes}\def\FSet{{\tt FSet}}
\def\Leaf{{\it L \hskip .03emf\!}}
\def\arraystretch{1.2}
\def\Boxx{\raisebox{-.1em}{\hbox{\hskip .2em\large $\Box$\hskip .1em}}}
\def\boxx{{\hskip .2em \Box}}
\def\End{\hbox{${\mathcal E}${\hskip -.1em\it nd\hskip .1em}}} 
\def\Coend{\hbox{\it co\/${\mathcal E}${\hskip -.1em\it nd\hskip .1em}}}
\def\Rada#1#2#3{#1_{#2},\dots,#1_{#3}}\def\tr{{\rm tr}}
\def\funny{\EuScript{S}p_2}\def\duo{\mathscr{D}}
\def\EuT{{\EuScript T}}\def\EuS{{\EuScript S}}
\def\scA{\mathscr{A}}
\def\bbN{{\mathbb N}}
\def\Set{\tt Set}
\def\fAss{\underline{{\mathcal Ass}}}
\def\cases#1#2#3#4{
                  \left\{
                         \begin{array}{ll}
                           #1,\ &\mbox{#2}
                           \\
                           #3,\ &\mbox{#4}
                          \end{array}
                   \right.
}
\title{Operadic categories and Duoidal Deligne's conjecture}
\author[M.~Batanin]{Michael Batanin}
\thanks{The first author acknowledges  the financial
support of Scott Russel Johnson Memorial Foundation 
and Australian Research Council (grant
No.~DP1095346).}
\author[M.~Markl]{Martin~Markl}
\thanks{The second author was supported by Eduard \v Cech
  Institute P201/12/G028 and RVO: 67985840.}
\address{Macquarie University, NSW 2109, Australia}
\email{michaelbatanin@mq.edu.au}
\address{Mathematical Institute of the Academy, {\v Z}itn{\'a} 25,
         115 67 Prague 1, The Czech Republic}
\address{Faculty of Mathematics and Physics, Charles University,
186 75 Sokolovsk\'a 83, Prague~8, The Czech Republic}
\email{markl@math.cas.cz}
\keywords{Operadic category, duoidal category, Deligne's conjecture} 
\subjclass{Primary 18D10,18D20, 18D50, secondary 55U40, 55P48.}
\begin{document}

\begin{abstract}
  The purpose of this paper is two-fold. In {\bf Part~1} 
  we introduce a new theory of operadic categories and their operads.  
  This theory is, in our opinion, of an independent~value. 

  In {\bf Part~2} we use this new theory together with our previous
  results to prove that multiplicative $1$-operads in duoidal
  categories admit, under some mild conditions on the underlying
  monoidal category, natural actions of contractible $2$-operads. The
  result of D.~Tamarkin on the structure of dg-categories, as well as
  the classical Deligne conjecture for the Hochschild cohomology, is a
  particular case of this statement.
\end{abstract}

\maketitle
\bibliographystyle{plain}
\baselineskip 17pt plus 2pt minus 1 pt

\tableofcontents

\section*{Introduction}

In  \cite{batanin-markl:Adv} we proposed a notion of center
and homotopy center of a monoid in a monoidal category enriched in a
duoidal category.  Examples include classical
centers but also the $2$-category of categories, the symmetric
monoidal closed category $\mathbf{Gray}$ of $2$-categories,
$2$-functors and pseudonatural transformations \cite{GPS} and
Tamarkin's homotopy 2-category of $dg$-categories, $dg$-functors and
their coherent natural transformations~\cite{tamarkin:CM07}.

It is well-known that the center of an associative algebra is a
commutative algebra. A~homotopical analogue of this statement is the
famous Deligne conjecture which states that there is a natural action
of an $E_2$-operad on the Hochschild complex of an associative algebra
lifting the Gerstenhaber algebra structure from the cohomology to the
chain level.  We conjectured in \cite{batanin-markl:Adv} that our
generalized (homotopical) center admits a closely related algebraic
structure. Namely, there is a natural action of a contractible
$2$-operad in the sense of the first author~\cite{batanin:globular} on
the homotopical center of a monoid \cite[Corollary
11.20]{batanin-markl:Adv}. We call this statement duoidal Deligne's
conjecture.  Tamarkin's main theorem from \cite{tamarkin:CM07} is a
particular case of this conjecture. Classical Deligne's conjecture
also follows from the duoidal version \cite[Corollary
11.22]{batanin-markl:Adv}.  The {\bf main goal} of this paper is to
prove this conjecture under some mild homotopical conditions on the
base symmetric monoidal category $V.$

Our {\bf secondary goal} is to advertise a new theory which, as we
believe, has an independent value.  During our work on the proof of
duoidal Deligne's conjecture we discovered that the existing language
is not adequate for our purposes. Some operad-like structures that we
wanted to use were not operads in any of the existing senses.  To
overcome these difficulties, we introduce a concept of an operadic
category and of an operad corresponding to such a category.

Examples of operadic categories are abundant. They include categories
like finite sets, finite ordinals, the categories of $n$-ordinals
\cite{batanin:conf} and $n$-trees
\cite{batanin:globular,batanin-street}, Barwick operator
categories~\cite{barwick}, ordered graphs and many other.  Examples of
the corresponding operads are (colored) classical symmetric and
nonsymmetric operads, $n$-operads~\cite{batanin:globular},
hyperoperads of Getzler and Kapranov~\cite{getzler-kapranov:CompM98}, 
charades of Kapranov~\cite{kapranov:langlands}, \&c.
As classical operads, our generalized operads have
algebras, which now include other operad-like structures such as
(wheeled) properads or PROPs, cyclic operads and (twisted) modular
operads.

We believe that operadic categories admit a rich and interesting
theory. They are closely connected to other existing and emerging
approaches to generalized operad-like structures such as Feynman
categories of Kaufmann \cite{kaufmann:Fey}, polynomial monads
\cite{batanin13:_homot}, moment categories of Berger
\cite{Berger:personal} and operator categories of Barwick~\cite{barwick}. To
keep the focus, we decided to choose a
`minimalistic' approach and to include as much or as little theory of
operadic categories as necessary for the proof of Deligne's
conjecture.  A deeper theory, including $2$-categorical aspects of
operadic categories and relation to other notions with more applications,
will be developed in subsequent papers.

\section*{Plan of the paper}

According to our goals we decided to subdivide our paper into two parts.
{\bf Part~1\/} contains all necessary definitions and facts
about operadic categories.

An operadic category is a category over the (skeletal) category of finite sets
whose morphisms come with a finite set of objects of the same
category, called fibers. The motivating example is the category of
finite sets itself, where the set of fibers of a map $f:T\to S$ is
the set of preimages $f^{-1}(i),\ i\in S.$ The axioms of operadic
category are designed to make the assignment of fibers to morphisms 
an abstract algebraic
structure on a category.  Operadic functors are functors which
preserve fibers and some other elements of this structure.

With each operadic category one associates its category of operads
with values in a symmetric monoidal category. In many respects the
category of operads plays the r\^ole of the presheaf category over a
small category, but other aspects of operadic categories make it
closer to multicategories. We show that several standard
categorical notions such as the left Kan extensions, discrete
fibrations, the Grothendieck
construction or Beck-Chevalley squares extend to operadic
categories and operads. On the other hand, some important notions from
the theory of classical operads, such as the Day-Street convolution,
multitensors and the condensation can be easily carried
over to the context of operadic categories.

{\bf Part 2} of our paper shows how all these notions work together in
the proof of duoidal Deligne's conjecture. In
Section~\ref{sec:oper-categ-levell} we describe an operadic
category $\tt LTr$ and an $\tt LTr$-operad $\bfone^{\tt LTr}$ that 
canonically acts on any
multiplicative $1$-operad in any duoidal category.
 
In the subsequent sections we demonstrate that this action induces an action
of a colored $2$-operad $\opTm^\bbN_2$ on the same multiplicative
$1$-operad.  This induced action is crucial and the most
complicated part of our proof.  The $2$-operad $\opTm^\bbN_2$ is
constructed as a pullback of the Tamarkin-Tsygan colored symmetric
operad $\Lat^{(2)}$ \cite{tamarkin-tsygan:LMP01,batanin-berger} whose
definition we recall in Section~\ref{sec:tamark-tsyg-oper}. Algebras
of $\Lat^{(2)}$ are multiplicative nonsymmetric operads. While this
observation was enough to prove classical Deligne's conjecture
\cite{batanin-berger}, for the duoidal version we have to take into
account that the two units of a duoidal category can be different, though
connected by a noninvertible morphism. This asymmetry cannot be
captured by the classical approach via symmetric operads. This was
our reason for introducing operadic categories.

Once this induced action of $\opTm^\bbN_2$ 
is constructed, we obtain the {\bf proof} of duoidal
Deligne's conjecture  using the condensation of
\cite{batanin-berger}  generalized to the context of operadic categories.
This is the subject of the last section of our paper.

\section*{Conventions} 
Throughout this article, $\ttV$ will be a complete, cocomplete closed
symmetric monoidal category.  A tree will always mean a rooted
(i.e.~directed) tree~\cite[II.1.5]{markl-shnider-stasheff:book}. 
The arity  of a~vertex of
a directed tree is the number of incoming edges of that vertex.

Categories will be denoted by typewriter letters $\ttO$, ${\ttP}$,
$\Tam_2^\bbN$, $\OmegaN$, \&c. Exceptions are our basic monoidal
category which we keep denoting $V$ from historical reasons, and the
basic duoidal category $\duo$ used in Part~2. Operads in $\ttV$ will
be denoted by the calligraphic letters $\calO$, $\calP$, \&c., while a
typical 
operad in $\duo$ will be denoted by the script $\scA$. A more specific
notation used in Part~2 is summarized in
Table~\ref{za_tyden_z_Bonnu_do_Prahy}. 
Finally, $\bbN$ denotes the set of natural numbers (inducing $0$).

\hskip .2em
\noindent {\bf Acknowledgment.} We enjoyed the wonderful atmosphere of
the Max-Planck Institut f\"ur Mathematik in Bonn during the period when
the first draft of this paper was completed.  We also wish to express
our gratitude to C.~Berger, R.~Garner, R.~Kaufmann, S.~Lack, R.~Street,
D.~Tamarkin and M.~Weber for many useful comments and
conversations.

\part{Theory of operadic categories}

\section{Operadic categories and their operads}

Let $\sFSet$ be the skeletal category of finite sets. The objects of
this category are linearly ordered sets $\bar{n} =\{1,\ldots,n\} ,
n\in \bbN.$ Morphisms are arbitrary maps between these sets.  We
define the $i$th fiber $f^{-1}(i)$ of a morphism $f: T \to S$,
$i\in S$, as the pullback of $f$ along the map $\bar{1}\to S$ which picks up the element $i$, so this is an  object $f^{-1}(i)=\bar{n}_i \in \sFSet$ which is isomorphic as
a linearly ordered set to the preimage $\big\{j\in T \ | \ f(j) =i\big\}$.
Any commutative diagram in $\sFSet$
  \[
    \xymatrix@C = +1em@R = +1em{
      T      \ar[rr]^f \ar[dr]_h & & S \ar[dl]^g
      \\
      &R&
    }
\]
then induces a map $f_i:h^{-1}(i)\to g^{-1}(i)$ for any $i\in R.$ This
assignment is a functor $\Fib_i:\sFSet/R\to \sFSet.$ Moreover, for
any $j\in S$ we have the equality $f^{-1}(j) =
f^{-1}_{g(j)}(j).$ The above structure on the category $\sFSet$ motivates the
following abstract definition.

A {\em strict  operadic category\/} is a 
category $\ttO$  equipped with a `cardinality' 
functor  $|\dash|:\ttO\to \sFSet$ having the following properties. We
require that each connected component of $\ttO$ has a chosen terminal object
$U_c$, $c\in \pi_0(\ttO)$.  We also assume that for every $f:T\to S$
in $\ttO$ and every element $i\in |S|$ there is given an object
$f^{-1}(i) \in \ttO,$ which we will call {\it the $i$th fiber\/} of~$f$,
such that $|f^{-1}(i)| = |f|^{-1}(i).$ We also require that 
\begin{itemize}  \item[(i)] For any $c\in \pi_0(\ttO)$, $|U_c| = 1$.\end{itemize}

A {\em trivial\/} 
 morphism $f:T\to S$ in $\ttO$ is a morphism such that, for each $i\in
 |S|$,  
$f^{-1}(i) = U_{d_i}$ for some $d_i\in \pi_0(\ttO).$

The remaining axioms for a strict operadic category are:
\begin{itemize}
\item[(ii)]The identity morphism $id:T\to T$ is trivial for any $T\in \ttO;$

\item[(iii)] For any commutative diagram in $\ttO$
  \begin{equation}
    \label{Kure}
    \xymatrix@C = +1em@R = +1em{
      T      \ar[rr]^f \ar[dr]_h & & S \ar[dl]^g
      \\
      &R&
    }
\end{equation}
  and every $i\in |R|$ one is given a map
  \[
  f_i: h^{-1}(i)\to g^{-1}(i)
  \]
  such that $|f_i|: |h^{-1}(i)|\to |g^{-1}(i)|$ is the map
  $|h|^{-1}(i)\to |g|^{-1}(i)$ of sets induced by
  \[
  \xymatrix@C = +1em@R = +1em{ |T| \ar[rr]^{|f|} \ar[dr]_{|h|} & & |S|
    \ar[dl]^{|g|}
    \\
    &|R|& }.
  \]
  We moreover require that this assignment forms a functor ${\Fib}_i: \ttO/R \to \ttO$. If
  $R=U_c$, the functor ${\Fib}_1$ is required to be the domain functor
  $\ttO/R \to \ttO.$

  \item[(iv)] 
In the situation of (iii), for any $j\in |S|$,  one has the equality
\begin{equation}
\label{karneval_u_retardacku}
  f^{-1}(j) = f_{|g|(j)}^{-1}(j).
\end{equation}

  \item[(v)] 
Let 
\[
\xymatrix@C = +2.5em@R = +1em{ & S \ar[dd]^(.3){g} \ar[dr]^a & 
\\
T  \ar[ur]^f    \ar@{-}[r]^(.7){b}\ar[dr]_h & \ar[r] & Q \ar[dl]^c
\\
&R&
}
\]
be a commutative diagram in $\ttO$ and let $j\in |Q|, i = |c|(j).$
Then by axiom (iii) the diagram
\[
    \xymatrix@C = +1em@R = +1em{
      h^{-1}(i)      \ar[rr]^{f_i} \ar[dr]_{b_i} & & g^{-1}(i) \ar[dl]^{a_i}
      \\
      &c^{-1}(i)&
    }
\]
commutes, so it induces a morphism $(f_i)_j: b_i^{-1}(j)\to a_i^{-1}(j).$ By axiom (iv) we have $$a^{-1}(j)=a_i^{-1}(j) \ \mbox{and} \ b^{-1}(j)=b_i^{-1}(j).$$
We  then require the equality
$$f_i = (f_i)_j.$$
\end{itemize}
We will also assume that the set $\pi_0(\ttO)$ of connected components
is {\em small\/} with respect to  a sufficiently big ambient universe.
  \begin{remark} It follows from axiom (iii) that  the unique fiber of the canonical morphism $!_T: T\to
  U_c$ is $T$. \end{remark}

A {\em strict  operadic functor\/} between two strict operadic categories is a
functor $F: \ttP \to \ttO$ over $\sFSet$ which preserves fibers in
the sense that $F\big(f^{-1}(i)\big) = F(f)^{-1}(i)$, for any $f : T
\to S \in \ttP$ and $i \in |S| = |F(S)|$.  We also require that $F$
preserves the chosen terminal objects in each connected component, and
equality~(\ref{karneval_u_retardacku}).  This gives the
category ${\OpCat}$ of strict operadic categories and their strict operadic
functors.

\begin{remark} Our notion of operadic category is not invariant under
  categorical equivalences. It is, nevertheless, very convenient and
  sufficient for our applications.  There is a~more general non-strict
  version of the above definition which we are going to consider in 
 a~subsequent paper. We will also consider non-strict operadic functors
  and operadic natural transformations. We hope to prove a coherence
  theorem saying that every general operadic category is equivalent in
  an appropriate sense to a strict one.  Since in this paper we only use
  strict operadic categories we will call them simply operadic
  categories for brevity.
  \end{remark}

\begin{example}
\label{onefirst} 
The terminal category  $\tt 1$ with the cardinality 
functor $|\dash|:{\tt
    1}\to \sFSet$ which sends the unique object of $\tt 1$ to $\bar{1}\in
  \sFSet$ is operadic.
\end{example}

\begin{example}
\label{finset+deltaalg}
The category $\Deltaalg$ of
finite ordinals (including the empty one) has an obvious structure of
an operadic category
\end{example}

\begin{example} 
The categories  $\sFSet$ and $\Deltaalg$  are examples of operator categories in the sense of
  Barwick~\cite{barwick}.  It is easy to see that, in fact, any
  Barwick's operator category is equivalent to an operadic category in our sense.
  Recall that an {\em operator category\/} is an essentially small
  category $\Phi$ which satisfies the following conditions:
\begin{itemize}
\item[(i)] 
the set of morphisms $\Phi(T,S)$ is finite for any pair of objects
$T,S \in \Phi$,
\item[(ii)] 
the category $\Phi$ has a terminal object $1$, and
\item[(iii)] 
there is a pullback $T\times_S i $ of $i$ along $f$ 
for any morphisms $f:T\to S$ and $i: 1\to S$.
\end{itemize} 

To find an equivalent operadic category we take a skeletal version of   $\Phi,$   fix $1$ as 
the chosen terminal object, take $ \Phi(1,T)) = |T|\in \sFSet$ as the 
cardinality of $T$ and choose   pullbacks $T\times_S i , i \in |S|$, as the fiber functors. The rest of the structure is clear. 
\end{example}

\begin{example}
\label{taut1}
Each category $\ttC$ determines the `tautological' operadic
category  $\Cplusone$ which, as a~category, is $\ttC$ with a formally
added terminal object $1$. The cardinality \hbox{$|\dash|
\!:\!\Cplusone \to \sFSet$} is defined  by
\[
|T| := \cases{\bar{0}}{if $T \in \ttC$, and}{\bar{1}}{if $T = 1$.}
\]
The axioms  dictate that 
the only maps that have fibers are $!_T : T \to 1$ with the unique
fiber~$T$, and the identity $\id : 1 \to 1$ whose fiber is $1$.
This construction constitutes a fully 
faithful embedding of the category of small categories 
to the category of operadic categories.  
\end{example}

\begin{example}
  \label{puget}
  Let $\frC$ be a set.  A {\em $\frC$-bouquet\/} is a map $b:
  X\!+\!1\to \frC,$ where $X\in \sFSet.$ In other words,
  a~$\frC$-bouquet is an ordered $(k+1)$-tuple $(c_1,\ldots,c_k;c), X
  = \bar{k}$, of elements of $\frC.$ It can also be thought of as a
  planar corolla whose all edges (including the root) are colored.
  The extra color $b(1) \in \frC$ is called the {\em root color\/}. The finite set
  $X$ is the {\em underlying set\/} of the bouquet~$b$.

  A map of $\frC$-bouquets $b \to c$ whose root colors coincide is an
  arbitrary map $f: X\to Y$ of their underlying sets. Otherwise there
  is no map between $\frC$-bouquets.  We denote the resulting category of
  $\frC$-bouquets by $\Bq(\frC)$.

  The cardinality functor $|\dash|:\Bq(\frC)\to \sFSet$ 
assigns to a bouquet $b :X+1\to \frC$ its underlying
  set~$X$.  The fiber of a map $b \to c$ given by $f : X \to Y$ over
  an element $y\in Y$ is a $\frC$-bouquet whose underlying set is
  $f^{-1}(y),$ the root color coincides with the color of $y$ and the
  colors of the elements are inherited from the colors of the elements
  of $X$.
\end{example}

It is easy to see that $\Bq(\frC)$ is an operadic category with $\frC$
its set of connected components. It is an example of an operadic
category whose fibers are not pullbacks. It has the following important
property:

\begin{proposition}
  \label{Jarca}
  For each operadic category $\ttO$ with $\pi_0(\ttO)=\frC$, there is a
  canonical operadic `arity' functor $\Ar: \ttO\to \Bq(\frC)$ giving
  rise to the factorization
  \begin{equation}
    \label{eq:factor}
\xymatrix@R=1em{&\ttO \ar@/_/[ld]_{\Ar} \ar@/^/[rd]^{|\dash|}   & 
\\
 \Bq(\frC) \ar[rr]^{|\dash|} &&\sFSet
}
\end{equation}
of the cardinality functor $|\dash| : \ttO \to \sFSet$. 
\end{proposition}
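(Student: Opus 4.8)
The plan is to construct the functor $\Ar: \ttO \to \Bq(\frC)$ on objects and morphisms directly, using the data already available in the operadic structure, and then to verify the operadic-category axioms and the commutativity of~(\ref{eq:factor}). First I would define $\Ar$ on objects. Given $T \in \ttO$, say $T$ lies in the connected component $c \in \pi_0(\ttO) = \frC$; this component has a chosen terminal object $U_c$, and there is a unique morphism $!_T : T \to U_c$. By axiom~(iii) applied to the trivial triangle with $R = U_c$ (so that $\Fib_1$ is the domain functor), the unique fiber of $!_T$ is $T$ itself (this is the Remark after axiom~(v)). Now $|\dash|$ sends $T$ to some $\bar k \in \sFSet$, and for each $i \in \bar k = |T|$ the fiber $(!_T)^{-1}$... no — rather, I should look at the fibers of a well-chosen map out of $T$ whose codomain has cardinality $|T|$. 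The natural choice is the map $T \to S$ with $|S| = |T|$ underlying $\mathrm{id}_{|T|}$: in fact, take any trivial morphism $t_T : T \to \widehat T$ covering $\mathrm{id}_{|T|}$ if one is canonically available; more robustly, I use the component structure directly. For each element $i \in |T|$, I want a color $c_i \in \frC$; I obtain it as the connected component of the $i$th fiber of a canonical map. The cleanest source of such a map is the following: consider $!_T : T \to U_c$; its fibers are indexed by $|U_c| = \bar 1$, which is not enough. So instead I build the bouquet from the fibers of the identity-like structure — but fibers of $\mathrm{id}_T$ are the $U_{d_i}$ by axiom~(ii), and these carry exactly the color data $d_i \in \frC$. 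Thus I \emph{define} $\Ar(T)$ to be the $\frC$-bouquet with underlying set $|T|$, with $i$th color $d_i$ where $\mathrm{id}_T^{-1}(i) = U_{d_i}$ (axiom~(ii) guarantees such $d_i$), and with root color $c = $ the component of $T$.

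Next I define $\Ar$ on morphisms. A morphism $f : T \to S$ in $\ttO$ gives, via $|\dash|$, a map $|f| : |T| \to |S|$ of finite sets. For $\Ar(f)$ to be a morphism of $\frC$-bouquets $\Ar(T) \to \Ar(S)$, I need first that the root colors agree, i.e.\ that $T$ and $S$ lie in the same connected component — which is automatic since $f$ connects them — and then that the underlying map $|f|$ is color-compatible in the sense that the colors are ``inherited'' as in the definition of fibers in $\Bq(\frC)$. Concretely, I must check that for $i \in |S|$, the colors of the elements $j \in |f|^{-1}(i) \subseteq |T|$ match the colors assigned to the corresponding elements of the fiber $f^{-1}(i)$; this is precisely where axioms~(iii) and~(iv) enter. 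Applying axiom~(iii) to the triangle with $h = f$, $g = \mathrm{id}_S$, $R = S$ gives, for each $i \in |S|$, a map $f_i : f^{-1}(i) \to \mathrm{id}_S^{-1}(i) = U_{d_i'}$, and the equality of axiom~(iv), $f^{-1}(i) = f^{-1}_{|g|(i)}(i)$ together with the functoriality of $\Fib_i$, pins down that the component of $f^{-1}(i)$ is the root color assigned to $i$ in $\Ar(S)$. Then reapplying axiom~(iii) to $f^{-1}(i) \to U$ identifies the colors of the elements of $f^{-1}(i)$ with the colors $d_j$ of the elements $j \in |T|$ lying over $i$. So I set $\Ar(f) := |f|$, viewed as a morphism of $\frC$-bouquets, and the preceding checks show it is well-defined.

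Then I verify that $\Ar$ is an operadic functor: it lies over $\sFSet$ since $|\dash| \circ \Ar = |\dash|$ by construction (which is exactly the commutativity of the triangle~(\ref{eq:factor}), once composed with $|\dash| : \Bq(\frC) \to \sFSet$, since the latter is ``take the underlying set''); it is a functor because $\Ar$ on morphisms is just $|\dash|$ on morphisms, which is functorial; it preserves the chosen terminal objects because $U_c \in \ttO$ maps to the terminal $\frC$-bouquet in the component $c$, namely the bouquet on $\bar 1$ with root color $c$ and single element color $c$ (using $|U_c| = \bar 1$ from axiom~(i) and $\mathrm{id}_{U_c}^{-1}(1) = U_c$ from axiom~(ii)); it preserves fibers because the fiber of $\Ar(f)$ over $i \in |S|$ is, by the definition of fibers in $\Bq(\frC)$, the bouquet on $|f|^{-1}(i)$ with inherited colors, which we just identified with $\Ar(f^{-1}(i))$; and it preserves the equality~(\ref{karneval_u_retardacku}) because that equality holds at the level of cardinalities already, and $\Ar$ is faithful over $\sFSet$.

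The main obstacle, and the step deserving the most care, is the color-bookkeeping in the definition of $\Ar$ on morphisms: one must use axioms~(ii), (iii), (iv) in combination to prove that the connected component of the $i$th fiber $f^{-1}(i)$, and the components of its elements, are exactly the data that make $|f|$ a legal morphism of $\frC$-bouquets. The subtlety is that the ``inheritance of colors'' in $\Bq(\frC)$ is a literal equality, so one needs the \emph{strict} equalities provided by axioms~(iii) (the functor $\Fib_i$, in particular $\Fib_1$ being the domain functor when $R = U_c$) and~(iv), not merely isomorphisms; this is precisely the kind of place where the strictness hypothesis on operadic categories is used. Everything else is routine diagram-chasing, and the canonicity of $\Ar$ (the fact that it is determined by the structure, hence natural in $\ttO$) follows because every choice made above — the component, the fibers of identities — is part of the given operadic data rather than an arbitrary selection.
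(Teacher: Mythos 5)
Your construction of $\Ar$ on objects -- the bouquet on $|T|$ whose $i$th color is the component $d_i$ of the fiber $\id_T^{-1}(i)=U_{d_i}$, with root color $\pi_0(T)$ -- is exactly the paper's definition (stated there via the ``source'' $s(T)$, the set of fibers of the identity), and the paper leaves the remaining verifications as an exercise, which you carry out correctly along the intended lines, using axioms (ii)--(iv) to match the fiber colors. So the proposal is correct and takes essentially the same approach as the paper.
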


\begin{proof}
  Let the {\em source\/} $s(T)$ of $T \in \ttO$ be the set of fibers of
  the identity $\id : T \to T$.  We define $\Ar(T) \in \Bq(\frC)$ as the
  bouquet $b: s(T)+1 \to \frC$, where $b$ associates to each fiber $U_c
  \in s(T)$ 
the corresponding connected component $c \in \frC$, and $b(1)
  := \pi_0(T)$.  We leave as an exercise to check that the assignment
  $T \mapsto \Ar(T)$\/ extends into an operadic functor.
\end{proof}

The assignment $\frC \mapsto \Bq(\frC)$ extends to a functor ${\it
  Bq} : \Set \to \OpCat$, and the functor $\Ar: \ttO\to \Bq(\frC)$
is the initial object of the comma-category $\ttO/{\it Bq}$. 
This explains why the
bouquets will play such a prominent r\^ole among operadic categories.
Indeed, the arity functor will be used to define the
endomorphism operads in Example~\ref{Zitra_do_Osnabrucku}.

\begin{proposition}
  \label{pullback-create-pullbacks}
  Pullbacks in the category of categories create pullbacks in the
  category of operadic categories and operadic functors.
\end{proposition}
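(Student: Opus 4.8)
The plan is to verify that the underlying-category pullback carries a canonical operadic structure and that this structure is forced, so it provides the pullback in $\OpCat$. Let me sketch the setup and the main verifications.

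\medskip

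\textbf{Proof proposal.}
Suppose we are given operadic functors $F : \ttP \to \ttO$ and $G : \ttR \to \ttO$, and form the pullback $\ttP \times_\ttO \ttR$ in $\ttCat$; write $P : \ttP \times_\ttO \ttR \to \ttP$ and $R : \ttP \times_\ttO \ttR \to \ttR$ for the projections. Objects of $\ttP\times_\ttO\ttR$ are pairs $(X,Y)$ with $F(X) = G(Y)$, and likewise for morphisms. First I would equip $\ttP\times_\ttO\ttR$ with a cardinality functor: since $F$ and $G$ are functors over $\sFSet$, the two composites to $\sFSet$ agree, so there is a unique $|\dash| : \ttP\times_\ttO\ttR \to \sFSet$ through which both projections are functors over $\sFSet$; concretely $|(X,Y)| := |X| = |Y|$. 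Next, for a morphism $(f,g) : (T_1,T_2) \to (S_1,S_2)$ and $i \in |(S_1,S_2)|$, I would \emph{define} the fiber to be $(f,g)^{-1}(i) := (f^{-1}(i), g^{-1}(i))$; this is a legitimate object of the pullback because $F$ and $G$ preserve fibers, so $F(f^{-1}(i)) = F(f)^{-1}(i) = G(g)^{-1}(i) = G(g^{-1}(i))$, and its cardinality is $|f|^{-1}(i) = |g|^{-1}(i)$ as required. The chosen terminal objects: a connected component of $\ttP\times_\ttO\ttR$ maps to a component of $\ttP$ and one of $\ttR$ lying over a common component of $\ttO$; one checks the pair of chosen terminal objects $(U_c, U_d)$ is again terminal in its component and has cardinality $1$, giving axiom (i).

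\medskip

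Then I would check axioms (ii)--(v) componentwise. Triviality of a morphism in the pullback is detected componentwise by the definition of fibers, so (ii) follows from (ii) in $\ttP$ and $\ttR$. For (iii), given a commuting triangle in $\ttP\times_\ttO\ttR$, project it to $\ttP$ and $\ttR$, obtain the induced maps $f_i$ in each and pair them; the compatibility with $|f_i|$ is inherited, and the functoriality of $\Fib_i : (\ttP\times_\ttO\ttR)/R \to \ttP\times_\ttO\ttR$ follows from functoriality in each factor together with the fact that a diagram in the pullback commutes iff both its projections do. The clause about $R = U_c$: the projections of $\Fib_1$ are the domain functors, hence so is $\Fib_1$ itself. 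Axioms (iv) and (v) are equalities of objects (resp.\ morphisms), and an equality of pairs holds iff it holds in both coordinates, so they follow immediately from the corresponding axioms in $\ttP$ and $\ttR$. Smallness of $\pi_0$ is inherited since $\pi_0(\ttP\times_\ttO\ttR)$ maps into $\pi_0(\ttP)\times\pi_0(\ttR)$.

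\medskip

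Finally I would verify the universal property. The projections $P, R$ are operadic functors essentially by construction: they are over $\sFSet$, preserve fibers and chosen terminals by our definitions, and respect~(\ref{karneval_u_retardacku}) since that holds in each factor. Given operadic functors $H_1 : \ttQ \to \ttP$, $H_2 : \ttQ \to \ttR$ with $F H_1 = G H_2$, the universal property of the pullback in $\ttCat$ yields a unique functor $H : \ttQ \to \ttP\times_\ttO\ttR$ with $P H = H_1$, $R H = H_2$; it remains to see $H$ is operadic. It is over $\sFSet$ because $H_1$ is; it preserves fibers because $H(f^{-1}(i)) = (H_1(f^{-1}(i)), H_2(f^{-1}(i))) = (H_1(f)^{-1}(i), H_2(f)^{-1}(i)) = H(f)^{-1}(i)$; it preserves chosen terminals and~(\ref{karneval_u_retardacku}) for the same coordinatewise reason. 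This gives the factorization, and uniqueness is inherited from $\ttCat$. The cone condition (that $\ttP\times_\ttO\ttR$ together with $P,R$ is a cone over the diagram in $\OpCat$, hence a limit) is then immediate.

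\medskip

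The only place demanding genuine care---rather than routine ``do it in each coordinate''---is confirming that the operadic structure we put on $\ttP\times_\ttO\ttR$ is the \emph{unique} one making $P$ and $R$ operadic functors; this is what makes the projections part of a limiting cone rather than merely a cone. This rigidity follows because operadic functors must preserve the cardinality (forcing $|\dash|$ on the pullback) and must preserve fibers on the nose (forcing the fiber objects), and similarly for the chosen terminals. Once this rigidity is in hand, everything else is a mechanical coordinatewise check, which is why the statement can be phrased as ``pullbacks in $\ttCat$ create pullbacks in $\OpCat$''.
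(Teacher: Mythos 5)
Your proposal follows essentially the same route as the paper's proof: equip the pullback in $\ttCat$ with the componentwise cardinality, chosen terminal objects and fibers, observe that fiber-preservation of the two legs guarantees the componentwise fiber lands in the pullback, and then check the axioms and the universal property coordinatewise. The paper states only the construction and leaves the verification that the result is a pullback in $\OpCat$ as an exercise, so your additional discussion of the universal property and of the rigidity forcing the operadic structure is a correct filling-in of exactly that omitted step.
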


\begin{proof}
  Let us consider the ordinary pullback
  \begin{equation}
    \label{eq:1}
    \xymatrix@C = +4em{
      \ttR \ar[r]^r \ar[d]_\varpi&\ttO \ar[d]^\pi
      \\
      \ttQ \ar[r]^p & \ttP
    }
  \end{equation}
  of the diagram $\xymatrix@1{\ttQ\ \ar[r]^p&\ \ttP \ & \ar[l]_\pi\ \ttO}$ of operadic
  categories and their operadic functors. We may assume that objects
  of $\ttR$ are pairs $(\ittau,\itS)$, $\ittau \in \ttO$, $\itS \in \ttQ$, such
  that $\pi(\ittau) = p(\itS)$. Morphisms $(\ittau,\itT) \to (\itkappa,\itS)$
  in $\ttR$ are couples $(\sigma,f)$, where $\sigma : \ittau \to \itkappa$ is
  a morphism in $\ttO$ and $f : \itT \to \itS$ a morphism in $\ttQ$ such that
  $\pi(\sigma) =p (f)$. The functors $r : \ttR \to \ttO$ and $\varpi : \ttR \to \ttQ$ are
  the obvious projections to the first resp.~the second factor.

  We equip $\ttR$ with a structure of an operadic category as follows. We
  define the cardinality functor $|\dash| : \ttR \to \sFSet$ by
  $|(\ittau,\itT)| := |\ittau|$.\footnote{Since $|\ittau| = |\itS|$ we could
    as well put $|(\ittau,\itT)| := |\itT|$.}  The chosen terminal objects are
\[
U_{\pi_0(\ittau,\itS)} :=
  (U_{\pi_0(\ittau)},U_{\pi_0(\itS)}).
\]
The fibers are defined componentwise, i.e.~for a morphism
  $(\sigma,f): (\ittau,\itT) \to (\itkappa,\itS)$ we put
  \[
  (\sigma,f)^{-1}(i) := \big(\sigma^{-1}(i),f^{-1}(i)\big),\ i \in
  |\itkappa| = |\itS|.
  \]
  Notice that, since $p$ and $\pi$ are operadic functors,
  \[
  \pi\big(\sigma^{-1}(i)\big) = (\pi\sigma)^{-1}(i) = (pf)^{-1}(i) =
  p\big(f^{-1}(i)\big),
  \]
  so indeed $(\sigma,f)^{-1}(i) \in \ttR$.

  We leave the verification that the diagram~(\ref{eq:1})
  is indeed a pullback in the category of operadic categories as an
  exercise.
\end{proof}

Pullbacks can be used to define colored versions of operadic
categories. Given an operadic category $\ttO$ and a finite
set $\frC$, we define the operadic category $\ttO^\frC$ of $\frC$-colored objects
in $\ttO$ as the pullback
\begin{equation}
    \label{eq:Jaruska}
    \xymatrix@C = +4em{
      \ttO^\frC \ar[r] \ar[d]& \Bq(\frC) \ar[d]^{|\dash|}
      \\
      \ttO \ar[r]^{|\dash|} & \sFSet
}    
\end{equation} 
Notice that \hbox{$\pi_0(\ttO^\frC) \cong \pi_0(\ttO)\! \times\!
  \frC$}. 
  \begin{remark} 
Since $\sFSet$ is the terminal object in the category of operadic categories, the pullback $\ttO^\frC$
is actually the product $\ttO\times \Bq(\frC) $ in $\tt OpCat.$
\end{remark}

A {\em $\ttO\/$-collection} in $\ttV$ is a collection $E = \{E(T)\}_{T
  \in \ttO}$ of objects of $\ttV$ indexed by the objects of the
category $\ttO$. The category of $\ttO$-collections in $\ttV$ will be
denoted $\ColV{\ttO}$. For a $\ttO\/$-collection $E$ and a morphism $f:T\to S$ 
let $$E(f) = \bigotimes_{i \in |S|} E({T_i})$$ 
In the following definition  we tacitly use
equalities~(\ref{karneval_u_retardacku}).

\begin{definition} 
An $\ttO$-operad is  a collection  $\calP = \{\calP(T)\}_{T
\in \ttO}$ in $\ttV$ together with units
  \[
  I\to \calP(U_c),\ c \in \pi_0(\ttO),
  \]
and structure maps
  \[
  \mu(f): \calP(f) \otimes \calP(S)\to \calP(T),\ f:T\to S,
  \]
satisfying the following axioms.

  \begin{itemize}
  \item[(i)] Let $T \stackrel f\to S \stackrel g\to R$ be morphisms in
    $\ttO$ and $h := gf : T \to R$ as in~(\ref{Kure}).  Then the
    following diagram of structure maps of $\calP$ combined with the
    canonical isomorphisms of products in $\ttV$ commutes:
    \[
    \xymatrix@C = 2em@R = .4em{
\ar@/^2.5ex/[rrd]^(.56){\hskip .5em\bigotimes_{i}\mu(f_i) \ot \id}
      \ar[dd]_(.45){\id \ot \mu(g)}
\displaystyle\bigotimes_{i
        \in |R|} 
      \calP(f_i) \ot \calP(g) \ot \calP(R) & &    
      \\  & &  \ar@/^/[dl]_{\mu(h)}
     \calP(h) \ot \calP(R)\ .
\\
      \ar[r]^(.77){\mu(f)}{\rule{0pt}{2em}}  
\displaystyle\bigotimes_{i \in |R|}
      \calP(f_i) \ot \calP(S) \cong   \calP(f) \ot \calP(S)&
 \calP(T)&
    }
    \]
  \item[(ii)] The composition
    \[
    \xymatrix@1@C = +2em{ \calP(T) \ar[r]& \rule{0pt}{2em} \displaystyle
      \bigotimes_{i\in |T|} I\! \ot\! \calP(T) \ar[r]&\rule{0pt}{2em}
      \displaystyle \bigotimes_{i\in |T|} \calP(U_{c_i})\! \ot\! \calP(T)\ar[r]^(.56)= &
      \calP(\id_T)\! \ot\! \calP(T)
 \ar[r]^(.65){\mu(\id)}&\calP(T) }
    \]
    is the identity for each $T \in \ttO$, as well as the identity is
  \item[(iii)] the composition
    \[
    \xymatrix@1@C = +2.2em{ \calP(T)\! \ot \! I \ \ar[r]&\ \calP(T)\! \ot\!
      \calP(U_c) \ \ar[r]^=
&\ \calP(!_T)\! \ot\! \calP(U_c)\
      \ar[r]^{\hskip 1.8em \mu(!_T)}&\ \calP(T)},\ c:= \pi_0(T).
    \]
  \end{itemize}
\end{definition}

Notice that for an arbitrary operad $\calP$ and $c\in \pi_0(\ttO)$,
$\calP(U_c)$ with the multiplication
\[
\mu(\id) : \calP(U_c) \ot \calP(U_c) \to \calP(U_c)
\]
forms a unital monoid in $\ttV$.

A {\em morphism\/}  $\varsigma : \calP' \to \calP''$  of $\ttO$-operads in $\ttV$ is a
collection $\{\varsigma_T\}_{T \in \ttO}$ of $\ttV$-morphisms  
\hbox{$\varsigma_T : \calP'(T) \to \calP''(T)$} commuting with the
structure operations.  
We denote by $\OpV \ttO$ the category of
$\ttO$-operads in $\ttV$.  Each operadic functor $F:\ttO\to \ttP$ 
obviously induces the restriction $F^*: \OpV P\to \OpV \ttO$.

We can put the definition of $\ttO$-operad in a $2$-categorical context as follows\footnote{We were inspired by the definition of a
non-symmetric operad as a strict monoidal lax-functor $\Deltaalg
\to \Sigma \ttV$ given by Day and Street in~\cite{day-street:lax}.}.
Let $\Sigma \ttV$ denote the symmetric
monoidal bicategory with one object $\star$ and $\ttV$ as its category
of morphisms $\star \to \star$. 
Recall that a part of a  lax-functor structure on $\calP$ from a category $\ttO$ to the bicategory  $\Sigma \ttV$ are morphisms
$$\calP(f)\otimes \calP(g)\to \calP(h)$$
for each commutative diagram like (\ref{Kure}), as well as morphisms
$I\to \calP(\id).$ For such a lax-functor and an object $T\in \ttO$ we denote
$\calP(T) := \calP(T\stackrel{!_{T}}{\to}U_c)$. 

\begin{definition} 
\label{jeste_musim_koupit_parecky} 
 An {\em operad-like} functor from $\ttO$ to $V$ is a lax-functor $\calP : \ttO
  \to \Sigma\ttV$ equipped, for each \hbox{ $f:T\to S$} with fibers $T_i :=
  f^{-1}(i)$, $i \in |\itS|$, with an isomorphism
\begin{equation}\label{structure-iso}
\calP(f) \cong \bigotimes_{i \in |S|} \calP({T_i})
\end{equation}  
which satisfies the following axioms:
\begin{itemize}

\item[(i)] 
For any commutative diagram (\ref{Kure}) the following diagram commutes
\[
\xymatrix{
\calP(f)\otimes\calP(g)
\ar[r]\ar[d]_{\cong}
&
\calP(h)
\ar[d]^{\cong}
\\
\displaystyle
\bigotimes_{j\in |S|}\calP\big(f^{-1}(j)\big) \otimes  
\bigotimes_{i\in |R|}\calP\big(g^{-1}(i)\big)\ar[d]_{id}
& \displaystyle\bigotimes_{i\in |R|}\calP\big(h^{-1}(i)\big)
\\ 
\displaystyle\bigotimes_{j\in |S|}\calP\big(f_{|g|(j)}^{-1}(j)\big) 
\otimes  \bigotimes_{i\in |R|}\calP\big(g^{-1}(i)\big)\ar[r]^(.47){\cong}
& \displaystyle \ar[u] \bigotimes_{j\in |R|}
\Big(\bigotimes_{|g|(j)=i}\calP\big(f_{i}^{-1}(j)\big) 
\otimes  \calP\big(g^{-1}(i)\big)\Big)
}
\]
where the bottom vertical arrow  on the left side 
is an identity due to equality
\begin{equation}
\label{eq:39}
\calP\big(f_{|g|(j)}^{-1}(j)\big)\stackrel=\lra \calP\big(f^{-1}(j)\big)
\end{equation}
and the up-going 
right vertical arrow is given by the
lax-constraints induced by commutative diagrams:
\[
    \xymatrix@C = +1em@R = +1em{
      h^{-1}(i)      \ar[rr]^{f_i} \ar[dr]_{!} & & g^{-1}(i)\ar[dl]^{!}
      \\
      &U_c&
    }.
\]
\item[(ii)] For each object $T\in \ttO$, the following diagram commutes
\[
\xymatrix@C = +3em{
I
\ar[d]_{} 
\ar[r]^{ } 
& \calP(\id_T)
\ar[d]^{\cong} 
\\
\displaystyle\bigotimes_{i\in |T|} \calP(U_{c_i})
\ar[r]^{ =}
& \displaystyle\bigotimes_{i\in |T|} \calP(\id_T^{-1}(i))\ .}
\]
\end{itemize}
\end{definition}
The proof of the following lemma is straightforward:
\begin{lemma}The category $\OpV \ttO$ is equivalent to the category of operad-like functors from $\ttO$ to $V$ and their lax-natural transformations which commute
with the structure isomorphisms~(\ref{structure-iso}).\end{lemma}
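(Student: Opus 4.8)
The plan is to construct mutually inverse functors between $\OpV\ttO$ and the category $\calL$ of operad-like functors $\ttO\to\Sigma\ttV$ together with structure-isomorphism-preserving lax-natural transformations, and to check that they are essentially inverse. Going from an $\ttO$-operad $\calP$ to an operad-like functor, one first sets $\calP(T\stackrel{!_T}\to U_c):=\calP(T)$ on the ``tautological'' morphisms to terminal objects, and more generally declares, for an arbitrary $f:T\to S$ with fibers $T_i=f^{-1}(i)$, that $\calP(f):=\calP(f)=\bigotimes_{i\in|S|}\calP(T_i)$, which both defines the values of the lax-functor on morphisms and supplies tautologically the structure isomorphism~(\ref{structure-iso}). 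The lax-constraint $\calP(f)\otimes\calP(g)\to\calP(h)$ attached to a commutative triangle~(\ref{Kure}) is taken to be the operad composition $\mu(f)$ (after identifying $\calP(f)\otimes\calP(S)$ with $\calP(f)\otimes\calP(g)$ using that $S=!_S^{-1}$ is the fiber over the point of $U_c$), and the unit constraint $I\to\calP(\id_T)$ is the composite built from the operad units $I\to\calP(U_{c_i})$. One then has to verify the lax-functor axioms (associativity and unitality of these constraints): associativity is exactly the operad associativity axiom~(i) rewritten via the identifications $\calP(h)=\bigotimes_i\calP(h^{-1}(i))$ and the fiber-of-fiber equality~(iv)/(\ref{karneval_u_retardacku}); the unit axioms for the lax-functor are operad axioms~(ii) and~(iii). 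Finally one checks that this data satisfies operad-like axioms~(i) and~(ii) of Definition~\ref{jeste_musim_koupit_parecky} — axiom~(i) is again a bookkeeping reformulation of operad associativity using the commutative squares over $U_c$ displayed there, and axiom~(ii) is operad axiom~(ii).

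Conversely, given an operad-like functor $\calP$, one defines an $\ttO$-operad with the same underlying collection $\{\calP(T)\}_{T\in\ttO}$ (with $\calP(T):=\calP(T\stackrel{!_T}\to U_c)$), units $I\to\calP(U_c)$ given by the lax unit constraint at $T=U_c$ (noting $\id_{U_c}=!_{U_c}$), and structure maps $\mu(f):\calP(f)\otimes\calP(S)\to\calP(T)$ defined as the composite of the structure isomorphism $\calP(f)\cong\bigotimes_{i\in|S|}\calP(T_i)$ tensored with $\calP(S)=\calP(S\stackrel{!_S}\to U_{c'})$, followed by the lax-constraint of the triangle $T\stackrel{f}\to S\stackrel{!_S}\to U_{c'}=R$ (here one uses the remark after~(v) that the unique fiber of $!_T$ is $T$, so $\calP(h)=\calP(!_T)=\calP(T)$). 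The operad axioms then follow: associativity~(i) from the lax-functor associativity together with operad-like axiom~(i), and the two unit axioms~(ii),(iii) from the lax-functor unit laws and operad-like axiom~(ii). One must also match morphisms: a lax-natural transformation commuting with~(\ref{structure-iso}) restricts on the morphisms $!_T$ to a family $\{\varsigma_T\}$, and the naturality squares for the triangles $T\to S\to U$ say precisely that $\{\varsigma_T\}$ commutes with the $\mu(f)$; conversely an operad morphism extends uniquely to such a lax-natural transformation because~(\ref{structure-iso}) forces its component at a general $f$ to be $\bigotimes_i\varsigma_{T_i}$.

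For the equivalence it then remains to observe that the two constructions are mutually inverse up to canonical isomorphism: starting from an $\ttO$-operad and returning, the collection, units and structure maps are literally recovered (the round trip on $\mu(f)$ collapses because $\calP(f)$ was defined as $\bigotimes_i\calP(T_i)$ in the first place, so the structure isomorphism is the identity); starting from an operad-like functor $\calP$, passing to an operad and back produces a lax-functor whose value on a general $f$ is $\bigotimes_i\calP(T_i)$, which is canonically identified with the original $\calP(f)$ via~(\ref{structure-iso}), and one checks this identification is compatible with all the lax-constraints — which is where~(\ref{eq:39}) and operad-like axiom~(i) are used. This last compatibility check, i.e.\ showing that re-expanding every $\calP(f)$ through its structure isomorphism is coherent with the lax-structure, is the only place where there is any real content; everything else is a direct translation dictionary between the two sets of axioms.

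The main obstacle, and the reason the lemma is merely ``straightforward'' rather than trivial, is the combinatorial bookkeeping around iterated fibers: one must be scrupulous in using equalities~(\ref{karneval_u_retardacku}) (the fiber over $j\in|S|$ of $f$ equals the $j$th fiber of the induced map $f_{|g|(j)}$ between fibers over $|R|$) and axiom~(v) to see that the two ways of decomposing $\calP(h)$ — directly as $\bigotimes_{i\in|R|}\calP(h^{-1}(i))$, or through $S$ as a double tensor product — agree on the nose, so that operad associativity~(i) and the pentagon for the lax-functor really are the same diagram. Once that identification is set up carefully (exactly as pres\-cribed in Definition~\ref{jeste_musim_koupit_parecky}(i), whose displayed diagram encodes precisely this reconciliation), the rest of the verifications are routine diagram chases and are safely left to the reader.
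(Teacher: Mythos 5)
The paper does not actually prove this lemma --- it is stated with the remark that the proof is straightforward --- so there is no argument to compare against line by line. Your translation dictionary is certainly the intended one, and its overall architecture is right: take the structure isomorphisms~(\ref{structure-iso}) to be identities when passing from an operad to an operad-like functor, recover $\mu(f)$ from the constraint of the triangle $T\stackrel{f}\to S\stackrel{!_S}\to U_c$ in the other direction, and match morphisms by observing that~(\ref{structure-iso}) forces the component of a lax-natural transformation at a general $f$ to be $\bigotimes_i\varsigma_{T_i}$.

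The one place where what you wrote does not survive scrutiny is the definition of the lax constraint for a \emph{general} triangle~(\ref{Kure}): the map $\mu(f)$ has target $\calP(T)$, whereas the constraint must land in $\calP(h)=\bigotimes_{i\in|R|}\calP\big(h^{-1}(i)\big)$, and the identification of $\calP(g)$ with $\calP(S)$ that you invoke holds only when $R=U_c$ (so that $g=!_S$ has unique fiber $S$). For general $R$ the constraint has to be $\bigotimes_{i\in|R|}\mu(f_i)$, precomposed with the regrouping
\[
\bigotimes_{j\in |S|}\calP\big(f^{-1}(j)\big)\otimes\bigotimes_{i\in |R|}\calP\big(g^{-1}(i)\big)
\;\cong\;
\bigotimes_{i\in |R|}\Big(\bigotimes_{|g|(j)=i}\calP\big(f_i^{-1}(j)\big)\otimes\calP\big(g^{-1}(i)\big)\Big),
\]
which is where the equalities~(\ref{karneval_u_retardacku}), alias~(\ref{eq:39}), enter; this is exactly what the displayed diagram in Definition~\ref{jeste_musim_koupit_parecky}(i) dictates, and your closing paragraph shows you are aware of it, so the slip is in the writing rather than the idea. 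Note also that verifying lax-functor associativity for a composable pair of triangles requires axiom~(v) of operadic categories (the equality $f_i=(f_i)_j$) in addition to operad associativity. With the constraint written correctly, the remaining verifications are the routine chases you describe, and the proof is complete.
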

\begin{example}
\label{monoids}
Operads over the terminal category $\tt 1$ of Example~\ref{onefirst} 
are monoids in~$V$.
\end{example}

\begin{example}
\label{finset+deltaalgalgeb}
The category of operads over the category $\sFSet$ is isomorphic to the category of classical symmetric operads. Operads over $\Deltaalg$ are ordinary non-$\Sigma$
operads~\cite[Sec.~3, Prop.~3.1]{batanin:AM08}. Applying the construction 
of diagram~(\ref{eq:Jaruska}) to the operadic
category $\Deltaalg$  we obtain the category
$\Deltaalg^\frC$ describing $\frC$-colored non-$\Sigma$-operads. More
examples of this construction will be given in
Section~\ref{sec:oper-categ-levell}.
\end{example}

\begin{example}
\label{zitra}
An operad over the category $\Cplusone$ from Example~\ref{taut1} is the
same as a monoid $M = \calP(1)$ in $\ttV$, together with the `actions'
\begin{equation}
\label{chobotnicka}
\mu(!_T) : M \ot \calP(T) \to \calP(T), \ T \in \ttC,
\end{equation}
and a contravariant functor $\Phi : \ttC \to \ttV$ such that the maps
\[
\Phi(f) : = \calP(f) : \calP(T) \to \calP(S),\ f:T \to S \in \ttC,
\]
commute with the actions~(\ref{chobotnicka}).
 In
particular,  $\Cplusone$-operads with $\calP(1) = I$ are precisely presheaves
$\ttC^{\tt op} \to \ttV$. 
\end{example}

\begin{example}
  \label{end}
  Operads over the category $\Bq(\frC)$ of $\frC$-bouquets introduced in
  Example~\ref{puget} are ordinary $\frC$-colored operads. Therefore, for
  each $\frC$-colored collection $E = \{E_c\}_{c\in \frC}$ of objects of
  $\ttV$ one has the {\em endomorphism $\Bq(\frC)$-operad\/}
  $\End^{\Bq(\frC)}_E$, namely the ordinary colored 
  endomorphism operad~\cite[\S1.2]{berger-moerdijk:CM07}.
\end{example}

\begin{example}
  The category $\OpV \ttO$ of $\ttO$-operads in $\ttV = (\ttV,\ot, I)$ has a
  monoidal structure given by the 'componentwise' multiplication in
  $\ttV$. The unit for this structure is the operad $\bfone^\ttO$ with
  $\bfone^\ttO(T) := I$ for each $T\in \ttO$. Clearly $F^*(\bfone^P) =
  \bfone^\ttO$ for any operadic functor
  $F:\ttO\to P$.
\end{example}

\begin{example}
\label{Zitra_do_Osnabrucku}
  For a $\frC$-colored collection $E = \{E_c\}_{c\in \frC}$ in $\ttV$ and an
  operadic category $\ttO$ with $\pi_0(\ttO) = \frC$, one defines the {\em
    endomorphism $\ttO$-operad\/} $\End^\ttO_E$ as the restriction
  \[
  \End^\ttO_E := \Ar^*\big(\End^{\Bq(\frC)}_E\big)
  \]
  of the $\Bq(\frC)$-endomorphism operad of Example~\ref{end} along the
  arity functor $\Ar$~of Proposition~\ref{Jarca}.
\end{example}

\begin{definition}
\label{Jaruska_ma_chripecku}
An {\em algebra\/} over an $\ttO$-operad $\calP$ in $\ttV$ is a
collection $A = \{A_c\}_{c\in \pi_0(\ttO)}$, $A_c \in \ttV$, equipped with an
$\ttO$-operad map $\alpha : \calP \to \End^\ttO_A$. 
\end{definition}

An algebra is thus
given by suitable structure maps
\[
\alpha_T : \bigotimes_{c \in \pi_0(s(T))} A_c \ot \calP(T) \to
A_{\pi_0(T)}, \ T \in \ttO,
\]
where $s(T)$ denotes, as before, the set of fibers of the identity
$\id: T\to T$. This notion of $\calP$-algebras will further be
generalized in Section~\ref{Jaruska_ma_streptokoka}.

\begin{example}
  The category $\Gamma$ of stable labelled
  graphs~\cite[Section~7]{markl:handbook} is an operadic category.
  Morphisms are given by contractions of subgraphs. The cardinality functor
  associates to a graph its set of vertices. Fibers of a morphism are
  the subgraphs contracted to a vertex.

  If $\ttV$ is the category of differential graded vector spaces, then
  $\Gamma$-operads are precisely {\em hyperoperads\/} in the sense of
  \cite{getzler-kapranov:CompM98}.  Algebras over these operads are
  (twisted) modular operads, see~\cite{getzler-kapranov:CompM98} or
  \cite[Def.~II.5.5]{markl-shnider-stasheff:book}.
\end{example}

\begin{example} This is our only example of a large operadic
  category. Let $\tt A$ be an abelian category and let $\tt Epi(A)$ be
  its subcategory of epimorphisms. The cardinality functor on $\tt
  Epi(A)$ maps all objects to the   one point set $\bar{1}.$ The (unique)
  fiber of any morphism is its (chosen) kernel. It is easy to check that
  this defines an operadic category structure on $\tt Epi(A).$ An $\tt
  Epi(A)$-operad in the category of vector spaces is the same as a
  charade over $\tt A$ in Kapranov's sense
  \cite[Definition~3.2]{kapranov:langlands}.\footnote{Generally  speaking, $\tt Epi(A)$  is not a strict operadic category but we can easily ``strictify'' it if we use a skeletal version.} 
\end{example}

\subsection{The category of $k$-trees}
\label{Omegan}
We are going to recall briefly the category
$\ttOmega_k$ of $k$-trees, for $k \geq
0$; the details can be found in~\cite[Sec.~3,
Example~8]{batanin:globular} or~\cite{batanin-street}. 
The category of {\em $0$-trees $\ttOmega_0$} is the terminal
category~$1$. Its unique object is denoted by $\EuU_0$.

The category of {\em $1$-trees $\ttOmega_1$} is the category of finite
ordinals $(n) := \{\rada 1n\}$, $n \geq 0$, and their order-preserving
maps.  As usual, we interpret $\{1,\ldots,n\}$  for $n=0$ as the empty~set.
The terminal object of $\ttOmega_1$ is $\EuU_1 := (1)$. When the
meaning is clear from the context, we will simplify the notation and
denote the object $(n) \in \ttOmega_1$ simply by $n$. The category
$\ttOmega_1$ is isomorphic to the operadic category $\Deltaalg$
recalled in Example~\ref{finset+deltaalg}.

Let $k \geq 2$. A {\em $k$-tree\/} is a chain 
\begin{equation}
\label{k-tree} 
\EuT =
(\xymatrix{n_k \ar[r]^(.4){t_{k-1}} & n_{k-1}
  \ar[r]^(.5) {t_{k-2}} &\ \cdots \ \ar[r]^(.6) {t_{1}} & n_1})
\end{equation}
of morphisms in $\ttOmega_1$. A morphism
\begin{equation}
\label{Puska}
\sigma:
(\xymatrix{n_k \ar[r]^(.4){t_{k-1}} & n_{k-1}
  \ar[r]^(.5) {t_{k-2}} &\ \cdots \ \ar[r]^(.6) {t_{1}} & n_1})
\longrightarrow (\xymatrix{m_k \ar[r]^(.4){s_{k-1}} & m_{k-1}
  \ar[r]^(.5) {s_{k-2}} &\ \cdots \ \ar[r]^(.6) {s_{1}} & m_1})
\end{equation}
of $k$-trees is a diagram in $\Set$
\[
\xymatrix{n_k \ar[r]^(.4){t_{k-1}} \ar[d]_{\sigma_k}   & n_{k-1}
\ar[d]_{\sigma_{k-1}} 
  \ar[r]^(.5) {t_{k-2}} &\ \cdots \ \ar[r]^(.6) {t_{1}} & n_1
\ar[d]^{\sigma_1} 
\\
m_k \ar[r]^(.4){s_{k-1}} & m_{k-1}
  \ar[r]^(.5) {s_{k-2}} &\ \cdots \ \ar[r]^(.6) {s_{1}} & m_1
}
\]
such that
\begin{itemize}
\item[(i)] 
$\sigma_1$ is order preserving and
\item[(ii)]  
for any $p$, $k > p \geq 1$, and  $i\in n_p$, the restriction of
$\sigma_{p+1}$ to $t_p^{-1}(i)$ is order-preserving.
\end{itemize}

We denote by $\ttOmega_k$ the category of $k$-trees and their
morphisms as defined above. Its terminal
object is the $k$-tree $\EuU_k := (1\to 1 \to \cdots \to
1)$, its initial object is $z^k\EuU_0 := (0\to 0 \to \cdots \to
0)$. Notice that we have the obvious {\em truncation\/} functors
\[
\xymatrix{
\ttOmega_k  \ar[r]^{\tr_{k-1}}& \ttOmega_{k-1} \ar[r]^{\tr_{k-2}} &
\cdots  \ar[r]^{\tr_{2}}&  \ttOmega_2  \ar[r]^(.35){\tr_{1}}  & \ttOmega_1
= \Deltaalg}.
\]
One also has the {\em suspension\/} functor $z :\ttOmega_k \to
\ttOmega_{k+1}$, $k \geq 0$, that to an $k$-tree as in~(\ref{k-tree})
associates the $(k+1)$-tree
\[ 
z\EuT: =
(\xymatrix{n_k \ar[r]^(.4){t_{k-1}} & n_{k-1}
  \ar[r]^(.5) {t_{k-2}} &\ \cdots \ \ar[r]^(.6) {t_{1}} & \ar[r]
  n_1 &1}).
\]

An {\em $s$-leaf\/} (or a {\em leaf of height $s$\/}) of a $k$-tree $\EuT$ as
in~(\ref{k-tree}) is, for $s=k$, by definition 
an element of $n_k$.  For $1 \leq s <
k$ an $s$-leaf is an element $i \in n_s$ such that $t^{-1}_s(i) =
\emptyset$. We denote by $\Leaf_s(\EuT)$ the set of all $s$-leaves of
$\EuT$.

Let $\sigma : \EuT \to \EuS$ be a map of $k$-trees as in~(\ref{Puska}) and
$i \in m_k = \Leaf_k(\EuS)$ a $k$-leaf of $\EuS$. Let us define the fiber
$\sigma^{-1}(i)$ over $i$ as the chain
\begin{equation}
\label{Eva_reditelem?}
\sigma^{-1}(i) :=
\big(\xymatrix{\sigma^{-1}_k(i) \ar[r] &  \sigma^{-1}_{k-1}s_{k-1}(i))
  \ar[r] &\ \cdots \ \ar[r] & \sigma^{-1}_{1}(s_1 \cdots s_{k-1}(i)) }\big)
\end{equation}
of the restrictions of the maps in~(\ref{k-tree}). Analogously one may
define also fibers over the $s$-leaves for $s < k$, but we will not use
them in this article.

The category $\ttOmega_k$ with the cardinality functor
$|\EuT| := \Leaf_k(\EuT)$, with the fibers defined
as above and the chosen terminal object the $k$-tree $\EuU_k$, 
is an operadic category. 
Operads in $\OpV {\ttOmega_k}$ are precisely $k$-operads in the monoidal
globular $k$-category $\Sigma^k \ttV$,
see~\cite[\S11.3]{batanin-markl:Adv}.

\subsection{The category of $k$-ordinals}
\label{nordinal}
Let, as in the Section \ref{Omegan} , $k \geq 0$.
Recall~\cite[Sec.~II]{batanin:conf} that a {\em $k$-ordinal\/} is a 
finite set $\EuO$  equipped with $k$ binary relations  $<_0, \ldots, <_{k-1}$
such that 
\begin{itemize}
\item[(i)] 
 $<_p$ is nonreflexive,
\item[(ii)] 
for every pair $a,b$ of distinct elements of $\EuO$ there
exists exactly one $p$ such that
\[
a<_p b \ \ \mbox{or} \ \ b<_p a,
\]
\item[(iii)] 
if $a<_p b $  and  $b<_q c$  then
$a<_{\min(p,q)} c$.
\end{itemize}
A morphism of $k$-ordinals
$\sigma: \EuO \rightarrow \EuN$ is a map 
of the underlying sets such that  
$i<_p j$ in $\EuO$  implies that
\begin{itemize}
\item[(i)]  
$\sigma(i) <_r \sigma(j)$ for some $r\ge p$, or
\item [(ii)]  
$\sigma(i)= \sigma(j)$, or
\item [(iii)]
$\sigma(j) <_r \sigma(i)$ for $r>p$.
\end{itemize}

Let $\Ordk$ be the skeletal category of $k$-ordinals and their morphisms. 
The  category $\Ordk$ is operadic.
The cardinality $|\dash|: \Ordk \to \sFSet$ associates to a
$k$-ordinal $\EuO$ its underlying set (denoted $\EuO$ again). The fiber
of a map $\sigma: \EuO \to \EuN$ over $i \in \EuN$ is
the preimage $\sigma^{-1}(i)$ with the induced structure of a
$k$-ordinal. The category $\Ordk$ is connected, the unique terminal
object being the one-point $k$-ordinal $1_k$.
Operads in $\OpV \Ordk$ are pruned  $k$-operads in the
monoidal globular $n$-category $\Sigma^k \ttV$ \cite{batanin:conf}.

There is a natural $k$-ordinal structure on the set  of
$k$-leaves of 
each $k$-tree in $\ttOmega_k$. Let $\EuT$ be as in~(\ref{k-tree}) and  
$a,b \in n_k = \Leaf_k(\EuT)$ its distinct $k$-leaves. We say that $a <_p b$ if
$a$ precedes $b$ in $n_k$ and $p$ is such that
\[
t_pt_{p-1} \cdots t_{k-1} (a) = t_pt_{p-1} \cdots t_{k-1} (b)
\]
but
\[
t_{p-1} \cdots t_{k-1} (a) \not= t_{p-1} \cdots t_{k-1} (b).
\]
If such a $p$ does not exist, we put $a <_0 b$. It is easy to show
that this construction extends to 
an operadic functor  
\begin{equation}
\label{pruning}
p : \ttOmega_k \to \Ordk.
\end{equation}

On the other hand, $\Ordk$ can be identified with the full
subcategory of $\ttOmega_k$ consisting of  pruned trees.
Recall that a $k$-tree $\EuT \in \ttOmega_k$ as in~(\ref{k-tree}) 
is {\em pruned\/} if all $\rada{t_{k-1}}{t_1}$ are
epimorphisms. Equivalently, all leaves of $\EuT$ are its $k$-leaves, so
$\EuT$ is ``fully grown.''
For example, the $2$-ordinal
\[
0 <_0 2,\ 0 <_0 3,\ 0 <_0 4,\ 1 <_0 2,\  1<_0 3,\ 1 <_0 4,\ 
0 <_1 1,\ 2 <_1 3,\ 2 <_1 4,\ 3 <_1 4, 
\]
is represented by the following pruned tree
\[
\psscalebox{.4 .4} 
{
\begin{pspicture}(0,-1.7822068)(6.81,1.7822068)
\psline[linecolor=black, linewidth=0.05](1.06,-0.35779327)(3.06,-1.7577933)(5.06,-0.35779327)(4.96,-0.35779327)
\psline[linecolor=black, linewidth=0.05](0.06,1.0422068)(1.06,-0.35779327)(2.06,1.0422068)(2.06,1.0422068)
\psline[linecolor=black, linewidth=0.05](3.46,1.0422068)(5.06,-0.35779327)(6.66,1.0422068)(6.66,1.0422068)
\psline[linecolor=black, linewidth=0.05](5.06,1.0422068)(5.06,-0.35779327)(5.06,-0.35779327)
\rput[b](0.06,1.5422068){\Huge 0}
\rput[b](2.06,1.5422068){\Huge 1}
\rput[b](3.46,1.5422068){\Huge 2}
\rput[b](5.06,1.5422068){\Huge 3}
\rput[bl](6.66,1.5422068){\Huge 4}
\end{pspicture}
}
\]
See \cite[Theorem~2.1]{batanin:conf} for a more detailed discussion.  
We thus have an inclusion of categories $l : \Ordk \hookrightarrow
\ttOmega_k$ which  is left adjoint to the pruning
functor~(\ref{pruning}). 

It will sometimes be useful to identify a 
$k$-ordinal $\EuO \in \Ordk$ with the corresponding 
pruned tree $l(\EuO)
\in \ttOmega_k$. The functor $p : \ttOmega_k \to \Ordk$ then appears as the {\em
  pruning\/} associating to each $\EuT \in \ttOmega_k$ its
maximal pruned subtree. We must emphasize 
that  $l : \Ordk \hookrightarrow \ttOmega_k$ \underline{is} \underline{not} an 
operadic functor, since it does not preserve fibers in general.  It is only lax operadic in an
appropriate sense.

\section{Discrete fibrations of  operadic categories}

\begin{definition}
\label{psano_v_Myluzach}
  An operadic functor $F:\ttO\to \ttP$ is called a {\it discrete operadic
    fibration} if 
\begin{itemize}
\item[(i)]
$F$ induces an epimorphism $\pi_0(\ttO) \twoheadrightarrow
\pi_0(\ttP)$ and
\item[(ii)]
for any morphism $f : \itT\to \itS$ in $\ttP$ and any
  $\ittau_i, \itkappa \in \ttO$, where $i \in |\itS|$, such that
  \[
  F(\itkappa) = \itS \mbox { and } F(\ittau_i) = f^{-1}(i) \mbox { for } i
  \in |S|,
  \]
  there exists a unique $\sigma : \ittau\to \itkappa$ in $\ttO$ such that
  \[
  F(\sigma) = f \mbox { and } \ittau_i = \sigma^{-1}(i)\ \mbox { for } i
  \in |S|.\footnote{Notice that $F(\itkappa) = \itS$ implies $|\itkappa| =
    |\itS|$ and $F(\sigma) = f$ implies $F(\ittau) = \itT$.}
  \]
\end{itemize}
\end{definition}

We have the following simple

\begin{lemma}
\label{Vecer_jdu_ke_trem_kralum}
A discrete operadic fibration $F:\ttO\to \ttP$ 
induces an isomorphism \hbox{$\pi_0(\ttO) \stackrel\cong\to
\pi_0(\ttP)$}. 
\end{lemma}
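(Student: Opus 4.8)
The plan is to show that the epimorphism $\pi_0(\ttO) \twoheadrightarrow \pi_0(\ttP)$ from part (i) of the definition of a discrete operadic fibration is also injective, hence an isomorphism. First I would observe that the chosen terminal objects give a convenient set of representatives: every connected component $c \in \pi_0(\ttP)$ contains its chosen terminal object $U_c$, and since $F$ is an operadic functor it preserves chosen terminal objects, so $F$ maps the set $\{U_d : d \in \pi_0(\ttO)\}$ onto the set $\{U_c : c \in \pi_0(\ttP)\}$. Thus it suffices to prove that $F(U_{d}) = F(U_{d'})$ forces $d = d'$, i.e.\ that two chosen terminal objects of $\ttO$ lying over the same chosen terminal object of $\ttP$ must coincide (equivalently, lie in the same component).

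The key step is to exploit the uniqueness clause in part (ii) of Definition~\ref{psano_v_Myluzach}. Suppose $F(U_d) = F(U_{d'}) =: \itS$ with $d, d' \in \pi_0(\ttO)$. Since $\itS = U_c$ for $c \in \pi_0(\ttP)$, we have $|\itS| = \bar 1$ by axiom (i) of an operadic category, and the identity $\id : \itS \to \itS$ is trivial with unique fiber $\itS = U_c$. Now apply the lifting property of part (ii) to the morphism $f = \id_{\itS} : \itS \to \itS$ in $\ttP$, with the data $\itkappa := U_d$ (so $F(\itkappa) = \itS$) and $\ittau_1 := U_{d'}$ (which is legitimate, since $F(\ittau_1) = \itS = \id_{\itS}^{-1}(1) = f^{-1}(1)$). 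Part (ii) then produces a unique $\sigma : \ittau \to \itkappa$ in $\ttO$ with $F(\sigma) = \id_{\itS}$ and $\sigma^{-1}(1) = U_{d'}$. But the identity $\id_{U_d} : U_d \to U_d$ also satisfies these constraints: $F(\id_{U_d}) = \id_{F(U_d)} = \id_{\itS}$, and $\id_{U_d}$ is trivial (axiom (ii)) with its unique fiber equal to $U_d$. Hence by uniqueness $\sigma = \id_{U_d}$, which forces $U_{d'} = \sigma^{-1}(1) = U_d$, and therefore $d = d'$.

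The only point requiring a little care — and the one I expect to be the main (though minor) obstacle — is making sure the fiber bookkeeping is airtight: one must check that $U_d$ and $U_{d'}$ are genuinely eligible as the objects $\itkappa$ and $\ittau_i$ in part (ii), which rests on $|U_c| = \bar 1$ and on the description (from the Remark after the axioms, together with axiom (ii)) of the fibers of identity morphisms on terminal objects. Once that is in place, the argument above shows the map on $\pi_0$ is injective; combined with the surjectivity assumed in part (i) of the definition, it is a bijection, and since it is induced by a functor it is an isomorphism of sets. This completes the proof.
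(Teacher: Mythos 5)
Your reduction to injectivity on chosen terminal objects, and your choice of lifting data ($f = \id$ on a terminal object of $\ttP$, target $U_d$, prescribed fiber $U_{d'}$), match the paper's setup exactly. However, the step where you conclude $\sigma = \id_{U_d}$ ``by uniqueness'' is a genuine gap. The uniqueness clause in part (ii) of Definition~\ref{psano_v_Myluzach} asserts that $\sigma$ is unique among morphisms satisfying \emph{both} $F(\sigma) = f$ \emph{and} $\sigma^{-1}(i) = \ittau_i$ for the prescribed fibers. The identity $\id_{U_d}$ satisfies the first condition, but, as you yourself note, its unique fiber is $U_d$ rather than the prescribed fiber $U_{d'}$; so it is not a competitor in the uniqueness statement unless $U_d = U_{d'}$, which is exactly what you are trying to prove. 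The stronger uniqueness you are implicitly invoking --- a unique lift of $f$ with given target, irrespective of its fibers --- fails in general: in the operadic Grothendieck construction of Proposition~\ref{sec:discr-fibr-betw}, a lift of $f$ with target $\itkappa$ is a pair $(\varepsilon,f)$, and distinct choices of $\varepsilon$ give distinct lifts over the same $f$ with the same target but different fibers.

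The repair is precisely the point you flag at the end as ``fiber bookkeeping'' but do not carry out, and it needs only the \emph{existence} half of the lifting property. The lift $\sigma : \ittau \to U_d$ comes with $\sigma^{-1}(1) = U_{d'}$; since the target $U_d$ is a chosen terminal object, axiom (iii) of an operadic category forces $\Fib_1 : \ttO/U_d \to \ttO$ to be the domain functor, so the domain of $\sigma$ equals its unique fiber, i.e.\ $\ittau = U_{d'}$. Hence $\sigma$ is a morphism $U_{d'} \to U_d$, the two chosen terminal objects lie in the same connected component, and $d = d'$. This is exactly the paper's argument; with that substitution your proof goes through.
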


\begin{proof}
Assume that $V_{a'}$,  $V_{a''}$, $a',a'' \in \pi_0(\ttO)$, 
are chosen terminal objects in components of the category
$\ttO$ such that $F(V_{a'}) =  F(V_{a''}) = U_c$ for $c \in
\pi_0(\ttP)$. 
Then (ii) of Definition~\ref{psano_v_Myluzach}
taken with $T = S = U_c$, $f: U_c \to U_c$ 
the identity map, $t_1 =  V_{a'}$ and $s =
V_{a''}$ produces a map $\sigma : t \to   V_{a''}$ whose unique
fiber $\sigma^{-1}(1)$ is $V_{a'}$. Since  ${\Fib}_1:\ttO/V_{a''} \to
\ttO$ 
is the domain functor
by (iii) of the definition of the operadic
  category, $t = V_{a'}$, so $\sigma$ is in fact a map   $V_{a'} \to
  V_{a''}$, therefore  $V_{a'}$ and $V_{a''}$ belong to the
  same component of $\ttO$, i.e.\ $a' = a''$. 
\end{proof}

For an $\ttO$-operad $\calO \in \OpV \ttO$ and $\itT \in \ttP$ put
\begin{equation}
  \label{eq:7}
  F_!\calO(\itT) := \coprod_{F(\ittau) = \itT} \calO(\ittau).
\end{equation}

\begin{proposition}
  Assume that $F$ is a discrete operadic fibration. Then~(\ref{eq:7})
  is the underlying collection of a naturally defined $\ttP$-operad.
\end{proposition}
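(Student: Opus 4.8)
The plan is to equip $F_!\calO$ with operad structure maps induced, component-by-component, from those of $\calO$, using the discrete fibration property to lift morphisms of $\ttP$ canonically to $\ttO$. First I would define the units: since $F$ induces an isomorphism $\pi_0(\ttO) \stackrel\cong\to \pi_0(\ttP)$ by Lemma~\ref{Vecer_jdu_ke_trem_kralum}, every component $c \in \pi_0(\ttP)$ has a unique preimage component $a \in \pi_0(\ttO)$, and $F(U_a) = U_c$, so I set the unit $I \to F_!\calO(U_c)$ to be the composite of the $\calO$-unit $I \to \calO(U_a)$ with the coproduct inclusion $\calO(U_a) \hookrightarrow \coprod_{F(\ittau) = U_c}\calO(\ittau)$. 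Note that $F(U_a) = U_c$ forces $|U_a| = 1$, consistent with $U_a$ being terminal in its component, and in fact $U_a$ is (up to iso) the only object of $\ttO$ over $U_c$ whose cardinality is $1$ — but I do not even need that, I only need the canonical inclusion.

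Next, the structure maps. Fix $f : \itT \to \itS$ in $\ttP$ with fibers $\itT_i := f^{-1}(i)$, $i \in |\itS|$. Since $\otimes$ commutes with coproducts in $\ttV$ (it is closed symmetric monoidal, hence $-\otimes X$ is a left adjoint), I have
\[
F_!\calO(f) \otimes F_!\calO(\itS) = \Big(\bigotimes_{i \in |\itS|} \coprod_{F(\ittau_i) = \itT_i} \calO(\ittau_i)\Big) \otimes \coprod_{F(\itkappa) = \itS} \calO(\itkappa) \cong \coprod \Big(\bigotimes_{i \in |\itS|}\calO(\ittau_i)\Big) \otimes \calO(\itkappa),
\]
where the outer coproduct runs over all families $(\{\ittau_i\}_{i \in |\itS|}, \itkappa)$ with $F(\ittau_i) = \itT_i$ and $F(\itkappa) = \itS$. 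For each such family, Definition~\ref{psano_v_Myluzach}(ii) supplies a \emph{unique} $\sigma : \ittau \to \itkappa$ in $\ttO$ with $F(\sigma) = f$ and $\sigma^{-1}(i) = \ittau_i$; I then use the $\calO$-structure map $\mu(\sigma) : \calO(\sigma) \otimes \calO(\itkappa) \to \calO(\ittau)$ followed by the inclusion $\calO(\ittau) \hookrightarrow F_!\calO(\itT)$ (legitimate since $F(\ittau) = \itT$, as noted in the footnote of the definition). Assembling these over all families via the universal property of the coproduct defines $\mu(f) : F_!\calO(f) \otimes F_!\calO(\itS) \to F_!\calO(\itT)$.

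It remains to verify the three operad axioms for $F_!\calO$. The associativity axiom (i) is where the real work lies: given $T \stackrel{f}\to S \stackrel{g}\to R$ in $\ttP$ with $h = gf$, I must match up the two ways of composing. On the $\ttO$ side, a choice of lifts compatible with all the data yields lifts $\tilde f, \tilde g$ of $f, g$ and hence $\tilde h := \tilde g \tilde f$; the point is that the fibers $(\tilde f)_i$ of $\tilde f$ over the diagram lift the $f_i$, and by the \emph{uniqueness} clause in Definition~\ref{psano_v_Myluzach}(ii) these lifts are forced, so the lifting is functorial enough to make the $\calO$-axiom (i) diagrams glue coproduct-summand-wise into the required $F_!\calO$-diagram. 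Axioms (ii) and (iii) (the two unit laws) reduce similarly to the corresponding $\calO$-axioms once one observes that the identity of $\itT$ lifts uniquely to the identity of any chosen preimage $\ittau$ (by uniqueness in the fibration property, since $\id_\ittau$ is one such lift), and that $!_{\itT} : \itT \to U_c$ lifts uniquely to $!_\ittau : \ittau \to U_a$. The compatibility of $\mu$ with coproduct inclusions — i.e.\ that the summand-wise definition actually assembles to a morphism and that reindexing is coherent — is routine given that $\otimes$ distributes over $\coprod$, which I would state once and then suppress. Finally I would remark that the projections make $F_!$ functorial in $\calO$, though that is not asked here. \textbf{The main obstacle} is bookkeeping the bijection between summands of the source and target coproducts across a composite $h = gf$: one must check that the unique lift of $h$ obtained from Definition~\ref{psano_v_Myluzach}(ii) coincides with the composite of the unique lifts of $f$ and $g$ — this is exactly where uniqueness in the definition of a discrete operadic fibration is essential, and it is the crux of the verification of axiom~(i).
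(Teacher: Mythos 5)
Your construction is exactly the paper's: the unit is the composite of the $\calO$-unit over the unique preimage component (via Lemma~\ref{Vecer_jdu_ke_trem_kralum}) with the coprojection, and the structure map is obtained by distributing $\otimes$ over the coproduct, taking the unique lift $\sigma$ of $f$ with prescribed fibers $\ittau_i$ and target $\itkappa$, and composing $\mu(\sigma)$ with the coprojection. The paper dispatches the axiom verification with the remark that all constructions are functorial lifts of the structure of $\calO$, whereas you correctly identify the crux (the unique lift of $h=gf$ agrees with the composite of the unique lifts of $f$ and $g$); this is a fuller account of the same argument, not a different one.
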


\begin{proof}
  For each $f : \itT \to \itS \in \ttP$ and $\itT_i := f^{-1}(i)$, $i \in
  |\itS|$, we need the structure map
  \[
  \mu(f) : \bigotimes_{i \in |\itS|} F_!\calO(\itT_i) \ot F_!\calO(\itS) \to
  F_!\calO(\itT).
  \]
  Expanding the definition of $F_!\calO$ and invoking the distributivity
  of the monoidal product over coproducts we see that it is the same
  as to give a map
  \[
  \mu(f) : \coprod_\doubless{F(\ittau_j) = \itT_j,\ j \in
    |\itS|}{F(\itkappa) = \itS} \bigotimes_{i \in |\itS|} \calO(\ittau_i) \ot
  \calO(\itkappa) \longrightarrow \coprod_{F(\itomega) = \itT} \calO(\itomega).
  \]
  It clearly amounts to specifying, for each $\ittau_i$'s and $\itkappa$
  as above, a map
  \begin{equation}
    \label{eq:8}
    \bigotimes_{i \in |\itS|}
    \calO(\ittau_i) \otimes \calO(\itkappa) \longrightarrow
    \coprod_{F(\itomega) = \itT} \calO(\itomega).
  \end{equation}

  The defining property of discrete operadic fibrations provides a
  unique $\ittau \in \ttO$ with $F(\ittau) = \itT$, and a morphism $\sigma :
  \ittau \to \itkappa$ such that $\sigma^{-1} (i) = \ittau_i$ for $i \in
  |\itS|$. We then choose (\ref{eq:8}) to be the composition
  \[
  \bigotimes_{i \in |\itS|} \calO(\ittau_i) \ot \calO(\itkappa) \stackrel {\mu(
    \sigma)}\vlra \calO(\ittau) \stackrel{\siota_\ittau}\longrightarrow
  \coprod_{F(\itomega) = \itT} \calO(\itomega),
  \]
  where ${\mu(\sigma)}$ is the structure map of the $\ttO$-operad $\calO$ and
  $\liota_\ittau$ the canonical map to the coproduct.

  Let $U_c$, $c \in \pi_0(\ttP)$, be a chosen terminal object of a
  component of $\ttP$.  By Lemma~\ref{Vecer_jdu_ke_trem_kralum}, there
  is a unique chosen terminal object $V_a$, $a \in \pi_0(\ttO)$, 
such that $F(V_a) = U_c$.
We define the unit $I \to F_!\calO(U_c)$ as the composition
\[
I \lra \calO(V_a) \lra \coprod_{F(\ittau) = U_c} \calO(\ittau) =
F_!\calO(U_c). 
\] 
of the unit map for $\calO$ with the coprojection.
  Since all constructions were functorial lifts of the operad
  structure of $\calO$, the resulting structure is an operad again.
\end{proof}

\begin{theorem}
  \label{Jaruska}
  Let $F:\ttO\to \ttP$ be a discrete operadic fibration of operadic
  categories. 
Then the assignment
  $\calO \mapsto F_!\calO$ described above defines a left adjoint $F_!:
  \OpV{\ttO}\to \OpV{\ttP}$ to the restriction functor \hbox{$F^*: \OpV \ttP\to
    \OpV \ttO$}.
\end{theorem}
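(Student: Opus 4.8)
The plan is to exhibit, for each pair $\calO \in \OpV{\ttO}$ and $\calP \in \OpV{\ttP}$, a natural bijection
\[
\OpV{\ttP}(F_!\calO, \calP) \;\cong\; \OpV{\ttO}(\calO, F^*\calP),
\]
and the most economical route is to verify the universal property directly rather than to chase a unit and counit. First I would unwind the left-hand side: a morphism $\varphi : F_!\calO \to \calP$ consists of $\ttV$-maps $\varphi_{\itT} : \coprod_{F(\ittau)=\itT}\calO(\ittau) \to \calP(\itT)$ for all $\itT \in \ttP$, compatible with the structure maps constructed in the previous proposition. By the universal property of the coproduct, giving such a family is equivalent to giving, for every $\ittau \in \ttO$, a map $\psi_\ittau : \calO(\ittau) \to \calP(F(\ittau)) = (F^*\calP)(\ittau)$, namely the restriction of $\varphi_{F(\ittau)}$ along the coprojection $\liota_\ittau$. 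This is already the underlying collection-level data of a morphism $\psi : \calO \to F^*\calP$ in $\OpV{\ttO}$; the content of the argument is that the compatibility conditions on the two sides match up.

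Next I would check that the operad-axiom compatibilities correspond under this translation. For the multiplication, recall how $\mu(f)$ was built on $F_!\calO$: on the summand indexed by $(\ittau_i)_{i},\itkappa$ one uses the unique lift $\sigma : \ittau \to \itkappa$ with $\sigma^{-1}(i) = \ittau_i$ furnished by Definition~\ref{psano_v_Myluzach}, then applies $\mu(\sigma)$ of $\calO$ followed by $\liota_\ittau$. Asking $\varphi$ to commute with this $\mu(f)$ is, summand by summand, exactly the assertion that the outer rectangle built from $\psi_{\ittau_i}$, $\psi_\itkappa$, $\mu(\sigma)$ for $\calO$, and $\mu(F(\sigma)) = \mu(f)$ for $\calP$ commutes --- because $F(\sigma) = f$ and $F(\sigma^{-1}(i)) = f^{-1}(i) = \itT_i$ by definition of discrete operadic fibration, this is precisely the statement that $\psi$ intertwines the structure maps $\mu(\sigma)$ of $\calO$ with those of $F^*\calP$. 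Conversely, any $\psi$ compatible with all $\mu(\sigma)$, $\sigma \in \ttO$, assembles via the coproduct into a $\varphi$ compatible with every $\mu(f)$, $f \in \ttP$, since every $f$ together with a choice of fiber-lifts $\ittau_i$ and source $\itkappa$ determines, uniquely, such a $\sigma$ --- and every $\sigma \in \ttO$ arises this way from $f := F(\sigma)$. For the units, by Lemma~\ref{Vecer_jdu_ke_trem_kralum} the component $a \in \pi_0(\ttO)$ with $F(V_a) = U_c$ is unique, the unit of $F_!\calO$ at $U_c$ was defined as $I \to \calO(V_a) \to \coprod_{F(\ittau)=U_c}\calO(\ittau)$, and $\varphi$ respects it iff $\psi_{V_a}$ carries the unit of $\calO$ to that of $\calP$ at $U_c = F(V_a)$; since the $U_c$ exhaust the chosen terminal objects of $\ttP$ and the $V_a$ the relevant terminal objects of $\ttO$, the two unit conditions are equivalent.

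Having matched the data and all the axioms, the assignment $\varphi \mapsto \psi$ is a bijection, with inverse sending $\psi$ to the map induced on coproducts; naturality in $\calO$ and in $\calP$ is immediate from the constructions, both being built coprojection-wise and componentwise. This establishes $F_! \dashv F^*$.

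I expect the main obstacle to be purely bookkeeping: carefully threading the definitions of discrete operadic fibration (existence \emph{and} uniqueness of $\sigma$, and the bijection between $\{\sigma \in \ttO : F(\sigma) = f,\ \sigma^{-1}(i) = \ittau_i\}$ and the single summand it indexes) through the distributivity isomorphism $\coprod \otimes \coprod \cong \coprod(\otimes)$ used to define $\mu(f)$ on $F_!\calO$, so that "commutes with $\mu(f)$ for all $f$" is genuinely equivalent, summand by summand, to "commutes with $\mu(\sigma)$ for all $\sigma$." There is no deep point here --- all the real work was done in constructing $F_!\calO$ in the preceding proposition --- but the indexing of coproducts has to be handled with some care, and one should note explicitly that the tacit use of equalities~(\ref{karneval_u_retardacku}) in the definition of $\calO(f)$ is preserved by $F$ since operadic functors preserve~(\ref{karneval_u_retardacku}).
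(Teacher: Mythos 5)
Your proposal follows the same route as the paper: establish the bijection at the level of collections via the universal property of the coproduct (the paper phrases this as the left Kan extension formula along ${\it Ob}(F)$), and then check that operad-morphism conditions on the two sides correspond. The paper leaves that second step as an exercise, and your summand-by-summand matching of $\mu(f)$ against the unique lift $\mu(\sigma)$, together with the unit check via Lemma~\ref{Vecer_jdu_ke_trem_kralum}, is exactly the intended verification.
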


\begin{proof}
  We need to establish, for $\calO \in \OpV \ttO$ and $\calP \in \OpV \ttP$, a
  natural isomorphism
  \[
  \OpV \ttP\big(F_!\calO,\calP\big) \cong \OpV \ttO\big(\calO,F^*\calP\big).
  \]
 There is an isomorphism of the sets of
  morphisms of collections
  \begin{equation}
    \label{eq:9}
    \ColV \ttP\big(F_!\calO,\calP\big) \cong \ColV \ttO\big(\calO,F^*\calP\big).
  \end{equation}
  This follows immediately since~(\ref{eq:7}) 
is the formula for the left Kan extension along the induced functor  
${\it Ob}(F): {\it Ob}(\ttO)\to {\it Ob}(\ttP)$
between the discrete categories of objects.

  The proof is finished by showing that a morphism of collections in
  the left hand side of~(\ref{eq:9}) is an operad morphism (i.e.~it
  commutes with the operad structure maps) if and only if the
  corresponding morphism in the right hand side of~(\ref{eq:9})
  does. We leave this as an exercise.
\end{proof}

Let now $\ttV$ be the monoidal category $\Set$ of sets,  $F:\ttO\to
\ttP$ a 
discrete operadic fibration and $\bfone^\ttO \in
\OpSet \ttO$ the 
terminal $\ttO$-operad with $\bfone^\ttO(\ittau) = \{\ittau\}$.
Theorem~\ref{Jaruska} gives the operad 
\begin{equation}
\label{eq:22}
\calO := F_!(\bfone^\ttO) \in \OpSet \ttP
\end{equation}
with $\calO(\itT) =\big\{\ittau \in \ttO;\ F(\ittau) = \itT\big\}$.
\def\bigtimes#1{\mathop{\hbox{\LARGE $\times$}}\limits_{\mbox{\scriptsize $#1$}}}

Vice versa, assume that one is given an operad $\calO \in \OpSet \ttP$. One
then has the category $\ttO$ whose objects are $\ittau \in \calO(\itT)$ for
some $\itT \in \ttP$. A morphism $\sigma : \ittau \to \itkappa$ from $\ittau
\in \calO(\itT)$ to $\itkappa \in \calO(\itS)$ is a couple $(\varepsilon,f)$
consisting of a morphism $f : \itT \to \itS$ in $\ttP$ and of some
$\varepsilon \in \bigtimes{i \in |\itS|} \calO\big(\ittau_i)$, $\ittau_i :=
f^{-1}(i)$, such that
\[
\mu(f)(\varepsilon,\itkappa) = \ittau,
\]
where $\mu$ is the structure map of the operad $\calO$.  Compositions of
morphisms are defined in the obvious manner.  The category $\ttO$ thus
constructed is clearly an operadic category such that the functor $F :
\ttO \to \ttP$ given by
\[
\mbox{$F(\ittau) := \itT$ for $\ittau \in \calO(\itT)$ and $F(\varepsilon,f)
  := f$}
\]
is a discrete operadic fibration. We call this construction the {\it
  operadic Grothendieck construction}.  It 
is a direct generalization of the classical Grothendieck
construction for presheaves~\cite[p.~44]{maclane-moerdijk} 
as the following proposition shows.

\begin{proposition}
  \label{sec:discr-fibr-betw}
The above construction establishes an equivalence between the category
$\OpSet \ttP$ of\/ $\ttP$-operads in  $\Set$ and the category of discrete
operadic fibrations over $\ttP$.
\end{proposition}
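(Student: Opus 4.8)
The plan is to establish a mutually inverse pair of functors between $\OpSet{\ttP}$ and the category $\mathtt{DFib}/\ttP$ of discrete operadic fibrations over $\ttP$. In one direction we already have the construction: an operad $\calO \in \OpSet{\ttP}$ is sent to the operadic Grothendieck construction $\int\calO$ together with the projection $F : \int\calO \to \ttP$, which the preceding paragraph verifies is a discrete operadic fibration. In the other direction, a discrete operadic fibration $F : \ttO \to \ttP$ is sent to the operad $F_!(\bfone^\ttO) \in \OpSet{\ttP}$ of~(\ref{eq:22}), whose value on $\itT \in \ttP$ is the fiber $\{\ittau \in \ttO;\ F(\ittau) = \itT\}$ and whose structure maps are those produced in the proof of Theorem~\ref{Jaruska}. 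First I would record that both assignments are functorial: a map of operads induces a functor over $\ttP$ preserving fibers (hence an operadic functor, and clearly a morphism of fibrations), and a morphism of discrete fibrations over $\ttP$ induces, by the universal property of $F_!$ or directly by restriction to objects, a map of the associated $\Set$-operads.

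Next I would check the two roundtrips. Starting from $\calO \in \OpSet{\ttP}$, forming $\int\calO$ and then $F_!(\bfone^{\int\calO})$ gives, on each $\itT$, the set $\{\ittau;\ \ittau \in \calO(\itT)\} = \calO(\itT)$, and one unwinds the definitions to see the operad structure maps agree: the structure map of $F_!(\bfone^{\int\calO})$ is built from the composition in $\int\calO$, which by construction encodes $\mu(f)$ of $\calO$. Starting instead from a discrete operadic fibration $F : \ttO \to \ttP$, forming $\calO := F_!(\bfone^\ttO)$ and then $\int\calO$ yields a category whose objects are pairs $(\ittau, \itT)$ with $F(\ittau) = \itT$, i.e.\ just objects $\ittau$ of $\ttO$, and whose morphisms $\ittau \to \itkappa$ are pairs $(\varepsilon, f)$ with $f : F(\ittau) \to F(\itkappa)$ in $\ttP$ and $\varepsilon$ a compatible tuple of fibers; the defining lifting property of a discrete operadic fibration (Definition~\ref{psano_v_Myluzach}(ii)) says precisely that such data correspond bijectively and naturally to morphisms $\sigma : \ittau \to \itkappa$ in $\ttO$ with prescribed fibers, so this reconstructs $\ttO$ on the nose, and the reconstructed cardinality, fibers and chosen terminal objects match by Lemma~\ref{Vecer_jdu_ke_trem_kralum} and the componentwise definitions. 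Both isomorphisms of categories are manifestly natural in the respective arguments.

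The step I expect to require the most care is the verification that the operadic category axioms (ii)--(v) and, on the morphism side, the operadic-functor conditions are faithfully transported and recovered in the $\ttO \mapsto \int F_!(\bfone^\ttO) \cong \ttO$ roundtrip: one must see that composition of the pairs $(\varepsilon, f)$ in $\int\calO$, defined ``in the obvious manner'' via $\mu$, corresponds under the lifting bijection exactly to composition in $\ttO$, and that fibers-of-fibers (axiom (v)) on the $\int\calO$ side match the iterated fibers in $\ttO$. This is where the associativity axiom (i) for the operad $\calO = F_!(\bfone^\ttO)$ — equivalently the functoriality of $\Fib_i$ and equality~(\ref{karneval_u_retardacku}) in $\ttO$ — does the real work. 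Once this bookkeeping is in place, together with the already-granted functoriality and the two roundtrip identities, the two functors are mutually inverse equivalences (indeed isomorphisms) of categories, which is the assertion. I would present the bulk of this as the promised ``exercise'' unwinding, highlighting only the correspondence of structure maps with composition as the key point.
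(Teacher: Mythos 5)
Your proposal is correct and fills in exactly the ``direct verification'' that the paper leaves implicit: the two functors are the Grothendieck construction and $F\mapsto F_!(\bfone^\ttO)$, and the roundtrips are checked via the uniqueness clause of Definition~\ref{psano_v_Myluzach}(ii). This is the same approach as the paper's, just written out.
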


\begin{proof}
A direct verification.
\end{proof}

\begin{example}
As we saw in Example~\ref{zitra},
$\Set$-operads $\calP$ in the operadic category $\Cplusone$ of
Example~\ref{taut1} such that $\calP(1) = 1$ are presheaves on
$\ttO$. The restriction of the correspondence
of Proposition~\ref{sec:discr-fibr-betw} to operads $\calP$
with this property
is an equivalence between the category of presheaves on $\ttO$ and the
category of discrete 
fibrations $\ttP \to \ttC$ of
categories. Proposition~\ref{sec:discr-fibr-betw} therefore indeed
generalizes the discrete version of the Grothendieck construction.
\end{example}

\begin{proposition}
  \label{sec:nejdou_fonty}
  Assume that in the pullback~(\ref{eq:1}), the functor $\pi : \ttO \to \ttP$
  is a discrete operadic fibration. Then $\varpi : \ttR \to \ttQ$ is a discrete
  operadic fibration, too.
\end{proposition}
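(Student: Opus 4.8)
The plan is to recall from the proof of Proposition~\ref{pullback-create-pullbacks} that $\varpi$ is already an operadic functor, and then to verify directly the two defining properties of a discrete operadic fibration (Definition~\ref{psano_v_Myluzach}) for it. I will use the explicit model of $\ttR$: objects are pairs $(\ittau,\itS)$ with $\pi(\ittau) = p(\itS)$, morphisms $(\ittau,\itT)\to(\itkappa,\itS)$ are pairs $(\sigma,f)$ with $\pi(\sigma) = p(f)$, and fibers, chosen terminal objects and cardinalities are taken componentwise. Throughout one exploits that $p$, being an operadic functor, commutes with fibers, $p\big(f^{-1}(i)\big) = p(f)^{-1}(i)$, and preserves chosen terminal objects.

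For property~(i) I would start from a component $d\in\pi_0(\ttQ)$ with chosen terminal object $U_d$. Then $p(U_d)$ is a chosen terminal object $U_c$ of $\ttP$, and since $\pi$ is a discrete operadic fibration, Lemma~\ref{Vecer_jdu_ke_trem_kralum} gives a unique chosen terminal object $V_a$, $a\in\pi_0(\ttO)$, with $\pi(V_a) = U_c$. The pair $(V_a,U_d)$ is then a legitimate object of $\ttR$ because $\pi(V_a) = U_c = p(U_d)$, it is the chosen terminal object of its component, and $\varpi(V_a,U_d) = U_d$; hence the induced map $\pi_0(\ttR)\to\pi_0(\ttQ)$ is surjective.

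For property~(ii), suppose given $f:\itT\to\itS$ in $\ttQ$ together with objects $(\itkappa,\itS)$ and $(\ittau_i,f^{-1}(i))$, $i\in|\itS|$, of $\ttR$ whose $\varpi$-images are $\itS$ and $f^{-1}(i)$; the pullback constraints read $\pi(\itkappa) = p(\itS)$ and $\pi(\ittau_i) = p\big(f^{-1}(i)\big) = p(f)^{-1}(i)$. Applying the lifting property of the discrete operadic fibration $\pi$ to the morphism $p(f):p(\itT)\to p(\itS)$ of $\ttP$ and to the objects $\ittau_i,\itkappa$ yields a unique $\sigma:\ittau\to\itkappa$ in $\ttO$ with $\pi(\sigma) = p(f)$ and $\sigma^{-1}(i) = \ittau_i$. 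Functoriality of $\pi$ forces $\pi(\ittau) = p(\itT)$, so $(\ittau,\itT)\in\ttR$ and $(\sigma,f):(\ittau,\itT)\to(\itkappa,\itS)$ is a morphism of $\ttR$ with $\varpi(\sigma,f) = f$ and $(\sigma,f)^{-1}(i) = \big(\sigma^{-1}(i),f^{-1}(i)\big) = (\ittau_i,f^{-1}(i))$. For uniqueness, any competing lift is necessarily of the form $(\sigma',f)$ with codomain $(\itkappa,\itS)$ and $(\sigma',f)^{-1}(i) = (\ittau_i,f^{-1}(i))$, so its first component $\sigma'$ satisfies $\pi(\sigma') = p(f)$ and $\sigma'^{-1}(i) = \ittau_i$ with codomain $\itkappa$; the uniqueness clause for $\pi$ then gives $\sigma' = \sigma$, whence $(\sigma',f) = (\sigma,f)$.

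I do not expect a genuine obstacle: the whole argument consists of transporting the witnessing data across $p$ and appealing to the corresponding properties of $\pi$. The one point that calls for care is confirming that the lifted object $(\ittau,\itT)$ really belongs to $\ttR$, which is exactly where one needs that $\pi(\sigma) = p(f)$ forces $\pi(\ittau) = p(\itT)$ (the observation in the footnote to Definition~\ref{psano_v_Myluzach}) together with the compatibility of $p$ with fibers.
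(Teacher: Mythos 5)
Your proof is correct and follows essentially the same route as the paper's: work in the explicit componentwise model of $\ttR$, observe that the constraints force $\widetilde\itS=\itS$, $\widetilde\itT_i=\itT_i$, $\widetilde f=f$, and then transport the lifting problem along $p$ to the discrete operadic fibration $\pi$ applied to $p(f)$. The only difference is that you also verify condition~(i) on $\pi_0$ explicitly via the chosen terminal objects, a point the paper's proof leaves implicit; that addition is harmless and, if anything, makes the argument more complete.
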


\begin{proof}
  We rely on the notation in the proof of
  Proposition~\ref{pullback-create-pullbacks}. Suppose we are given
  a~morphism $f: \itT \to \itS$ in $\ttQ$ with fibers $\itT_i :=
  f^{-1}(i)$, $i \in |\itS|$. Suppose we are also given objects
  $(\ittau_i,\widetilde \itT_i)$ and $(\itkappa,\widetilde\itS)$ of the
  category $\ttR$ such that $\itS = \varpi(\itkappa,\widetilde\itS)$ and $\itT_i
  = \varpi(\ittau_i,\widetilde\itT_i)$ for each $i \in |\itS|$.  We must find
  a unique $(\sigma,\widetilde f) : (\ittau, \widetilde \itT) \to
  (\itkappa,\widetilde \itS)$ in $\ttR$ such that $\varpi(\sigma,\widetilde f) =
  f$ and $(\sigma,\widetilde f )^{-1} (i) = (\ittau_i,\widetilde
  \itT_i)$ for each $i \in |\itS|$.

  It follows from definitions that $\itS = \varpi(\itkappa,\widetilde\itS)$
  implies $\widetilde \itS = \itS$, $\itT_i =
  \varpi(\ittau_i,\widetilde\itT_i)$ implies $\widetilde \itT_i = \itT_i$ and
  $\varpi(\sigma,\widetilde f) = f$ implies $\widetilde f = f$. Since
  $(\itkappa,\widetilde\itS)$ and $(\ittau_i,\widetilde\itT_i)$ are
  objects of $\ttR$ we see that $\pi(\itkappa) = p(\itS)$, and $\pi(\ittau_i) =
  p(\itT_i)$ for all $i \in |\itS|$. Similarly, we conclude that
  $\pi(\sigma) = p(f)$.

  We therefore need to prove the following statement.  Given $f: \itT
  \to \itS$ in $\ttQ$ with fibers $\itT_i := f^{-1}(i)$ and objects
  $\ittau_i, \itkappa$ of $\ttO$, $i \in |\itS|$, such that $\pi(\itkappa) =
  p(\itS)$, and $\pi(\ittau_i) = p(\itT_i)$, there exists a unique $\sigma
  :\ittau \to \itkappa$ in $\ttO$ such that $\sigma^{-1}(i) = \ittau_i$ and
  $\pi(\sigma) = p(f)$.  The above statement however follows from the
  lifting property in the discrete operadic fibration $\pi : \ttO \to \ttP$
  applied to the data $p(f) : p(\itT) \to p(\itS) \in \ttP$ and
  $\itkappa,\ittau_i \in \ttO$, $i \in |\itS|$. This finishes the proof.
\end{proof}

We close this section by proving that squares of adjoint functors
between the associated categories of operads induced by pullbacks of 
discrete operadic fibrations satisfy
the Beck-Chevalley property~\cite[p.~205]{maclane-moerdijk}. Therefore
operads over operadic categories behave similarly as presheaves over
small categories.

\begin{proposition}
  \label{Beck-Chevalley}
  If in the pullback of operadic categories~(\ref{eq:1}) the functor
  $\pi : \ttO \to \ttP$ is a discrete operadic fibration, then the induced functors
  between the associated categories of
  operads satisfy the Beck-Chevalley condition, meaning that there is a
  natural isomorphism
  \begin{equation}
    \label{eq:2}
    \varpi_!(r^*(\calP))\cong p^*(\pi_!(\calP))
  \end{equation}
  for any operad $\calP \in \OpV \ttO$.
\end{proposition}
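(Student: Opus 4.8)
The plan is to establish the isomorphism~(\ref{eq:2}) first on the level of underlying $\ttQ$-collections, where it turns out to be essentially an identity, and then to check that this identification respects units, structure maps, and morphisms of operads. The main point is not the isomorphism itself but the bookkeeping needed to see that everything matches up.

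First I would observe that, by Proposition~\ref{sec:nejdou_fonty}, the functor $\varpi : \ttR \to \ttQ$ is again a discrete operadic fibration, so $\varpi_!$ is defined by Theorem~\ref{Jaruska} and the left-hand side of~(\ref{eq:2}) makes sense. Then, using the explicit description of $\ttR$ from the proof of Proposition~\ref{pullback-create-pullbacks} — the objects of $\ttR$ lying over a fixed $\itS \in \ttQ$ are exactly the pairs $(\ittau,\itS)$ with $\pi(\ittau) = p(\itS)$, with $r(\ittau,\itS) = \ittau$ — I would compute, for $\itS \in \ttQ$,
\[
\varpi_!\big(r^*\calP\big)(\itS) = \coprod_{\varpi(\rho) = \itS}\calP\big(r(\rho)\big)
= \coprod_{\pi(\ittau) = p(\itS)}\calP(\ittau) = \pi_!\calP\big(p(\itS)\big) = p^*\big(\pi_!\calP\big)(\itS),
\]
the middle equality being the evident reindexing $(\ittau,\itS) \leftrightarrow \ittau$. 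This is the candidate isomorphism, and it is visibly natural in $\calP$.

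Next I would isolate the compatibility statement on which the rest hinges: if $g : \itT \to \itS$ in $\ttQ$ has fibers $\itT_i := g^{-1}(i)$, and $(\ittau_i,\itT_i)$, $(\itkappa,\itS)$ are given objects of $\ttR$, then the unique $\varpi$-lift $(\sigma,g) : (\ittau,\itT) \to (\itkappa,\itS)$ with $(\sigma,g)^{-1}(i) = (\ittau_i,\itT_i)$ furnished by Definition~\ref{psano_v_Myluzach} has first component $\sigma : \ittau \to \itkappa$ equal to the unique $\pi$-lift of $p(g) : p(\itT) \to p(\itS)$ with $\sigma^{-1}(i) = \ittau_i$; this is immediate from the way that lift is built in the proof of Proposition~\ref{sec:nejdou_fonty}. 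Combined with the fact that $r$ is an operadic functor (hence $r^*\calP(\ittau_i,\itT_i) = \calP(\ittau_i)$ and $\mu_{r^*\calP}(\sigma,g) = \mu_\calP(\sigma)$) and that $p$ is operadic (hence $p(\itT_i) = p(g)^{-1}(i)$), I would then unwind the two structure maps. Recall that in the definition of $\varpi_!$ and of $\pi_!$ one expands the coproducts, invokes distributivity of $\otimes$ over $\coprod$ (available since $\ttV$ is closed), and on the summand indexed by the data above applies $\mu_\calP$ of the relevant lift followed by a coprojection; the displayed identification of collections, together with the coincidence of lifts, then shows the structure map of $\varpi_!(r^*\calP)$ at $g$ and that of $p^*(\pi_!\calP)$ at $g$ agree summand by summand. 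Units are handled the same way: by Lemma~\ref{Vecer_jdu_ke_trem_kralum} the chosen terminal objects of $\ttR$ and of $\ttO$ lying over a given $U_c$ correspond, and both unit maps unwind to the unit of $\calP$ followed by a coprojection.

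Finally I would note that naturality in $\calP$ is automatic, since every arrow in sight is a functorial lift of the operad structure of $\calP$, and this completes the verification of~(\ref{eq:2}). I expect the only real obstacle to be the bookkeeping in the third step: threading the distributivity isomorphisms so that the ``$\varpi$-component'' and the ``$\pi$-component'' of the structure maps are genuinely compared on matching summands, and confirming that the two universally characterised lifts are literally the same morphism rather than merely corresponding ones. This is clerical rather than conceptual, but it is where care is required.
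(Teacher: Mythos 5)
Your proposal is correct and follows essentially the same route as the paper: both identify $\varpi_!(r^*\calP)(\itS)$ and $p^*(\pi_!\calP)(\itS)$ with $\coprod_{\pi(\ittau)=p(\itS)}\calP(\ittau)$ via the explicit coproduct formula of Theorem~\ref{Jaruska}, using that $\varpi(\ittau,\widetilde\itS)=\itS$ forces $\widetilde\itS=\itS$. The only difference is that you spell out the compatibility of the identification with the operad structure maps and units (via the coincidence of the $\varpi$- and $\pi$-lifts), which the paper leaves implicit.
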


\begin{proof}
  Notice that the functor $\varpi : \ttR \to \ttQ$ is a discrete operadic
  fibration by Proposition~\ref{sec:nejdou_fonty}. We will use the
  explicit description of the left adjoint along discrete fibrations
  provided by Theorem~\ref{Jaruska}.  For $\itT \in \ttP$ it gives
  \[
  \pi_!(\calP)(\itT) = \coprod_{\pi(\ittau) = \itT} \calP(\ittau),
  \]
  therefore, for $\itS \in \ttQ$,
  \[
  p_* \big(\pi_!(\calP)\big)(\itS) = \pi_!(\calP)(p(\itS)) \cong \coprod_{\pi(\ittau)
    = p(\itS)} \calP(\ittau).
  \]
  On the other hand,
  \[
  \varpi_!\big(r^*(\calP)\big)(\itS) = \coprod_{\varpi(\ittau,\widetilde \itS) =
    \itS}r^*(\calP)(\ittau,\widetilde \itS) \cong \coprod_{\varpi(\ittau,\widetilde
    \itS) = \itS}\calP\big(r(\ittau, \widetilde \itS)\big) =
  \coprod_{\pi(\ittau) = p(\itS)} \calP(\ittau),
  \]
  therefore indeed $p_* \big(\pi_!(\calP)\big)(\itS) \cong
  \varpi_!\big(r^*(\calP)\big)(\itS)$ for each $\itS \in \ttQ$. In the last
  display we used the fact that $\varpi(\ittau,\widetilde \itS) = \itS$
  implies $\widetilde \itS = \itS$ so, since $(\ittau,\widetilde \itS)
  \in \ttR$, we have the equality $\pi(\ittau) = p(\itS)$.
\end{proof}

\section{$\ttO$-multi(co)tensors and generalized algebras of $\ttO$-operads}
\label{Jaruska_ma_streptokoka}

  We start by showing how the standard notion of a
multitensor on a $\ttV$-category
$\ttC$, see~\cite[Def.~2.1]{batanin-weber} or \cite{batanin:AM08,DS2},  generalizes to the realm
of operadic categories.  Let $\End_{\tt{C}}$ be the endomorphism
$\sFSet$-operad of $\ttC$, so $\End_{\ttC}(n)$ is, for $n \geq 0$,
the category of $\ttV$-functors $\ttC^{\otimes n}\to \ttC.$
The restriction along the cardinality functor $|\dash|: \ttO
\to \sFSet$ gives a categorical $\ttO$-operad $|\End_{\ttC}|^*$.  Let
${\it 1}_\ttO$ be the terminal categorical $\ttO$-operad.  An
$\ttO$-multitensor on $\ttC$ is then a lax-morphism of categorical
operads ${\it 1}_\ttO\to |\End_{\ttC}|^*$. Unpacking this definition
we obtain:

\begin{definition} \label{multitensor}
An {\em $\ttO$-multitensor\/} on
a $\ttV$-category $\ttC$ is an $\ttO$-collection $E = \{E_T\}_{T \in \ttO}$  of $\ttV$-functors
\[
E_T: \underbrace{\tt{C}\otimes\cdots\otimes \tt{C}}_{|T|- times}
\rightarrow \tt{C},\    T \in \ttO,
\]
equipped with
\begin{itemize}
\item[(i)]
$\ttV$-natural transformations
\[
\mu_{f}:E_S(E_{T_1},\ldots,E_{T_k})\rightarrow E_{T}
\]
defined for any $f:T\to S$ in $\tt O$ with fibers $T_1,\ldots,T_k$, and  
\item[(ii)] 
$\ttV$-natural transformations (the units)
\[
\eta_c:
\id \rightarrow E_{U_c},\ c\in \pi_0(\tt{O}),
\]
\end{itemize}
satisfying the obvious  associativity and unitality  conditions.
\end{definition}

Multitensors create operads, as shown in the following lemma whose
simple proof we leave to the reader. 
For an object $T \in\ttO$, let
$\pi_0s(T)$ denote the set of connected components of the
fibers of the identity $\id : T\to T$, and $\pi_0(T)$ the connected
component of $T$, cf.~the proof of Proposition~\ref{Jarca}.

\begin{lemma}
\label{Opicka}
\label{coendo}
Let $E$ be an  $\ttO$-multitensor on
a $\ttV$-category $\ttC$ and $\delta = \{\delta(i)\}_{i \in
  \pi_0(\ttO)}$  an arbitrary collection in $\ttC$. Then
the collection
\[
\Coend^E_\delta = \big\{\Coend^E_\delta(T)\big\}_{T \in \ttO}
\]
with $\Coend^E_\delta(T)$ the enriched hom
\[
\ttC\big(\delta(i),E_T(\rada{\delta(i_1)}{\delta(i_k)})\big),\
 i = \pi_0(T),\ \{\Rada i1k\} = \pi_0s(T),
\]
is a natural $\ttO$-operad in $\ttV$.
\end{lemma}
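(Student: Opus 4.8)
The plan is to verify directly that $\Coend^E_\delta$ carries the structure maps and units required by the definition of an $\ttO$-operad, reading them off from the multitensor data $\mu_f$, $\eta_c$ and from composition in the enriched category $\ttC$. First I would fix notation: for $T \in \ttO$ write $i := \pi_0(T)$ and $\pi_0 s(T) = \{i_1,\dots,i_k\}$, so that $\Coend^E_\delta(T) = \ttC\big(\delta(i), E_T(\delta(i_1),\dots,\delta(i_k))\big)$. The key observation that makes everything fit is bookkeeping of connected components: for a morphism $f : T \to S$ with fibers $T_1,\dots,T_m$ (where $m = |S|$), each $\pi_0 s(T_j)$ is a subset of $\pi_0 s(T)$, these subsets partition $\pi_0 s(T)$, and $\pi_0(T_j)$ runs exactly over $\pi_0 s(S)$; this is precisely the combinatorial content encoded in axioms (iii)--(v) of an operadic category together with equality~(\ref{karneval_u_retardacku}). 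Granting this, the composite functor $E_S(E_{T_1},\dots,E_{T_m})$ applied to the family $\{\delta(i_p)\}_{i_p \in \pi_0 s(T)}$, grouped according to the fibers, has exactly the right inputs and target to receive $\mu_f$.

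Next I would define the structure map $\mu(f) : \Coend^E_\delta(f) \ot \Coend^E_\delta(S) \to \Coend^E_\delta(T)$, where $\Coend^E_\delta(f) = \bigotimes_{j \in |S|} \Coend^E_\delta(T_j)$. A morphism in $\Coend^E_\delta(T_j)$ is an arrow $\delta(\pi_0 T_j) \to E_{T_j}(\dots)$ in $\ttC$; tensoring these together over $j$ and using the enriched functoriality of $E_S$, one obtains an arrow $E_S(\delta(\pi_0 T_1),\dots,\delta(\pi_0 T_m)) \to E_S(E_{T_1}(\dots),\dots,E_{T_m}(\dots))$. Composing on the left with an element of $\Coend^E_\delta(S) = \ttC\big(\delta(i), E_S(\delta(\pi_0 T_1),\dots)\big)$ and on the right with the natural transformation $\mu_f : E_S(E_{T_1},\dots,E_{T_m}) \to E_T$ evaluated at the $\delta$'s, yields the required element of $\ttC\big(\delta(i), E_T(\dots)\big) = \Coend^E_\delta(T)$. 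The units $I \to \Coend^E_\delta(U_c)$ are obtained from $\eta_c : \id \to E_{U_c}$ evaluated at $\delta(c)$, using that $|U_c| = 1$ and $\pi_0 s(U_c) = \{c\}$, so that $\Coend^E_\delta(U_c) = \ttC(\delta(c), E_{U_c}(\delta(c)))$ contains the image of $\id_{\delta(c)}$ under $\eta_c$.

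Then I would check the three operad axioms. Associativity (axiom~(i)) follows from the associativity condition on the multitensor $\mu$ together with associativity and the interchange law of composition in $\ttC$; the diagram of structure maps unwinds, after expanding $E_T(f_i)$ via the grouping of fibers, into the multitensor associativity pentagon applied at the $\delta$'s, with all the reindexing isomorphisms being exactly the canonical ones coming from~(\ref{karneval_u_retardacku}) and axiom~(v). The unit axioms~(ii) and~(iii) reduce similarly to the unitality conditions on $\eta$ and the unit laws in $\ttC$, using axiom~(ii) of an operadic category (identities are trivial) to identify $\id_T^{-1}(i)$ with the appropriate $U_{c_i}$. Finally, naturality of the assignment $\delta \mapsto \Coend^E_\delta$ in the evident sense, and functoriality in $\ttO$, are immediate from enriched functoriality of the $E_T$. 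The main obstacle is purely organizational rather than conceptual: one must be scrupulous about the identification of indexing sets of connected components under taking fibers and iterated fibers, so that the tensor factors in $\Coend^E_\delta(f)$, $\Coend^E_\delta(f_i)$ and $\Coend^E_\delta(h)$ line up on the nose and the multitensor coherences can be invoked verbatim; once this dictionary is set up, each axiom is a routine diagram chase, which is why the lemma is stated with its proof left to the reader.
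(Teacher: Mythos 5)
Your plan is correct and is exactly the routine verification the paper has in mind when it says the ``simple proof'' of Lemma~\ref{Opicka} is left to the reader: the structure maps are built from $\ttV$-functoriality of $E_S$, composition in $\ttC$, and $\mu_f$, the units from $\eta_c$, and the axioms reduce to the multitensor coherences once the fiber bookkeeping (via axioms (iii)--(v) and~(\ref{karneval_u_retardacku})) identifies the indexing sets. In particular your key observations --- that $\pi_0(T_j)$ agrees with the component of the $j$th fiber of $\id_S$, and that the fibers of $\id_T$ are the fibers of the $\id_{T_j}$ reindexed along $|f|$ --- are precisely the points that make the composite $\mu_f \circ E_S(\phi_1,\dots,\phi_m) \circ \psi$ land in $\Coend^E_\delta(T)$, so nothing is missing.
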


Dually, we introduce $\ttO$-{\em multicotensors\/} on $\ttC$ as 
colax-morphisms of the categorical \hbox{$\ttO$-operads} ${\it
  1}_\ttO\to |\End_{\ttC}|^*$. An explicit definition can be obtained
by inverting arrows in Definition~\ref{multitensor}.
Multicotensors create operads in a similar way as multitensors do:

\begin{lemma} 
\label{endo}
Let $D$ be an  $\ttO$-multicotensor on
a $\ttV$-category $\ttC$ and  $X = \{X(i)\}_{i \in
  \pi_0(\ttO)}$  an arbitrary collection of objects of $\ttC$. Then
the collection
\[
\End^D_X = \big\{\End^D_X(T)\big\}_{T \in \ttO}
\]
with $\End^E_X(T) := \ttC\big(D_T(\rada{X(i_1)}{X(i_k)}),X(i) \big)$
and $\Rada i1k,i \in \pi_0(\ttO)$ having the meaning as in Lemma~\ref{Opicka},
is a natural $\ttO$-operad in $\ttV$.
\end{lemma}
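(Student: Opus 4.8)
The statement is the exact dual of Lemma~\ref{Opicka}, so the cleanest route is to deduce it from that lemma by a formal duality argument rather than to re-run the verification by hand. First I would make the duality precise. Passing from the closed symmetric monoidal category $\ttV$ to its opposite $\ttV^{\op}$ (which is again closed symmetric monoidal, with the internal hom of $\ttV$ becoming the monoidal product of $\ttV^{\op}$ and vice versa is not literally true, so instead) one uses that a $\ttV$-enriched category $\ttC$ has an opposite $\ttV$-category $\ttC^{\op}$ with $\ttC^{\op}(X,Y) = \ttC(Y,X)$, and that a colax-morphism ${\it 1}_\ttO \to |\End_{\ttC}|^*$ of categorical $\ttO$-operads is the same datum as a lax-morphism ${\it 1}_\ttO \to |\End_{\ttC^{\op}}|^*$: the functors $D_T : \ttC^{\otimes|T|} \to \ttC$ are reinterpreted as functors $\ttC^{\op}{}^{\otimes|T|} \to \ttC^{\op}$, and reversing the arrows in the structure $2$-cells $\mu_f$ and $\eta_c$ of Definition~\ref{multitensor} is exactly what turns a lax transformation into a colax one. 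Thus an $\ttO$-multicotensor $D$ on $\ttC$ is literally an $\ttO$-multitensor $D$ on $\ttC^{\op}$.

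Next I would apply Lemma~\ref{Opicka} verbatim to the $\ttO$-multitensor $D$ on the $\ttV$-category $\ttC^{\op}$ and the collection $X = \{X(i)\}_{i \in \pi_0(\ttO)}$ viewed as a collection of objects of $\ttC^{\op}$. Lemma~\ref{Opicka} then produces a natural $\ttO$-operad in $\ttV$ whose value on $T \in \ttO$ is
\[
\ttC^{\op}\big(X(i), D_T(X(i_1),\ldots,X(i_k))\big),\quad i = \pi_0(T),\ \{i_1,\ldots,i_k\} = \pi_0 s(T).
\]
By the definition of the opposite enriched category this enriched hom-object equals $\ttC\big(D_T(X(i_1),\ldots,X(i_k)), X(i)\big)$, which is precisely $\End^D_X(T)$ as defined in the statement. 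Hence $\End^D_X$ carries the structure of a natural $\ttO$-operad in $\ttV$, with the units $I \to \End^D_X(U_c)$ and the structure maps $\mu(f)$ obtained by transporting those of $\Coend^D_X$ (computed in $\ttC^{\op}$) back through the identification $\ttC^{\op}(-,-) = \ttC(-,-)$; concretely they are built from the colax-constraints of $D$ and the enriched composition of $\ttC$, just as in Lemma~\ref{Opicka} one uses the lax-constraints and enriched composition.

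The only point requiring a little care — and the step I expect to be the main obstacle, though it is a soft one — is checking that the $\ttO$-operad axioms (i)–(iii) of the definition really are self-dual under this identification, i.e.\ that applying Lemma~\ref{Opicka} in $\ttC^{\op}$ yields the operad axioms as stated for $\ttV$ and not some ``co'' variant of them. This comes down to the observation that the associativity and unitality diagrams for an $\ttO$-operad live entirely in $\ttV$ (not in $\ttC$): they involve only the objects $\End^D_X(T)$, the monoidal product $\otimes$ of $\ttV$, and the unit $I$, none of which are affected by passing from $\ttC$ to $\ttC^{\op}$. The reversal of arrows happens one level down, inside the construction of the structure maps, and is exactly compensated by the reversal built into the passage from multicotensor to multitensor. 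Once this is noted, the verification of the axioms is identical to that of Lemma~\ref{Opicka}, so I would, following the style of the surrounding text, record only that the proof is obtained from Lemma~\ref{Opicka} by this duality and leave the routine diagram-chasing to the reader.
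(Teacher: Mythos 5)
Your proof is correct and is essentially the argument the paper intends: the text introduces multicotensors as colax-morphisms ``dually'' to multitensors and asserts that they create operads ``in a similar way,'' i.e.\ precisely by the duality $\ttC \rightsquigarrow \ttC^{\op}$ that you make explicit (a multicotensor on $\ttC$ is a multitensor on $\ttC^{\op}$, and the operad axioms live in $\ttV$, which is untouched). Your observation that the reversal of $2$-cells happens only inside the construction of the structure maps, and is absorbed by the passage from lax to colax constraints, is exactly the point; only the false start about $\ttV^{\op}$ in your first sentence should be deleted.
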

  
\begin{example}
\label{one} 
Let $\tt 1$ be the terminal category of
Example~\ref{onefirst}.   
A $\tt 1$-(co)multitensor on $\ttC$ is the same as a
$V$-(co)monad on $\ttC$.

For an arbitrary operadic category $\ttO$ and $c\in \pi_0(\ttO)$,
there is an operadic functor ${\tt 1}\to \ttO$ which sends the unique
object of $\tt 1$ to $U_c$. Restricting along
this functor we verify that $E_{U_c}$ (resp.~$D_{U_c}$) is a $V$-monad
(resp.~$V$-comonad) on $\ttC$ for an arbitrary $\ttO$-multitensor $E$
(resp.~multicotensor~$D$).
\end{example}
  
\begin{example}\label{odot} 
  The tensor product $\odot$ of a symmetric monoidal $V$-category
  $\ttC$ gives rise to a $\sFSet$-multitensor $\bigodot$ on $\ttC$,
  which is simultaneously  a $\sFSet$-multicotensor on $\ttC$.
  Namely, for a finite set $S$ of cardinality $n$ and $\Rada X1n \in
  \ttC$ we put 
\[
\textstyle
{\bigodot}_T(X_1,\ldots,X_n) :=
\bigodot_{i\in S}X_i.
\]
For any operadic category $\ttO$ we then have an $\ttO$-multitensor
${\bigodot}^{\ttO}$ on $\ttC$
(which is also a $\ttO$-multicotensor) 
given by restricting $\bigodot$ along the cardinality functor:
\[
\textstyle
{\bigodot}^{\ttO}_T(X_1,\ldots,X_n): =
{\bigodot}_{|T|}(X_1,\ldots,X_k),\ T \in \ttO.
\]
The case  $\ttC =V$ of the above construction along with Lemmas~\ref{coendo}
and~\ref{endo} explains the standard fact that an object of a
symmetric monoidal category $V$ has both the (classical) endomorphism
and coendomorphism operads, see \cite{markl-shnider-stasheff:book},
Definitions~II.1.7 and II.1.9.
\end{example}

\begin{definition} 
\label{dnes_obed_s_Magdou}
  Let $D$ be a fixed $\ttO$-multicotensor and
  $\calP$  an $\ttO$-operad.  An {\em algebra of $\calP$ in~$\ttC$\/} is a
  $\pi_0(\ttO)$-collection  $A$ in $\ttC$ with a morphism of $\ttO$-operads
$$\calP\to \End^D_A.$$
\end{definition}

\begin{example}
  If $\ttC = \ttV$ and $D = \bigodot^\ttO$, then $\calP$-algebras in
  the sense of the above definition are the same as $\calP$-algebras
  of Definition~\ref{Jaruska_ma_chripecku}.
\end{example}

Assume that $D$ is  a $\ttO$-multicotensor and suppose that
$\ttC$ is 
cocomplete as a $V$-category. This means, in particular, that there is
a left action functor $V\otimes \ttC \to \ttC$ which we  denote
by $\otimes$, believing that it would not 
lead to confusion.  We have an
adjunction
\[
\ttC(B\otimes X,Y )\cong V(B,\ttC(X,Y))
\]
for any $B\in V$ and $X,Y\in \ttC$.

Since $D$ is a $V$-functor in each variable, there is for each $T\in
\ttO$, $|T|=n$, $\Rada X1n \in \ttC$, and $B \in \ttV$, a $V$-natural
transformation
\[
B\otimes D_T(X_1,\ldots,X_i,\ldots,X_n) \lra D_T(X_1,\ldots,B\otimes X_i,\ldots,
X_n),\  1\le i\le n,
\]
called the {\it strength} of $D$.  We will require that $D$ interacts
with the left $\ttV$-action in such a way that the following
conditions are satisfied:
\begin{itemize}\item[(i)] 
The diagram 
 \[
    \xymatrix@C = +1em@R = +1.28em{
    B\otimes D_{U_c}(X)      \ar[rr] \ar[dr]_{B\otimes \eta_c} 
& & D_{U_c}(B\otimes X) \ar[dl]^{\eta_c}
      \\
      &B\otimes X&
    }
\]
commutes for any $c\in \pi_0(\ttO)$, $X \in \ttC$ and $B \in \ttV$.
\item[(ii)]
Let  $f:T\to S$ be a morphism in $\ttO$ with fibers
$T_1,\ldots,T_k$. Let
$n = |T|$, and $1 \leq i \leq n$, $1 \leq s \leq k$ be such that $i \in
|T|$ belongs to $|T_s|$. 
Then the diagram
\[
\xymatrix@C = -12.2em@R=+1em{
B\otimes D_T(\Rada X1i,\ldots,X_n) \ar[rr] \ar[d]
&&B\otimes D_S\big(D_{T_1}(X_1,\ldots),\ldots,D_{T_k}(\ldots,X_n)\big)
\ar[ddl]^{}
\\
D_T(X_1,\ldots, B\otimes X_i,\ldots,X_n)
 \ar[rd]
 & &
\\
&
 D_S\big(D_{T_1}(X_1,\ldots),\ldots,D_{T_s}
(\ldots,B\otimes X_i,\ldots ),\ldots, D_{T_k}(\ldots,X_n)\big)&
}
\]
commutes  for any $B \in \ttV$ and $\Rada X1n \in \ttC$.
\end{itemize}

\begin{definition} 
\label{Jarka_ma_streptokoka}
  An $\ttO$-multicotensor $D$ on a cocomplete $V$-category $\ttC$
  satisfying properties (i) and (ii) above will be called {\em
    strong\/}.  We will call a $V$-category $\ttC$  equipped with a
  strong multitensor $D$ for which the comonad $D_{U_c}$ of
  Example~\ref{one} is the identity comonad for each 
$c \in \pi_0(\ttO)$ a {\em colax $\ttO$-monoidal $V$-category\/}.
\end{definition}

\begin{remark} 
  The terminology above has been adapted from the classical definition of a
  strong monad on a closed monoidal category \cite{AK}. Indeed, when
  $\ttO$ is the terminal category~$\tt 1$, 
the notion of a strong $\ttO$-multicotensor coincides
  with the notion of a strong comonad, cf.~Example \ref{one}.
\end{remark}

For a $\pi_0(\ttO)$-collection $X$ in $\ttC$ and $T\in \ttO$ denote 
\[
X^T: =
D_T\big(\rada{X(i_1)}{X(i_k)}\big), \  \{\Rada i1k\} = \pi_0s(T).
\]
The structure of a $\calP$-algebra on a $\pi_0(\ttO)$-collection  $A$ in
a colax monoidal $V$-category $\ttC$ can be expressed in terms of
an action defined as a collection of morphisms
$$\calP(T)\otimes A^T\to A(i), \ T \in \ttO, \ i = \pi_0(T),$$
satisfying the following conditions:
\begin{itemize}
\item[(i)] 
The square 
\[
\xymatrix@C = +3.2em{I\otimes A^{U_c}
  \ar[r]^{\cong}
\ar[d]
&A^{U_c}\ar[d]^{\eta_c}
\\
\calP(U_c)\otimes A^{U_c}
 \ar[r]
 & 
 A}
\]
in which $\eta_c$ is the counit of the comonad $D_{U_c}(-)= (-)^{U_c}$,
commutes for any $c\in \pi_0(\ttO).$

\item[(ii)]  For any morphism $f:T\to S$ in $\tt O$ with fibers
$T_1,\ldots,T_k$, the following diagram in which $i = \pi_0(T)$ commutes
\[
\xymatrix{
\calP(S)\!\otimes\! \calP(T_1)\!\otimes\cdots\ot \calP(T_k)\!\otimes\! A^T
\ar[r]\ar[d]
&
\calP(S)\!\otimes\! \calP(T_1)\!\otimes\cdots\ot \calP(T_k)\!\otimes\!
D_S(A^{T_1},\ldots,A^{T_k})\ar[d]
\\
\calP(T)\!\otimes\! A^T\ar[d] & \calP(S)\!\otimes\! D_S(\calP(T_1)\!\otimes\!
A^{T_1},\ldots,\calP(T_k)\!\otimes\! A^{T_k})\ar[d]
\\
A(i) & \calP(S)\!\otimes\! A^S \ar[l]
}
\]
\end{itemize}

\section{Convolution and condensation}

The condensation described in this section creates,
in a controlled manner, out of colored operads and their algebras,
non-colored ones. 
The main statement of this section is Proposition~\ref{v_patek_do_Prahy}.

Fix a finite set $\frC$ and consider the pullback $\ttO^\frC$
in~(\ref{eq:Jaruska}). Objects of $\ttO^\frC$ can be 
interpreted as objects of $\ttO$
colored by elements of $\frC$. A typical object of $\ttO^\frC$ will
therefore be denoted by $T(i_1,\ldots,i_k;i)$, 
where $k:= |T|$ and $\Rada
i1k,i \in \frC$.

The operadic category $\ttO^\frC$ contains a full operadic subcategory $\ttLO^\frC$\footnote{``{\tt L}'' abbreviating ``linear.''} whose objects are objects of $\ttO^\frC$ of the form 
 $U_c(i;j)$, $i,j \in \frC$, $c\in \pi_0(\ttO).$
It is easy to see that $\ttLO^\frC$-operads are precisely $\pi_0(\ttO)$-families of
$\ttV$-enriched categories with the set of objects $\frC$.
In other words,
$\OpV{\ttLO^\frC} \cong \ttCat_\frC(\ttV)^{\pi_0(\ttO)}$, where $\ttCat_\frC(\ttV)$
denotes the category of $\ttV$-categories with the set of objects $\frC$,
and the set $\pi_0(\ttO)$ is considered as a
discrete category.

The inclusion $\liota: {\ttLO^\frC}\hookrightarrow {\ttO^\frC}$ induces the restriction
functor $\liota^* : \OpV {\ttO^\frC} \to   \OpV{\ttLO^\frC}$ 
between the categories of operads. We define the functor $\ttU :
\OpV {\ttO^\frC} \to \ttCat(\ttV)$ as the composition
\[
\ttU : \xymatrix@C=1.5em{
 \OpV {\ttO^\frC} \ar[r]^(.33){\hbox{$\iota$}^*} & \OpV{\ttLO^\frC} \cong
\ttCat_\frC(\ttV)^{\pi_0(\ttO)} \ar[rr]^(.65){\coprod_{c \in
  \pi_0(\ttO)}} && \ttCat(\ttV)},
\] 
where the coproduct exists because $\ttV$ is cocomplete.

\begin{definition} 
We will call the $\ttV$-enriched category $\ttU(\calP)$ the {\em underlying
    category\/} of an $\ttO^\frC$-operad $\calP$ in $\ttV$.
\end{definition}  

Explicitly, the 
set of objects of $\ttU(\calP)$ is 
\hbox{$\pi_0(\ttO)\! \times\! \frC$}, while the enriched hom-set between
$(c,i) \in\hbox{$\pi_0(\ttO)\! \times\! \frC$}$ and $(d,j)
\in\hbox{$\pi_0(\ttO)\! \times\! \frC$}$ is
\[
\ttU(\calP)\big((c,i),(d,j)\big) 
=  
\cases{\hbox{the initial object $0$ of $\ttV$}}
{if $c\ne d$, and}{\calP\big(U_c(i;j)\big)}{otherwise.}
\]
Since $U_c(i;i)$ are the chosen terminal objects of $\ttO^\frC$, we have
the identity morphisms
\[
I\to \ttU(\calP)\big((c,i),(c,i)\big), 
\ (c,i) \in \hbox{$\pi_0(\ttO)\! \times\! \frC$},
\]
given by the operad units of $\calP$. The composition in $\ttU(\calP)$ is
induced by the (unique) maps between objects of the form $U_c(i;j)$,
$i,j \in \frC$, $c \in \pi_0(\ttO)$.

Let $\ttC$  be a colax $\ttO$-monoidal  $V$-category with a
multicotensor $D$ as in
Definition~\ref{Jarka_ma_streptokoka}.  
We are going to define the convolution product on 
the  category~$\ttC^{\ttU(\calP)}$. 
Assume first
for simplicity that $\ttO$ is connected so that the objects of $\ttU(\calP)$ are
elements of~$\frC$.  For any
$T\in \ttO$, $k = |T|$, the operad $\calP$ generates a $\ttV$-functor
\[
\mathbb{\calP}(T): \underbrace{\ttU(\calP)^{\it op}\otimes\cdots\otimes
  \ttU(\calP)^{\it op}}_{k-times}\otimes\, \ttU(\calP) \rightarrow \ttV
\]
defined on objects $i_1,\ldots,i_k,i \in (\ttU(\calP)^{\it op})^{\ot
  k} \ot \ttU(\calP)$ by
\begin{equation}
\label{Ralph}
\mathbb{\calP}(T)(i_1,\ldots,i_k;i) := \calP\big(T(i_1,\ldots,i_k;i)\big).
\end{equation}
To see how $\mathbb{\calP}$ acts on morphisms,
we observe that in $\tt O^\frC$ we have morphisms of the
form 
\[
T(i_1,\ldots,i_k;i)\lra T(j_1,\ldots, j_k;i),\
\Rada i1k,\Rada j1k,i \in \frC,
\] 
whose underlying morphism in $\tt O$ is the identity of $T$. The 
multiplication in the $\ttO^\frC$-operad $\calP$ with
respect to these morphisms induces the contravariant part of the functor
$\mathbb{\calP}(T)$. The covariant part is induced by the multiplication
in $\calP$
corresponding to the morphisms in $\ttO^\frC$ of the form
\[
T(i_1,\ldots,i_k;i) \to U_c(n;i),\ \Rada i1k, i ,n \in \frC.
\]
 
For $T$ as above and $\Rada X1k \in \ttC^{\ttU(\calP)}$,
the {\em convolution product\/} $\E D^\calP_T(X_1,\ldots,X_k) \in \ttC^{\ttU(\calP)}$  
is given by the  coend in $\ttC$:
\begin{equation}
\label{convolut}
\E D^\calP_T(X_1,\ldots,X_k)(i) = 
\int^{\ttU(\calP)^{\otimes n}}\hskip -.6em 
\mathbb{\calP}(T)(i_1,\ldots,i_k;i)\otimes
D_T\big(X_1(i_1),\ldots, X_k(i_k)\big), \  i \in \ttU(\calP).
\end{equation}
The above constructions easily extend to the case of an arbitrary
$\pi_0(\ttO)$, we just take the products of the corresponding
constructions for the individual connected components.

The convolution~(\ref{convolut}) generalizes the Day-Street
convolution product~\cite{DS2} of a substitude related to an ordinary
(that is $\sFSet$-) operad.  
If $\ttC$ is a symmetric monoidal $V$-category and $D=
\bigodot^{\ttO}$ is the  \hbox{$\ttO$-multicotensor} from
Example \ref{odot}, many standard facts about the convolution remain valid
in this generalized situation. For example we have

\begin{proposition} 
\label{Musim_na_prohlidku}
The convolution product determines an $\ttO$-multitensor $\E\odot^\calP 
=\{\E\odot^\calP_T\}_{T \in \ttO}$ on 
$\ttC^{\ttU(\calP)}$ whose unital part $\prod_{c \in \pi_0(\ttO)}
\E\odot^\calP_{U_c}$ 
is the identity monad.
\end{proposition}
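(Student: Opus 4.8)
The plan is to unpack what it means for $\E\odot^\calP$ to be an $\ttO$-multitensor on $\ttC^{\ttU(\calP)}$, namely to exhibit the $\ttV$-natural transformations $\mu_f$ and units $\eta_c$ of Definition~\ref{multitensor} and to verify associativity and unitality. First I would observe that the coend formula~(\ref{convolut}) with $D = \bigodot^\ttO$ reads
\[
\E\odot^\calP_T(X_1,\ldots,X_k)(i) = \int^{\ttU(\calP)^{\otimes k}}
\calP\big(T(i_1,\ldots,i_k;i)\big)\otimes \bigodot_{j}X_j(i_j),
\]
so it is literally the operadic Day--Street convolution of the substitude attached to $\calP$, and the required structure maps should be assembled from (a) the operad structure maps $\mu(f)$ of $\calP$ in $\ttO^\frC$, (b) the symmetric monoidal structure $\odot$ of $\ttC$, and (c) the coend calculus (Fubini, the co-Yoneda lemma). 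Concretely, for $f : T \to S$ in $\ttO$ with fibers $T_1,\ldots,T_k$, I would first lift $f$ to $\ttO^\frC$ in all relevant colorings, apply $\mu(f) : \calP(f)\otimes \calP(S)\to\calP(T)$ componentwise inside the coend, use the symmetric monoidal reassociation of the $\odot$-factors to match $\bigodot$ over $|S|$-many groups of $\bigodot$'s with $\bigodot$ over $|T|$, and then collapse the nested coend using co-Yoneda exactly as in the classical convolution-of-substitude argument; this produces $\mu_f : \E\odot^\calP_S(\E\odot^\calP_{T_1},\ldots,\E\odot^\calP_{T_k}) \to \E\odot^\calP_T$.

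Next I would treat the units. For $c \in \pi_0(\ttO)$ the object $U_c$ is terminal and $|U_c| = 1$; the colored objects $U_c(i;j)$ are the hom-objects of $\ttU(\calP)$, and $\E\odot^\calP_{U_c}(X)(j) = \int^{i \in \ttU(\calP)}\calP(U_c(i;j))\otimes X(i) = \int^{i}\ttU(\calP)(i,j)\otimes X(i)$, which by the co-Yoneda lemma is canonically isomorphic to $X(j)$ (restricting to the $c$-component of $\ttU(\calP)$). Hence $\E\odot^\calP_{U_c}$ is isomorphic to the identity endofunctor of $\ttC^{\ttU(\calP)}$ restricted to the appropriate component, and the product $\prod_{c \in \pi_0(\ttO)}\E\odot^\calP_{U_c}$ is the identity monad; this simultaneously supplies the unit $\eta_c : \id \to \E\odot^\calP_{U_c}$ (the inverse of this co-Yoneda isomorphism, built from the operad unit $I \to \calP(U_c(i;i))$) and proves the last clause of the statement. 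The associativity and unitality axioms for $\mu_f$ and $\eta_c$ then follow by a diagram chase: each of them reduces, after commuting coends past the monoidal product and applying co-Yoneda, to the corresponding operad axiom~(i)--(iii) in the definition of $\ttO$-operad for $\calP$ (using the equalities~(\ref{karneval_u_retardacku}) to identify iterated fibers of $f$ with fibers of composites) together with the coherence of the symmetric monoidal structure $\odot$ on $\ttC$.

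The main obstacle I expect is bookkeeping rather than conceptual: one must check that the functor $\mathbb{\calP}(T)$ of~(\ref{Ralph}) is genuinely $\ttV$-functorial in the $\ttU(\calP)^{\op}$-variables and the $\ttU(\calP)$-variable — i.e.\ that the contravariant action via recoloring morphisms $T(i_1,\ldots,i_k;i)\to T(j_1,\ldots,j_k;i)$ and the covariant action via $T(i_1,\ldots,i_k;i)\to U_c(n;i)$ are compatible with composition — so that the coend in~(\ref{convolut}) is well-defined and the reassociation steps in constructing $\mu_f$ actually land in $\ttC^{\ttU(\calP)}$ rather than merely in $\ttC$. This is where the operad axioms of $\calP$ and the hypothesis that $D = \bigodot^\ttO$ is both a multitensor and a multicotensor (hence strong, with $D_{U_c}$ the identity comonad) are used in an essential way; once functoriality and the coend manipulations are in place, the verification of Definition~\ref{multitensor} is routine coend calculus, and the proof can legitimately be left to the reader as the statement suggests.
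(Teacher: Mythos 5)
Your proposal is correct and follows exactly the route the paper intends: the authors state Proposition~\ref{Musim_na_prohlidku} without proof, noting only that the construction generalizes the Day--Street convolution of a substitude and (in the remark that follows) that the multitensor structure of $\E\odot^\calP$ is induced by the multitensor structure of $\bigodot^{\ttO}$ --- which is precisely your argument, namely $\mu_f$ assembled from the operadic composition of $\calP$ in $\ttO^\frC$ together with the symmetric monoidal reassociation of the $\odot$-factors after commuting coends past $\odot$, and the identification of the unital part via $\calP\big(U_c(i;j)\big)=\ttU(\calP)\big((c,i),(c,j)\big)$ and co-Yoneda. Your closing observation that it is the multitensor (not merely multicotensor) structure of $\bigodot^{\ttO}$ that makes the construction possible is exactly the content of the authors' remark.
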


\begin{remark}
The $\ttO$-multitensor structure of  $\E\odot^\calP$ is induced by the
$\ttO$-multitensor structure of the $\ttO$-multicotensor
$\bigodot^\ttO$. The fact that $\bigodot^\ttO$ is both a
multicotensor {\em and\/} a multitensor makes this situation very
special -- the convolution product~(\ref{convolut}) need not be a
$\ttO$-multitensor for general $D$. To simplify the notation, we will sometimes
write $E^\calP$ instead of  $\E\odot^\calP$. 
\end{remark}

An $\ttO$-multicotensor $D$ on $\ttC$ induces an
$\ttO^\frC$-multicotensor 
$D^\frC$ on $\ttC$ by `forgetting the colors'
\begin{equation}
\label{Zitra_s_chlapci}
D^\frC_{T(\Rada i1k;i)} := D_T, \ T(\Rada i1k;i) \in \ttO^\frC
\end{equation}
which in turn restricts to an  $\ttLO^\frC$-multicotensor $LD^\frC$. 
Since, by assumption, $D_{U_c}$ is the identity comonad for each $c
\in \pi_0(\tt0)$, the colax unitality of multicotensors gives natural morphisms
\begin{equation}
\label{Kdy_se_jarka_uzdravi}
LD^\frC_{U_c(i;j)}(X) = D_{U_c}(X) \stackrel\cong 
\lra X, \ U_c(i;j) \in \ttLO^\frC.
\end{equation}

Let now $A$ be an algebra of an $\ttO^\frC$-operad $\calP$ in $\ttC$
as in Definition~\ref{dnes_obed_s_Magdou}. 
The action $\alpha:\calP\to
\End_A^{D^\frC}$ induces, via the restriction along the
inclusion $\liota : \ttLO^\frC \hookrightarrow \ttO^\frC$ combined
with the canonical transformations~(\ref{Kdy_se_jarka_uzdravi}), an action
\[
\liota^*(\alpha):
\liota^*(\calP)\lra  \liota^*(\End_A^{D^\frC}) =  \End_A^{LD^\frC} 
\lra  \End_A^{\id},
\]
where $\id$ is the obvious identity $\ttLO^\frC$-multitensor on $\ttC$.
This gives rise to a functor 
\begin{equation}
\label{Jarka_slusne_prudi}
\liota^*: \Alg_\calP(\ttC)\to \ttC^{\ttU(\calP)}
\end{equation}
from the category of $\calP$-algebras in $\ttC$ to the functor
category $\ttC^{\ttU(\calP)}$.  If $\ttO$ is connected, the algebra
$A$ is a collection $\{A(i)\}_{i \in \frC}$ in $\ttV$, the functor
$\liota^*(\alpha)$ takes $i \in \frC$ to $A(i)$, and the map
\[
\liota^*(\alpha) : \calP\big(U(i;j)\big) \to \ttC\big(A(i),A(j)\big)
\] 
of the enriched hom-sets
is given by the
$\calP$-algebra structure of~$A$. The description of $\liota^*(\alpha)$ for
a general $\pi_0(\ttO)$ is similar.

In the particular case when
$\ttC$ is a symmetric monoidal $V$-category,
$E^\calP = \E\odot^\calP$ is a multitensor on $\ttC^{\ttU(\calP)}$ by
Proposition~\ref{Musim_na_prohlidku}. 
It turns out that $\liota^*(A)$ is an
algebra over  $E^\calP$ in the sense of the following definition.

\begin{definition}
An {\em algebra over an $\ttO$-multitensor\/} $E$ on a   $\ttV$-category $\ttC$
is an object $Z \in \ttC$ equipped with a family of morphisms
\[
\alpha_T : E_T(\rada ZZ) \to Z,\ T \in \ttO,
\]
such that
\begin{itemize}
\item[(i)]
the composition
$
\xymatrix@1{Z\ \ar[r]^(.34){\eta_c} & 
\ E_{U_c}(Z)\ \ar[r]^(.64){\alpha_{U_c}} &\ Z} 
$
is the identity for each $c \in \pi_0(\ttO)$, and
\item[(ii)]
the diagram
\[
\xymatrix@C = +3.2em{E_S\big(E_{T_1}(\rada ZZ),\ldots,E_{T_k}(\rada ZZ)\big)
  \ar[r]^(.7){\mu_\sigma} 
\ar[d]_{E_S(\rada{\alpha_{T_1}}{\alpha_{T_k}})}
& E_T(\rada ZZ)\ar[d]^{\alpha_T}
\\
E_S(\rada ZZ) \ar[r]^{\alpha_S} & Z
}
\]
commutes  for any morphism $\sigma:T\to S$ in $\tt O$ with fibers
$T_1,\ldots,T_k$. 
\end{itemize}
\end{definition}

As in the classical case~\cite[Proposition 1.8]{batanin-berger} we obtain

\begin{proposition} 
\label{problemy_tentokrat_s_okem}
The functor $\liota^*: \Alg_\calP(\ttC)\to
\ttC^{\ttU(\calP)}$ induces an isomorphism between the
category of $\calP$-algebras in $\ttC$ and the category of  $E^\calP$-algebras in
$\ttC^{\ttU(\calP)}$. 
\end{proposition}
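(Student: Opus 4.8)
The plan is to unwind both sides of the claimed isomorphism to the same concrete data and then check that the bijection on underlying objects established by the functor $\liota^*$ of~(\ref{Jarka_slusne_prudi}) respects the algebra-structure constraints. First I would recall from the discussion preceding the statement that an object of $\ttC^{\ttU(\calP)}$ underlying an $E^\calP$-algebra is a functor $Z : \ttU(\calP) \to \ttC$, equivalently (when $\ttO$ is connected, to which one reduces by taking products over $\pi_0(\ttO)$) a $\frC$-collection $\{Z(i)\}_{i\in\frC}$ in $\ttC$ together with the action of the enriched hom-objects $\calP(U_c(i;j))$; an $E^\calP$-algebra structure on $Z$ is then a family $\alpha_T : E^\calP_T(Z,\ldots,Z) \to Z$ satisfying the unitality~(i) and associativity~(ii) of the definition of algebra over an $\ttO$-multitensor. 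On the other side, a $\calP$-algebra $A$ in $\ttC$ is a $\pi_0(\ttO)$-collection with a map $\calP \to \End^{D^\frC}_A$, i.e.\ structure maps $\calP(T(\rada i1k;i)) \otimes D_T(A(i_1),\ldots,A(i_k)) \to A(i)$ subject to the operad-algebra axioms. The functor $\liota^*$ sends such an $A$ to the functor $i \mapsto A(i)$ with the $\ttU(\calP)$-action coming from the $U_c(i;j)$-components, as spelled out after~(\ref{Jarka_slusne_prudi}).

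The key step is to identify $E^\calP_T(Z,\ldots,Z)(i)$ explicitly using the coend formula~(\ref{convolut}): with $Z_\ell = Z$ for all $\ell$, this coend is
\[
\int^{\ttU(\calP)^{\otimes k}} \calP\big(T(i_1,\ldots,i_k;i)\big) \otimes D_T\big(Z(i_1),\ldots,Z(i_k)\big),
\]
and the co-Yoneda / co-end reduction — using that the $\ttU(\calP)$-module structure on $Z$ is precisely the $U_c(-;-)$-part of a would-be $\calP$-algebra structure — collapses this coend to an expression that matches, component by component, the domains $\bigotimes_{i\in|S|}\cdots$ appearing in the $\calP$-algebra action maps. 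Concretely, giving $\alpha_T$ is the same as giving, compatibly in $T$, a family of maps $\calP(T(\rada i1k;i)) \otimes D_T(Z(i_1),\ldots,Z(i_k)) \to Z(i)$ that are dinatural in the $i_\ell$'s with respect to the $\ttU(\calP)$-action; but dinaturality with respect to the $\ttU(\calP)$-action is exactly the compatibility of the $\calP$-algebra structure with the `recoloring' morphisms $T(\rada i1k;i) \to T(\rada j1k;i)$ and with the morphisms $T(\rada i1k;i)\to U_c(n;i)$ that were used to define $\mathbb{\calP}(T)$ on morphisms. Hence the data of an $E^\calP$-algebra structure on $\liota^*(A)$ is in natural bijection with the data of a $\calP$-algebra structure on $A$ whose $U_c(i;j)$-part is the prescribed module structure.

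Next I would check that this bijection of structures carries the two algebra axioms to each other. The unitality axiom~(i) for an $E^\calP$-algebra, which involves $\eta_c$ and the counit of the comonad $D_{U_c} = (-)^{U_c}$, translates under the identification — recall that $D_{U_c}$ is the identity comonad in a colax $\ttO$-monoidal category, cf.\ Definition~\ref{Jarka_ma_streptokoka} and~(\ref{Kdy_se_jarka_uzdravi}) — into the $\calP$-algebra unitality for the units $I \to \calP(U_c(i;i))$. The associativity axiom~(ii), expressed via the multitensor composition $\mu_\sigma$ of $E^\calP$, unwinds — using the Remark that the $\ttO$-multitensor structure on $\E\odot^\calP$ is induced by that of the $\ttO$-multicotensor $\bigodot^\ttO$, i.e.\ by the structure maps $\mu(f)$ of $\calP$ themselves — into exactly axiom~(i) of Definition~\ref{dnes_obed_s_Magdou}'s action combined with the coherence of $D$. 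Finally, a morphism of $E^\calP$-algebras is a $\ttU(\calP)$-natural transformation commuting with the $\alpha_T$'s, and under the translation this is precisely a map of $\pi_0(\ttO)$-collections commuting with the $\calP$-algebra actions; so $\liota^*$ is fully faithful as well as essentially surjective, giving the asserted isomorphism of categories. The main obstacle I anticipate is the bookkeeping in the co-end collapse of~(\ref{convolut}): one must verify carefully that the dinaturality in the $\ttU(\calP)^{\otimes k}$ variables matches, on the nose, the functoriality of $\calP$ in the `recoloring' and `collapsing' morphisms of $\ttO^\frC$, and that nothing is lost when $\ttO$ is disconnected and one takes the product over $\pi_0(\ttO)$ — this is the routine-but-delicate part that the classical argument \cite[Proposition 1.8]{batanin-berger} handles in the $\sFSet$ case and which we are transporting to the operadic setting.
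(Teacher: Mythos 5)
Your proposal is correct and follows exactly the route the paper intends: the paper gives no argument of its own here, deferring to the classical case \cite[Proposition~1.8]{batanin-berger}, and your unwinding — the universal property of the coend in~(\ref{convolut}) turning $\alpha_T$ into a dinatural family matching the componentwise $\calP$-action, with the recoloring morphisms absorbed into dinaturality and the morphisms over $f:T\to S$ absorbed into the multitensor associativity axiom — is precisely that classical argument transported to the operadic-category setting. The only cosmetic point is that your ``fully faithful and essentially surjective'' conclusion should be sharpened to a bijection on objects (which your structure-level bijection already gives) to justify the word \emph{isomorphism} rather than mere equivalence.
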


Let us return to a general colax 
$\ttO$-monoidal $V$-category $\ttC$. Assume that, in addition, $\ttC$
is complete as a $V$-category, so we have cotensors
$Y^{\alpha} \in \ttC$ such that 
\[
\ttC(X,Y^\alpha) \cong \ttV\big(\alpha,\ttC(X,Y)\big)\ \mbox { for } \ X,Y \in
\ttC,\ \alpha \in \ttV.
\]
For an $\calP$-algebra $A$ in $\ttC$ and an object $\delta$ of the functor category
$\ttV^{\ttU (\calP)}$ we define the {\em
  $\delta$-totalization\/} of $A$ as the end
\begin{equation}
\label{totalization}
\Tot_{\delta}(A):= 
\int_{i\in \ttU(\calP)} \big(\iota^*(A)(i)\big)^{\delta(i)}.
\end{equation}
Since $E^\calP = \E\odot^\calP$ is a multitensor,
the  $\ttO$-collection $\Coend^\calP_{\delta}$ with
\[
\Coend^\calP_{\delta}(T) := 
\int_{i\in \ttU(\calP)}\ttV\big(\delta(i), E^\calP_T(\delta,\ldots,\delta)(i)\big)
\] 
is an $\ttO$-operad in $\ttV$ 
by  Lemma~\ref{Opicka}. 
The main statement of this section reads:

\begin{proposition}
\label{v_patek_do_Prahy}
Let $\calP$ be an $\ttO^\frC$-operad in $\ttV$ and $A$ a $\calP$-algebra  in $\ttC$.
Then the $\pi_0(\ttO)$-collection 
$\Tot_{\delta}(A)$
is a natural $\Coend^\calP_{\delta}$-algebra. 
\end{proposition}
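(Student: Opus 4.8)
The plan is to assemble the $\Coend^\calP_{\delta}$-algebra structure on $\Tot_{\delta}(A)$ out of three pieces that have already been set up in the excerpt, and then check compatibility with the operad axioms. First I would invoke Proposition~\ref{problemy_tentokrat_s_okem}: the algebra $A$ over the $\ttO^\frC$-operad $\calP$ corresponds, via $\liota^*$, to an algebra $Z := \liota^*(A)$ over the $\ttO$-multitensor $E^\calP = \E\odot^\calP$ on $\ttC^{\ttU(\calP)}$, with structure maps $\alpha_T : E^\calP_T(Z,\ldots,Z) \to Z$ satisfying the two axioms in the definition of an algebra over a multitensor. So the problem is reduced to a purely multitensor-theoretic statement: given an $\ttO$-multitensor $E$ on a $\ttV$-category $\ttC'$ (here $\ttC' = \ttC^{\ttU(\calP)}$, but completeness of $\ttC$ as a $\ttV$-category gives cotensors in $\ttC'$ as well), a $\ttU(\calP)$-object $\delta$, and an $E$-algebra $Z$, one produces from the $E$-totalization a $\Coend^E_{\delta}$-algebra, where $\Coend^E_\delta(T) = \int_{i} \ttV(\delta(i), E_T(\delta,\ldots,\delta)(i))$ as in Lemma~\ref{Opicka}.

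Next I would write down the structure maps. For $T \in \ttO$ with $|T| = k$ and fibers of the identity having components $\{i_1,\dots,i_k\} = \pi_0 s(T)$, $i = \pi_0(T)$, I need a morphism
\[
\Coend^\calP_{\delta}(T) \otimes \Tot_{\delta}(A) \otimes \cdots \otimes \Tot_{\delta}(A) \lra \Tot_{\delta}(A).
\]
The right-hand side is the end $\int_{i\in\ttU(\calP)} Z(i)^{\delta(i)}$. Using the $\ttV$-enriched adjunction for cotensors and the universal property of ends, giving such a map amounts to giving, for each object $i$ of $\ttU(\calP)$, a compatible family
\[
\Big(\textstyle\int_{j} \ttV\big(\delta(j), E^\calP_T(\delta,\ldots,\delta)(j)\big)\Big) \otimes \delta(i) \otimes \Tot_{\delta}(A)^{\otimes k} \lra Z(i).
\]
The first tensor factor evaluated against $\delta(i)$ and the canonical projections $\Tot_{\delta}(A) \to Z(i_r)^{\delta(i_r)}$, together with the evaluation/coevaluation for cotensors and the $E^\calP$-multitensor structure maps feeding into the coend that defines $E^\calP_T(\delta,\dots,\delta)(i)$, land in $E^\calP_T(Z,\dots,Z)(i)$; then $\alpha_T$ sends this to $Z(i)$. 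Concretely: the coend formula for the convolution $E^\calP_T = \E\odot^\calP_T$ in~(\ref{convolut}) has the same shape, $\int^{\ttU(\calP)^{\otimes n}} \mathbb{\calP}(T)(\vec\jmath;i)\otimes \bigodot_r X_r(j_r)$, so the end-over-$j$ structure of $\Coend^\calP_\delta(T)$ is exactly dual to the coend-over-$j$ in $E^\calP_T(\delta,\dots,\delta)(i)$, and the pairing $\int_j \ttV(\delta(j),F(j))\otimes \int^{\vec\jmath}(\cdots) \to (\text{evaluated thing})$ is the standard Fubini/co-Yoneda manipulation. I would also define the unit: for $T = U_c$, $\Coend^\calP_\delta(U_c)$ is a monoid in $\ttV$ whose unit $I \to \Coend^\calP_\delta(U_c)$ comes from the identity-monad property of $\prod_c E^\calP_{U_c}$ (Proposition~\ref{Musim_na_prohlidku}) combined with $\eta_c = \mathrm{id}$, and under this the action on $\Tot_\delta(A)$ reduces to the identity.

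The verification of the operad algebra axioms (associativity square, left and right unit laws from Definition~\ref{Jaruska_ma_chripecku}/the algebra-over-operad definition) then splits into two independent compatibilities, and this is where the real work sits. The associativity axiom has to be deduced from axiom (i) of the operad structure on $\calP$ (equivalently, the associativity of the $\ttO$-multitensor $E^\calP$ and axiom (ii) of the multitensor-algebra structure on $Z$): one takes a composable $T \xrightarrow{f} S \xrightarrow{g} R$, uses the functoriality $\Fib_i : \ttO/R \to \ttO$ and equality~(\ref{karneval_u_retardacku}) to rewrite iterated convolutions, and chases the big square through the coend/end Fubini isomorphisms. The main obstacle I anticipate is precisely bookkeeping the interaction between the coend defining $E^\calP_T$ and the end defining $\Coend^\calP_\delta(T)$ and $\Tot_\delta(A)$ under substitution — i.e.\ making the "co-Yoneda cancels against Yoneda" steps rigorous and checking they are natural in all the $\ttU(\calP)$-variables simultaneously; none of the individual diagrams is deep, but getting the variances and the enriched universal properties to line up is the delicate part. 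I would therefore organize the proof so that the coend-pairing lemma (that $\int_j \ttV(\delta(j),-)$ paired with a convolution coend collapses correctly) is isolated once, after which the associativity and unit diagrams follow formally from the already-established facts about $E^\calP$ and about $\calP$-algebras. As is customary in this paper, the remaining diagram chases would be left to the reader.
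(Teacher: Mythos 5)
Your opening reduction is where the argument breaks. Proposition~\ref{problemy_tentokrat_s_okem} identifies $\calP$-algebras with $E^\calP$-algebras only \emph{in the particular case} when $\ttC$ is a symmetric monoidal $V$-category and the multicotensor is $\bigodot^\ttO$; the remark following Proposition~\ref{Musim_na_prohlidku} states explicitly that for a general strong multicotensor $D$ the convolution $\E D^\calP$ need not be an $\ttO$-multitensor at all, so there is no notion of ``algebra over $\E D^\calP$'' to reduce to. Proposition~\ref{v_patek_do_Prahy}, however, is stated for an arbitrary colax $\ttO$-monoidal $V$-category $\ttC$, and that generality is essential downstream (the application in Part~2 takes $\ttC = \duo$ with the duoidal multicotensor $\Boxx$ of Proposition~\ref{omegaendos}, which is not of the form $\bigodot^\ttO$). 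The remark immediately after the proposition concedes that your route works when $\ttC = \ttV$; the general case must be done by hand.

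A symptom of the same problem: you write the action as a map out of $\Coend^\calP_\delta(T)\otimes \Tot_\delta(A)^{\otimes k}$, whereas the correct source is $\Coend^\calP_\delta(T)\otimes \Tot_\delta(A)^T$ with $\Tot_\delta(A)^T = D_T\big(\Tot_\delta(A),\ldots,\Tot_\delta(A)\big)$. To push the factors $\delta(i_1),\ldots,\delta(i_n)$ \emph{inside} $D_T$ so that the evaluation maps $\delta(i_r)\otimes\int_{j}A(j)^{\delta(j)}\to A(i_r)$ can be applied argumentwise, one needs the strength of $D$ --- this is exactly why Definition~\ref{Jarka_ma_streptokoka} requires $D$ to be strong, and it is the one ingredient your construction never invokes. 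The paper's proof builds an auxiliary morphism $\mho_T:\Tot_\delta(A)^T\to\int_i \E D^\calP_T(\liota^*A,\ldots,\liota^*A)(i)^{E^\calP_T(\delta,\ldots,\delta)(i)}$ directly from the strength, the evaluations and the coprojection into the coend, and only then composes with the pairing against $\Coend^\calP_\delta(T)$ and the algebra map $\E D^\calP_T(A,\ldots,A)(i)\to A(i)$. Your end-against-coend pairing and the concluding verification of the axioms do match the computational core of the paper's argument, but without the strength of $D$ the first arrow of the construction cannot even be written down in the stated generality.
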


\begin{definition}
We will call the $\ttO$-operad $\Coend^\calP_{\delta}$ the
{\em $\delta$-condensation\/} of $\calP$ and its algebra $\Tot_{\delta}(A)$ the {\it
  $\delta$-totalization\/} of $A.$
\end{definition}

\begin{remark}
  If $\ttC = \ttV$, the $\delta$-totalization~(\ref{totalization}) 
  can be given by a simplified formula
\[
\Tot_{\delta}(A):= 
\int_{i\in \ttU(\calP)} \ttV\big({\delta(i)} , \iota^*(A)(i)\big).
\] 
Proposition~\ref{v_patek_do_Prahy} in this case can be proved using
Proposition~\ref{problemy_tentokrat_s_okem} by a straightforward
generalization of~\cite[Proposition~1.5]{batanin-berger}, see also the
appendix to~\cite{batanin-berger-markl}.
\end{remark}

\begin{proof}[Proof of Proposition~\ref{v_patek_do_Prahy}]
We will assume for simplicity that $\ttO$ is connected, the general case can be
handled similarly. The action
\begin{equation}
\label{Dnes_setkani_reditelu}
\Coend^\calP_{\delta}(T)\otimes \Tot_{\delta}(A)^T \lra
\Tot_{\delta}(A),\ T\in \ttO,
\end{equation}
where $\Tot_{\delta}(A)^T = D_T\big(
\Tot_{\delta}(A),\ldots,\Tot_{\delta}(A)\big)$,
will be constructed  using a natural morphism
 \begin{equation}
\label{action}
\mho_T:  \Tot_{\delta}(A)^T\lra \int_{i \in\ttU(\calP)}
 \E D^{\calP}_T(\liota^*A,\ldots,\liota^* A)(i)^{E^{\calP}_T(\delta,\ldots,\delta)(i)}.
\end{equation}

To simplify the notation, we will implicitly assume that the symbols
$\Rada i1n,\ \Rada j1n$ and $i$ denote objects of the underlying
category $\ttU(\calP)$. We will also drop $\liota^*$ from the
notation, writing $A$ instead of $\liota^* A$.
Notice that the target of~(\ref{action}) is equal to 
\[ 
\int_{i} \int_{i_1,\ldots,i_n}
\E D^{\calP}_T(A,\ldots,A)(i)^{\calP(T)(i_1,\ldots,i_n;i)
\otimes\delta(i_1)\otimes\ldots\otimes
\delta(i_n)},\ n = |T|,
\]
so the morphism~(\ref{action}) is the same as a family of
morphisms
\[
\Tot_{\delta}(A)^T\to
\E D^{\calP}_T(A,\ldots,A)(i)^{\calP(T)(i_1,\ldots,i_n;i)
\otimes\delta(i_1)\otimes\ldots\otimes
  \delta(i_n)}, 
\] 
which satisfy some obvious naturality conditions. 
By adjunction, this amounts to a family of morphisms 
\begin{eqnarray*}
\lefteqn{
  {\calP(T)(i_1,\ldots,i_n;i)\otimes\delta(i_1)\otimes\cdots\otimes
    \delta(i_n)} \otimes   \Tot_{\delta}(A)^T} 
\\
&&\hskip 2em\lra
\calP(T)(i_1,\ldots,i_n;i)\otimes D_T
\Big(\delta(i_1)\otimes  \hskip -.3em \int_{j_1} \hskip -.1em
A(j_1)^{\delta(j_1)},\ldots,  \delta(i_n)\otimes \
\hskip -.3em
\int_{j_n}  \hskip -.1em A(j_n)^{\delta(j_n)}\Big)  
\\ 
 &&\hskip 2em \lra
 \int^{i_1,\ldots i_n} \calP(T)(i_1,\ldots,i_n;i)\otimes
 D_T\big(A(i_1),\ldots,A(i_n)\big).
\end{eqnarray*} 
We also have, for each $1\le k\le n$, the evaluation morphisms
 $$\delta(i_k)\otimes \int_{j_k} 
A(j_k)^{\delta(j_k)}\lra A(i_k)$$
which induce a map
\begin{eqnarray*}
\calP(T)(i_1,\ldots,i_n;i)\otimes D_T
\Big(\delta(i_1)\otimes  \hskip -.3em \int_{j_1} \hskip -.1em
A(j_1)^{\delta(j_1)},\ldots,  \delta(i_n)\otimes \
\hskip -.3em
\int_{j_n}  \hskip -.1em A(j_n)^{\delta(j_n)}\Big)  
\hskip 1em
\\
\hskip 1em \lra  \calP(T)(i_1,\ldots,i_n;i)\otimes
 D_T\big(A(i_1),\ldots,A(i_n)\big).
\end{eqnarray*}
Composing this map with the canonical coprojection to the coend
\begin{eqnarray*}
\lefteqn{
 \calP(T)(i_1,\ldots,i_n;i)\otimes
 D_T\big(A(i_1),\ldots,A(i_n)\big) \hskip 3em}
\\ 
&&
\hskip 3em \lra
 \int^{i_1,\ldots,i_n} \calP(T)(i_1,\ldots,i_n;i)\otimes
 D_T\big(A(i_1),\ldots,A(i_n)\big)
\end{eqnarray*}
we obtain the required morphism~(\ref{action}). 
The necessary naturality conditions for the above construction is obvious. 
 
Let us finally explain how the map $\mho_T$ in~(\ref{action})
leads to 
the action~(\ref{Dnes_setkani_reditelu}). 
To this end, 
observe that a $\calP$-algebra structure on $A$ generates a morphism
\begin{equation}
\label{Zitra_vedeni}
\E D^{\calP}_T(A,\ldots,A)(i)= 
\int^{i_1,\ldots,i_n}
\calP(T)(i_1,\ldots,i_n;i)\otimes 
D_T\big(A(i_1),\ldots,A(i_n)\big) \to A(i).
\end{equation} 
The action~(\ref{Dnes_setkani_reditelu} is then the composition 
\begin{eqnarray*} 
\lefteqn{
\Coend^\calP_{\delta}(T)\otimes \Tot_{\delta}(A)^T = 
 \Big(\int_{i}V\big(\delta(i),
 E^\calP_T(\delta,\ldots,\delta)(i)\big)\Big)\otimes
 \Tot_{\delta}(A)^T\stackrel{\id \ot \mho_T}\vlra}
\\
&&
 \hskip 5em   \int_{i}
V\big(\delta(i), E^\calP_T(\delta,\ldots,\delta)(i)\big)\otimes     
    \int_{i} \E
    D^{\calP}_T(A,\ldots,A)(i)^{E^{\calP}_T(\delta,\ldots,\delta)(i)}
 \lra 
\\
&&
 \hskip 5em  
\int_{i} \E D^{\calP}_T(A,\ldots,A)(i)^{\delta(i)} \lra \int_i
A(i)^{\delta(i)} = Tot_{\delta}(A)
\end{eqnarray*}
whose last arrow is generated by~(\ref{Zitra_vedeni}).
We leave a straightforward but tedious verification 
that this composition is, indeed, a $\calP$-algebra structure to the reader.  
\end{proof}

\part{Duoidal Deligne's conjecture}
\label{part2}

\section*{Reminders}

In this part we work with many  
particular examples of operadic categories
and their operads. We included
Table~\ref{za_tyden_z_Bonnu_do_Prahy} to simplify the reader's
navigation through them. 
\begin{table}[t]
  \centering
\begin{tabular}{llll}
\multicolumn{4}{c}
  {\bf Categories}
\\ [.3 em]
\hline
\hbox{Name:} & \hbox{type of objects:} &
\hbox{introduced in:} & \hbox{typical object:}
\\
\hline
\rule{0pt}{1.5em}%
$\ttOmega_k$ & \hbox {Batanin's $k$-trees} &
\hbox {Section~\ref{Omegan}} 
&  \hbox {$\EuT,\EuS,\ldots$}
\\
\hbox {$\OmegakN$} & \hbox {$\bbN$-colored $k$-trees} & 
\hbox {Section~\ref{OmegaN}}&  \hbox
{$\frT = (\EuT,c),\ \frS= (\EuS,c)$}
\\
\hbox {$\Ord_2$} & \hbox {$2$-ordinals} & \hbox {Section~\ref{nordinal}}
&  \hbox {$\EuO,\EuN$}
\\
\hbox {$\OrN$} & \hbox {$\bbN$-colored $2$-ordinals} & 
\hbox {Section~\ref{OmegaN}}&  
\hbox {$\frO = (\EuO,c), \ \frN = (\EuN,c)$}
\\
\hbox{$\LTr$} & \hbox{trees with levels}&\hbox{Section~\ref{LTr}} & 
\hbox{$\beta,\alpha,\ldots$}  
\\
\hbox {$\Tam^\bbN_2$} & $\OrN$-labelled trees  &\hbox
{Section~\ref{sec:tamark-tsyg-oper}} &\hbox {$(\frO,\delta)$, $(\frN,\gamma)$}
\\
\hbox {$\Tm^\bbN_2$} & $\OmegaN$-labelled trees &Section~\ref{sec:tamark-tsyg-oper}& 
$(\frT,\delta)$, $(\frS,\gamma)$
\\ 
$\pro$ & trees with labeled \bily-vertices&Proof of Lemma~\ref{tam-as-tree}&$\zeta$
\\
$\pr$ &  trees with labeled \bily-vertices&Proof of Lemma~\ref{tam-as-tree}&$\xi$
\\
\hline 
\multicolumn{4}{c}
  {\rule{0pt}{2em}\bf Operads:}
\\ 
 [.3 em]
\hline
\hbox{Name:} & \hbox{type of operad:} &
\hbox{introduced in:} & \hbox{typical element:}
\\
\hline
\rule{0pt}{1.5em}%
$\Lat^{(2)}$& \hbox{$\bbN$-colored $\Sigma$-operad} &
 \hbox  {Section~\ref{sec:tamark-tsyg-oper}} &  \hbox
{tree $\delta,\gamma,\ldots$}
\\
\hbox {$\opTm^\bbN_2$} & \hbox{$\bbN$-colored $2$-operad} & 
\hbox {Definition~\ref{musim_objet_Ryn}} &\hbox
{$\OmegaN$-labelled tree $(\frT,\delta)$}
\\
\hbox {$\opTam^\bbN_2$} & \hbox{pruned $\bbN$-colored $2$-operad} & 
\hbox {pullback~(\ref{Tam2})} &\hbox
{$\OrN$-labelled tree $(\frO,\delta)$}
\\
\hline \rule{0pt}{1em}
\end{tabular}

\caption{Notation.\label{za_tyden_z_Bonnu_do_Prahy}}
\end{table}
We also briefly recall, following~\cite{batanin-markl:Adv} closely, 
duoidal categories and the necessary related notions.

A {\em duoidal $\ttV$-category} is a pseudomonoid in the
2-category of monoidal $\ttV$-categories, lax-monoidal $\ttV$-functors and
their monoidal $\ttV$-transformations. 
Explicitly, a  duoidal $\ttV$-category is a quintuple $\duo = 
(\duo,\boxx_0,\boxx_1,e,v)$ such that
\begin{itemize}
\item [(i)]
$(\duo,\boxx_0,e)$ and $(\duo,\boxx_1,v)$ are monoidal $\ttV$-categories, equipped with 
\item[(ii)]
 a $\ttV$-natural interchange transformation
\begin{equation}
\label{dl}
(X \boxx_1 Y) \boxx_0 (Z \boxx_1 W) \to 
(X \boxx_0 Z) \boxx_1 (Y \boxx_0 W),
\end{equation}
\item[(iii)]
a map 
$e \to e\boxx_1 e, $
\item[(iv)] 
a map
$v\boxx_0 v \to v,$ and 
\item[(v)] 
a map 
$e\to v.$
\end{itemize}
The above data should enjoy the coherence properties listed e.g.\
in~\cite[p.~1816]{batanin-markl:Adv}. Moreover, we require that $v$ is
a monoid in $(\duo,\boxx_0,e)$ and $e$ a comonoid in
$(\duo,\boxx_1,v)$. 

A duoidal category $\duo$ is called 
{\em strict\/} if both monoidal categories
$(\duo,\boxx_0,e)$ and $(\duo,\boxx_1,v)$ are strict monoidal categories.
Since every duoidal category is equivalent to a strict one
by~\cite[Theorem~2.16]{batanin-markl:Adv}, we assume that all
duoidal categories in this article are strict.

A {\em $1$-operad\/} in a duoidal category $\duo =
(\duo,\boxx_0,\boxx_1,e,v)$ is a
collection $\scA = \{\scA(n)\}_{n \geq 0}$ of objects of $\duo$ such
that
\begin{itemize}
\item[(i)]
for each integers $n \geq 1$, $\Rada k1n
\geq 0$, one is given a structure morphism
\begin{equation}
\label{morph}
\gamma : \big( \scA(k_1) \boxx_1 \cdots \boxx_1 \scA(k_n)\big) \boxx_0 \scA(n) \to
\scA(k_1 + \cdots + k_n),
\end{equation}
\item[(ii)]
one is given a map $j : e \to \scA(1)$ (the unit) and
\item[(iii)]
a left $v$-module structure $v \boxx_0 \scA(0) \to \scA(0)$
\end{itemize}
such that appropriate axioms are satisfied, 
see~\cite[p.~1825]{batanin-markl:Adv} for
details. An example is the operad $\uAss$ with $\uAss(n) = v$ for each
$n \in \bbN$~\cite[Example~4.4]{batanin-markl:Adv}. 
Recall finally that a~{\em
  multiplicative $1$-operad\/} is a $1$-operad $\scA$ equipped with an
operad morphism 
\begin{equation}
\label{eq:19}
\alpha : \uAss \to \scA.
\end{equation}

For a duoidal category $\duo$ we denote by $\ttCat(\duo)$ the
$2$-category of $(\duo,\boxx_0,e)$-enriched categories. As observed by
Forcey \cite{Forcey}, $\ttCat(\duo)$ has a monoidal structure. The
tensor product $\timesred_1$ of two $\duo$-categories $\calK$ and
$\calL$ is given by the cartesian product on the objects level while
\[
(\calK\timesred_1\calL)\big((X,Y),(Z,W)\big) := \calK(X,Z)\boxx_1 \calL(Y,W),
\mbox { for $X,Z \in \calK$, $Y,W \in \calL$.}
\]
 The unit for this tensor product is the category $\mathbf 1_v$ which has one object $*$ and 
 ${\mathbf 1}_v(*,*) = v.$ 

A {\em monoidal $\duo$-category\/} $\calK = (\calK,\odot,\eta)$ is defined as 
a pseudomonoid in the monoidal $2$-category
$(\ttCat(\duo),\timesred_1,{\mathbf 1_v})$, see
\cite[pp.~1820--21]{batanin-markl:Adv} for a detailed description of
this structure. 
Each object $X \in \calK$ has its 
{\em endomorphism $1$-operad\/} $\End_X$ in $\duo$ with components
\[
\End_X(n) := \calK(\mbox{\large $\odot$}^n X,X),\ n \geq 1,
\]
see~\cite[Def.~4.7]{batanin-markl:Adv}.

A {\em monoid in $\calK$\/} is a lax monoidal functor
$\mathbf 1_v \to \calK$.
More explicitly, a monoid in $\calK$ is an object
$\ttM\in\calK$ together with:
\begin{itemize}
\item[(i)] 
a morphism (neutral element)
$i:\eta \to  \ttM,$
\item[(ii)] 
a morphism (multiplication)
$m: \ttM\odot \ttM \to \ttM$
and
\item[(iii)] 
a morphism (the {\em unit\/})
 $u:v\to \calK(\ttM,\ttM)$ in $\duo$.
\end{itemize}
These data should satisfy axioms listed in \cite[p.~1823]{batanin-markl:Adv}.
The endomorphism $1$-operad $\End_{\ttM}$ of a monoid $\ttM \in \calK$ is
multiplicative by \cite[Prop.~4.9]{batanin-markl:Adv}.

The {\em center\/} of a monoid $\ttM$ is the following equalizer in
$\duo$:
\begin{equation}
\label{equalizer}
Z(\ttM)\to {\calK}(\eta,\ttM)
\eqv
{\calK}(\ttM, \ttM).
\end{equation}

The center has a natural structure of a {\em duoid\/}  ({\em double monoid\/}
in the terminology of \cite{AM}) in $\duo$. This
is, by definition, an object $\scD \in \duo$  together with 
\begin{itemize} 
\item[(i)] a structure of a monoid 
$\scD\boxx_0\scD\to \scD \ , \  e\to \scD$
with respect to the first monoidal structure of $\duo$, and
\item[(ii)] a structure of a monoid
$\scD\boxx_1\scD \to \scD \ , \ v\to \scD$
with respect to the second monoidal structure of $\duo$
\end{itemize}
such that suitable axioms listed in
\cite[p.~1818]{batanin-markl:Adv} are satisfied.

\section{Operadic categories of trees and  ordinals.}
\label{sec:oper-categ-levell}

In this section we introduce several operadic categories of colored
trees and ordinals, and study various functors between them. We also
define endomorphism operads of collections in duoidal categories and
prove, in Theorem~\ref{etiopska_restaurace}, the existence of
canonical actions on multiplicative $1$-operads.

\subsection{The category of coloured $2$-trees} 
\label{OmegaN}
We will need the $\bbN$-colored version $\OmegakN$ of the
category $\ttOmega$ of $k$-trees recalled in Section~\ref{Omegan}.  
It is constructed by taking $\ttO = \ttOmega_k$ and $\frC =
\bbN$ in the pullback~(\ref{eq:Jaruska}). Explicitly, 
an {\em $\bbN$-colored $k$-tree\/} is a couple
$\bfT = (\EuT,c)$ consisting of a $k$-tree $\EuT \in \ttOmega_k$ 
and of a~coloring $c : \hbox{$\Leaf_k(\EuT)\! + \!1 \to
\bbN$}$ of its set of $k$-leaves plus one more color $c(1)$ interpreted
as the `output' color of~$\EuT$.  Morphisms $(\EuT,c') \to (\EuS,c'')$ are
morphisms $\EuT \to \EuS$ of the underlying $k$-trees if $c'(1)
= c''(1)$, while there are no morphisms if $c'(1) \not= c''(1)$. This
explicit description follows the ideas of
Tamarkin's~\cite{tamarkin:CM07}.

The category $\OmegakN$ with $|\bfT| := \Leaf_k(\EuT)$ is 
operadic. The fiber $\bfT_i$ of a map $f: \bfT = (\EuT,c')
\to \bfS = (\EuS,c'')$ over a $k$-leaf $i \in |\EuS|$ is the fiber as
in~(\ref{Eva_reditelem?}), with the coloring of its
$k$-leaves induced by $c'$ and the output color $c''(i)$.  One
has $\pi_0(\OmegakN) \cong \bbN$, the chosen terminal object for $n
\in \bbN$ being the 
terminal $k$-tree $\bfU_k^n$ with its unique $k$-leaf
colored by $n$ and the output color~$n$.

One analogously  defines
an $\bbN$-colored version $\OrdkN$ of the operadic category $\Ordk$ of 
$k$-ordinals of
Example~\ref{nordinal}. We leave its detailed description to the
reader. One has $\pi_0(\OrdkN)
\cong \bbN$, the chosen terminal object for $n \in \bbN$ being
the terminal $k$-ordinal $\bfjedna^n_k$ colored by $n$,  with the output
color~$n$.

In the rest of this article we will however need the categories
$\ttOmega_k$ and $\Ordk$, resp.~their colored versions  $\ttOmega_k^\bbN$
and $\OrdkN$, only for $k \leq 2$.

\begin{definition}
We will call $\ttOmega_2$-operads, 
resp.~$\ttOmega_2^\bbN$-operads, \hbox{\em $2$-operads\/}, 
resp.~{\em $\bbN$-colored  $2$-operads\/}. Similarly, $\Ord_2$-operads,
resp.~$\OrN$-operads, will be called {\em pruned $2$-operads\/},
resp. {\em pruned $\bbN$-colored $2$-operads\/}. 
\end{definition}

\begin{proposition} \label{omegaendos} There is a natural
  $\ttOmega_2$-multicotensor $\Boxx$ on any duoidal $V$-category $\duo
  = (\duo, \boxx_0,\boxx_1,e,v)$.
  If $\duo$ is a cocomplete $V$-category then the
  multicotensor $\Boxx$ is strong.

In particular,
each $\bbN$-colored collection $\scE = \coll \scE$ of
objects in $\duo$ admits its
$\bbN$-colored endomorphism $2$-operad $\End_\scE^{\OmegaN} := 
\End_\scE^{\boxx^\bbN} \in  \OpV
\OmegaN$. 
\end{proposition}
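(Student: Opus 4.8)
The plan is to define the $\ttOmega_2$-multicotensor $\Boxx$ explicitly on objects and morphisms, check the colax-morphism-of-operads axioms, verify strength, and then obtain the endomorphism $2$-operad by combining the general colored construction~(\ref{eq:Jaruska}) with Lemma~\ref{endo}. First I would recall that a $2$-tree $\EuT\in\ttOmega_2$ is a single morphism $\xymatrix@1{n_2\ar[r]^{t_1}&n_1}$ in $\Deltaalg$, and that $|\EuT| = \Leaf_2(\EuT) = n_2$. Given objects $X_1,\dots,X_{n_2}\in\duo$ indexed by the $2$-leaves, I would set
\[
\Boxx_{\EuT}(X_1,\dots,X_{n_2}) := \boxx_0\Big(\, \boxx_{1, \, a\in t_1^{-1}(1)} X_a \ ,\ \dots\ ,\ \boxx_{1,\, a\in t_1^{-1}(n_1)} X_a\,\Big),
\]
that is, first take $\boxx_1$ along each fiber of $t_1$ (respecting the linear order on the fiber), then take $\boxx_0$ of the resulting $n_1$ objects (in the order on $n_1$); when a fiber is empty the inner $\boxx_1$ yields the unit $v$, and when $n_1=0$ the outer $\boxx_0$ yields $e$. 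This is exactly the bracketing that the two-level tree $\EuT$ prescribes, and it is the natural duoidal generalization of the iterated-tensor multitensor $\bigodot^{\ttOmega_2}$ of Example~\ref{odot}.

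Next I would define the colax structure maps. For a morphism $\sigma:\EuT\to\EuS$ of $2$-trees with fibers $\EuT_1,\dots,\EuT_k$ ($k=|\EuS|$), the required colax constraint is
\[
\mu_\sigma : \Boxx_{\EuT}(X_1,\dots,X_{n_2}) \lra \Boxx_{\EuS}\big(\Boxx_{\EuT_1}(\dots),\dots,\Boxx_{\EuT_k}(\dots)\big);
\]
this is assembled from the interchange map~(\ref{dl}) applied iteratively, together with the monoid map $v\boxx_0 v\to v$ and comonoid map $e\to e\boxx_1 e$ used to handle repeated or missing fibers (exactly as in the combinatorics of reassociating a two-level bracketing). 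The unit $\eta_n:\id\Rightarrow\Boxx_{\bfU_2^n}$ is the identity, since $\Boxx_{\bfU_2}$ is the identity functor — which is precisely the condition making $\duo$ a colax $\ttOmega_2$-monoidal category in the sense of Definition~\ref{Jarka_ma_streptokoka}. I would then check the associativity and unitality coherences of Definition~\ref{multitensor} (dualized): associativity reduces, after unwinding, to the coherence axioms of the duoidal structure listed in~\cite[p.~1816]{batanin-markl:Adv}, applied to three-level trees; unitality to the monoid/comonoid conditions on $v$ and $e$. When $\duo$ is cocomplete as a $V$-category I would verify conditions (i) and (ii) of the "strong" definition: the strength of $\Boxx$ in the $i$th variable is built from the strengths of $\boxx_0$ and $\boxx_1$ (each is a $V$-functor in each variable, being part of a $V$-enriched monoidal structure), and the two diagrams commute because strength is natural and compatible with the interchange map.

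Finally, the colored statement follows formally: applying the pullback construction~(\ref{eq:Jaruska}) with $\ttO=\ttOmega_2$ and $\frC=\bbN$ produces $\OmegaN$, and the $\ttOmega_2$-multicotensor $\Boxx$ induces an $\OmegaN$-multicotensor $\boxx^\bbN=\Boxx^\bbN$ by the "forgetting the colors" recipe~(\ref{Zitra_s_chlapci}); Lemma~\ref{endo} then yields, for any $\bbN$-colored collection $\scE=\coll\scE$, the $\bbN$-colored endomorphism $2$-operad $\End_\scE^{\OmegaN}:=\End_\scE^{\boxx^\bbN}\in\OpV{\OmegaN}$, whose components are $\duo\big(\Boxx_{\EuT}(\scE(i_1),\dots,\scE(i_k)),\scE(i)\big)$ with the indexing of Lemma~\ref{Opicka}. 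I expect the main obstacle to be the bookkeeping in the colax constraint $\mu_\sigma$: a morphism of $2$-trees can simultaneously merge leaves within a fiber (calling for $\boxx_1$-multiplication), merge whole fibers of $t_1$ (calling for $\boxx_0$-multiplication), and permute nothing but reorder bracketings, so showing that the map built from~(\ref{dl}) and the (co)monoid maps is well-defined independent of the order of assembly, and that it satisfies the cocycle/associativity condition, is where the duoidal coherence axioms must be invoked carefully. Everything else is a routine, if lengthy, diagram chase.
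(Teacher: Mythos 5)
Your proposal is correct and follows essentially the same route as the paper: the same two-level formula for $\Boxx_\EuT$ (empty $\boxx_1$-fibers giving $v$), colax constraints built inductively from the interchange~(\ref{dl}) and the (co)monoid structures of $v$ and $e$, strength deduced from the $V$-functoriality of $\boxx_0,\boxx_1$ and $V$-naturality of the interchange, and the colored endomorphism operad obtained via~(\ref{Zitra_s_chlapci}) and Lemma~\ref{endo}. The only discrepancy is the order of the $\boxx_0$-factors: the paper deliberately reverses them (factor over the last $1$-vertex first), which is immaterial for the multicotensor axioms themselves but is the convention needed later so that $\gamma_\beta:\scA(\beta)\to\scA(n)$ in Theorem~\ref{etiopska_restaurace} is literally assembled from the structure morphisms~(\ref{morph}) of a $1$-operad, where the inner operations precede the outer one in the $\boxx_0$-product.
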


\begin{proof}
The proof is based on a simple
 modification of a construction given
in~\cite[p.~1853]{batanin-markl:Adv}.
Let $\EuT\in \ttOmega_2$ be a   $2$-tree. Let the $1$-truncation of  $\EuT$ be
$\{1,\ldots,t\}$, with its set of $2$-leaves over the $1$-vertex $d
\in \{1,\ldots,t\}$ being $\{v^d_1,\ldots,v^d_{q_d}\}$,
and let ${\mathcal X} = \{X_{c,i}\}_{1\leq d \leq t,\ 1\leq i\leq q_d} $ be a family of objects in $\duo.$

We then define, for $1 \leq d \leq t$,
\begin{equation}
  \label{Jarus1}
  {\mathcal X}^{\boxx_1^{q_d}}_\EuT := 
  \cases{
    X_{d,1} \boxx_1 \cdots  
    \boxx_1 X_{d,q_d}}{if $q_d > 1$}v{if
    $q_d =0$.} 
\end{equation}

With this notation,
\[
\Boxx_{\EuT}(X_{1,1},\ldots,X_{t,q_t}) := {\mathcal X}^{\boxx_1^{q_t}}_\EuT \boxx_0
{\mathcal X}^{\boxx_1^{q_{t-1}}}_\EuT  \boxx_0       \cdots\boxx_0
{\mathcal X}^{\boxx_1^{q_2}}_\EuT \boxx_0
{\mathcal X}^{\boxx_1^{q_1}}_\EuT.\footnote{Observe the reversed order of the
  factors.}
\]

The counit of the multicotensor $\Boxx$ is the identity.
The comultiplication 
\begin{equation}
\label{multD}
\mu_{\sigma}:
\xymatrix{
\Boxx_\EuT \ar[r]& \Boxx_{\EuS}(\Boxx_{\EuT_1},\ldots,\Boxx_{\EuT_k})
}
\end{equation}
corresponding to a morphism $\sigma: \EuT \to \EuS$ in $\ttOmega_2$
with fibers $\Rada \EuT1k$ can be described by induction. We do not
provide the details here but refer to the construction of
the morphism $X^{\sigma}$ in the proof of ~\cite[Lemma~11.13]{batanin-markl:Adv} which
is exactly the comultiplication $\mu_{\sigma}$ in the case when all
members of the family $\mathcal X$ are equal to the same object $X.$
The argument however does not depend on this difference. We will
repeat the same kind of construction in the description of the endomorphism
$2$-operad $\End_\scE^{\OmegaN}$ in the second part of this proof.

The proof that $\Boxx$ is a strong multitensor also goes by an induction following the construction of 
the comultiplication, using the fact that 
both $\boxx_0,\boxx_1$ are $V$-functors and the interchange
morphism~(\ref{dl}) 
is a $V$-natural transformation.  We leave the details to the reader.

As shown in~(\ref{Zitra_s_chlapci}), the $\ttOmega_2$-multicotensor
$\Boxx$ induces an $\ttOmega^\bbN_2$-multicotensor $\Boxx^\bbN$. The
$\bbN$-colored  $2$-operad 
$\End_\scE^{\OmegaN} \in \OpV \OmegaN$ of an $\bbN$-collection $\scE =
\coll \scE$ in $\duo$ is the endomorphism
operad related to this multicotensor as in Lemma~\ref{endo}.  We
describe it in detail because we will need this
description later.

We start by associating, to each $\bbN$-colored $2$-tree
$\frT = (\EuT,c) \in \OmegaN$, the {\em $\boxx$-power\/} $\scE^{\frT}$ of
$\scE$ as
follows.  Let the $1$-truncation of the underlying $2$-tree $\EuT$ be
$\{1,\ldots,t\}$, with its set of $2$-leaves over the $1$-vertex $d
\in \{1,\ldots,t\}$ being $\{v^d_1,\ldots,v^d_{q_d}\}$.

We then define, for $1 \leq d \leq t$,
\begin{equation}
  \label{Jarus}
  \scE^{\boxx_1^{q_d}}_\frT := 
  \cases{
    \scE\big(c(v^d_1)\big) \boxx_1 \cdots  
    \boxx_1 \scE\big(c(v^d_{q_d})\big)}{if $q_d > 1$}v{if
    $q_d =0$.} 
\end{equation}
With this notation,
\[
\scE^{\frT} := \scE^{\boxx_1^{q_t}}_\frT \boxx_0
\scE^{\boxx_1^{q_{t-1}}}_\frT  \boxx_0       \cdots\boxx_0
\scE^{\boxx_1^{q_2}}_\frT \boxx_0
\scE^{\boxx_1^{q_1}}_\frT.
\]
We believe that the portrait of $\scE^{\frT}$ in
Figure~\ref{symbol} borrowed from~\cite{batanin-markl:Adv} 
clarifies our definition.
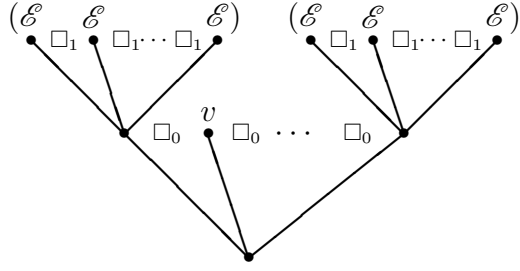
\begin{figure}
\begin{center}
{
\unitlength=.1em
\begin{picture}(150.00,80.00)(0.00,0.00)
\thicklines
\put(85.00,40.00){\makebox(0.00,0.00){$\cdots$}}
\put(130.00,70.00){\makebox(0.00,0.00){\scriptsize $\cdots$}}
\put(40.00,70.00){\makebox(0.00,0.00){\scriptsize $\cdots$}}
\put(110.00,40.00){\makebox(0.00,0.00)[r]{\scriptsize $\boxx_0$}}
\put(63.00,40.00){\makebox(0.00,0.00)[l]{\scriptsize $\boxx_0$}}
\put(43.00,40.00){\makebox(0.00,0.00){\scriptsize $\boxx_0$}}
\put(140.00,70.00){\makebox(0.00,0.00){\scriptsize $\boxx_1$}}
\put(120.00,70.00){\makebox(0.00,0.00){\scriptsize $\boxx_1$}}
\put(100.00,70.00){\makebox(0.00,0.00){\scriptsize $\boxx_1$}}
\put(50.00,70.00){\makebox(0.00,0.00){\scriptsize $\boxx_1$}}
\put(30.00,70.00){\makebox(0.00,0.00){\scriptsize $\boxx_1$}}
\put(10.00,70.00){\makebox(0.00,0.00){\scriptsize $\boxx_1$}}
\put(150.00,72.00){\makebox(0.00,0.00)[b]{$\hphantom{(}\scE)$}}
\put(110.00,74.00){\makebox(0.00,0.00)[b]{$\scE$}}
\put(90.00,72.00){\makebox(0.00,0.00)[b]{$(\scE \hphantom{)}$}}
\put(60.00,72.00){\makebox(0.00,0.00)[b]{$\hphantom{(}\scE)$}}
\put(20.00,74.00){\makebox(0.00,0.00)[b]{$\scE$}}
\put(0.00,72.00){\makebox(0.00,0.00)[b]{$(\scE \hphantom{)}$}}
\put(70.00,0.00){\makebox(0.00,0.00){\scriptsize $\bullet$}}
\put(30.00,40.00){\makebox(0.00,0.00){\scriptsize $\bullet$}}
\put(120.00,40.00){\makebox(0.00,0.00){\scriptsize $\bullet$}}
\put(150.00,70.00){\makebox(0.00,0.00){\scriptsize $\bullet$}}
\put(110.00,70.00){\makebox(0.00,0.00){\scriptsize $\bullet$}}
\put(90.00,70.00){\makebox(0.00,0.00){\scriptsize $\bullet$}}
\put(60.00,70.00){\makebox(0.00,0.00){\scriptsize $\bullet$}}
\put(20.00,70.00){\makebox(0.00,0.00){\scriptsize $\bullet$}}
\put(0.00,70.00){\makebox(0.00,0.00){\scriptsize $\bullet$}}
\put(57.0,40.00){\makebox(0.00,0.00){\scriptsize $\bullet$}}
\put(57.0,44.00){\makebox(0.00,0.00)[b]{$v$}}
\put(70.00,0.00){\line(5,4){50.00}}
\put(30.00,40.00){\line(1,-1){40.00}}
\put(120.00,40.00){\line(1,1){30.00}}
\put(120.00,40.00){\line(-1,3){10.00}}
\put(90.00,70.00){\line(1,-1){30.00}}
\put(30.00,40.00){\line(1,1){30.00}}
\put(20.00,70.00){\line(1,-3){10.00}}
\put(30.00,40.00){\line(-1,1){30.00}}
\put(70.00,0.00){\line(-1,3){13.00}}
\end{picture}}
\caption{\label{symbol}An `ideological' picture of $\scE^{\frT}$. Leaves
  of height $2$ (resp.~$1$) are decorated by $\scE = \coll \scE$ (resp.~$v$).  The
  decorations of vertices of height $2$ (resp.~$1$) are then
  multiplied by $\boxx_1$ (resp.~$\boxx_0$), with the
  $\boxx_1$-multiplication performed first. For brevity we did not
  show the colors.}
\end{center}
\end{figure}
Define finally
\[
\End^{\OmegaN}_\scE(\frT) := \duo\big(\scE^\frT,\scE(n)\big),
\]
where $n$ is the output color of $\frT$. Let us describe
the operad multiplication
\begin{equation}
\label{mult}
\mu(f):
\xymatrix{
\End^{\OmegaN}_\scE(\frT_1) \ot \cdots \ot \End^{\OmegaN}_\scE(\frT_k) 
\ot \End^{\OmegaN}_\scE(\frS) \ar[r]& \End^{\OmegaN}_\scE(\frT)
}
\end{equation}
corresponding to a morphism $f : \frT \to \frS$ in $\OmegaN$ 
with fibers $\Rada \frT1k$. For 
\[
\phi : \scE^\frS \to \scE(n) \in \End^{\OmegaN}_\scE(\frS) 
\ \mbox { and } \ \phi_i : \scE^{\frT_i} \to \scE(n_i) \in
\End^{\OmegaN}_\scE(\frT_i),\ i \in |\frS|,
\]
we  use an auxiliary natural morphism
\begin{equation}
\label{eq:13}
\Phi^f(\Rada \phi1k): \scE^{\frT} \lra  \scE^{\frS}
\end{equation}
described below, and define $\mu(f)$ by the formula
\[
\mu(f)(\phi_1 \ot \cdots \phi_k \ot \phi) := \phi \circ 
\Phi^f(\Rada \phi1k).
\]

Let us define~(\ref{eq:13}). Suppose first that 
$\tr_1(\frS) = (1)$, so $\frS$ is the suspension of the $1$-tree
$(1,\ldots,k)$. In this case the exchange rule~(\ref{dl}) in $\duo$ induces  
a natural map
\[
\scE^f : \scE^{\frT} \longrightarrow \scE^{\frT_1} \boxx_1 \cdots \boxx_1 \scE^{\frT_k}
\]
and $\Phi^f(\Rada \phi1k)$ is the composite
\[
\xymatrix@1@C = +1.6em{
\scE^{\frT} \ar[r]^{\scE^f \hskip 3em}& \scE^{\frT_1} \boxx_1 \cdots \boxx_1
\scE^{\frT_k} \ar[rrr]^{\phi_1\boxx_1 \cdots \boxx_1 \phi_k \hskip 2em}&&&
 \scE(n_1)\boxx_1 \cdots \boxx_1 \scE(n_k) = \scE^{\frS}. 
}
\]

To address the general case denote, for $\frT_1,\frT_2 \in \OmegaN$, by $\frT_1
\vee \frT_2$ the colored $2$-tree 
obtained by identifying the root of $\frT_1$ with the root of
$\frT_2$.\footnote{In~\cite{batanin-markl:Adv} we denoted this
  operation by  $\frT_1+\frT_2$.}  Observe that
\begin{equation}
  \label{eq:16}
  \scE^{\frT_1 \vee \frT_2} = \scE^{\frT_1} \boxx_0 \scE^{\frT_2}.
\end{equation}
A general $\frS$ uniquely decomposes into the product 
$\frS_1 \vee \cdots \vee
\frS_p$ of the suspensions of $1$-trees; $f :\frT \to \frS$ is then of
the form 
\begin{equation}
\label{Vlasta}
f = f_1 \vee \cdots \vee f_p : \frT = \frT_1 \vee \cdots \vee \frT_p \lra
\frS_1 \vee \cdots \vee \frS_p = \frS.
\end{equation}
Suppose the fibers of $f_a : \frT_a \to \frS_a$ are
$\frT^a_1,\ldots,\frT^a_{p_a}$, $1 \leq a \leq s$, 
then the fibers of $f$ are
\[
\frT_1,\ldots,\frT_k = 
\frT^1_1,\ldots,\frT^1_{p_1},\ldots,\frT^s_1,\ldots,\frT^s_{p_s}.
\]
For $\phi^a_b : \scE^{\frT^a_b} \to \scE(n^a_b)$ we define
$\Phi^f(\phi^1_1,\ldots,\phi^1_{p_1},\ldots,\phi^s_1,\ldots,\phi^s_{p_s}): 
\scE^{\frT} \to \scE^{\frS}$ by
\begin{equation}
\label{eq:18}
\Phi^f(\phi^1_1,\ldots,\phi^1_{p_1},\ldots,\phi^s_1,\ldots,\phi^s_{p_s}) :=
\Phi^{f_1}(\phi^1_1,\ldots,\phi^1_{p_1}) \boxx_0 \cdots \boxx_0
\Phi^{f_s}(\phi^s_1,\ldots,\phi^s_{p_s}),
\end{equation} 
where we tacitly used~(\ref{eq:16}).
This defines~(\ref{eq:13}) for arbitrary trees.
\end{proof}

\subsection{The category of levelled trees}
\label{LTr}
The category $\LTr$ has objects planar rooted trees with three
types of \hbox{vertices}: `white' vertices $\bily$, `vertical'
vertices $\tlusty$ and
`horizontal' vertices~$\cerny$.\footnote{The terminology will be
explained in Section~\ref{LTrendo}.}   These vertices may have arbitrary arities
$\geq 0$ and are lined up into levels of two types:
\begin{itemize}
\item[(i)]
levels consisting of white vertices \bily\ and/or vertical vertices \tlusty, and
\item[(ii)]
levels consisting only of horizontal vertices \cerny.
\end{itemize}
An example of a tree in $\LTr$ is given in Figure~\ref{fig:1} which
uses the convention that levelled trees are drawn horizontally, with the
root on the left.
\begin{figure}
\begin{center}
\psscalebox{.5 .5}
{
\begin{pspicture}(2,-5.75)(21.006176,4.9)
\psline[linecolor=black, linewidth=0.02](2.7289307,1.7506206)(2.7289307,1.7506206)
\psline[linecolor=black, linewidth=0.02](2.5974844,4.250776)(2.5974844,-5.75)(2.5974844,-5.6249905)
\pscustom[linecolor=black, linewidth=0.05]
{
\newpath
\moveto(8.9,1.15)
}
\pscustom[linecolor=black, linewidth=0.05]
{
\newpath
\moveto(21.5,2.45)
}

\pscustom[linecolor=black, linewidth=0.05]
{
\newpath
\moveto(2.597484,-1.2496119)
\lineto(2.912956,-1.549635)
\curveto(3.0706918,-1.6996472)(3.51761,-1.9996704)(3.8067925,-2.149682)
\curveto(4.095975,-2.2996936)(4.858365,-2.599717)(5.331572,-2.7497284)
\curveto(5.8047805,-2.89974)(7.1718235,-3.1997633)(8.06566,-3.3497753)
\curveto(8.959497,-3.499787)(10.668302,-3.6998022)(13.113208,-3.8498137)
}

\pscustom[linecolor=black, linewidth=0.05]
{
\newpath
\moveto(2.597484,-1.2496119)
\lineto(2.912956,-0.9495886)
\curveto(3.0706918,-0.799577)(3.51761,-0.49955383)(3.8067925,-0.34954223)
\curveto(4.095975,-0.19953065)(4.858365,0.10049255)(5.331572,0.25050414)
\curveto(5.8047805,0.40051574)(7.1718235,0.70053893)(8.06566,0.85055053)
\curveto(8.959497,1.0005622)(10.668302,1.200578)(13.113208,1.3505896)
}
\pscustom[linecolor=black, linewidth=0.05]
{
\newpath
\moveto(13.113208,1.3505896)
\lineto(13.22968,1.4976599)
\curveto(13.287915,1.5711951)(13.429346,1.7182654)(13.512539,1.7918005)
\curveto(13.595733,1.8653357)(13.895232,2.0124056)(14.111536,2.0859408)
\curveto(14.327839,2.159476)(14.843642,2.262425)(15.742138,2.3506672)
}
\pscustom[linecolor=black, linewidth=0.05]
{
\newpath
\moveto(13.113208,-3.849814)
\lineto(13.22968,-3.996884)
\curveto(13.287915,-4.0704193)(13.429346,-4.2174892)(13.512539,-4.291024)
\curveto(13.595733,-4.364559)(13.895232,-4.5116296)(14.111536,-4.585165)
\curveto(14.327839,-4.6587)(14.843642,-4.761649)(15.742138,-4.849891)
}
\pscustom[linecolor=black, linewidth=0.05]
{
\newpath
\moveto(13.113208,-3.849814)
\lineto(13.22968,-3.6880364)
\curveto(13.287915,-3.607148)(13.429346,-3.4453712)(13.512539,-3.3644824)
\curveto(13.595733,-3.2835937)(13.895232,-3.1218164)(14.111536,-3.0409276)
\curveto(14.327839,-2.9600391)(14.843642,-2.846795)(15.742138,-2.7497284)
}
\pscustom[linecolor=black, linewidth=0.05]
{
\newpath
\moveto(13.113208,1.3505896)
\lineto(13.22968,1.1888123)
\curveto(13.287915,1.1079236)(13.429346,0.94614655)(13.512539,0.86525786)
\curveto(13.595733,0.78436923)(13.895232,0.62259215)(14.111536,0.54170346)
\curveto(14.327839,0.46081483)(14.843642,0.3475708)(15.742138,0.25050429)
}
\psline[linecolor=black, linewidth=0.05](13.113208,-3.8498137)(13.113208,-3.8498137)
\psline[linecolor=black, linewidth=0.05](13.113208,-3.8498137)(18.371069,-3.8498137)(18.371069,-3.8498137)
\psline[linecolor=black, linewidth=0.05](15.742138,0.25050429)(18.371069,0.05048877)(18.502516,0.05048877)
\psline[linecolor=black, linewidth=0.05](15.742138,-2.7497284)(18.371069,-2.549713)(18.371069,-2.549713)
\psline[linecolor=black, linewidth=0.05](18.371069,-2.549713)(21.0,-2.3496974)(21.0,-2.3496974)
\pscustom[linecolor=black, linewidth=0.05]
{
\newpath
\moveto(18.371069,0.050488893)
\lineto(18.604012,0.3232373)
\curveto(18.720484,0.4596118)(19.252924,0.73235995)(19.668896,0.8687341)
\curveto(20.084866,1.0051084)(20.625626,1.1687573)(21.0,1.2505819)
}
\pscustom[linecolor=black, linewidth=0.05]
{
\newpath
\moveto(15.742139,2.350667)
\lineto(15.975081,2.5779574)
\curveto(16.091553,2.6916027)(16.623995,2.9188929)(17.039967,3.0325382)
\curveto(17.455935,3.1461835)(17.996696,3.2825577)(18.371069,3.3507447)
}
\pscustom[linecolor=black, linewidth=0.05]
{
\newpath
\moveto(18.371069,-3.849814)
\lineto(18.604012,-3.6907105)
\curveto(18.720484,-3.6111584)(19.252924,-3.4520557)(19.668896,-3.3725038)
\curveto(20.084866,-3.2929523)(20.625626,-3.1974902)(21.0,-3.1497595)
}
\pscustom[linecolor=black, linewidth=0.05]
{
\newpath
\moveto(18.502516,0.050488893)
\lineto(18.72381,0.1641339)
\curveto(18.83446,0.22095641)(19.340279,0.33460206)(19.73545,0.39142457)
\curveto(20.130623,0.44824708)(20.644346,0.5164343)(21.0,0.5505276)
}
\pscustom[linecolor=black, linewidth=0.05]
{
\newpath
\moveto(18.371069,2.450675)
\lineto(18.604012,2.33703)
\curveto(18.720484,2.2802076)(19.252924,2.1665618)(19.668896,2.1097393)
\curveto(20.084866,2.0529168)(20.625626,1.9847299)(21.0,1.9506361)
}
\pscustom[linecolor=black, linewidth=0.05]
{
\newpath
\moveto(18.371069,2.450675)
\lineto(18.604012,2.6325073)
\curveto(18.720484,2.7234235)(19.252924,2.9052558)(19.668896,2.996172)
\curveto(20.084866,3.087088)(20.625626,3.1961875)(21.0,3.250737)
}
\pscustom[linecolor=black, linewidth=0.05]
{
\newpath
\moveto(15.742139,2.350667)
\lineto(15.975081,2.168835)
\curveto(16.091553,2.0779188)(16.623995,1.8960865)(17.039967,1.8051703)
\curveto(17.455935,1.7142541)(17.996696,1.6051548)(18.371069,1.5506052)
}
\pscustom[linecolor=black, linewidth=0.05]
{
\newpath
\moveto(5.2264147,-2.6497204)
\lineto(5.4593577,-2.467888)
\curveto(5.57583,-2.376972)(6.1082716,-2.19514)(6.524242,-2.1042237)
\curveto(6.940212,-2.0133076)(7.480973,-1.9042084)(7.8553457,-1.8496586)
}
\pscustom[linecolor=black, linewidth=0.05]
{
\newpath
\moveto(18.371069,-3.849814)
\lineto(18.604012,-4.1452913)
\curveto(18.720484,-4.29303)(19.252924,-4.588507)(19.668896,-4.736246)
\curveto(20.084866,-4.8839846)(20.625626,-5.061272)(21.0,-5.1499147)
}
\pscustom[linecolor=black, linewidth=0.05]
{
\newpath
\moveto(18.371069,0.050488893)
\lineto(18.604012,-0.26771727)
\curveto(18.720484,-0.42682067)(19.252924,-0.7450275)(19.668896,-0.9041309)
\curveto(20.084866,-1.0632336)(20.625626,-1.2541577)(21.0,-1.3496199)
}
\pscustom[linecolor=black, linewidth=0.05]
{
\newpath
\moveto(18.502516,0.050488893)
\lineto(18.72381,-0.06315674)
\curveto(18.83446,-0.11997925)(19.340279,-0.23362426)(19.73545,-0.2904468)
\curveto(20.130623,-0.3472693)(20.644346,-0.41545653)(21.0,-0.44955003)
}
\psline[linecolor=black, linewidth=0.05](18.371069,-3.8498137)(21.0,-4.149837)(21.0,-4.149837)
\psline[linecolor=black, linewidth=0.05](15.742138,2.3506672)(18.371069,2.450675)(18.371069,2.450675)
\psline[linecolor=black, linewidth=0.02](5.226415,4.250776)(5.226415,-5.75)(5.226415,-5.6249905)
\psline[linecolor=black, linewidth=0.02](7.8553457,4.250776)(7.8553457,-5.75)(7.8553457,-5.6249905)
\psline[linecolor=black, linewidth=0.02](10.484277,4.250776)(10.484277,-5.75)(10.484277,-5.6249905)
\psline[linecolor=black, linewidth=0.02](13.113208,4.250776)(13.113208,-5.75)(13.113208,-5.6249905)
\psline[linecolor=black, linewidth=0.02](15.842138,4.250776)(15.842138,-5.75)(15.842138,-5.6249905)
\psline[linecolor=black, linewidth=0.02](18.371069,4.250776)(18.371069,-5.75)(18.371069,-5.6249905)
\psdots[linecolor=black, dotsize=0.49939746](5.2,-2.65)
\psdots[linecolor=black, dotsize=0.49939746](5.2,0.15)
\psdots[linecolor=black, dotsize=0.49939746](7.8,0.85)
\psdots[linecolor=black, dotsize=0.49939746](7.9,-1.85)
\psdots[linecolor=black, dotsize=0.49939746](7.9,-3.25)
\psdots[linecolor=black, fillstyle=solid, dotstyle=o, dotsize=0.49939746](10.5,1.15)
\psdots[linecolor=black, fillstyle=solid, dotstyle=o, dotsize=0.49939746](18.4,0.05)
\psdots[linecolor=black, fillstyle=solid, dotstyle=o, dotsize=0.49939746](18.4,3.35)
\psdots[linecolor=black, fillstyle=solid, dotstyle=o, dotsize=0.49939746](18.4,2.45)
\psdots[linecolor=black, fillstyle=solid, dotstyle=o, dotsize=0.49939746](18.4,1.55)
\psdots[linecolor=black, fillstyle=solid, dotstyle=o, dotsize=0.49939746](18.4,-3.85)
\psdots[linecolor=black, dotstyle=otimes, dotsize=0.49939746](18.4,1.55)
\psdots[linecolor=black, fillstyle=solid, dotstyle=o, dotsize=0.49939746](13.1,1.35)
\psdots[linecolor=black, fillstyle=solid, dotstyle=o, dotsize=0.49939746](15.8,0.25)
\psdots[linecolor=black, fillstyle=solid, dotstyle=o, dotsize=0.49939746](15.8,-4.85)
\psdots[linecolor=black, fillstyle=solid, dotstyle=o, dotsize=0.49939746](15.8,2.35)
\psdots[linecolor=black, fillstyle=solid, dotstyle=o, dotsize=0.49939746](18.4,-2.55)
\psdots[linecolor=black, fillstyle=solid, dotstyle=o, dotsize=0.49939746](15.8,-2.75)
\psdots[linecolor=black, dotstyle=otimes, dotsize=0.49939746](15.8,0.25)
\psdots[linecolor=black, dotstyle=otimes, dotsize=0.49939746](15.8,-2.75)
\psdots[linecolor=black, dotstyle=otimes, dotsize=0.49939746](15.8,-4.85)
\psdots[linecolor=black, dotstyle=otimes, dotsize=0.49939746](18.4,-2.55)
\psdots[linecolor=black, dotstyle=otimes, dotsize=0.49939746](15.8,2.35)
\psdots[linecolor=black, dotstyle=otimes, dotsize=0.49939746](15.8,-3.85)
\psdots[linecolor=black, fillstyle=solid, dotstyle=o, dotsize=0.49939746](13.1,-3.85)
\psline[linecolor=black, linewidth=0.05, arrowsize=0.05291666666666668cm 6.0,arrowlength=2.0,arrowinset=0.0]{<-}(0.1,-1.2496121)(2.5974844,-1.2496121)
\psdots[linecolor=black, fillstyle=solid, dotstyle=o, dotsize=0.49939746](10.5,-3.65)
\psdots[linecolor=black, dotstyle=otimes, dotsize=0.49939746](10.5,-3.65)
\psdots[linecolor=black, fillstyle=solid, dotstyle=o, dotsize=0.49939746](13.1,-2.25)
\psdots[linecolor=black, dotstyle=otimes, dotsize=0.3](2.6,-1.25)
\psdots[linecolor=black, fillstyle=solid, dotstyle=o, dotsize=0.49939746](2.6,-1.25)
\psdots[linecolor=black, dotstyle=otimes, dotsize=0.49939746](2.6,-1.25)
\psdots[linecolor=black, dotsize=0.49939746](18.4,3.35)
\psdots[linecolor=black, dotsize=0.49939746](18.4,2.45)
\psdots[linecolor=black, dotsize=0.49939746](18.4,1.55)
\psdots[linecolor=black, dotsize=0.49939746](18.4,0.05)
\psdots[linecolor=black, dotsize=0.49939746](18.4,-2.55)
\psdots[linecolor=black, dotsize=0.49939746](18.4,-3.85)
\pscustom[linecolor=black, linewidth=0.05]
{
\newpath
\moveto(10.542139,-3.449333)
\lineto(10.7485,-3.199315)
\curveto(10.85168,-3.0743067)(11.323361,-2.8242884)(11.691864,-2.6992798)
\curveto(12.060367,-2.5742712)(12.539417,-2.4242601)(12.871069,-2.3492553)
}
\pscustom[linecolor=black, linewidth=0.05]
{
\newpath
\moveto(5.142138,0.25066712)
\lineto(5.375082,0.06883484)
\curveto(5.4915533,-0.022081299)(6.023995,-0.20391357)(6.439965,-0.29482973)
\curveto(6.855935,-0.38574585)(7.3966956,-0.49484497)(7.771069,-0.54939485)
}
\psdots[linecolor=black, dotsize=0.49939746](7.8,-0.55)
\pscustom[linecolor=black, linewidth=0.05]
{
\newpath
\moveto(7.8710694,-0.5493249)
\lineto(8.1040125,-0.48112914)
\curveto(8.220484,-0.44703126)(8.752926,-0.37883544)(9.168896,-0.34473756)
\curveto(9.584866,-0.31063965)(10.125627,-0.2697217)(10.5,-0.24926297)
}
\pscustom[linecolor=black, linewidth=0.05]
{
\newpath
\moveto(7.8710694,-0.64932495)
\lineto(8.1040125,-0.8311523)
\curveto(8.220484,-0.92206544)(8.752926,-1.1038928)(9.168896,-1.1948059)
\curveto(9.584866,-1.2857196)(10.125627,-1.3948157)(10.5,-1.4493638)
}
\psdots[linecolor=black, fillstyle=solid, dotstyle=o, dotsize=0.49939746](10.5,-1.45)
\psline[linecolor=black, linewidth=0.05](10.542138,-0.24933279)(13.171069,-0.14932503)(13.171069,-0.14932503)
\psdots[linecolor=black, fillstyle=solid, dotstyle=o, dotsize=0.49939746](10.5,-0.25)
\psdots[linecolor=black, fillstyle=solid, dotstyle=o, dotsize=0.49939746](13.1,-0.15)
\psdots[linecolor=black, dotstyle=otimes, dotsize=0.49939746](10.5,1.15)
\psdots[linecolor=black, dotstyle=otimes, dotsize=0.49939746](10.5,-0.25)
\psdots[linecolor=black, dotstyle=otimes, dotsize=0.49939746](10.5,-1.45)
\psdots[linecolor=black, dotstyle=o, dotsize=0.49939746](15.8,-3.85)
\psdots[linecolor=black, dotstyle=o, dotsize=0.49939746](15.8,0.25)
\psdots[linecolor=black, dotstyle=o, dotsize=0.49939746](2.6,-1.25)
\rput[b](2.64,4.75){\Huge 1}
\rput[b](5.2,4.75){\Huge 2}
\rput[b](7.8,4.75){\Huge 3}
\rput[b](10.5,4.75){\Huge 4}
\rput[b](13.1,4.75){\Huge 5}
\rput[b](15.8,4.75){\Huge 6}
\rput[b](18.4,4.75){\Huge 7}
\rput{90.0}(16.6221,-27.22){\rput(22.01,-5.15){\Large 1}}
\rput{90.0}(17.745,-25.945){\rput(21.795,-4.05){{\Large 2}}}
\rput{90.0}(18.754349,-24.854347){\rput(21.804348,-3.05){{\Large 3}}}
\rput{90.0}(19.445,-24.145){\rput(21.795,-2.35){{\Large 4}}}
\rput{90.0}(20.454348,-23.154348){\rput(21.804348,-1.35){{\Large 5}}}
\rput{90.0}(21.354347,-22.254349){\rput(21.804348,-0.45){{\Large 6}}}
\rput{90.0}(22.354347,-21.254349){\rput(21.804348,0.55){{\Large 7}}}
\rput{90.0}(23.154348,-20.454348){\rput(21.804348,1.35){{\Large 8}}}
\rput{90.0}(23.754349,-19.854347){\rput(21.804348,1.95){{\Large 9}}}
\rput{90.0}(25.054348,-18.554348){\rput(21.804348,3.25){{\Large 10}}}
\end{pspicture}
}
\end{center}
\caption{\label{fig:1}
A tree in $\LTr$. It has $7$ levels numbered
  from the left to the right, $4$ of type (i), $3$ of type (ii). The
  vertices on the same 
  level and the input leaves are ordered from the bottom up.}
\end{figure}
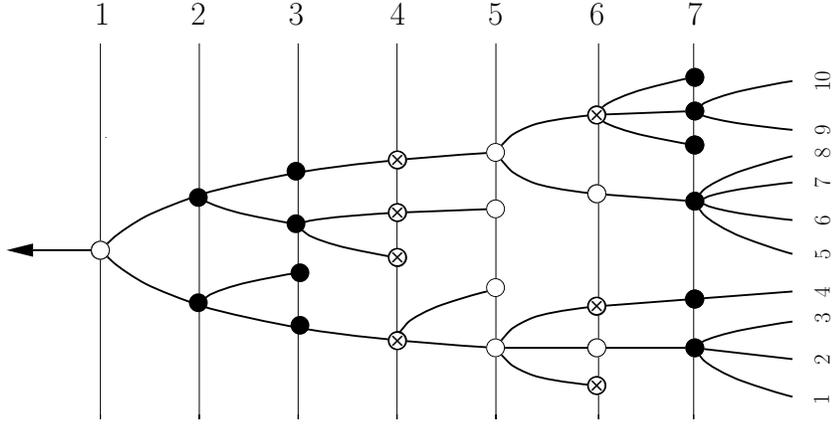
Morphisms in $\LTr$ are generated by three types of `elementary' morphisms:

\noindent 
{\it Type 1.}
Maps of trees $f : \beta \to \alpha$, where $\alpha$ is obtained from $\beta$ by
choosing two adjacent levels and contracting all edges connecting vertices
in these two chosen levels. This contraction results in a single
level with vertices determined by the following rules:
\begin{itemize}
\item[(i)]
contracting an edge adjacent to a \bily-vertex produces a \bily-vertex,
\item[(ii)]
contracting an edge connecting two \tlusty-vertices or a
\cerny-vertex with a \tlusty-vertex produces a \tlusty-vertex and
\item[(iii)]
contracting an edge connecting two \cerny-vertices produces a
\cerny-vertex.
\end{itemize}
If we `order' the types of vertices by 
\begin{equation}
\label{eq:6}
\cerny \prec  \tlusty \prec \bily
\end{equation}
then the above rules say that the `higher takes everything.'

\noindent 
{\it Type 2.}
Maps of trees $f : \beta \to \alpha$, where $\alpha$ is obtained from $\beta$ by
replacing a~\tlusty-vertex by a~\bily-vertex of the same arity, or
by replacing all \cerny-vertices in the same level by \tlusty-vertices.
Therefore only replacements that increase order~(\ref{eq:6})
are allowed.

\noindent 
{\it Type 3.}
Maps of trees $f : \beta \to \alpha$, where $\alpha$ is obtained from
$\beta$ by introducing a new level consisting only of \bily-vertices of
arity $1$.

We require the following relations between these elementary morphisms:
a contraction of two levels does not depend on the order, 
contractions commute with replacement of vertices, and 
introducing a level and then contracting it is an identity morphism.

The category $\LTr$ is operadic. 
The cardinality functor  $\LTr \to \FSet$ assigns to $\beta \in \LTr$
the set $\Wh(\beta)$ of its white vertices. The fibers
of a map $\sigma : \beta \to \alpha$ are preimages of white vertices of $\alpha$. 
Trees in $\LTr$ belong to the same connected component
if they have the same arity (= number of input edges), one therefore clearly has
$\pi_0(\LTr) \cong \bbN$. The (in this case unique) terminal objects are white
corollas $\bily^n$ of arity $n \in \bbN$.

There is an operadic functor 
\begin{equation}
\label{eq:21}
\qOmega:\LTr\to \OmegaN
\end{equation}
assigning to $\beta
\in \LTr$ an
$\bbN$-colored $2$-tree $\bfT = (\EuT,c)$ defined as follows.
Assume that the white and vertical vertices of $\beta$ are lined up, 
from the root down, as in the table
\begin{equation}
\label{eq:3}
\begin{array}{cccc}
 v^1_1 &  v^1_2& \cdots &  v^1_{q_1}
\\
 v^2_1 &  v^2_2& \cdots &  v^2_{q_2}
\\
\vdots & \vdots && \vdots
\\
 v^t_1 &  v^t_2& \cdots &  v^t_{q_t}
\end{array}
\end{equation}
(the table therefore does not show type~(ii) levels).
The $1$-truncation of $\EuT$ is the $1$-ordinal $\{1,\ldots,t\}$
represented by the corolla with $t$ leaves numbered from the left to the right.
The leaves of $\EuT$ are elements of the set
\begin{equation}
\label{Dnes_snad_Jarca_zalije_kyticky}
\big\{v^c_d,\ 1 \leq c \leq t,\ 1 \leq d \leq q_c,\ v^c_d  \mbox { is
  white} \big\}.\footnote{So we use the same symbols for both the white vertices
  of $\beta$ and for the leaves of $\EuT$.}
\end{equation}
The leaf $v^c_d$ is connected to the $c$th $1$-vertex $c$. Finally,
the color $c(v^c_d)$ of the leaf $v^c_d$ is the arity of the vertex
$v^c_d$, while the output color $c(1)$ of $\EuT$ is the arity
of $\beta$. We put $\qOmega(\beta) :=(\EuT,c)$. An example of this construction
is in Figure~\ref{tab:fig2}.  Observe that the functor $\qOmega$ in~(\ref{eq:21})
does not see horizontal vertices.

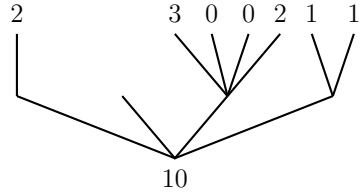
\begin{figure}
\begin{center}
\psscalebox{.7 0.7}
{
\begin{pspicture}(0,-1.24)(6.5653358,1.54)
\psline[linecolor=black, linewidth=0.04](2.1053357,0.0)(3.1053357,-1.1818181)(4.1053357,0.0)(4.1053357,0.0)
\psline[linecolor=black, linewidth=0.04](3.1053357,-1.1818181)(6.1053357,0.0)(6.1053357,0.0)
\psline[linecolor=black, linewidth=0.04](0.10533569,1.1818181)(0.10533569,0.0)(0.10533569,0.0)
\psline[linecolor=black, linewidth=0.04](4.1053357,0.0)(3.1053357,1.1818181)(3.1053357,1.1818181)
\psline[linecolor=black, linewidth=0.04](4.1053357,0.0)(5.1053357,1.1818181)(5.1053357,1.1818181)
\psline[linecolor=black, linewidth=0.04](4.1053357,0.0)(4.505336,1.1818181)(4.505336,1.1818181)
\psline[linecolor=black, linewidth=0.04](4.1053357,0.0)(3.8053358,1.1818181)(3.8053358,1.1818181)
\psline[linecolor=black, linewidth=0.04](6.1053357,0.0)(5.7053356,1.1818181)(5.7053356,1.1818181)
\psline[linecolor=black, linewidth=0.04](6.1053357,0.0)(6.505336,1.1818181)(6.505336,1.1818181)
\rput[b](0.10533569,1.4){\large $2$}
\rput[b](3.1053357,1.4){\large$3$}
\rput[b](3.8053358,1.4){\large$0$}
\rput[b](4.505336,1.4){\large$0$}
\rput[b](5.1053357,1.4){\large$2$}
\rput[b](5.7053356,1.4){\large$1$}
\rput[b](6.505336,1.4){\large$1$}
\rput[t](3.1053357,-1.4){\large$10$}
\psline[linecolor=black, linewidth=0.04](3.1053357,-1.1818181)(0.10533569,0.0)(0.10533569,0.0)
\end{pspicture}
}
\end{center}
\caption{An $\bbN$-colored  $2$-tree corresponding to the
  leveled tree of Figure~\ref{fig:1}.}
  \label{tab:fig2}
\end{figure}

\subsection{Endomorphism object for levelled trees}
\label{LTrendo}
By Proposition~\ref{omegaendos},
each collection
$\scE = \coll \scE$ of objects of a~duoidal category $\duo = (\duo,
\boxx_0,\boxx_1,e,v)$ has its $\bbN$-colored endomorphism $2$-operad
$\End_\scE^{\OmegaN}$. Likewise, one can define
a $\LTr$-collection $\End_\scE^{\LTr} =
\big\{\End_\scE^{\LTr}(\beta)\big\}_{\beta \in \LTr}$ which becomes, under the
mild assumption of the commutativity of diagram~(\ref{eq:32b}) below, 
an operad in $\OpV \LTr$.

Its construction is similar to the one in the proof of 
Proposition~\ref{omegaendos}.  Assume that all (not only \bily- and
\tlusty-) vertices of a leveled tree $\beta \in \LTr$ are organized as
in the table
\begin{equation}
\label{eq:3all}
\begin{array}{cccc}
u^1_1 &  u^1_2& \cdots &  u^1_{p_1}
\\
u^2_1 &  u^2_2& \cdots &  u^2_{p_2}
\\
\vdots & \vdots && \vdots
\\
u^\ell_1 &  u^\ell_2& \cdots &  u^\ell_{p_\ell}
\end{array}
\end{equation}
Notice that necessarily $p_1 = 1$.
Then  we define, for $1 \leq b \leq \ell$, 
\[
\scE^{\boxx_1^{p_b}}_\beta := 
 \scE(u^b_1) \boxx_1 \cdots  
    \boxx_1 \scE(u^b_{p_b}),
\]
where, for $1 \leq c \leq p_b$,
\begin{equation}
\label{eq:14}
\scE(u^b_c) 
:= \tricases{\scE(n^b_c)}{if $u^b_c$ is a white vertex of arity $n^b_c$,}
v{if  $u^b_c$ is a vertical vertex, and}e{if $u^b_c$ is a horizontal vertex.}
\end{equation}
Vertical vertices \tlusty\ are therefore represented by the vertical unit
$v$ and the horizontal vertices~\cerny\ by the horizontal unit $e$, 
which explains the terminology. Finally we put
\[
\scE^{\beta} := \scE^{\boxx_1^{p_\ell}}_\beta\boxx_0\cdots\boxx_0 
\scE^{\boxx_1^{p_1}}_\beta
\]
or, in the expanded form,
\begin{equation}
\label{eq:15}
\scE^{\beta} =
\big(\scE(u^\ell_1) \boxx_1 \cdots  
\boxx_1 \scE(u^\ell_{p_\ell}) \big) \boxx_0 \cdots \boxx_0  
\big(\scE(u^1_1) \boxx_1 \cdots  
    \boxx_1 \scE(u^1_{p_1})\big),
\end{equation}
and define $\End_\scE^\LTr(\beta) := \duo\big(\scE^\beta,\scE(n)\big)$ with $n$
the arity of $\beta$.

One has a natural morphism  of
$\LTr$-collections
\begin{equation}
\label{eq:12}
\Lambda:
\End_\scE^\LTr \to  \qOmega^* \End_\scE^\OmegaN
\end{equation}
whose components
\[
\Lambda_\beta:
\End_\scE^\LTr(\beta) \to  \End_\scE^\OmegaN(\frT), \ \beta \in \LTr, \
\frT := \qOmega(\beta),
\]
are induced by a natural map 
\begin{equation}
\label{eq:tttt}
\theta_\beta : \scE^\frT \to \scE^\beta  
\end{equation}
defined as follows.
Let us introduce first an auxiliary reduced $\boxx$-power
\[
\overline \scE^{\beta} := \overline \scE^{\boxx_1^{p_\ell}}_\beta\boxx_0 
\scE^{\boxx_1^{p_{\ell-1}}}_\beta 
\boxx_0\cdots\boxx_0 \overline \scE^{\boxx_1^{p_2}}_\beta\boxx_0
\overline \scE^{\boxx_1^{p_1}}_\beta
\]
whose factors are, for $1 \leq b \leq \ell$, given as
\[
\overline \scE^{\boxx_1^{p_b}} :=
\cases{\scE^{\boxx_1^{p_b}}}{if the $b$th level of $\beta$ is of type (i),
and}e{if the $b$th level of $\beta$ is of type (ii).}
\]

Clearly, $\overline \scE^{\beta}$ is obtained from $\scE^\beta$ by replacing
multiple $\boxx_1$-powers of $e$ by a single instance of~$e$. 
The comonoid structure 
\begin{equation}
\label{eq:34}
e \to e \boxx _1 e
\end{equation}
therefore gives rise to a canonical map $\varpi : \overline \scE^\beta \to
\scE^\beta$. More precisely, 
\[
\varpi =  \varpi^\ell \boxx_0 \cdots \boxx_0 \varpi^1 :  
\overline \scE^\beta =\overline  \scE^{\boxx_1^{p_\ell}}_\beta\boxx_0\cdots\boxx_0 
\overline \scE^{\boxx_1^{p_1}}_\beta  \longrightarrow 
 \scE^{\beta} =  \scE^{\boxx_1^{p_\ell}}_\beta\boxx_0\cdots\boxx_0 
\scE^{\boxx_1^{p_1}}_\beta,
\]
where $\varpi^b : \overline \scE^{\boxx_1^{p_b}}_\frT \to 
\scE^{\boxx_1^{p_b}}_\beta$, $1 \leq b \leq \ell$, are the following
canonical morphisms.

If the $b$th level of $\beta$ is of type (i), then
$\varpi^b$ is the identity
$\overline \scE^{\boxx_1^{p_b}}_\beta =    \scE^{\boxx_1^{p_b}}_\beta$.
When the $b$th level is of type (ii), then by definition
$\overline \scE^{\boxx_1^{p_b}}_\beta = e$ while
$\scE^{\boxx_1^{p_b}}_\beta = e \boxx_1 \cdots \boxx_1 e$
($p_b$-times). In this case we take as $\varpi_b$ the map
given by iterating the comonoid structure of $e$.

Comparing the definitions of $\overline \scE^\beta$ and 
$\scE^\frT$ we see that $\overline \scE^\beta$ differs from $\scE^\frT$ by
(possibly multiple) $\boxx_0$-products with $e$ and/or
$\boxx_1$-products with $v$. Recalling that $e$ (resp.~$v$) 
is the unit for the horizontal (resp.~vertical) 
multiplication in $\duo$, we conclude that $\overline \scE^\beta$ and $\scE^\frT$ are
canonically isomorphic. The map $\Lambda_\beta$ in~(\ref{eq:tttt}) is  the
composition
$\scE^\frT \cong \overline \scE^\beta \stackrel\varpi\longrightarrow  \scE^\beta$.

\begin{remark}
One can check that if the diagram
\begin{equation}
\label{eq:32b}
\xymatrix@R = +1.6em{e \boxx_1 e \ar[r] \ar[d] & e \boxx_1 v\ar[d]
\cr
v \boxx_1 e \ar[r] & e
}
\end{equation}
induced by the canonical map $e \to v$ and by the unit constraint for $v$
commutes, the construction of Section~\ref{LTrendo} gives rise to
an $\LTr$-multicotensor that induces a natural $\LTr$-operad
structure on $\End^\LTr_\scE$.

This in particular happens when the 
comultiplication~(\ref{eq:34}) is an isomorphism, in which case 
$\varpi$ is an isomorphism, too, and
$\Lambda$ is an isomorphism of $\LTr$-operads. An important instance
of such a situation is
when $\duo$ is the duoidal category $\funny(\ttC,\ttV)$ of
span $\ttV$-objects over a small category $\ttC$ 
\cite[Def.~6.3]{batanin-markl:Adv}.
\end{remark}

\begin{theorem}
\label{etiopska_restaurace}
For each multiplicative $1$-operad $\scA$ in a duoidal category $\duo$
there exists  a~natural  map of\/ $\LTr$-collections
\begin{equation}
\label{ltraction}
\Psi: \bfone^\LTr \to \End^\LTr_\scA 
\end{equation}
 such that the composite
\begin{equation}
\label{eq:33}
\Xi:
\bfone^\LTr \stackrel\Psi\longrightarrow \End^\LTr_\scA 
\stackrel\Lambda\longrightarrow \qOmega^* \End_\scA^\OmegaN
\end{equation}
is a morphism of\/ $\LTr$-operads. If the diagram~(\ref{eq:32b})
commutes, all maps
in~(\ref{eq:33}) are operad morphisms.
\end{theorem}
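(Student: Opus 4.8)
The plan is to define $\Psi$ componentwise from the $1$-operad structure of $\scA$, then describe the composite $\Xi$, and finally verify the operad-morphism property of $\Xi$ (and, under (\ref{eq:32b}), of $\Psi$) by induction on the number of levels of a tree in $\LTr$. Since $\bfone^\LTr(\beta)=I$ and $\End^\LTr_\scA(\beta)=\duo(\scA^\beta,\scA(n))$, where $n$ is the arity of $\beta$, the datum $\Psi$ is equivalent to a choice, for every $\beta\in\LTr$, of a morphism $\psi_\beta\colon\scA^\beta\to\scA(n)$ in $\duo$. I would define $\psi_\beta$ as the iterated operadic composite dictated by the leveled tree $\beta$: in the factorization (\ref{eq:15}) of $\scA^\beta$, a white vertex of arity $k$ keeps its factor $\scA(k)$; a vertical vertex of arity $k$ has its factor $v=\uAss(k)$ pushed forward along the multiplicative structure $\alpha$ of (\ref{eq:19}); a horizontal vertex of arity $k$ has its factor $e$ pushed forward along $e\to v\stackrel{\alpha_k}{\to}\scA(k)$; the $\boxx_1$-tensor along a level is retained as written; two consecutive levels are composed by one instance of the structure map $\gamma$ of (\ref{morph}); and arity-zero leaves are absorbed by the $v$-action $v\boxx_0\scA(0)\to\scA(0)$ of the operad $\scA$. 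Formally this is an induction on the number of levels, contracting the topmost level against the rest; the associativity and unit axioms of $\scA$ make the outcome independent of all choices, and the construction is visibly natural in $\scA$, so it yields a map of $\LTr$-collections.

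The component $\Xi_\beta$ of the composite is then the morphism $\scA^\frT\to\scA(n)$ equal to $\psi_\beta\circ\theta_\beta$, with $\frT=\qOmega(\beta)$ and $\theta_\beta$ as in (\ref{eq:tttt}). Recall that $\theta_\beta$ is the canonical isomorphism $\scA^\frT\cong\overline\scA^\beta$ followed by the map $\varpi$ built from the comultiplication $e\to e\boxx_1 e$ of (\ref{eq:34}). On each block $e\boxx_1\cdots\boxx_1 e$ produced by $\varpi$, the map $\psi_\beta$ acts through $e\to v$ in every factor, so the counit coherence of the comonoid $e$ in $(\duo,\boxx_1,v)$ collapses each such block again; what remains is exactly the evaluation of the $\bbN$-colored $2$-tree $\frT$ in $\scA$, assembled only from $\gamma$, $\alpha$, the unit $j\colon e\to\scA(1)$ and the $v$-action. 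Thus $\Xi_\beta$ depends only on $\frT$, which is what makes $\Xi$ a well-defined map of collections into the pullback operad $\qOmega^*\End^\OmegaN_\scA$.

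Because $\bfone^\LTr$ is the unit operad, an operad-morphism structure on $\Xi$ amounts to compatibility of the $\Xi_\beta$ with operad units and structure maps. Compatibility with units at the white corollas $\bily^n$ follows from the unit axioms of $\scA$. For $f\colon\beta\to\alpha$ in $\LTr$ with fibers $\beta_1,\dots,\beta_k$ one must show that $\Xi_\beta$ equals the composite of the $\Xi_{\beta_i}$ and $\Xi_\alpha$ along the comultiplication $\mu_{\qOmega(f)}$ of the induced multicotensor $\Boxx^\bbN$, i.e.\ along the auxiliary maps $\Phi^{\qOmega(f)}$ of (\ref{eq:13})--(\ref{eq:18}) that enter (\ref{mult}). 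I would prove this by induction on the number of levels, using the three generating morphisms of $\LTr$: a type-$1$ morphism (contraction of two adjacent levels) is handled by the associativity axiom (i) of $\scA$ matched against the inductive clause for $\mu_\sigma$ in (\ref{eq:13}); a type-$2$ morphism by naturality of $\gamma$ together with the maps $e\to v$ and $\alpha$; a type-$3$ morphism (insertion of a level of unary white vertices) by the unit axioms (ii) and (iii) of $\scA$ and the fact that $j$ is the operad unit; and the fibers on the two sides match because $\qOmega$ is an operadic functor. This compatibility of the three inductively defined constructions $\psi_\beta$, $\theta_\beta$ and $\mu_\sigma$ is the computational heart of the proof, and the step I expect to be the main obstacle.

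Finally, if (\ref{eq:32b}) commutes then $\End^\LTr_\scA$ carries a natural $\LTr$-operad structure and $\Lambda$ is a morphism of $\LTr$-operads by the Remark preceding the theorem; it therefore suffices to show that $\Psi$ itself is an operad morphism. This is the same induction as above carried out directly inside $\End^\LTr_\scA$, the only new input relative to the $\qOmega^*$-side being the contraction of a type-(ii) level, where blocks $e\boxx_1\cdots\boxx_1 e$ genuinely appear: the commuting square (\ref{eq:32b}) relating $e\to v$, the $v$-unit constraint and $e\to e\boxx_1 e$ is exactly the coherence that makes these contractions associative and unital (on the $\qOmega^*$-side such contractions are invisible, since $\qOmega$ forgets horizontal vertices). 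With $\Psi$ and $\Lambda$ both operad morphisms, so is $\Xi=\Lambda\circ\Psi$, hence all maps in (\ref{eq:33}).
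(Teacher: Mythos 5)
Your construction of $\Psi$ is the paper's own: ``push each vertex decoration into $\scA$ according to its type, then apply the iterated operadic composition'' is exactly the factorization $\Psi_\beta=\gamma_\beta\circ\omega_\beta$ through the object $\scA(\beta)$ of~(\ref{eq:17}), with $\omega_\beta$ as in~(\ref{eq:20}) assembled from $\id$, $\alpha_v$ and $\alpha_e$, and your plan to verify the operad-morphism property on the three types of elementary morphisms of $\LTr$ is precisely the check the paper dismisses as ``tautological.'' Your treatment of the case when~(\ref{eq:32b}) commutes also agrees with the paper's remark.

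There is, however, one genuinely false claim in your second paragraph: that $\Xi_\beta$ depends only on $\frT=\qOmega(\beta)$. The functor $\qOmega$ of~(\ref{eq:21}) forgets the horizontal vertices together with their arities, but $\Psi_\beta$ does not: a horizontal vertex of arity $k$ contributes $\alpha_e\colon e\to v\to\scA(k)$ plugged into an input of the operadic composite, and this is a non-trivial operation unless $k=1$. Your ``counit coherence collapses each block'' argument works only for arity-one horizontal vertices, where $\alpha_e$ coincides with the operad unit $j$. Concretely, let $\beta'$ and $\beta''$ each consist of one white vertex of arity $2$ followed by a single level of two horizontal vertices, of arities $(1,1)$ for $\beta'$ and $(2,0)$ for $\beta''$. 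Then $\qOmega(\beta')=\qOmega(\beta'')$, yet for $\duo=\ttV$ with $\boxx_0=\boxx_1=\ot$, $e=v=I$ and $\scA=\End_A$ the endomorphism operad of an associative algebra $A$, one computes $\Xi_{\beta'}(f)=f$ while $\Xi_{\beta''}(f)=f\big(\mu_2(-,-),1_A\big)$, and these differ for noncommutative $A$ (take $f(x,y)=yx$). Fortunately the theorem does not need the claim: a morphism of $\LTr$-collections $\bfone^\LTr\to\qOmega^*\End_\scA^\OmegaN$ merely assigns to each $\beta$ an element of $\End_\scA^\OmegaN\big(\qOmega(\beta)\big)$, with no constancy requirement along the fibers of $\qOmega$. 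The correct statement of this flavor is Lemma~\ref{golovokruzenie}, which asserts constancy along the fibers of the finer functor $w$ (which remembers horizontal vertices up to the moves of Lemma~\ref{elementary-moves}), not of $\qOmega$, and it requires a separate argument given later in the paper. Delete the claim and its justification; the rest of your argument stands and coincides with the paper's proof.
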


\begin{proof}
The map~(\ref{eq:19}) gives rise to a map $\alpha_v : v \to
\scA(n)$ 
and, when precomposed with  
the canonical map $e\to v$, to a  map $\alpha_e : e \to
\scA(n)$ for each $n \geq 0$.
By the definition of $ \bfone^\LTr$, morphism~(\ref{ltraction}) is
determined by specifying a map $\Psi_\beta : \scA^\beta \to \scA(n)$ for
each $\beta\in \LTr$, where $n$ is the arity of $\beta$. 
Assume again that the vertices of $\beta$ are as in
table~(\ref{eq:3all}) and denote, only for
purposes of this proof,
\begin{equation}
\label{eq:17}
\scA(\beta): =
\big(\scA(n^\ell_1) \boxx_1 \cdots  
\boxx_1 \scA(n^\ell_{p_\ell}) \big) \boxx_0 \cdots \boxx_0  
\big(\scA(n^1_1) \boxx_1 \cdots  
    \boxx_1 \scA(n^1_{p_1})\big)
\end{equation}
where $n^b_c$ is the arity of the vertex $v^b_c$, $1\leq b \leq \ell$,
$1 \leq c \leq p_b$. It is clear that the structure morphisms~(\ref{morph}) of the
$1$-operad $\scA$ give rise to a map $\gamma_\beta :   \scA(\beta) \to \scA(n)$.

For $\scA(u^b_c)$ as in~(\ref{eq:14}) define $\omega^b_c : \scA(u^b_c) \to
\scA(n^b_c)$ by
\[
\omega^b_c := \tricases{\mbox {the identity $\id : \scA(n^b_c) \to
    \scA(n^b_c)$}}{if $u^b_c$ is white}
          {\mbox {the map $\alpha_v: v \to
    \scA(n^b_c)$}}{if $u^b_c$ is vertical, and}
      {\mbox {the map $\alpha_e : e \to
    \scA(n^b_c)$}}{if $u^b_c$ is horizontal.}
\] 
Comparing~(\ref{eq:17}) and~(\ref{eq:15}), we see that the above maps
assemble to a morphism
\begin{equation}
\label{eq:20}
\omega_\beta : \scA^\beta \to \scA(\beta)
\end{equation}
We finally define $\Psi_\beta: \scA^\beta \to \scA(n)$ as the composite
\[
\Psi_\beta:
\scA^\beta  \stackrel{\omega_\beta}\lra   \scA(\beta) \stackrel{\gamma_\beta}\lra \scA(n).
\]

The category $\LTr$ was designed to model 
the `pasting schemes' for multiplicative
$1$-operads and all related constructions were `tautological.'
This makes the desired properties of the above objects obvious. 
\end{proof}

\section{The Tamarkin operad}
\label{sec:tamark-tsyg-oper}

We are going to give an alternative definition of Tamarkin's
operad $\seq$ acting on dg-categories and study various related categories
and operads. The second part of this section is devoted to the
construction of the functor~(\ref{eq:functoru}) needed in
Section~\ref{sec:acti-tamark-tsyg}.  
We consider as in the introduction 
to~\cite{batanin-berger} the pruned $\bbN$-colored $2$-operad
$\opTam_2^\bbN$ given by the pullback
\begin{equation}
\label{Tam2}  
   \xymatrix@C = +3em{
    \opTam_2^\bbN \ar[d]_{} 
    \ar[r]^{} 
    &\opK^{(2)} /a_2\ar[d]
    \\ 
   Des_2(\Lat^{(2)})
    \ar[r]^{Des_2(c)}
    &\, Des_2(\opK^{(2)}) 
  }
\end{equation}
where $\Lat^{(2)}$ is the second filtration of the lattice-path
operad,
 $\opK^{(2)}$ is the second filtration of the complete graph operad of
Berger \cite{berger:CM202}, $c: \Lat^{(2)} \to \opK^{(2)}$ 
is the complexity index functor and
$a_2$ is the canonical internal $2$-operad in $\opK^{(2)}$ consisting of
$2$-ordinals. 
The prominent r\^ole of  $\opTam_2^\bbN$ is given by the fact that 
the operad $\seq$ of~\cite[\S5.6.1]{tamarkin:CM07} acting on dg-categories
equals its restriction along the pruning
functor~(\ref{pruning}). To keep the notation compatible with the rest
of the paper, we denote the Tamarkin operad by $\opTm^\bbN_2$ and take
the above observations as its definition. 

\begin{definition}
\label{musim_objet_Ryn}
The {\em Tamarkin operad\/} $\opTm^\bbN_2$ is the restriction
$p^*\opTam_2^\bbN$ of the pullback~(\ref{Tam2}) along the pruning
functor $p:\OmegaN \to \OrN$.
\end{definition}

An alternative description of $\opTm^\bbN_2$ 
is given in Remark~\ref{pristi_tyden_posledni}.
Without going into details of all objects above, we give below  
an explicit description of  $\opTam_2^\bbN$ and the related~objects. 

It was shown in \cite[Prop.~4.10]{batanin-berger-markl}
or~\cite[Prop.~2.14]{batanin-berger} that
$\Lat^{(2)}$ is isomorphic to the Tamarkin-Tsygan operad
\cite{tamarkin-tsygan:LMP01}. Our exposition will use this description of $\Lat^{(2)}$.
The operad $\Lat^{(2)}$ is an ordinary symmetric
$\bbN$-colored operad in the monoidal category $\Set$ of sets. Its component 
$\Lat^{(2)}(n_1,\ldots,n_k;m)$ is the set of planar rooted trees $\delta$ with
$m$ input leaves and two types of vertices:
\begin{itemize} 
\item[(i)]
white vertices \bily\ labelled $\rada 1k$ such that the
$i$th vertex has arity $n_i$, and
\item[(ii)]
black vertices \cerny\ of arbitrary arities different from~1.
\end{itemize}
We moreover require that $\delta$ has no internal edge connecting
two black vertices. An example is given in Figure~\ref{tam-tree}.
\begin{figure}[t]
  \centering
\psscalebox{.5 .5}
{
\begin{pspicture}(7,-5.462137)(18.44407,3.2)
\definecolor{colour0}{rgb}{0.03137255,0.003921569,0.003921569}
\psline[linecolor=black, linewidth=0.02](2.597499,1.4627581)(2.597499,1.4627581)
\pscustom[linecolor=black, linewidth=0.05]
{
\newpath
\moveto(8.9,0.86213744)
}
\pscustom[linecolor=black, linewidth=0.05]
{
\newpath
\moveto(21.5,2.1621375)
}

\pscustom[linecolor=black, linewidth=0.05]
{
\newpath
\moveto(13.113208,-4.1376762)
\lineto(13.185376,-4.2847466)
\curveto(13.22146,-4.358282)(13.309092,-4.5053515)(13.36064,-4.5788865)
\curveto(13.412188,-4.652422)(13.597764,-4.7994924)(13.73179,-4.8730273)
\curveto(13.865815,-4.9465623)(14.185415,-5.049512)(14.742138,-5.137754)
}
\pscustom[linecolor=black, linewidth=0.05]
{
\newpath
\moveto(13.113208,-4.1376762)
\lineto(13.349299,-3.9023695)
\curveto(13.467346,-3.7847161)(13.754028,-3.5494099)(13.922666,-3.4317565)
\curveto(14.091303,-3.314103)(14.698396,-3.0787964)(15.136852,-2.9611425)
\curveto(15.575308,-2.8434894)(16.620857,-2.678775)(18.442139,-2.537591)
}
\pscustom[linecolor=black, linewidth=0.05]
{
\newpath
\moveto(10.913208,0.66272706)
\lineto(11.127149,0.55977356)
\curveto(11.234118,0.5082965)(11.493901,0.40534303)(11.646716,0.35386598)
\curveto(11.799531,0.3023889)(12.349662,0.19943604)(12.746979,0.14795898)
\curveto(13.144296,0.096481934)(14.091743,0.024414062)(15.742138,-0.03735826)
}
\psline[linecolor=black, linewidth=0.05](13.113208,-4.1376762)(13.113208,-4.1376762)
\psline[linecolor=black, linewidth=0.05](13.113208,-4.1376762)(15.571069,-4.1376762)(15.571069,-4.1376762)
\pscustom[linecolor=black, linewidth=0.05]
{
\newpath
\moveto(16.871069,-0.1296814)
\lineto(17.006544,0.16404602)
\curveto(17.074282,0.31090942)(17.383938,0.60463715)(17.625858,0.75150084)
\curveto(17.867779,0.89836454)(18.182274,1.0746012)(18.4,1.1627194)
}
\pscustom[linecolor=black, linewidth=0.05]
{
\newpath
\moveto(15.7006035,-4.1376762)
\lineto(15.93979,-3.978573)
\curveto(16.059383,-3.899021)(16.606096,-3.7399182)(17.033216,-3.6603663)
\curveto(17.460337,-3.5808148)(18.015593,-3.4853528)(18.4,-3.437622)
}
\pscustom[linecolor=black, linewidth=0.05]
{
\newpath
\moveto(16.947515,-0.1296814)
\lineto(17.076218,-0.0072949217)
\curveto(17.140568,0.053898316)(17.434742,0.17628479)(17.664566,0.23747803)
\curveto(17.894388,0.29867128)(18.19316,0.37210265)(18.4,0.40881887)
}
\pscustom[linecolor=black, linewidth=0.05]
{
\newpath
\moveto(14.542139,1.8628045)
\lineto(14.881412,1.7946087)
\curveto(15.051047,1.7605109)(15.826527,1.6923151)(16.432371,1.6582172)
\curveto(17.038214,1.6241193)(17.825811,1.5832013)(18.371069,1.5627426)
}
\pscustom[linecolor=black, linewidth=0.05]
{
\newpath
\moveto(9.263346,-0.637583)
\lineto(9.377826,-0.3193872)
\curveto(9.435066,-0.1602893)(9.696737,0.15790649)(9.901168,0.31700438)
\curveto(10.105598,0.4761023)(10.371359,0.66702026)(10.555346,0.7624788)
}
\pscustom[linecolor=black, linewidth=0.05]
{
\newpath
\moveto(15.7006035,-4.1376762)
\lineto(15.93979,-4.4331536)
\curveto(16.059383,-4.5808926)(16.606096,-4.8763695)(17.033216,-5.024109)
\curveto(17.460337,-5.171847)(18.015593,-5.3491344)(18.4,-5.437777)
}
\pscustom[linecolor=black, linewidth=0.05]
{
\newpath
\moveto(16.871069,-0.1296814)
\lineto(17.006544,-0.4723633)
\curveto(17.074282,-0.64370483)(17.383938,-0.9863867)(17.625858,-1.1577277)
\curveto(17.867779,-1.3290687)(18.182274,-1.5346777)(18.4,-1.6374824)
}
\pscustom[linecolor=black, linewidth=0.05]
{
\newpath
\moveto(16.947515,-0.1296814)
\lineto(17.076218,-0.25206786)
\curveto(17.140568,-0.31326112)(17.434742,-0.43564758)(17.664566,-0.49684083)
\curveto(17.894388,-0.55803406)(18.19316,-0.6314661)(18.4,-0.6681818)
}
\psline[linecolor=black, linewidth=0.05](15.700603,-4.1376762)(18.4,-4.4377)(18.4,-4.4377)
\psdots[linecolor=black, fillstyle=solid, dotstyle=o, dotsize=0.010382021](10.5,0.86213744)
\psdots[linecolor=black, fillstyle=solid, dotstyle=o, dotsize=0.49939746](13.1,-4.1378627)
\psline[linecolor=black, linewidth=0.05, arrowsize=0.05291666666666672cm 6.0,arrowlength=2.0,arrowinset=0.0]{<-}(5.6982164,-1.5374746)(7.725617,-1.5374746)
\psdots[linecolor=black, fillstyle=solid, dotstyle=o, dotsize=0.49939746](12.0,-2.8378625)
\psdots[linecolor=black, dotsize=0.49939746](14.4,1.8621374)
\psdots[linecolor=black, dotsize=0.49939746](17.0,-0.13786255)
\pscustom[linecolor=black, linewidth=0.05]
{
\newpath
\moveto(10.542139,-3.7371955)
\lineto(10.651031,-3.5553596)
\curveto(10.705476,-3.4644415)(10.954375,-3.2826061)(11.148826,-3.1916883)
\curveto(11.343276,-3.1007702)(11.596065,-2.9916687)(11.77107,-2.9371178)
}
\pscustom[linecolor=black, linewidth=0.05]
{
\newpath
\moveto(9.236145,-0.8371875)
\lineto(9.350626,-0.7689917)
\curveto(9.4078665,-0.7348938)(9.669538,-0.666698)(9.873968,-0.63260007)
\curveto(10.078399,-0.5985022)(10.344159,-0.5575842)(10.528146,-0.5371255)
}
\psdots[linecolor=black, fillstyle=solid, dotstyle=o, dotsize=0.49939746](10.5,-0.53786254)
\psdots[linecolor=black, dotstyle=otimes, dotsize=0.010382021](10.5,0.86213744)
\psdots[linecolor=black, fillstyle=solid, dotstyle=o, dotsize=0.49939746](14.4,-4.1378627)
\psdots[linecolor=black, fillstyle=solid, dotstyle=o, dotsize=0.49939746](15.8,-0.03786255)
\pscustom[linecolor=black, linewidth=0.05]
{
\newpath
\moveto(7.860071,-1.7371875)
\lineto(8.087773,-2.237196)
\curveto(8.201625,-2.487201)(8.722088,-2.9872096)(9.128698,-3.2372143)
\curveto(9.53531,-3.4872186)(10.063905,-3.787224)(10.429855,-3.9372263)
}
\pscustom[linecolor=black, linewidth=0.05]
{
\newpath
\moveto(7.887272,-1.3375831)
\lineto(8.001753,-1.2239331)
\curveto(8.058993,-1.1671082)(8.320664,-1.053457)(8.525095,-0.9966321)
\curveto(8.729526,-0.9398071)(8.995284,-0.8716162)(9.179273,-0.8375212)
}
\pscustom[linecolor=black, linewidth=0.05]
{
\newpath
\moveto(10.671069,-4.0371876)
\lineto(10.8862915,-4.0599236)
\curveto(10.993901,-4.0712914)(11.485837,-4.0940275)(11.870161,-4.1053953)
\curveto(12.254486,-4.116764)(12.754108,-4.1304054)(13.1,-4.1372266)
}
\psdots[linecolor=black, fillstyle=solid, dotstyle=o, dotsize=0.49939746](13.1,-4.1378627)
\pscustom[linecolor=black, linewidth=0.05]
{
\newpath
\moveto(10.842138,0.96280456)
\lineto(11.154829,1.1673676)
\curveto(11.311174,1.269649)(12.025893,1.4742126)(12.584269,1.5764941)
\curveto(13.142644,1.6787757)(13.868533,1.8015133)(14.371069,1.8628823)
}
\pscustom[linecolor=black, linewidth=0.05]
{
\newpath
\moveto(14.542139,1.9628046)
\lineto(14.881412,2.0991857)
\curveto(15.051047,2.1673768)(15.826527,2.303758)(16.432371,2.3719485)
\curveto(17.038214,2.440139)(17.825811,2.521968)(18.371069,2.5628822)
}
\psdots[linecolor=black, dotsize=0.49939746](9.3,-0.83786255)
\psdots[linecolor=black, fillstyle=solid, dotstyle=o, dotsize=0.49939746](7.8,-1.5378625)
\psdots[linecolor=black, fillstyle=solid, dotstyle=o, dotsize=0.49939746](10.7,0.7621375)
\psline[linecolor=colour0, linewidth=0.05](16.0,-0.03786255)(17.1,-0.13786255)
\psdots[linecolor=black, dotsize=0.49939746](10.5,-3.9378626)
\psdots[linecolor=black, dotsize=0.49939746](14.9,-5.1378627)
\psdots[linecolor=black, dotsize=0.49939746](15.8,-4.1378627)
\rput(7.6,-1.0178625){\textcolor{colour0}{\Large 1}}
\rput(12.8,-3.6378624){\textcolor{colour0}{\Large 2}}
\rput(12.2,-2.2378625){\textcolor{colour0}{\Large 3}}
\rput(10.7,-0.083786255){\textcolor{colour0}{\Large 4}}
\rput(10.4,1.2621374){\textcolor{colour0}{\Large 5}}
\rput[bl](15.8,0.4213746){\Large 7}
\rput(14.7,-3.678626){\textcolor{colour0}{\Large 6}}
\end{pspicture}
}
\caption{A tree $\delta \in \Lat^{(2)}(2,3,0,0,2,1,1;10)$.}
  \label{tam-tree}
\end{figure}
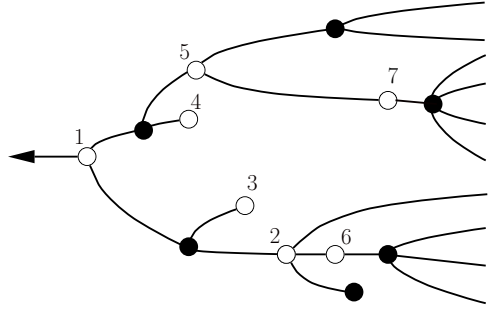

The symmetric groups permute the labels of white vertices. The
$\circ_i$-operation
\[
\Lat^{(2)}(n_1,\ldots,n_k;m)\! \times\! \Lat^{(2)}(m_1,\ldots,m_l;n_i) 
\to \Lat^{(2)}(n_1,\ldots,n_{i-1}, m_1,\ldots,m_l,n_{i+1},\ldots,n_k;m)
\]
inserts the tree $\gamma \in\Lat^{(2)}(m_1,\ldots,m_l;n_i) $ to the
$i$th vertex of $\delta \in \Lat^{(2)}(n_1,\ldots,n_k;m)$ and
contracts the edges connecting two black vertices if necessary.  The
units are represented by planar corollas whose unique vertex is white.

There is a canonical {\em complementary order\/}~\cite[Def.~2.2]{batanin:conf}
on the set $\Wh(\delta)$ of white vertices of a
tree $\delta\in \Lat^{(2)}$ given as follows. For $i,j \in
\Wh(\delta)$,
\begin{itemize} 
\item[(i)] 
$i \triangleleft_0 j$ \ if and only if
  there is a directed path in $\delta$ from  $i$ to $j$, and
\item[(ii)]
$i \triangleleft_1 j$ \ if and only if the edge path connecting
$i$ with the root lies on the left from the path connecting
$j$ to the root, where `left' refers to the planar structure of $\delta$.
\end{itemize}

The following definition will be useful in the sequel; recall that
$\bbN$-colored $2$-ordinals were reviewed in Section~\ref{OmegaN}.

\begin{definition}
An {\em $\OrN$\/-labelled tree} is a couple $(\frO,\delta)$, where  $\delta \in
  \Lat^{(2)}$ and $\frO \in \OrN$ is an $\bbN$-colored $2$-ordinal whose
  underlying set is
  $\Wh(\delta)$ and the coloring given by  the
  arity of the corresponding vertex of $\delta$.
\end{definition}

Let $\triangleleft$ be some complementary order on the underlying
set of a $2$-ordinal $\EuO$. We say
that $\EuO$ {\em dominates\/} $\triangleleft$ if, for all $i,j \in \EuO$, 
\begin{equation}
\label{eq:32}
i  \not\triangleright_0\ j \ \mbox { if }\ i <_0 j,\  \mbox { and }\
i \triangleleft_1 j \ \mbox { if } \ i <_1 j.
\end{equation} 

The pullback (\ref{Tam2}) represents the pruned  $2$-operad
$\opTam_2^\bbN$ as a suboperad of the desymmetrisation of the operad
$\Lat^{(2)}$ whose arity- $\frO$ operations
are  $\OrN$-labelled trees  $(\frO,\delta)$ such that 
$\frO$ dominates the canonical complementary
order on the set $\Wh(\delta)$ of white vertices of~$\delta$.

Let us define the operadic category $\Tam_2^\bbN$ 
as the Grothendieck construction
of Proposition~\ref{sec:discr-fibr-betw} applied to the pruned
$2$-operad $\opTam_2^\bbN$. By definition, the
objects of $\Tam_2^\bbN$ are pairs $(\frO,\delta) \in
\opTam_2^\bbN(\frO)$ 
of $\OrN$-labeled trees such that $\frO$ dominates the canonical
complementary order on $\Wh(\delta)$.
The rest of this section will be devoted to the definition of a functor
\begin{equation}
\label{eq:functoru}
u : \LTr \to \Tam_2^\bbN,
\end{equation}
where $\LTr$ is the category of leveled trees introduced in
Section~\ref{LTr}. This functor will play a key r\^ole in
Section~\ref{sec:acti-tamark-tsyg}. 

We start by noticing that the set $\Wh(\beta)$ of \bily-vertices of
$\beta \in \LTr$ has a natural lexicographic order defined by saying
that $u < v$ if the level of $u$ is closer to the root than the level
of $v$; if $u$ lies on the same level as $v$ then $u < v$ if and only
if $u$ is on the left from $v$ in the sense of the planar structure of
$\beta$.  Given a leveled tree $\beta\in \LTr$, we consider an
$\OrN$-labelled tree $u(\beta) = (\frO,\bar{\beta})$ with $\frO :=
p\big(\Omega(\beta)\big)$, where $\Omega : \LTr \to \OmegaN$ is the functor
in~(\ref{eq:21}), $p: \OmegaN \to \OrN$ the pruning functor, and
$\bar{\beta}$ produced from $\beta$ in four steps:
\begin{itemize}
\item[(i)] 
labelling the white vertices of $\beta$ by
$\rada 1k$ using the above
lexicographical order,   
\item[(ii)] 
forgetting the level structure of $\beta$,
\item[(iii)] 
converting  all \tlusty-vertices of $\beta$ to \cerny-vertices and
\item[(iv)] 
contracting edges connecting two \cerny-vertices 
and erasing all \cerny-vertices of arity  one.
\end{itemize}
As an exercise, we recommend to check that 
applying the above steps to the tree $\beta$ in
Figure~\ref{fig:1} produces the tree in Figure~\ref{tam-tree}. To
show that indeed $u(\beta) = (\frO,\bar{\beta}) \in \Tam^\bbN_2$ means
to verify that the $2$-ordinal $\frO$ dominates the complementary
order generated by~$\bar{\beta}$.

Let $i<_0 j$ in $\frO$ and let $w_i, w_j$ be the corresponding white
vertices from $\bar{\beta}$. The relation $i<_0 j$ means that, in
$\beta$, the vertex $w_j$ lies on the level closer to the root than the
level of $w_i$.  There are two possibilities: either $w_i$ and $w_j$
are connected by a directed path and then $w_i\triangleleft_0 w_j$
in $\Wh(\bar{\beta})$, or there is no such a directed path, in which
case either $w_i\triangleleft_1 w_j$ or $w_j \triangleleft_1 w_1$ in $\bar{\beta}$.  The
domination condition~(\ref{eq:32}) clearly holds for the pair $i,j$ in this
case.

Assume that $i<_1 j$ in $\frO$. This means that $w_i$ precedes 
$w_j$ in the lexicographical order and also that there is no directed
path connecting $w_i$ and $w_j$ in $\beta$. Hence $w_i \triangleleft_1 w_j$ in
$\Wh(\bar{\beta})$ which finishes the verification that $\frO$
dominates $\bar{\beta}$. The idea is indicated in Figure~\ref{ide}.

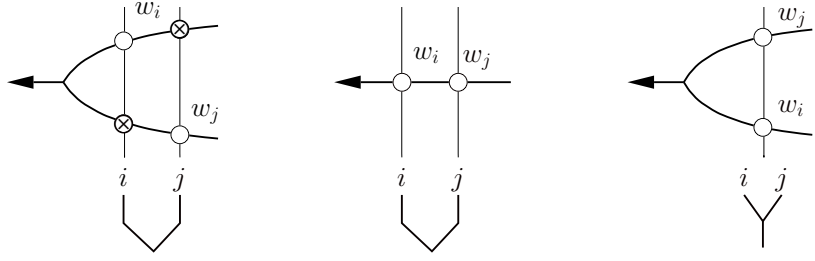
\begin{figure}[t]
  \centering 
\psscalebox{.5 .5} 
{
\begin{pspicture}(-4,-5.2676873)(26.71,2)
\definecolor{colour0}{rgb}{0.03137255,0.003921569,0.003921569}
\psline[linecolor=black, linewidth=0.02](5.997484,1.2684633)(5.997484,-2.6270506)(6.0,-2.7323127)
\pscustom[linecolor=black, linewidth=0.05]
{
\newpath
\moveto(8.9,0.66768736)
}
\pscustom[linecolor=black, linewidth=0.05]
{
\newpath
\moveto(21.5,1.9676874)
}

\pscustom[linecolor=black, linewidth=0.05]
{
\newpath
\moveto(2.897484,-0.7319244)
\lineto(3.0209558,-0.905025)
\curveto(3.0826917,-0.9915753)(3.25761,-1.1646746)(3.3707926,-1.251225)
\curveto(3.483975,-1.3377752)(3.7823648,-1.5108758)(3.9675722,-1.5974262)
\curveto(4.15278,-1.6839764)(4.6878242,-1.8570758)(5.0376606,-1.943626)
\curveto(5.387497,-2.0301764)(6.056302,-2.1455762)(7.0132074,-2.2321265)
}

\pscustom[linecolor=black, linewidth=0.05]
{
\newpath
\moveto(2.897484,-0.7319244)
\lineto(3.0209558,-0.5588245)
\curveto(3.0826917,-0.47227478)(3.25761,-0.2991742)(3.3707926,-0.2126239)
\curveto(3.483975,-0.12607361)(3.7823648,0.047025755)(3.9675722,0.13357605)
\curveto(4.15278,0.22012635)(4.6878242,0.39322633)(5.0376606,0.47977662)
\curveto(5.387497,0.5663269)(6.056302,0.6817267)(7.0132074,0.768277)
}
\psline[linecolor=black, linewidth=0.02](4.526415,1.2684633)(4.526415,-2.7323127)(4.526415,-2.682303)
\psdots[linecolor=black, fillstyle=solid, dotstyle=o, dotsize=0.49939746](6.0,-2.1323125)
\psdots[linecolor=black, fillstyle=solid, dotstyle=o, dotsize=0.49939746](6.0,0.66768736)
\psdots[linecolor=black, dotstyle=otimes, dotsize=0.49939746](6.0,0.66768736)
\psdots[linecolor=black, fillstyle=solid, dotstyle=o, dotsize=0.49939746](4.5,0.36768737)
\rput{90.0}(31.677687,-41.742313){\rput[bl](36.71,-5.0323124){1}}
\psline[linecolor=black, linewidth=0.05, arrowsize=0.05291666666666672cm 6.0,arrowlength=2.0,arrowinset=0.0]{<-}(1.4,-0.7319247)(2.8974843,-0.7319247)
\psdots[linecolor=black, dotstyle=o, dotsize=0.49939746](4.5,-1.8323126)
\psdots[linecolor=colour0, dotstyle=otimes, dotsize=0.5](4.5,-1.8323126)
\psline[linecolor=colour0, linewidth=0.05](4.5,-3.7323127)(4.5,-4.4823127)(5.3,-5.2323127)(6.0,-4.4823127)(6.0,-3.7323127)(6.0,-3.7323127)
\psline[linecolor=black, linewidth=0.05, arrowsize=0.05291666666666672cm 6.0,arrowlength=2.0,arrowinset=0.0]{<-}(10.1,-0.7319247)(14.797484,-0.7319247)
\psline[linecolor=black, linewidth=0.02](13.397484,1.2684633)(13.397484,-2.6270506)(13.4,-2.7323127)
\psline[linecolor=black, linewidth=0.02](11.897484,1.2684633)(11.897484,-2.6270506)(11.9,-2.7323127)
\psdots[linecolor=colour0, dotstyle=o, dotsize=0.5](11.9,-0.7323126)
\psdots[linecolor=colour0, dotstyle=o, dotsize=0.5](13.4,-0.7323126)
\psline[linecolor=colour0, linewidth=0.05](11.9,-3.7323127)(11.9,-4.4823127)(12.7,-5.2323127)(13.4,-4.4823127)(13.4,-3.7323127)(13.4,-3.7323127)
\pscustom[linecolor=black, linewidth=0.05]
{
\newpath
\moveto(19.397484,-0.7319244)
\lineto(19.499956,-0.8934863)
\curveto(19.551191,-0.97426695)(19.69636,-1.1358289)(19.790293,-1.2166095)
\curveto(19.884224,-1.2973907)(20.131865,-1.4589521)(20.285572,-1.5397333)
\curveto(20.43928,-1.6205145)(20.883326,-1.7820758)(21.173662,-1.8628571)
\curveto(21.463999,-1.9436377)(22.019053,-2.051345)(22.813208,-2.1321263)
}
\pscustom[linecolor=black, linewidth=0.05]
{
\newpath
\moveto(19.397484,-0.7319244)
\lineto(19.499956,-0.57036316)
\curveto(19.551191,-0.4895819)(19.69636,-0.32802063)(19.790293,-0.24723938)
\curveto(19.884224,-0.16645813)(20.131865,-0.004896851)(20.285572,0.0758844)
\curveto(20.43928,0.15666504)(20.883326,0.31822664)(21.173662,0.39900756)
\curveto(21.463999,0.4797882)(22.019053,0.58749634)(22.813208,0.668277)
}
\psline[linecolor=black, linewidth=0.02](21.526415,1.2684633)(21.526415,-2.7323127)(21.526415,-2.682303)
\psdots[linecolor=black, dotstyle=o, dotsize=0.49939746](21.5,0.46768737)
\psline[linecolor=black, linewidth=0.05, arrowsize=0.05291666666666672cm 6.0,arrowlength=2.0,arrowinset=0.0]{<-}(17.9,-0.7319247)(19.397484,-0.7319247)
\psdots[linecolor=black, dotstyle=o, dotsize=0.49939746](21.5,-1.9323126)
\psline[linecolor=colour0, linewidth=0.05](21.0,-3.7323127)(21.5,-4.4323125)(22.0,-3.7323127)(22.0,-3.7323127)
\psline[linecolor=colour0, linewidth=0.05](21.5,-4.4323125)(21.5,-5.132313)(21.5,-5.132313)
\rput[bl](4.8,0.96768737){\LARGE $w_i$}
\rput[bl](6.3,-1.8323126){\LARGE $w_j$}
\rput[bl](12.2,-0.23231262){\LARGE $w_i$}
\rput(13.9,-0.13231263){\textcolor{colour0}{\LARGE $w_j$}}
\rput[l](21.9,-1.4323126){\textcolor{colour0}{\LARGE $w_i$}}
\rput[l](21.9,0.9676874){\textcolor{colour0}{\LARGE $w_j$}}
\rput(4.5,-3.2623127){\LARGE $i$}
\rput(6.0,-3.3323126){\textcolor{colour0}{\LARGE $j$}}
\rput(11.9,-3.2623127){\textcolor{colour0}{\LARGE $i$}}
\rput(13.4,-3.3323126){\textcolor{colour0}{\LARGE $j$}}
\rput(21.0,-3.2623127){\textcolor{colour0}{\LARGE $i$}}
\rput(22.0,-3.3323126){\textcolor{colour0}{\LARGE $j$}}
\end{pspicture}
}
\caption{\label{ide}The cases $i <_0 j$ (two left pictures) and $i <_1 j$
  (rightmost picture).}  
\end{figure}

A morphism $\phi : (\frO,\delta) \to (\frN, \gamma)$ in $\Tam_2^\bbN$ is,
by definition of the Grothendieck construction, an $|\frN|\!+\!1$-tuple
\begin{equation}
\label{eq:4}
\big(\sigma,(\frO_1,\delta_1),\ldots,(\frO_k,\delta_k)\big)
\end{equation}
where $\sigma:\frO\to \frN$ is a
morphism of $2$-ordinals, $\frO_i =  \sigma^{-1}(i)$ and $(\frO_i,\delta_i)\in
\opTam^\bbN_2(\frO_i)$, $i \in |\frN|$, 
are such that 
\begin{equation}
\label{eq:38}
m\big((\frO_1,\delta_1),\ldots,(\frO_k,\delta_k); (\frN, \gamma)\big)
=  (\frO,\delta),
\end{equation}
where $m$ is the multiplication in the pruned $2$-operad
$\opTam^\bbN_2$.
  
For any morphism $f : \beta \to \alpha$ in $\LTr$, the 
functor $\Omega: \LTr \to \OmegaN$ induces a map of $2$-trees $\Omega(f): \Omega(\beta)\to
\Omega(\alpha)$ so, denoting $\frO: =  p\big(\Omega(\beta)\big)$ and $\frN: =
p\big(\Omega(\alpha)\big)$,  we have the induced map
\begin{equation}
\label{eq:5}
\sigma: \frO \lra \frN , \ \sigma: =  p(\Omega(f))
\end{equation}
of $\bbN$-colored $2$-ordinals.

To define the functor $u: \LTr \to \Tam^\bbN_2$ on morphisms, it
suffices to specify it on elementary morphisms listed in
Section~\ref{LTr}.  Let $f : \beta \to \alpha$ be an elementary
morphism of Type~1 collapsing two consecutive levels.    Each \bily-vertex $w$ of
$\alpha$ has the fiber $f^{-1}(w)\in \LTr$. Such a $w$
also corresponds to a unique element $i = i(w)$ of the $2$-ordinal
presented $\frN =p(\Omega(\alpha))$. So, we associate to $w$ the 
element $(\frO_i,\delta_i) :=
\big(\sigma^{-1}(i),\overline{f^{-1}(w)}\big)
\in \Tam^\bbN_2\big(\sigma^{-1}(i)\big)$. It is easy to
check that then~(\ref{eq:38}) with  $(\frO_i,\delta_i)$ as above,
$(\frO,\delta) := u(\beta)$ and $(\frN,\gamma) := u(\alpha)$ is satisfied,
so we constructed a morphism $u(f) : u(\beta) \to u(\alpha)$ in $ \Tam^\bbN_2$.

If $f: \beta\to \alpha$ is an elementary morphism of Type~2 that
replaces a level of \cerny-vertices by a level of \tlusty-vertices,
then clearly $u(\beta) = u(\alpha)$ and we define $u(f)$ to be the
identity morphism.

Assume that $f: \beta\to \alpha$ replaces a 
\tlusty-vertex of $\beta$  by a \bily-vertex $w$ and denote by $e =
e(w)$ 
the corresponding `exceptional' element of the $2$-ordinal $\frN$. To describe the
fibers of $f: \beta\to \alpha$ and of the induced map~(\ref{eq:5}) denote,
for $c\in \bbN$, by ${\boldsymbol 1}^c \in \OrN$ the terminal $2$-ordinal 
whose input and output
colors equal $c$, ${\boldsymbol 0}^c_2 \in \OrN$ the initial $2$-ordinal whose
output color is $c$, and by $\bily^c$  (resp.~$\tlusty^c$)
the \bily-  (resp.~\tlusty-) corolla of arity $c$. With this notation,
\[
f^{-1}(u) = \cases{\bily^{c_v}}{if $v \not= w$, and}{\tlusty^{c_v}}{if $v = w$,} 
\]
where $c_v$ is the arity of a \bily-vertex $v$ of $\alpha$. The
induced map~(\ref{eq:5}) is an inclusion and
\[
\sigma^{-1}(u) = 
\cases{{\boldsymbol 1}_2^{c_i}}{if $i \not= e$, and}{{\boldsymbol 0}_2^{c_i}}{if $i = e$,} 
\]
where $c_i \in \bbN$ denotes the color of $i \in \EuS$. We define
$u(f) : u(\beta) \to u(\alpha)$ by taking in~(\ref{eq:4})
\[
(\frO_i,\delta_i) :=
\cases{({\boldsymbol 1}^{c_i}_2\, , \hskip -.2em \bily^{c_i})}
{if $i \not= e$, and}
{ ({\boldsymbol 0}^{c_i}_2\, , \hskip -.2em\tlusty^{c_i})}{if  $i = e$.}
\]

The discussion
of Type~3 elementary morphisms $f: \beta\to \alpha$ is analogous. The
exceptional fibers of $f$ are now the exceptional trees $\except$ with
no vertices, while the exceptional fibers of
the induced map $\sigma : \frO \to \frN$ are the initial of
$2$-ordinals ${\boldsymbol1}_2$ with the output color $1$
This completes the definition of the functor $u$.

\section{Action of the Tamarkin operad}
\label{sec:acti-tamark-tsyg}

The aim of this section is to prove the following statement in which
$\opTm^\bbN_2$ is the Tamarkin operad recalled in
Definition~\ref{musim_objet_Ryn}.  

\begin{theorem}
\label{lode_na_Rynu}
For any multiplicative $1$-operad $\scA$ in a duoidal category one has a
natural action
\begin{equation}
\label{eq:23}
\opTm^\bbN_2 \to \End_\scA^{\Omega_2^\bbN}.
\end{equation}
\end{theorem}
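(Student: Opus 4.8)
The plan is to realise $\opTm^\bbN_2$ as a left Kan extension of a terminal operad along a discrete operadic fibration, reduce the construction of the action to the adjunction of Theorem~\ref{Jaruska}, and feed in the levelled-tree action of Theorem~\ref{etiopska_restaurace} through the functor $u$ of~(\ref{eq:functoru}). By Definition~\ref{musim_objet_Ryn}, $\opTm^\bbN_2 = p^*\opTam^\bbN_2$, and $\opTam^\bbN_2$ is a set-valued pruned $\bbN$-colored $2$-operad whose operadic Grothendieck construction (Proposition~\ref{sec:discr-fibr-betw}) is the category $\Tam^\bbN_2$; let $G_0 : \Tam^\bbN_2 \to \OrN$ be the resulting discrete operadic fibration, so $(G_0)_!(\bfone^{\Tam^\bbN_2}) = \opTam^\bbN_2$ by~(\ref{eq:22}). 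I would form the pullback $\Tm^\bbN_2 := \OmegaN \times_{\OrN} \Tam^\bbN_2$ in operadic categories (Proposition~\ref{pullback-create-pullbacks}); its objects are exactly the $\OmegaN$-labelled trees $(\frT,\delta)$ for which $p(\frT)$ dominates the complementary order on $\Wh(\delta)$. Its projection $G : \Tm^\bbN_2 \to \OmegaN$ is again a discrete operadic fibration (Proposition~\ref{sec:nejdou_fonty}), and Beck--Chevalley (Proposition~\ref{Beck-Chevalley}) gives $G_!(\bfone^{\Tm^\bbN_2}) \cong p^*\opTam^\bbN_2 = \opTm^\bbN_2$; here, as usual, a set-valued operad is read in $\ttV$ via $X \mapsto \coprod_X I$, so that~(\ref{eq:23}) asks for a morphism of $\OmegaN$-operads in $\ttV$ out of this object. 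By the adjunction $G_! \dashv G^*$ of Theorem~\ref{Jaruska}, producing~(\ref{eq:23}) is then the same as producing a morphism of $\Tm^\bbN_2$-operads
\[
\chi : \bfone^{\Tm^\bbN_2} \longrightarrow G^*\End_\scA^{\OmegaN},
\]
that is, a $\duo$-morphism $\chi_{(\frT,\delta)} : \scA^\frT \to \scA(n_\frT)$ for every $\OmegaN$-labelled tree $(\frT,\delta)$ (with $n_\frT$ the output color of $\frT$), compatible with units and with the structure maps~(\ref{mult}).

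Next I would bring in the functor $u : \LTr \to \Tam^\bbN_2$ of~(\ref{eq:functoru}). It is operadic: by construction the fibre of $u(f)$ over the element attached to a \bily-vertex $w$ of the target is $u(f^{-1}(w))$, and $u$ lies over $\sFSet$ and preserves the chosen terminal objects; moreover $G_0 \circ u = p \circ \qOmega$ (with $\qOmega : \LTr \to \OmegaN$ of~(\ref{eq:21})), so $(\qOmega,u)$ induces an operadic functor $\widetilde u := (\qOmega,u) : \LTr \to \Tm^\bbN_2$ with $G\circ\widetilde u = \qOmega$, and one checks that $\widetilde u$ is surjective on objects---every $\OmegaN$-labelled tree is $(\qOmega(\beta),\overline\beta)$ for a suitable levelled tree $\beta$, the domination condition being precisely what makes such a $\beta$ available. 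On the other hand Theorem~\ref{etiopska_restaurace} supplies a morphism of $\LTr$-operads $\Xi : \bfone^\LTr \to \qOmega^*\End_\scA^{\OmegaN} = \widetilde u^*(G^*\End_\scA^{\OmegaN})$, whose component at $\beta$ is $\Xi_\beta = \Psi_\beta \circ \theta_\beta : \scA^{\qOmega(\beta)} \to \scA^\beta \to \scA(n)$. I would then \emph{define} $\chi$ by $\chi_{\widetilde u(\beta)} := \Xi_\beta$.

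The real content is then two verifications. First, well-definedness: $\widetilde u(\beta) = \widetilde u(\beta')$ must force $\Xi_\beta = \Xi_{\beta'}$. Two such levelled trees differ only by insertion or deletion of arity-one \cerny-vertices and of whole \cerny-levels, which are invisible both to $\qOmega$ and to the passage $\beta \mapsto \overline\beta$, while the maps $\omega_\beta$, $\gamma_\beta$, $\theta_\beta$ that build $\Xi_\beta$ absorb exactly such insertions into the comonoid structure $e \to e \boxx_1 e$ and into the unit constraints for $e$ and $v$; so the equality follows from the defining relations of $\LTr$ (``introduce a level and then contract it is the identity'', ``contractions commute with replacements'') together with the unit and associativity coherences of $\duo$. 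Second, $\chi$ must be a morphism of $\Tm^\bbN_2$-operads: for $\psi : (\frT,\delta) \to (\frS,\gamma)$ with fibres $(\frT_i,\delta_i)$ one needs
\[
\mu(G\psi)\bigl(\chi_{(\frT_1,\delta_1)},\dots,\chi_{(\frT_k,\delta_k)},\chi_{(\frS,\gamma)}\bigr) = \chi_{(\frT,\delta)} .
\]
Here one uses that the multiplication~(\ref{mult}) of $\End_\scA^{\OmegaN}$ only sees the underlying $\OmegaN$-morphism $G\psi$, that after enlarging the chosen levelled trees by extra levels of arity-one \bily-vertices the morphism $\psi$ becomes a composite of $\widetilde u$-images of elementary morphisms of $\LTr$, and that $\Xi$ is a morphism of $\LTr$-operads, with the well-definedness above bridging the non-uniqueness of these lifts. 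Naturality in $\scA$ is immediate since $\Xi$ and $u$ are natural.

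The main obstacle is this second verification. The functor $\widetilde u$ is \emph{not} a discrete operadic fibration---the Type~2 replacements of vertices and the relation ``introduce then contract'' destroy uniqueness of lifts---so morphisms of $\Tm^\bbN_2$ do not lift to $\LTr$ on the nose and compatibility of $\chi$ with operadic composition is not a formal consequence of $\Xi$ being an $\LTr$-operad morphism. Establishing it calls for a hands-on analysis of how the four-step recipe $\beta \mapsto \overline\beta$ defining $u$ interacts with the elementary morphisms of $\LTr$ and with the comultiplications of the multicotensors $\Boxx$ and $\End^{\OmegaN}_\scE$, with the ``domination'' condition doing the combinatorial bookkeeping---which is why the induced action is flagged in the introduction as the most delicate part of the paper.
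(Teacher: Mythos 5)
Your proposal follows the paper's proof essentially step for step: the same Grothendieck-construction/pullback/Beck--Chevalley/adjunction reduction to a morphism $\bfone^{\Tm^\bbN_2}\to t^*(\End_\scA^{\OmegaN})$, the same descent along the object-surjective functor $w:\LTr\to\Tm^\bbN_2$, and the same key verification that $w(\beta')=w(\beta'')$ forces equal components (the paper's Lemma~\ref{golovokruzenie}, proved via the elementary moves of Lemma~\ref{elementary-moves}). The only divergences are ones of emphasis: the paper packages the descent of the operad structure into Lemma~\ref{pelmene} (leaving the verification as an exercise), whereas you rightly flag that since $w$ is not a fibration this step needs a hands-on analysis of lifts; conversely, your list of moves identifying levelled trees with the same $w$-image omits the vertical/horizontal interchanges (moves (iii)--(iv) of Lemma~\ref{elementary-moves}), which are exactly where the duoidal axioms enter.
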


Proposition~\ref{sec:discr-fibr-betw} associates to the operad ${\opTam^\bbN_2}$
the operadic category $\Tam^\bbN_2$, together
with a discrete operadic fibration
\[
s:\Tam^{\bbN}_2\to \OrN.
\]
By~(\ref{eq:22}), ${\opTam^\bbN_2} = s_!(\bfone^{\Tam^\bbN_2})$,  therefore
$\opTm^\bbN_2 = p^*({\opTam^\bbN_2}) = p^*\big(s_!(\bfone^{\Tam^\bbN_2})\big)$, 
so the map in~(\ref{eq:23}) is the same as a morphism
\begin{equation}
\label{eq:25}
p^*\big(s_!(\bfone^{\Tam^\bbN_2})\big) \to \End_\scA^{\Omega_2^\bbN}.
\end{equation}

On the other hand, define the operadic category $\Tm^\bbN_2$ 
using Proposition~\ref{pullback-create-pullbacks}  as the 
pullback of operadic categories:
\begin{equation}
  \label{eq:11}
  \xymatrix@C = +3em{
    \Tm_2^\bbN \ar[d]_{t} 
    \ar[r]^{r} 
    &\Tam_2^\bbN \ar[d]^s
    \\ 
    \Omega_2^\bbN
    \ar[r]^{p}
    &\, \OrN.
  }
\end{equation}
The induced functors
of the associated categories of operads enjoy the Beck-Chevalley property by
Proposition~\ref{Beck-Chevalley}, so
\begin{equation}
  \label{eq:patek}
p^*\big(s_!(\bfone^{\Tam^\bbN_2})\big)
\cong t_!\big(r^*(\bfone^{\Tam^\bbN_2})\big).
\end{equation}
The map in~(\ref{eq:25}) is thus the
same as a morphism
\[
\label{eq:29}
t_!\big(r^*(\bfone^{\Tam^\bbN_2})\big) \to \End_\scA^{\Omega_2^\bbN}
\]
which is, by adjunction, the same as an operad morphism
\[
r^*(\bfone^{\Tam^\bbN_2})\to t^*(\End_\scA^{\OmegaN}).
\]
Since clearly
$r^*(\bfone^{\Tam^\bbN_2}) = \bfone^{\Tm^\bbN_2}$, 
Theorem~\ref{lode_na_Rynu} will be proved if 
we construct a natural morphism.
\begin{equation}
\label{eq:26}
\bfone^{\Tm^\bbN_2}\to t^*(\End_\scA^{\OmegaN}).
\end{equation}

\begin{remark}
\label{pristi_tyden_posledni}
Notice that $r^*(\bfone^{\Tam^\bbN_2}) = \bfone^{\Tm^\bbN_2}$ together
with~(\ref{eq:patek}) implies that $\opTm^\bbN_2 = t_!(
\bfone^{\Tm^\bbN_2})$, so  $\opTm^\bbN_2$ is the result of the
application of the functor inverse to the
Grothendieck construction to the discrete fibration  $\Tm^\bbN_2\to \OmegaN$.   
\end{remark}

It is obvious that the diagram
\[
\mbox { \xymatrix@C = +3em{ \LTr \ar[d]_{\Omega} \ar[r]^{u} &\ar[d]^s
    \Tam_2^\bbN
    \\
    \OmegaN \ar[r]^{p} &\OrN } }
\]
in which $\Omega: \LTr \to \OmegaN$ is the functor~(\ref{eq:21}) and $u :
\LTr \to \Tam_2^\bbN$ the functor~(\ref{eq:functoru}) commutes, so we
have the induced operadic functor \hbox{$w: \LTr\to \Tm_2^\bbN$} to
the pullback of~(\ref{eq:11}) as in the commutative diagram
\begin{equation}
\label{eq:27}
\xymatrix@C = +3em@R = 1em{\LTr  \ar[dr]^w  \ar@/_1em/[ddr]^{\Omega} \ar@/^1em/[rrd]^{u}& &
\\
& \Tm_2^\bbN  \ar[d]_t \ar[r]^r  &\ar[d]^s
    \Tam_2^\bbN
    \\
  &  \OmegaN \ar[r]^{p} &\ \OrN .}
\end{equation}
Recall that we already constructed a canonical 
morphism~(\ref{eq:33}) of $\LTr$-operads 
\[
\Xi :\bfone^{\sLTr}\to
\Omega^*(\End_\scA^{\OmegaN}).
\]
By~(\ref{eq:27}), $tw = \Omega$, so
$\Omega^*(\End_\scA^{\OmegaN}) = w^*(t^*(\End_\scA^{\OmegaN}))$. Noticing the
isomorphism  $\bfone^{\sLTr} = w^*(\bfone^{\Tm_2^\bbN})$, we
see that we therefore also have a natural morphism of $\LTr$-operads
\begin{equation}
\label{eq:28}
\Upsilon :   w^*(\bfone^{\Tm_2^\bbN})
\longrightarrow w^*\big(t^*(\End_\scA^{\OmegaN})\big).
\end{equation}
The requisite map in~(\ref{eq:26}) will be constructed by `inverting
$w^*$' in~(\ref{eq:28}), using:  

\begin{lemma}
\label{pelmene}
Let $F : \ttO \to \ttP$ be an operadic functor and $\calP,\calO \in \OpV
  \ttP$. 
Assume that $F$ is surjective on objects. 
Suppose we are also given a morphism $\varsigma : F^*(\calP) \to F^*(\calO)$ of
 $\ttO$-operads such that, for arbitrary $Q',Q'' \in \ttO$ such that
 $F(Q') = F(Q'')$,
\begin{equation}
\label{eq:30}
\varsigma_{Q'} : F^*\calP(Q') \to F^*\calO(Q') \ \mbox { equals }\
\varsigma_{Q''} : F^*\calP(Q'') \to F^*\calO(Q''). 
\end{equation}
Then there exist a unique morphism $\rho : \calP \to \calO$ of\/ $\ttP$-operads
satisfying $\varsigma = F^*(\rho)$.
\end{lemma}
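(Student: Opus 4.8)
The plan is to construct $\rho$ one object at a time and then check it is an operad map. Using that $F$ is surjective on objects, for each $T \in \ttP$ pick some $Q_T \in \ttO$ with $F(Q_T) = T$ and put $\rho_T := \varsigma_{Q_T} : \calP(T) = F^*\calP(Q_T) \to F^*\calO(Q_T) = \calO(T)$; hypothesis~(\ref{eq:30}) says exactly that $\rho_T$ is independent of the choice of $Q_T$, so the collection $\rho = \{\rho_T\}_{T \in \ttP}$ is well defined. Since $F^*(\rho)_Q = \rho_{F(Q)} = \varsigma_Q$ for every $Q \in \ttO$, we get $F^*(\rho) = \varsigma$ for free; and conversely any $\rho'$ with $F^*(\rho') = \varsigma$ satisfies $\rho'_T = \varsigma_{Q_T} = \rho_T$, so $\rho$ is the only candidate. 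Everything thus reduces to verifying that $\rho$ respects units and structure maps.

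For the units, fix $c \in \pi_0(\ttP)$, pick $Q$ over $U_c$, let $a \in \pi_0(\ttO)$ be the component of $Q$ and $V_a$ the chosen terminal there; since the operadic functor $F$ preserves chosen terminal objects, $F(V_a) = U_c$, hence $\rho_{U_c} = \varsigma_{V_a}$. By the very construction of the restriction $F^*$, the units of $F^*\calP$ and $F^*\calO$ at $a$ are the units of $\calP$ and $\calO$ at $c$, and $\varsigma_{V_a}$ intertwines them because $\varsigma$ is a morphism of $\ttO$-operads; this is precisely the required unit compatibility for $\rho$.

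The remaining point --- compatibility with the structure maps --- is where I expect the real work to be. Given $f : T \to S$ in $\ttP$ with fibers $T_i = f^{-1}(i)$, I would choose a lift $\sigma : Q' \to Q$ in $\ttO$ with $F(\sigma) = f$; since $F$ preserves fibers, $F(\sigma^{-1}(i)) = T_i$, and under the resulting identifications $F^*\calP(\sigma) = \calP(f)$ and $F^*\calO(\sigma) = \calO(f)$ the maps $\mu_{F^*\calP}(\sigma)$ and $\mu_{F^*\calO}(\sigma)$ are identified with $\mu_\calP(f)$ and $\mu_\calO(f)$. Replacing, via~(\ref{eq:30}), $\rho_{T_i}$ by $\varsigma_{\sigma^{-1}(i)}$, $\rho_T$ by $\varsigma_{Q'}$ and $\rho_S$ by $\varsigma_Q$, the square expressing compatibility of $\rho$ with $\mu(f)$ coincides with the square expressing compatibility of $\varsigma$ with $\mu(\sigma)$, which holds since $\varsigma$ is a morphism of $\ttO$-operads; and the conclusion does not depend on the choice of $\sigma$, because both composites in that square are built only from $\rho$, $\mu_\calP(f)$ and $\mu_\calO(f)$. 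The one genuinely nontrivial ingredient is the existence of the lift $\sigma$ of an \emph{arbitrary} morphism $f$ of $\ttP$: surjectivity of $F$ on objects only produces lifts of $T$, $S$ and the $T_i$, so what one really uses is that $F$ is surjective on morphisms --- or, short of that, that every morphism of $\ttP$ is a composite of morphisms lying in the image of $F$, so that the general case follows from operad axiom (i) together with the unit compatibility already established. In the situation in which this lemma is invoked, $F$ does admit such lifts, and with the lift at hand the verification is purely formal.
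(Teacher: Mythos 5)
Your construction of $\rho$ is exactly the paper's: choose $Q$ over $T$, set $\rho_T:=\varsigma_Q$, note that~(\ref{eq:30}) makes this independent of the choice, and observe that uniqueness and $F^*(\rho)=\varsigma$ are automatic. The paper stops there and declares the verification that $\rho$ is an operad morphism ``an exercise,'' so everything you add beyond that point is your own completion of the exercise; your treatment of the units is correct (using that an operadic functor sends the chosen terminal object $V_a$ of the component of any $Q$ over $U_c$ to $U_c$ itself).

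The issue you raise about the structure maps is not a defect of your argument but a genuine gap in the lemma as stated. Compatibility of $\rho$ with $\mu_\calP(f)$ and $\mu_\calO(f)$ for a morphism $f:T\to S$ of $\ttP$ is only forced by the hypothesis on $\varsigma$ when $f$ admits a lift $\sigma$ with $F(\sigma)=f$ (or is generated by such morphisms together with the units), and surjectivity of $F$ on \emph{objects} does not provide this. Indeed, since $F^*$ only remembers the structure maps of $\calP$ and $\calO$ indexed by morphisms in the image of $F$, one can cook up an operadic functor that is bijective on objects but misses a morphism $f$, and then a $\varsigma$ satisfying all the hypotheses whose induced $\rho$ fails to commute with $\mu(f)$; so the lemma needs surjectivity (or an appropriate fullness) on morphisms as an additional, implicitly assumed, hypothesis. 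This is harmless for the intended application, where $F$ is the functor $w:\LTr\to\Tm_2^\bbN$ and every morphism of $\Tm_2^\bbN$ is hit, but you are right that the verification is not ``purely formal'' from the stated assumptions alone. With that extra hypothesis granted, your reduction of the square for $\mu(f)$ to the square for $\mu(\sigma)$, together with the remark that the resulting identity involves only $\rho$ and the structure maps of $\calP$ and $\calO$ and hence is independent of the chosen lift $\sigma$, completes the proof correctly.
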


\begin{proof}[Proof Lemma~\ref{pelmene}]
  For $T \in \ttP$ choose $Q \in \ttO$ such that $T = F(Q)$. Since
  $F^*\calP(Q) = \calP(T)$ and $F^*\calO(Q) = \calO(T)$, we may 
define \hbox{$\rho_T :
    \calP(T) \to \calO(T)$} by $\rho_T := \varsigma_Q$. We leave as an
  exercise to verify that the collection $\rho = \{\rho_T\}$
  is a well-defined morphism of operads.
\end{proof}

We wish to apply the lemma to the situation when $\ttO = \LTr$, $\ttP =
\Tm_2^\bbN$, $F$ is the functor $w$ in~(\ref{eq:27}), $\calP =
\bfone^{\Tm^\bbN_2}$, $\calO = t^*(\End_\scA^{\OmegaN})$ and $\varsigma$ the
morphism $\varepsilon$ of~(\ref{eq:28}). Since $w$ is clearly
surjective on objects, we only need
to verify~(\ref{eq:30}). In this particular case it means that, 
given $\beta',\beta'' \in \LTr$ such that $w(\beta')
= w(\beta'')$, $\Upsilon_{\beta'} = \Upsilon_{\beta''}$. 
Recalling again that   $\bfone^{\sLTr} = w^*(\bfone^{\Tm_2^\bbN})$ and
$tw = \Omega$, we easily see that it is enough to prove:

\begin{lemma}
\label{golovokruzenie}
Let $\Xi = \{\Xi_\beta\} : \bfone^{\sLTr}\to
\Omega^*(\End_\scA^{\OmegaN})$ be the composite~(\ref{eq:33}).
If $\beta',\beta'' \in \LTr$ are such that $w(\beta') = w(\beta'')$, then
$\Psi_{\beta'} =  \Psi_{\beta''}$.
\end{lemma}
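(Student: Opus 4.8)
The plan is to prove the equality of the morphisms $\Psi_{\beta'}$ and $\Psi_{\beta''}$ directly from the construction of $\Psi$ in the proof of Theorem~\ref{etiopska_restaurace}, rather than routing through $\Lambda$ (which is not injective in general, so that the identity $\Xi_{\beta'}=\Xi_{\beta''}$ would not suffice). First I would unravel the hypothesis. Since $\Tm_2^\bbN$ is the pullback~(\ref{eq:11}), one has $w(\beta)=(\Omega(\beta),u(\beta))$, and because $u(\beta)=\big(p(\Omega(\beta)),\bar\beta\big)$, the condition $w(\beta')=w(\beta'')$ is equivalent to the two equalities $\Omega(\beta')=\Omega(\beta'')$ and $\bar\beta'=\bar\beta''$. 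Reading off the construction of $\Omega$ in~(\ref{eq:21}) and of $\bar\beta$ in the definition of the functor~(\ref{eq:functoru}), this says precisely that $\beta'$ and $\beta''$ carry the same white vertices with the same arities, distributed into the same type-(i) levels, and with the same adjacency after the non-white part is contracted. Consequently $\beta'$ and $\beta''$ can differ only in their \emph{auxiliary} data: arity-one vertical vertices $\tlusty$ and the horizontal (type-(ii)) pass-through structure $\cerny$, i.e.\ exactly the vertices which~(\ref{eq:14}) represents by the units $v$ and $e$.

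Second I would establish equality of the \emph{objects} $\scA^{\beta'}$ and $\scA^{\beta''}$. Recall from~(\ref{eq:15}) that $\scA^\beta$ is a $\boxx_0$-product over the levels of $\beta$ of $\boxx_1$-products over the vertices of each level, a white vertex contributing $\scA(n)$, a vertical vertex $v$, and a horizontal vertex $e$. Because $\duo$ is strict, $v$ is a strict unit for $\boxx_1$ and $e$ a strict unit for $\boxx_0$; hence every arity-one vertical vertex and every type-(ii) level consisting of a single horizontal vertex is silently absorbed, and to the extent that $\beta',\beta''$ differ only by such features the words $\scA^{\beta'}$ and $\scA^{\beta''}$ literally coincide. (The one configuration escaping this — several parallel horizontal pass-throughs on a single level, contributing $e\boxx_1\cdots\boxx_1 e\neq e$ — is addressed below.)

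Third, and this is the real point, I would prove equality of the \emph{morphisms}. The key observation is that the auxiliary vertices are processed by $\Psi_\beta=\gamma_\beta\circ\omega_\beta$ through the maps $\alpha_v\colon v\to\scA(n)$ and $\alpha_e\colon e\to\scA(n)$ of the proof of Theorem~\ref{etiopska_restaurace}, and that these are \emph{images of operadic identities}: since $\alpha\colon\uAss\to\scA$ of~(\ref{eq:19}) preserves units and $\uAss(1)=v$ with unit the canonical map $e\to v$, the composite $\alpha_e\colon e\to\scA(1)$ is exactly the operad unit $j$ of $\scA$, while an arity-one vertical vertex inserts the identity $1$-ary operation $\uAss(1)=v$. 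By the unit axioms of the definition of an $\ttO$-operad, grafting such an operation into the pasting scheme that defines $\gamma_\beta$ out of the structure maps~(\ref{morph}) leaves the composite unchanged. Carrying this out for each auxiliary vertex and level in turn — equivalently, checking it for the elementary moves relating $\beta'$ to $\beta''$ — transforms $\Psi_{\beta'}$ into $\Psi_{\beta''}$. Here one uses crucially that $\LTr$ was designed so that $\gamma_\beta$ is assembled purely tautologically from~(\ref{morph}), which is what forces these identifications.

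The main obstacle is the combinatorial bookkeeping of the first two steps: one must verify exactly which differences between $\beta'$ and $\beta''$ are compatible with $w(\beta')=w(\beta'')$, and in particular handle a type-(ii) level carrying several parallel arity-one horizontal pass-throughs, whose contribution $e\boxx_1\cdots\boxx_1 e$ to $\scA^\beta$ is \emph{not} a strict unit. Here I would invoke the comonoid structure $e\to e\boxx_1\cdots\boxx_1 e$ of~(\ref{eq:34}), together with strictness of the two tensor units, to identify the relevant factors and reduce this configuration to the single-unit case treated above; controlling this reduction while keeping the morphism identities of the third step intact is the technical heart of the argument.
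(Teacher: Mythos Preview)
Your characterization of the hypothesis $w(\beta')=w(\beta'')$ is incomplete, and this propagates into a genuine gap in steps~2 and~3. You assert that $\beta'$ and $\beta''$ can differ only in arity-one vertical vertices and the horizontal pass-through structure. But the paper's Lemma~\ref{elementary-moves} (which is proved precisely to make this step rigorous) shows that the elementary moves relating such pairs include moves~(iii) and~(iv), which exchange an arity-$1$ vertical adjacent to an arity-$n$ horizontal with an arity-$n$ vertical adjacent to $n$ arity-$1$ horizontals (and the analogous swap with the roles of $\tlusty$ and $\cerny$ reversed). So vertical vertices of arity $n>1$ can appear in one tree and not the other, and, more to the point, the \emph{number} of horizontal vertices on a given type-(ii) level can change. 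Your acknowledgement of the $e\boxx_1\cdots\boxx_1 e$ obstruction covers only part of this; for move~(iii), say, $\scA^{\beta'}$ carries a local factor $e\boxx_0 v$ while $\scA^{\beta''}$ carries $(e\boxx_1\cdots\boxx_1 e)\boxx_0 v$, and these are not equal as objects.

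The second gap is in step~3. Your argument that the auxiliary contributions are ``operadic identities'' only applies when the relevant vertex has arity~$1$: then $\alpha_e:e\to\scA(1)$ is indeed the unit~$j$. But for an arity-$n$ horizontal vertex, the map $\alpha_e:e\to\scA(n)$ is the composite $e\to v=\uAss(n)\to\scA(n)$ coming from the multiplicative structure~(\ref{eq:19}), not a unit, and the operadic unit axioms alone do not force the required identifications. The paper instead reduces each elementary move to the commutativity of a small diagram in~$\duo$ built from the monoid structure of~$v$, the comonoid structure of~$e$, and the canonical map $e\to v$ (diagrams~(\ref{eq:35})--(\ref{eq:37})); these are genuine duoidal coherences, not consequences of the $1$-operad axioms for~$\scA$. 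Your proposal does not isolate these diagrams, and without them the ``technical heart'' you allude to in the last paragraph remains unaddressed.
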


As the first step in proving Lemma~\ref{golovokruzenie} we
characterize, in Lemma~\ref{elementary-moves} below,  
pairs $\beta',\beta'' \in \LTr$ having the same $w$-image in
$\Tm_2^\bbN$. For this, the following 
alternative description of objects of the categories $
\Tam^\bbN_2$ and $\Tm_2^\bbN$ will be useful.

\begin{lemma}
\label{tam-as-tree}
Objects of\/ $\Tam^\bbN_2$ can be described as the
isomorphism classes of planar rooted trees $\zeta$
with white vertices \bily, vertical vertices \tlusty\ and horizontal
vertices \cerny. While
\bily-vertices have arbitrary arities $\geq 0$, \cerny-vertices have
either arity $\geq 2$ or $0$, and all \tlusty-vertices
are of arity~$1$.  

We moreover require that $\zeta$ has no internal edge connecting two
\cerny-vertices and no internal edge starting from a \cerny-vertex and
ending in a \tlusty-vertex, i.e.~the following edges
\begin{equation}
\label{eq:31}
\psscalebox{.5 .5}
{
\begin{pspicture}(0,-0.6230486)(4.146097,0.6230486)
\pscustom[linecolor=black, linewidth=0.05]
{
\newpath
\moveto(8.9,-3.9769514)
}
\pscustom[linecolor=black, linewidth=0.05]
{
\newpath
\moveto(21.5,-2.6769514)
}

\psdots[linecolor=black, dotsize=0.49939746](3.9,-0.37695137)
\psdots[linecolor=black, dotsize=0.49939746](1.1,-0.37695137)
\psline[linecolor=black, linewidth=0.05, arrowsize=0.05291666666666672cm 6.0,arrowlength=2.0,arrowinset=0.0]{<-}(1.2,-0.37656346)(3.6974843,-0.37656346)
\end{pspicture}
}
\ \ \mbox { or } 
\psscalebox{.5 .5}
{
\begin{pspicture}(0,-0.6257164)(4.146097,0.6257164)
\pscustom[linecolor=black, linewidth=0.05]
{
\newpath
\moveto(8.9,-3.9742837)
}
\pscustom[linecolor=black, linewidth=0.05]
{
\newpath
\moveto(21.5,-2.6742837)
}

\psdots[linecolor=black, dotsize=0.49939746](3.9,-0.37428364)
\psline[linecolor=black, linewidth=0.05, arrowsize=0.05291666666666672cm 6.0,arrowlength=2.0,arrowinset=0.0]{<-}(1.2,-0.37389573)(3.6974843,-0.37389573)
\psdots[linecolor=black, fillstyle=solid, dotstyle=o, dotsize=0.49939746](1.0,-0.37428364)
\psdots[linecolor=black, dotstyle=otimes, dotsize=0.49939746](1.0,-0.37428364)
\end{pspicture}
}
\end{equation}
are not allowed. Finally, \bily- and \tlusty-vertices
are  lined up in levels such that each level contains at least one
\bily-vertex. 

Objects of\/ $\Tm^\bbN_2$ can similarly be identified with the
isomorphisms classes of trees as above, but this time allowing also
levels consisting solely of \tlusty-vertices.
\end{lemma}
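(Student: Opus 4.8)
The plan is to prove that the object part of the functor $u\colon\LTr\to\Tam^\bbN_2$ of~\eqref{eq:functoru} restricts to a bijection from the set of trees $\zeta$ described in the statement onto the objects of $\Tam^\bbN_2$, and then to deduce the $\Tm^\bbN_2$-case from the pullback~\eqref{eq:11}. Recall, from Proposition~\ref{sec:discr-fibr-betw}, that an object of $\Tam^\bbN_2$ is a pair $(\frO,\delta)$ with $\delta\in\Lat^{(2)}$ and $\frO\in\OrN$ a $2$-ordinal whose underlying set is $\Wh(\delta)$, coloured by the arities of the white vertices, such that $\frO$ dominates the canonical complementary order on $\Wh(\delta)$. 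A tree $\zeta$ of the stated type is in particular an object of $\LTr$, so $u(\zeta)=(\frO,\delta)$ is defined, with $\delta=\bar\zeta$ obtained by the four steps (i)--(iv) of Section~\ref{sec:tamark-tsyg-oper} and $\frO=p(\Omega(\zeta))$; the membership $u(\zeta)\in\Tam^\bbN_2$ is exactly the verification given in the text immediately after the definition of $u(\beta)$, applied to $\beta=\zeta$. So the real task is to show that $\zeta\mapsto u(\zeta)$ maps the set of such trees bijectively onto the objects of $\Tam^\bbN_2$, for which I would produce an explicit inverse.

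The first step is to re-encode the $2$-ordinal. A $2$-ordinal on a finite set $W$ is the same thing as an ordered partition $W=B_1\sqcup\cdots\sqcup B_t$ into non-empty ``levels'' together with a linear order on each $B_c$ --- the $B_c$ being the classes of the relation ``comparable under $<_1$'' and the order between them given by $<_0$. Writing $\ell(w)=c$ when $w\in B_c$, the hypothesis that $\frO$ dominates the complementary order of $\delta$ unwinds, by the same computation as in the verification just preceding this lemma (see Figure~\ref{ide}), into two assertions: along every directed edge-path of $\delta$ the function $\ell$ is strictly monotone on the white vertices encountered, and on each $B_c$ its order coincides with the planar left-to-right order of the white vertices of $\delta$ lying on it.

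Given such a pair $(\frO,\delta)$, I reconstruct $\zeta$ as follows. Put the white vertices of $\delta$ on the levels prescribed by $\frO$, in the level orders. Turn each black vertex $b$ of $\delta$ into a horizontal vertex $\cerny$ lying on a horizontal level directly on the leaf side of the white level of the white vertex (or root) immediately on the root side of $b$ --- this placement is forced, for $\delta$ has no black--black edge and~\eqref{eq:31} forbids a $\cerny$ from being a child of a $\tlusty$. Finally pad every remaining edge of $\delta$ whose endpoints now sit on non-adjacent white levels by a single arity-$1$ vertical vertex $\tlusty$ on each intermediate white level. This padding creates no forbidden edge of~\eqref{eq:31} (a $\tlusty$ may legitimately be a child of a $\cerny$; only the converse is barred, and that is exactly what the forced position of the $\cerny$'s avoids), the two properties of the previous paragraph make the construction well defined, and every level carries at least one white vertex since each $B_c\neq\emptyset$; thus $\zeta$ is of the required type. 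That $u(\zeta)=(\frO,\delta)$, and that applying $u$ and then this reconstruction to a tree $\zeta$ of the required type returns $\zeta$, are then checked by inspection: the four steps destroy precisely the $\tlusty$-padding, while the $\cerny$-vertices of $\zeta$ survive as the black vertices of $\delta$ and the white levels become $\frO$, and both the padding and the level structure are recovered canonically from $\frO$.

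For $\Tm^\bbN_2$, Proposition~\ref{pullback-create-pullbacks} identifies its objects with the pairs $(\frT,(\frO,\delta))$ with $(\frO,\delta)\in\Tam^\bbN_2$ and $\frT\in\OmegaN$ satisfying $p(\frT)=\frO$; since $p$ is the pruning, such a $\frT$ is $\frO$ with arbitrary finite strings of \emph{empty} $1$-vertices inserted in the gaps between and around its $t$ non-empty $1$-vertices, and under the bijection just established these correspond exactly to extra levels of $\zeta$ consisting solely of $\tlusty$-vertices (the reconstruction being otherwise unchanged). This accounts for the only additional freedom, giving the stated description of the objects of $\Tm^\bbN_2$. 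I expect the reconstruction in the third paragraph to be the main obstacle: one must place the horizontal $\cerny$-levels and the $\tlusty$-padding so that both prohibitions of~\eqref{eq:31} hold automatically and, conversely, verify that ``$\frO$ dominates the complementary order of $\delta$'' is precisely --- neither less nor more than --- what makes the four-step recipe invertible; the Grothendieck- and pullback-theoretic parts are then routine bookkeeping.
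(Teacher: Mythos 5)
Your proposal is correct and follows essentially the same route as the paper: the paper's proof likewise constructs explicit mutually inverse maps $\phi,\psi$ (resp.\ $\varrho,\varsigma$) between the stated classes of trees and the objects of $\Tam^\bbN_2$ (resp.\ $\Tm^\bbN_2$), with your forced placement of the \cerny-vertices just leafward of their parent's level and the unary \tlusty-padding at level crossings matching the paper's $\psi$, and your treatment of the extra $1$-vertices of $\frT$ versus $p(\frT)$ matching its $\varsigma$. The only slip is the claim that a tree $\zeta$ of the stated type is literally an object of $\LTr$ (it is not: its \cerny-vertices are not organized into type-(ii) levels and its \tlusty-vertices are constrained to arity $1$), but this is harmless since the four-step recipe defining $u$ on objects applies verbatim to such $\zeta$ and produces exactly the paper's map $\phi$.
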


\begin{proof}
Let us denote
provisionally by $\pro$ the set of isomorphism classes of trees which,
according to the lemma, should describe objects of $\Tam^\bbN_2$, and let
$\pr$ be similarly related to~$\Tm^\bbN_2$. We are going to construct
two couples of mutually inverse maps,
\[
{\vbox to 1.6em{\vss \hbox to
        20em{
$\xymatrix{\pro \ar@/^.8em/[r]^\phi &  \ar@/^.8em/[l]_\psi
  \Tam^\bbN_2}
\ \mbox { and }
\xymatrix{\pr \ar@/^.8em/[r]^\varrho &  \ar@/^.8em/[l]_\varsigma
  \Tm^\bbN_2}.$}}}
\]
While the definitions of $\phi : \pro \to \Tam^\bbN_2$ and $\varrho : 
\pr \to \Tm^\bbN_2$ are very simple, 
our constructions of their
inverses will involve intuitive geometric arguments. A formal
combinatorial construction should use a straightforward but
lengthy induction on the number of vertices of the trees involved. We
leave it to the interested reader.

Before we begin, notice that each planar rooted tree
$\omega$ with levels of \bily- and \tlusty-vertices determines
an $\bbN$-colored $2$-tree $\Omega(\omega) \in \OmegaN$ by same
procedure as described for the leveled trees of $\LTr$ in
Section~\ref{LTr}. That is, we organize \bily- and
\tlusty-vertices of $\omega$ to table~(\ref{eq:3}); the $1$-truncation
of $\Omega(\omega)$ will then be the set $\{\rada 1t\}$ and its
$2$-leaves the same as in~(\ref{Dnes_snad_Jarca_zalije_kyticky}). The
$\bbN$-coloring of the $2$-leaves of $\Omega(\omega)$ is given, as
always in this article, by the arities of the corresponding
\bily-vertices. It is clear that $\Omega(\omega)$ is pruned if and
only if $\omega$ does not have levels consisting solely of
\tlusty-vertices.

Let us describe $\phi : \pro \to \Tam^\bbN_2$. 
As in Section~\ref{sec:tamark-tsyg-oper},
the level structure of $\zeta \in \pro$ induces the lexicographic order on the
set $\Wh(\zeta)$ of its white vertices. We label the \bily-vertices
of $\zeta$ by $\{\rada 1k\}$ accordingly, remove the levels and denote
the resulting tree by $\overline \zeta$.

Since, by assumption, all levels of $\zeta$ contain at
least one white vertex, the $2$-tree $\Omega(\zeta)$ is pruned and
can therefore be interpreted as a $2$-ordinal. Then, by construction, 
$\phi(\zeta) := \big(\Omega(\zeta),\bar \zeta\big)$ is an $\OrN$-labelled tree.
It is moreover clear that the $2$-ordinal $\Omega(\zeta)$ dominates the
canonical complementary order on $\Wh(\overline \zeta)$, so in fact
$\phi(\zeta) \in \Tam^\bbN_2$.
    
On the other hand, take $(\frO,\delta) \in \Tam^\bbN_2$ and define
$\zeta = \psi(\frO,\delta) \in \pro$ as follows.  First, organize the
\bily-vertices of $\delta$ to levels such that $\Omega(\delta)
=\frO$. 
The domination condition for $(\frO,\delta)$ guarantees that
it is possible. Then move the \cerny-vertices of $\delta$ so close
to the root that none of the edges \cernybily\ 
intersect a level line and that none  of the \cerny-vertices lies on a
level line.
Finally, introduce unary \tlusty-vertices at the intersection points
of the level lines with the edges of $\delta$. 
Then $\phi :
\pro \to \Tam^\bbN_2$ and $\psi : \Tam^\bbN_2 \to \pro$ are obviously
mutual inverses.

For instance, if $\frO$ is the $\bbN$-colored $2$-ordinal represented
by the pruned  $\bbN$-colored $2$-tree in the right side of 
Figure~\ref{prun-unprun}
\begin{figure}[t]
  \centering
\psscalebox{.5 .5} 
{
\begin{pspicture}(0,-1)(6.1622066,1.57)
\psline[linecolor=black, linewidth=0.05](3.1975353,-1.07)(5.2410135,-0.07)(4.62797,0.93)(4.62797,0.93)
\psline[linecolor=black, linewidth=0.05](5.2410135,-0.07)(5.854057,0.93)(5.854057,0.93)
\psline[linecolor=black, linewidth=0.05](1.154057,0.93)(1.154057,-0.07)(3.1975353,-1.07)(3.1975353,-1.07)
\psline[linecolor=black, linewidth=0.05](3.1975353,-0.07)(3.1975353,-1.07)(3.1975353,-1.07)
\psline[linecolor=black, linewidth=0.05](4.2192745,0.93)(3.1975353,-0.07)(2.175796,0.93)(2.175796,0.93)
\psline[linecolor=black, linewidth=0.05](3.504057,0.93)(3.1975353,-0.07)(2.8910136,0.93)(2.8910136,0.93)
\rput[b](1.154057,1.23){\Large 2}
\rput[b](2.175796,1.23){\Large 3}
\rput[b](2.8910136,1.23){\Large 0}
\rput[b](3.504057,1.23){\Large 0}
\rput[b](4.2192745,1.23){\Large 2}
\rput[b](4.62797,1.23){\Large 1}
\rput[b](5.854057,1.23){\Large 1}
\rput[b](3.2236223,-1.67){\Large 10}
\psline[linecolor=black, linewidth=0.05](3.2192085,-1.07)(2.0086024,-0.07)(2.0086024,-0.07)
\psline[linecolor=black, linewidth=0.05](3.2192085,-1.07)(2.5070872,-0.07)(2.5070872,-0.07)
\psline[linecolor=black, linewidth=0.05](3.2192085,-1.07)(3.7889054,-0.07)(3.7889054,-0.07)
\psline[linecolor=black, linewidth=0.05](3.254057,-1.07)(6.154057,-0.07)(6.154057,-0.07)
\psline[linecolor=black, linewidth=0.05](3.154057,-1.07)(0.15405701,-0.07)(0.15405701,-0.07)
\end{pspicture}
}
\raisebox{1.5em}{
\hskip 2em \xymatrix@1@C=5em{\ar@{|->}[r]^{\mbox {\scriptsize pruning}}&}\hskip 2em}
\psscalebox{.5 .5} 
{
\begin{pspicture}(0,-1)(4.79,1.57)
\psline[linecolor=black, linewidth=0.05](2.105,-1.07)(4.105,-0.07)(3.505,0.93)(3.505,0.93)
\psline[linecolor=black, linewidth=0.05](4.105,-0.07)(4.705,0.93)(4.705,0.93)
\psline[linecolor=black, linewidth=0.05](0.105,0.93)(0.105,-0.07)(2.105,-1.07)(2.105,-1.07)
\psline[linecolor=black, linewidth=0.05](2.105,-0.07)(2.105,-1.07)(2.105,-1.07)
\psline[linecolor=black, linewidth=0.05](3.105,0.93)(2.105,-0.07)(1.105,0.93)(1.105,0.93)
\psline[linecolor=black, linewidth=0.05](2.405,0.93)(2.105,-0.07)(1.805,0.93)(1.805,0.93)
\rput[b](0.105,1.23){\Large 2}
\rput[b](1.105,1.23){\Large 3}
\rput[b](1.805,1.23){\Large 0}
\rput[b](2.405,1.23){\Large 0}
\rput[b](3.105,1.23){\Large 2}
\rput[b](3.505,1.23){\Large 1}
\rput[b](4.705,1.23){\Large 1}
\rput[b](2.105,-1.67){\Large 10}
\end{pspicture}
}
\caption{A $2$-tree $\frT \in \OmegaN$ and its pruning $\frO =p(\frT) \in \OrN$.}
\label{prun-unprun}
\end{figure}
and $\delta$ the tree in Figure~\ref{tam-tree}, then $(\frO,\delta)
\in \Tam^\bbN_2$, 
and $\psi(\frO,\delta) \in \pro$ is the tree in Figure~\ref{fig:6}.

\begin{figure}[t]
  \centering
\psscalebox{.5 .5}
{
\begin{pspicture}(5.5,-5.555)(20.005621,3)
\definecolor{colour0}{rgb}{0.03137255,0.003921569,0.003921569}
\psline[linecolor=black, linewidth=0.02](2.597499,1.5556207)(2.597499,1.5556207)
\pscustom[linecolor=black, linewidth=0.05]
{
\newpath
\moveto(8.9,0.955)
}
\pscustom[linecolor=black, linewidth=0.05]
{
\newpath
\moveto(21.5,2.255)
}

\pscustom[linecolor=black, linewidth=0.05]
{
\newpath
\moveto(12.013208,-4.0448136)
\lineto(12.098667,-4.191884)
\curveto(12.1413965,-4.2654195)(12.245169,-4.412489)(12.30621,-4.4860244)
\curveto(12.367251,-4.5595593)(12.587003,-4.7066298)(12.745713,-4.7801647)
\curveto(12.904423,-4.8537)(13.282884,-4.9566493)(13.942139,-5.0448914)
}
\pscustom[linecolor=black, linewidth=0.05]
{
\newpath
\moveto(12.013208,-4.0448136)
\lineto(12.36449,-3.794801)
\curveto(12.540131,-3.669795)(12.966687,-3.4197826)(13.217603,-3.2947767)
\curveto(13.468518,-3.1697705)(14.371814,-2.9197571)(15.024195,-2.794751)
\curveto(15.676575,-2.669745)(17.23225,-2.4947364)(19.942139,-2.3447285)
}
\pscustom[linecolor=black, linewidth=0.05]
{
\newpath
\moveto(12.113208,0.7555896)
\lineto(12.273983,0.6526361)
\curveto(12.35437,0.60115904)(12.549597,0.49820557)(12.664437,0.44672853)
\curveto(12.779278,0.39525145)(13.192699,0.2922986)(13.4912815,0.24082154)
\curveto(13.7898655,0.18934448)(14.50187,0.11727661)(15.742138,0.055504285)
}
\psline[linecolor=black, linewidth=0.05](13.113208,-4.0448136)(13.113208,-4.0448136)
\psline[linecolor=black, linewidth=0.05](13.112857,-4.0448136)(16.771069,-4.0448136)(13.1,-4.045)
\pscustom[linecolor=black, linewidth=0.05]
{
\newpath
\moveto(16.871069,-0.036818847)
\lineto(17.148315,0.25690857)
\curveto(17.286938,0.40377197)(17.920647,0.6974997)(18.415731,0.8443634)
\curveto(18.910816,0.9912271)(19.554424,1.1674637)(20.0,1.2555819)
}
\pscustom[linecolor=black, linewidth=0.05]
{
\newpath
\moveto(17.000603,-4.0448136)
\lineto(17.266373,-3.8857105)
\curveto(17.399256,-3.8061585)(18.00673,-3.6470556)(18.481318,-3.5675037)
\curveto(18.955906,-3.4879522)(19.57287,-3.3924901)(20.0,-3.3447595)
}
\pscustom[linecolor=black, linewidth=0.05]
{
\newpath
\moveto(16.947515,-0.036818847)
\lineto(17.217989,0.08556762)
\curveto(17.353226,0.14676087)(17.97145,0.26914734)(18.45444,0.33034056)
\curveto(18.937428,0.39153382)(19.56531,0.46496522)(20.0,0.5016814)
}
\pscustom[linecolor=black, linewidth=0.05]
{
\newpath
\moveto(14.542139,1.8556671)
\lineto(15.023184,1.7647439)
\curveto(15.263706,1.7192823)(16.363235,1.6283593)(17.222244,1.5828977)
\curveto(18.081253,1.5374359)(19.197962,1.482882)(19.97107,1.4556051)
}
\pscustom[linecolor=black, linewidth=0.05]
{
\newpath
\moveto(8.563345,-0.64472044)
\lineto(8.84618,-0.3037976)
\curveto(8.987598,-0.13333619)(9.63408,0.2075879)(10.139143,0.3780493)
\curveto(10.644207,0.54851073)(11.300789,0.7530646)(11.755346,0.8553414)
}
\pscustom[linecolor=black, linewidth=0.05]
{
\newpath
\moveto(17.000603,-4.0448136)
\lineto(17.266373,-4.340291)
\curveto(17.399256,-4.48803)(18.00673,-4.783507)(18.481318,-4.9312463)
\curveto(18.955906,-5.0789843)(19.57287,-5.256272)(20.0,-5.3449144)
}
\pscustom[linecolor=black, linewidth=0.05]
{
\newpath
\moveto(16.871069,-0.036818847)
\lineto(17.148315,-0.37950075)
\curveto(17.286938,-0.5508423)(17.920647,-0.89352417)(18.415731,-1.0648651)
\curveto(18.910816,-1.236206)(19.554424,-1.4418151)(20.0,-1.5446198)
}
\pscustom[linecolor=black, linewidth=0.05]
{
\newpath
\moveto(16.947515,-0.036818847)
\lineto(17.217989,-0.15920532)
\curveto(17.353226,-0.22039856)(17.97145,-0.34278503)(18.45444,-0.40397826)
\curveto(18.937428,-0.46517152)(19.56531,-0.53860354)(20.0,-0.57531923)
}
\psline[linecolor=black, linewidth=0.05](17.000603,-4.0448136)(18.4,-4.344837)(20.0,-4.545)
\psline[linecolor=black, linewidth=0.05, arrowsize=0.05291666666666672cm 6.0,arrowlength=2.0,arrowinset=0.0]{<-}(3.6982164,-1.9446121)(5.725617,-1.9446121)
\psdots[linecolor=black, fillstyle=solid, dotstyle=o, dotsize=0.49939746](12.0,-2.745)
\psdots[linecolor=black, dotsize=0.49939746](14.4,1.955)
\psdots[linecolor=black, dotsize=0.49939746](17.0,-0.045)
\pscustom[linecolor=black, linewidth=0.05]
{
\newpath
\moveto(9.842138,-3.644333)
\lineto(10.013056,-3.462497)
\curveto(10.098515,-3.371579)(10.489184,-3.1897438)(10.794395,-3.0988257)
\curveto(11.0996065,-3.0079076)(11.49638,-2.898806)(11.77107,-2.8442552)
}
\pscustom[linecolor=black, linewidth=0.05]
{
\newpath
\moveto(8.736145,-0.7443249)
\lineto(9.010119,-0.67612916)
\curveto(9.147107,-0.64203125)(9.7733345,-0.57383543)(10.262576,-0.5397375)
\curveto(10.751817,-0.5056397)(11.38783,-0.46472168)(11.828146,-0.44426295)
}
\psdots[linecolor=black, fillstyle=solid, dotstyle=o, dotsize=0.49939746](12.0,-0.445)
\psdots[linecolor=black, fillstyle=solid, dotstyle=o, dotsize=0.49939746](15.8,0.055)
\pscustom[linecolor=black, linewidth=0.05]
{
\newpath
\moveto(5.9600716,-1.944325)
\lineto(6.285242,-2.3761523)
\curveto(6.447828,-2.5920653)(7.1910753,-3.0238929)(7.771737,-3.239806)
\curveto(8.352399,-3.4557197)(9.10726,-3.7148156)(9.629855,-3.844364)
}
\pscustom[linecolor=black, linewidth=0.05]
{
\newpath
\moveto(5.987272,-1.8447205)
\lineto(6.216943,-1.5947064)
\curveto(6.3317785,-1.4696997)(6.856741,-1.2196851)(7.266867,-1.0946784)
\curveto(7.6769934,-0.9696716)(8.210158,-0.8196631)(8.579272,-0.74465865)
}
\pscustom[linecolor=black, linewidth=0.05]
{
\newpath
\moveto(9.971069,-3.844325)
\lineto(10.248316,-3.8896449)
\curveto(10.386938,-3.9123046)(11.020647,-3.9576244)(11.515731,-3.9802845)
\curveto(12.010816,-4.0029445)(12.654424,-4.030136)(13.1,-4.0437317)
}
\pscustom[linecolor=black, linewidth=0.05]
{
\newpath
\moveto(12.1421385,1.0556672)
\lineto(12.339639,1.2602301)
\curveto(12.438389,1.3625116)(12.889817,1.5670753)(13.2424965,1.6693567)
\curveto(13.595176,1.7716382)(14.053658,1.8943759)(14.371069,1.9557447)
}
\pscustom[linecolor=black, linewidth=0.05]
{
\newpath
\moveto(14.542139,2.0556672)
\lineto(15.023184,2.2147758)
\curveto(15.263706,2.29433)(16.363235,2.4534388)(17.222244,2.5329928)
\curveto(18.081253,2.612547)(19.197962,2.7080123)(19.97107,2.7557447)
}
\psdots[linecolor=black, dotsize=0.49939746](8.6,-0.745)
\psdots[linecolor=black, fillstyle=solid, dotstyle=o, dotsize=0.49939746](12.0,0.955)
\psline[linecolor=colour0, linewidth=0.05](16.0,0.055)(17.1,-0.045)
\psdots[linecolor=black, dotsize=0.49939746](9.8,-3.845)
\psdots[linecolor=black, dotsize=0.49939746](13.8,-4.945)
\psdots[linecolor=black, dotsize=0.49939746](17.0,-4.045)
\psdots[linecolor=black, fillstyle=solid, dotstyle=o, dotsize=0.49939746](12.0,-4.045)
\psdots[linecolor=black, fillstyle=solid, dotstyle=o, dotsize=0.49939746](15.8,2.355)
\psline[linecolor=black, linewidth=0.02](5.9,2.755)(5.9,-5.545)(5.9,-5.545)
\psline[linecolor=black, linewidth=0.02](12.0,2.755)(12.0,-5.545)(12.0,-5.545)
\psline[linecolor=black, linewidth=0.02](15.8,2.755)(15.8,-5.545)(15.8,-5.545)
\psdots[linecolor=black, fillstyle=solid, dotstyle=o, dotsize=0.49939746](12.0,0.955)
\psdots[linecolor=black, fillstyle=solid, dotstyle=o, dotsize=0.49939746](12.0,-2.745)
\psdots[linecolor=black, fillstyle=solid, dotstyle=o, dotsize=0.49939746](15.8,-4.045)
\psdots[linecolor=black, fillstyle=solid, dotstyle=o, dotsize=0.49939746](12.0,-4.045)
\psdots[linecolor=black, fillstyle=solid, dotstyle=o, dotsize=0.49939746](12.0,-0.445)
\psdots[linecolor=black, fillstyle=solid, dotstyle=o, dotsize=0.49939746](15.8,0.055)
\psdots[linecolor=black, fillstyle=solid, dotstyle=o, dotsize=0.49939746](15.8,-2.645)
\psdots[linecolor=black, fillstyle=solid, dotstyle=o, dotsize=0.49939746](15.8,2.355)
\psdots[linecolor=black, fillstyle=solid, dotstyle=o, dotsize=0.49939746](15.8,1.655)
\psdots[linecolor=black, dotstyle=otimes, dotsize=0.49939746](15.8,-2.645)
\psdots[linecolor=black, dotstyle=otimes, dotsize=0.49939746](15.8,2.355)
\psdots[linecolor=black, dotstyle=otimes, dotsize=0.49939746](15.8,1.655)
\psdots[linecolor=black, fillstyle=solid, dotstyle=o, dotsize=0.49939746](5.9,-1.945)
\end{pspicture}
}
\caption{\label{fig:6}
The tree $\zeta = \psi(\frO,\delta) \in \pro$.}
\end{figure}

Let us describe $\varrho : 
\pr \to \Tm^\bbN_2$. Notice that elements of $\Tm^\bbN_2$ are, by the definition
via the pullback in~(\ref{eq:11}),  couples
$(\frT,\delta)$ such that $\frT \in \OmegaN$ and $\big(p(\frT),\delta)\big) \in
\Tam^\bbN_2$, where $p(\frT) \in \OrN$ is the pruning 
of the $\bbN$-colored $2$-tree $\frT$. We can call couples
$(\frT,\delta) \in \Tm^\bbN_2$ {\em $\OmegaN$-labelled trees\/}.

For $\xi \in \pr$ denote by $p(\xi)$ be the tree obtained from $\xi$ by
removing levels all consisting only of \tlusty-vertices. Clearly
$p(\xi)\in \pro$, so it makes sense to put
$\overline{\xi} := \phi\big(p(\xi)\big)$. It is then clear that the rule $\xi
\mapsto \big(\Omega(\xi),\phi(p(\xi))\big)$ defines a map $\varrho :\pr \to
\Tm^\bbN_2$.

Let us construct its inverse $\varsigma : \Tm^\bbN_2 \to \pr$.
Suppose that $(\frT,\delta) \in \Tm^\bbN_2$. As we already observed,
$(p(\frT),\delta) \in \Tam^\bbN_2$, so we may use the previous
construction and consider, as an intermediate step, the 
tree $\zeta :
=\psi\big(p(\frT),\delta\big) \in \pro$.  The tree $\xi =\varsigma(\frT,\delta)$
will be constructed by adding additional levels of
\tlusty-vertices of arity $1$ to $\zeta$ as follows.

Let $\tr_1(\frT) = \{\rada 1u\}$ and $\tr_1\big(p(\frT)\big) = \{\rada
1t\}$. If $t=u$, 
there is nothing to do as $\frT$ is pruned; in this case we
take $\xi := \zeta$. Assume therefore that $t <
u$.

Since $\tr_1\big(p(\frT)\big)$ is a  subset of $\tr_1(\frT)$, 
we have an inclusion $\liota : \{\rada
1t\} \hookrightarrow \{\rada 1u\}$. The complement
$\{\rada 1u\}\setminus {\rm Im}(\liota)$ is the disjoint union $S_1
\cup \cdots \cup S_k$ of non-empty intervals.
 For instance, for $\frT$
as Figure~\ref{prun-unprun},   $\tr_1(\frT) = \{\rada 18\}$,
$\tr_1\big(p(\frT)\big) = \{1,2,3\}$, $ {\rm Im}(\liota) = \{2,5,7\}$,~so
 \[
\{\rada 18\}\setminus {\rm Im}(\liota) = (1) \cup (3,4)\cup (6) \cup
(8).
\]

For $i$, $1\leq i \leq k$, such that $t\not\in S_i$, let $r_i := \liota^{-1}(\max(S_i) +
1)$. In the example above, $r_1 = 1$, $r_2 = 2$ and
$r_3 = 3$. For each such an $i$ we add to $\zeta$ ${\rm card}(S_i)$ new
levels consisting of~\tlusty-vertices of arity $1$ placed above the
$r_i$th level of $\zeta$ so close to it that all vertices of $\zeta$ above
this level are also above these newly introduced levels.
If $t \in S_i$\footnote{This may obviously happen only when
$i=k$.} we introduce ${\rm card}(S_i)$ new
levels of \tlusty-vertices of arity $1$ intersecting the input leaves of
$\zeta$.

We denote the resulting tree by $\xi$ and define
$\varsigma(\frT,\delta) := \xi$. We believe that
Figure~\ref{fig:9} makes the construction of $\beta$ out of
$\zeta$ obvious. 
\begin{figure}[t]
  \centering
\psscalebox{.5 .5}
{
\begin{pspicture}(5,-5.605)(20.025,4)
\definecolor{colour0}{rgb}{0.03137255,0.003921569,0.003921569}
\psline[linecolor=black, linewidth=0.02](2.597499,1.6056206)(2.597499,1.6056206)
\pscustom[linecolor=black, linewidth=0.05]
{
\newpath
\moveto(8.9,1.005)
}
\pscustom[linecolor=black, linewidth=0.05]
{
\newpath
\moveto(21.5,2.305)
}

\pscustom[linecolor=black, linewidth=0.05]
{
\newpath
\moveto(12.013208,-3.994814)
\lineto(12.094235,-4.1418843)
\curveto(12.134751,-4.2154193)(12.233142,-4.362489)(12.29102,-4.436024)
\curveto(12.348898,-4.509559)(12.557256,-4.6566296)(12.707738,-4.730165)
\curveto(12.85822,-4.8037)(13.21706,-4.906649)(13.842138,-4.994891)
}
\pscustom[linecolor=black, linewidth=0.05]
{
\newpath
\moveto(12.013208,-3.994814)
\lineto(12.36449,-3.759507)
\curveto(12.540131,-3.6418536)(12.966687,-3.4065473)(13.217603,-3.288894)
\curveto(13.468518,-3.1712403)(14.371814,-2.9359338)(15.024195,-2.81828)
\curveto(15.676575,-2.7006269)(17.23225,-2.5359125)(19.942139,-2.3947284)
}
\pscustom[linecolor=black, linewidth=0.05]
{
\newpath
\moveto(12.113208,0.8055896)
\lineto(12.273983,0.7026361)
\curveto(12.35437,0.65115905)(12.549597,0.54820555)(12.664437,0.4967285)
\curveto(12.779278,0.44525146)(13.192699,0.3422986)(13.4912815,0.29082152)
\curveto(13.7898655,0.23934448)(14.50187,0.1672766)(15.742138,0.10550429)
}
\psline[linecolor=black, linewidth=0.05](13.113208,-3.994814)(13.113208,-3.994814)
\psline[linecolor=black, linewidth=0.05](13.112857,-3.994814)(16.771069,-3.994814)(13.1,-3.995)
\pscustom[linecolor=black, linewidth=0.05]
{
\newpath
\moveto(16.871069,0.013181153)
\lineto(17.148315,0.2841815)
\curveto(17.286938,0.4196814)(17.920647,0.69068176)(18.415731,0.82618165)
\curveto(18.910816,0.96168154)(19.554424,1.1242816)(20.0,1.2055819)
}
\pscustom[linecolor=black, linewidth=0.05]
{
\newpath
\moveto(17.000603,-3.994814)
\lineto(17.266373,-3.8357105)
\curveto(17.399256,-3.7561584)(18.00673,-3.5970557)(18.481318,-3.5175037)
\curveto(18.955906,-3.4379523)(19.57287,-3.3424902)(20.0,-3.2947595)
}
\pscustom[linecolor=black, linewidth=0.05]
{
\newpath
\moveto(16.941778,0.029192505)
\lineto(17.2167,0.14872314)
\curveto(17.354162,0.20848876)(17.975323,0.3243866)(18.459023,0.3805188)
\curveto(18.942722,0.4366504)(19.57125,0.50350434)(20.005737,0.5356698)
}
\pscustom[linecolor=black, linewidth=0.05]
{
\newpath
\moveto(14.542139,1.9056671)
\lineto(15.023184,1.8374714)
\curveto(15.263706,1.8033735)(16.363235,1.7351776)(17.222244,1.7010797)
\curveto(18.081253,1.6669818)(19.197962,1.6260638)(19.97107,1.6056051)
}
\pscustom[linecolor=black, linewidth=0.05]
{
\newpath
\moveto(8.763346,-0.49472046)
\lineto(9.03732,-0.17652465)
\curveto(9.174307,-0.017426757)(9.800534,0.30076903)(10.289776,0.45986694)
\curveto(10.7790165,0.61896485)(11.41503,0.8098828)(11.855346,0.9053414)
}
\pscustom[linecolor=black, linewidth=0.05]
{
\newpath
\moveto(17.000603,-3.994814)
\lineto(17.266373,-4.2902913)
\curveto(17.399256,-4.43803)(18.00673,-4.733507)(18.481318,-4.8812466)
\curveto(18.955906,-5.0289845)(19.57287,-5.206272)(20.0,-5.2949147)
}
\pscustom[linecolor=black, linewidth=0.05]
{
\newpath
\moveto(16.871069,0.013181153)
\lineto(17.148315,-0.32950073)
\curveto(17.286938,-0.5008423)(17.920647,-0.84352416)(18.415731,-1.0148652)
\curveto(18.910816,-1.1862061)(19.554424,-1.3918152)(20.0,-1.4946198)
}
\pscustom[linecolor=black, linewidth=0.05]
{
\newpath
\moveto(16.947515,0.013181153)
\lineto(17.217989,-0.10920532)
\curveto(17.353226,-0.17039856)(17.97145,-0.29278505)(18.45444,-0.35397828)
\curveto(18.937428,-0.4151715)(19.56531,-0.4886035)(20.0,-0.5253193)
}
\psline[linecolor=black, linewidth=0.05](17.000603,-3.994814)(20.0,-4.394783)(20.0,-4.395)
\psline[linecolor=black, linewidth=0.05, arrowsize=0.05291666666666672cm 6.0,arrowlength=2.0,arrowinset=0.0]{<-}(3.3982165,-1.8946121)(6.025617,-1.8946121)
\psdots[linecolor=black, fillstyle=solid, dotstyle=o, dotsize=0.49939746](12.0,-2.695)
\psdots[linecolor=black, dotsize=0.49939746](14.4,2.005)
\psdots[linecolor=black, dotsize=0.49939746](17.0,0.005)
\pscustom[linecolor=black, linewidth=0.05]
{
\newpath
\moveto(9.542138,-3.594333)
\lineto(9.739638,-3.412497)
\curveto(9.838388,-3.321579)(10.289817,-3.1397436)(10.642496,-3.0488257)
\curveto(10.995175,-2.9579077)(11.453657,-2.8488061)(11.77107,-2.7942553)
}
\pscustom[linecolor=black, linewidth=0.05]
{
\newpath
\moveto(8.836145,-0.694325)
\lineto(9.101259,-0.62612915)
\curveto(9.233816,-0.59203124)(9.83979,-0.5238354)(10.313208,-0.48973754)
\curveto(10.786626,-0.45563966)(11.402069,-0.41472167)(11.828146,-0.39426297)
}
\psdots[linecolor=black, fillstyle=solid, dotstyle=o, dotsize=0.49939746](12.0,-0.395)
\psdots[linecolor=black, fillstyle=solid, dotstyle=o, dotsize=0.49939746](15.8,0.105)
\pscustom[linecolor=black, linewidth=0.05]
{
\newpath
\moveto(6.1600714,-1.9943249)
\lineto(6.449799,-2.335243)
\curveto(6.594663,-2.505702)(7.2568984,-2.8466198)(7.7742686,-3.0170789)
\curveto(8.291639,-3.187538)(8.964221,-3.3920887)(9.429855,-3.4943638)
}
\pscustom[linecolor=black, linewidth=0.05]
{
\newpath
\moveto(6.1872716,-1.7947204)
\lineto(6.3992214,-1.5674341)
\curveto(6.505196,-1.4537903)(6.989652,-1.2265038)(7.368133,-1.1128601)
\curveto(7.746614,-0.9992163)(8.23864,-0.8628442)(8.579272,-0.79465866)
}
\pscustom[linecolor=black, linewidth=0.05]
{
\newpath
\moveto(9.671069,-3.594325)
\lineto(9.974898,-3.684812)
\curveto(10.126813,-3.7300556)(10.8212805,-3.8205426)(11.363833,-3.865786)
\curveto(11.906384,-3.9110296)(12.611702,-3.9653223)(13.1,-3.992468)
}
\pscustom[linecolor=black, linewidth=0.05]
{
\newpath
\moveto(12.1421385,1.1056671)
\lineto(12.339639,1.3102301)
\curveto(12.438389,1.4125116)(12.889817,1.6170752)(13.2424965,1.7193567)
\curveto(13.595176,1.8216382)(14.053658,1.9443759)(14.371069,2.0057447)
}
\pscustom[linecolor=black, linewidth=0.05]
{
\newpath
\moveto(14.442139,2.105667)
\lineto(14.923183,2.287503)
\curveto(15.163706,2.378421)(16.263235,2.560257)(17.122244,2.651175)
\curveto(17.981253,2.7420928)(19.097961,2.8511941)(19.871069,2.9057448)
}
\psdots[linecolor=black, dotsize=0.49939746](8.7,-0.695)
\psdots[linecolor=black, fillstyle=solid, dotstyle=o, dotsize=0.49939746](12.0,1.005)
\psline[linecolor=colour0, linewidth=0.05](16.0,0.105)(17.1,0.005)
\psdots[linecolor=black, dotsize=0.49939746](9.6,-3.495)
\psdots[linecolor=black, dotsize=0.49939746](13.9,-4.995)
\psdots[linecolor=black, dotsize=0.49939746](17.0,-3.995)
\psdots[linecolor=black, fillstyle=solid, dotstyle=o, dotsize=0.49939746](12.0,-3.995)
\psdots[linecolor=black, fillstyle=solid, dotstyle=o, dotsize=0.49939746](15.8,2.405)
\psline[linecolor=black, linewidth=0.02](6.0,3.505)(6.0,-5.595)(6.0,-5.234604)
\psline[linecolor=black, linewidth=0.02](12.0,3.505)(12.0,-5.595)(12.0,-5.595)
\psline[linecolor=black, linewidth=0.02](15.909255,3.5037165)(15.8,3.505)(15.8,-5.595)
\psdots[linecolor=black, fillstyle=solid, dotstyle=o, dotsize=0.49939746](12.0,1.005)
\psdots[linecolor=black, fillstyle=solid, dotstyle=o, dotsize=0.49939746](12.0,-2.695)
\psdots[linecolor=black, fillstyle=solid, dotstyle=o, dotsize=0.49939746](15.8,-3.995)
\psdots[linecolor=black, fillstyle=solid, dotstyle=o, dotsize=0.49939746](12.0,-3.995)
\psdots[linecolor=black, fillstyle=solid, dotstyle=o, dotsize=0.49939746](12.0,-0.395)
\psdots[linecolor=black, fillstyle=solid, dotstyle=o, dotsize=0.49939746](15.8,0.105)
\psdots[linecolor=black, fillstyle=solid, dotstyle=o, dotsize=0.49939746](15.8,-2.595)
\psdots[linecolor=black, fillstyle=solid, dotstyle=o, dotsize=0.49939746](15.8,2.405)
\psdots[linecolor=black, fillstyle=solid, dotstyle=o, dotsize=0.49939746](15.8,1.705)
\psdots[linecolor=black, dotstyle=otimes, dotsize=0.49939746](15.8,-2.595)
\psdots[linecolor=black, dotstyle=otimes, dotsize=0.49939746](15.8,2.405)
\psdots[linecolor=black, dotstyle=otimes, dotsize=0.49939746](15.8,1.705)
\psdots[linecolor=black, fillstyle=solid, dotstyle=o, dotsize=0.49939746](6.0,-1.895)
\psline[linecolor=black, linewidth=0.02](10.4,3.505)(10.4,-5.595)(10.4,-5.595)
\psline[linecolor=black, linewidth=0.02](11.2,3.505)(11.2,-5.595)(11.2,-4.776)
\psdots[linecolor=black, fillstyle=solid, dotstyle=o, dotsize=0.49939746](10.4,0.505)
\psdots[linecolor=black, fillstyle=solid, dotstyle=o, dotsize=0.475374](11.2,0.705)
\psdots[linecolor=black, fillstyle=solid, dotstyle=o, dotsize=0.49939746](11.2,-0.395)
\psdots[linecolor=black, fillstyle=solid, dotstyle=o, dotsize=0.49939746](10.4,-0.495)
\psdots[linecolor=black, fillstyle=solid, dotstyle=o, dotsize=0.49939746](11.2,-2.895)
\psdots[linecolor=black, fillstyle=solid, dotstyle=o, dotsize=0.49939746](10.4,-3.095)
\psdots[linecolor=black, fillstyle=solid, dotstyle=o, dotsize=0.49939746](11.2,-3.795)
\psdots[linecolor=black, fillstyle=solid, dotstyle=o, dotsize=0.49939746](10.4,-3.795)
\psdots[linecolor=black, dotstyle=otimes, dotsize=0.49939746](11.2,0.705)
\psdots[linecolor=black, dotstyle=otimes, dotsize=0.49939746](11.2,-0.395)
\psdots[linecolor=black, dotstyle=otimes, dotsize=0.49939746](11.2,-2.895)
\psdots[linecolor=black, dotstyle=otimes, dotsize=0.49939746](11.2,-3.795)
\psdots[linecolor=black, dotstyle=otimes, dotsize=0.49939746](10.4,0.505)
\psdots[linecolor=black, dotstyle=otimes, dotsize=0.49939746](10.4,-0.495)
\psdots[linecolor=black, dotstyle=otimes, dotsize=0.49939746](10.4,-3.095)
\psdots[linecolor=black, dotstyle=otimes, dotsize=0.49939746](10.4,-3.795)
\psline[linecolor=black, linewidth=0.02](15.1,3.505)(15.1,3.505)(15.1,-5.595)
\psdots[linecolor=black, fillstyle=solid, dotstyle=o, dotsize=0.49939746](15.1,2.305)
\psdots[linecolor=black, fillstyle=solid, dotstyle=o, dotsize=0.49939746](15.1,1.805)
\psdots[linecolor=black, fillstyle=solid, dotstyle=o, dotsize=0.49939746](15.1,0.105)
\psdots[linecolor=black, fillstyle=solid, dotstyle=o, dotsize=0.49939746](15.1,-2.695)
\psdots[linecolor=black, fillstyle=solid, dotstyle=o, dotsize=0.49939746](15.1,-3.995)
\psdots[linecolor=black, dotstyle=otimes, dotsize=0.49939746](15.1,1.805)
\psdots[linecolor=black, dotstyle=otimes, dotsize=0.49939746](15.1,0.105)
\psdots[linecolor=black, dotstyle=otimes, dotsize=0.49939746](15.1,-2.695)
\psdots[linecolor=black, dotstyle=otimes, dotsize=0.49939746](15.1,-3.995)
\psdots[linecolor=black, dotstyle=otimes, dotsize=0.49939746](15.1,2.305)
\psline[linecolor=black, linewidth=0.02](19.300177,3.505)(19.3,3.505)(19.299824,-5.595)
\psdots[linecolor=black, fillstyle=solid, dotstyle=o, dotsize=0.49939746](19.3,-1.295)
\psdots[linecolor=black, fillstyle=solid, dotstyle=o, dotsize=0.49939746](19.3,-0.495)
\psdots[linecolor=black, fillstyle=solid, dotstyle=o, dotsize=0.49939746](19.3,0.405)
\psdots[linecolor=black, fillstyle=solid, dotstyle=o, dotsize=0.49939746](19.3,1.005)
\psdots[linecolor=black, fillstyle=solid, dotstyle=o, dotsize=0.49939746](19.3,1.605)
\psdots[linecolor=black, fillstyle=solid, dotstyle=o, dotsize=0.49939746](19.3,2.905)
\psdots[linecolor=black, fillstyle=solid, dotstyle=o, dotsize=0.49939746](19.3,-5.095)
\psdots[linecolor=black, fillstyle=solid, dotstyle=o, dotsize=0.49939746](19.3,-4.295)
\psdots[linecolor=black, fillstyle=solid, dotstyle=o, dotsize=0.49939746](19.3,-3.395)
\psdots[linecolor=black, fillstyle=solid, dotstyle=o, dotsize=0.49939746](19.3,-2.395)
\psdots[linecolor=black, dotstyle=otimes, dotsize=0.49939746](19.3,2.905)
\psdots[linecolor=black, dotstyle=otimes, dotsize=0.49939746](19.3,1.605)
\psdots[linecolor=black, dotstyle=otimes, dotsize=0.49939746](19.3,1.005)
\psdots[linecolor=black, dotstyle=otimes, dotsize=0.49939746](19.3,0.405)
\psdots[linecolor=black, dotstyle=otimes, dotsize=0.49939746](19.3,-0.495)
\psdots[linecolor=black, dotstyle=otimes, dotsize=0.49939746](19.3,-1.295)
\psdots[linecolor=black, dotstyle=otimes, dotsize=0.49939746](19.3,-2.395)
\psdots[linecolor=black, dotstyle=otimes, dotsize=0.49939746](19.3,-3.395)
\psdots[linecolor=black, dotstyle=otimes, dotsize=0.49939746](19.3,-4.295)
\psdots[linecolor=black, dotstyle=otimes, dotsize=0.49939746](19.3,-5.095)
\psline[linecolor=black, linewidth=0.02](5.0,3.505)(5.0,-5.595)(5.0,-5.595)
\psdots[linecolor=black, fillstyle=solid, dotstyle=o, dotsize=0.50171536](5.0,-1.895)
\psdots[linecolor=black, dotstyle=otimes, dotsize=0.49939746](5.0,-1.895)
\end{pspicture}
}
\caption{\label{fig:9}The tree $\xi = \varsigma(\frT,\delta) \in \pr$.}
\end{figure}
It is also clear that the maps $\varrho : \pr \to \Tm^\bbN_2$ and
$\varsigma : \Tm^\bbN_2 \to \pr$ are inverse to each other, showing that
$ \pr \cong \Tm^\bbN_2$. This finishes the proof. 
\end{proof}

In the description of Lemma~\ref{tam-as-tree}, the $w$-image of a leveled tree 
$\beta \in \LTr$ in $\Tm^\bbN_2$ can be obtained in four steps. 
First we forget all type-(ii)
levels of $\beta$, so the \cerny-vertices are allowed to move
freely. In the second step we split all \tlusty-vertices of arity $n > 1$ into
one  \cerny-vertex of arity $1$ followed by $n$  \tlusty-vertices
of arities $1$, graphically:  
\begin{center}
\psscalebox{.5 .5}
{
\begin{pspicture}(1,-4.0)(14.735774,0)
\definecolor{colour0}{rgb}{0.03137255,0.003921569,0.003921569}
\pscustom[linecolor=black, linewidth=0.05]
{
\newpath
\moveto(8.9,-0.6)
}
\pscustom[linecolor=black, linewidth=0.05]
{
\newpath
\moveto(21.5,0.7)
}

\pscustom[linecolor=black, linewidth=0.05]
{
\newpath
\moveto(3.5633454,-1.7997204)
\lineto(3.7487125,-1.4815247)
\curveto(3.8413959,-1.3224268)(4.2650924,-1.004231)(4.596105,-0.84513307)
\curveto(4.927118,-0.68603516)(5.3574343,-0.4951172)(5.655346,-0.39965865)
}
\psline[linecolor=black, linewidth=0.02](3.5239174,-0.19922417)(3.5239174,-3.8)(3.5239174,-3.7549903)
\psline[linecolor=black, linewidth=0.05, arrowsize=0.05291666666666672cm 6.0,arrowlength=2.0,arrowinset=0.0]{<-}(1.2982163,-1.9996121)(3.325617,-1.9996121)
\pscustom[linecolor=black, linewidth=0.05]
{
\newpath
\moveto(3.6361454,-2.1993248)
\lineto(3.8215125,-2.517516)
\curveto(3.9141963,-2.6766114)(4.3378925,-2.994801)(4.6689053,-3.1538966)
\curveto(4.999918,-3.3129919)(5.4302344,-3.5039062)(5.728146,-3.5993638)
}
\psdots[linecolor=black, fillstyle=solid, dotstyle=o, dotsize=0.49939746](12.7,-2.2)
\psdots[linecolor=black, dotstyle=otimes, dotsize=0.49939746](12.7,-0.6)
\pscustom[linecolor=black, linewidth=0.05]
{
\newpath
\moveto(3.6734257,-2.060248)
\lineto(3.8524334,-2.0899072)
\curveto(3.9419374,-2.104737)(4.3518853,-2.143313)(4.672329,-2.1670594)
\curveto(4.992773,-2.1908057)(5.4093804,-2.2205396)(5.6978717,-2.2385025)
}
\psdots[linecolor=black, dotstyle=otimes, dotsize=0.49833098](12.7,-2.2)
\psdots[linecolor=black, fillstyle=solid, dotstyle=o,dotsize=0.49939746](3.5,-2.0)
\psdots[linecolor=black, fillstyle=solid, dotstyle=otimes,dotsize=0.49939746](3.5,-2.0)

\psdots[linecolor=black, dotsize=0.49939746](11.0,-2.0)
\psline[linecolor=colour0, linewidth=0.08, linestyle=dotted, dotsep=0.10583334cm](4.9,-0.9)(4.9,-1.9)(4.9,-1.8)
\pscustom[linecolor=black, linewidth=0.05]
{
\newpath
\moveto(3.6734257,-2.1602478)
\lineto(3.8524334,-2.2731006)
\curveto(3.9419374,-2.3295276)(4.3518853,-2.4763086)(4.672329,-2.5666625)
\curveto(4.992773,-2.6570165)(5.4093804,-2.7701538)(5.6978717,-2.8385026)
}
\pscustom[linecolor=black, linewidth=0.05]
{
\newpath
\moveto(11.063345,-1.7997204)
\lineto(11.248713,-1.4815247)
\curveto(11.341396,-1.3224268)(11.765093,-1.004231)(12.096106,-0.84513307)
\curveto(12.427119,-0.68603516)(12.857434,-0.4951172)(13.155346,-0.39965865)
}
\psline[linecolor=black, linewidth=0.02](12.723917,0.0)(12.723917,-4.0)(12.723917,-3.9499903)
\psline[linecolor=black, linewidth=0.05, arrowsize=0.05291666666666672cm 6.0,arrowlength=2.0,arrowinset=0.0]{<-}(8.798217,-1.9996121)(10.825617,-1.9996121)
\pscustom[linecolor=black, linewidth=0.05]
{
\newpath
\moveto(11.136145,-2.1993248)
\lineto(11.321512,-2.517516)
\curveto(11.414197,-2.6766114)(11.8378935,-2.994801)(12.168905,-3.1538966)
\curveto(12.499917,-3.3129919)(12.930234,-3.5039062)(13.228146,-3.5993638)
}
\pscustom[linecolor=black, linewidth=0.05]
{
\newpath
\moveto(11.173426,-2.060248)
\lineto(11.290537,-2.0899072)
\curveto(11.349093,-2.104737)(11.617292,-2.143313)(11.826935,-2.1670594)
\curveto(12.036577,-2.1908057)(12.309134,-2.2205396)(12.497871,-2.2385025)
}
\psline[linecolor=colour0, linewidth=0.08, linestyle=dotted, dotsep=0.10583334cm](12.4,-0.9)(12.4,-1.9)(12.4,-1.8)
\pscustom[linecolor=black, linewidth=0.05]
{
\newpath
\moveto(11.173426,-2.1602478)
\lineto(11.352434,-2.28974)
\curveto(11.441938,-2.354486)(11.851885,-2.5229077)(12.172329,-2.6265833)
\curveto(12.492773,-2.7302587)(12.90938,-2.8600762)(13.197871,-2.9385026)
}
\psdots[linecolor=black, fillstyle=solid, dotstyle=o, dotsize=0.49939746](12.7,-0.6)
\psdots[linecolor=black, fillstyle=solid, dotstyle=o, dotsize=0.49939746](12.7,-3.4)
\psdots[linecolor=black, fillstyle=solid, dotstyle=o, dotsize=0.49939746](12.7,-2.8)
\psdots[linecolor=black, dotstyle=otimes, dotsize=0.49939746](12.7,-0.6)
\psdots[linecolor=black, dotstyle=otimes, dotsize=0.49939746](12.7,-2.8)
\psdots[linecolor=black, dotstyle=otimes, dotsize=0.49939746](12.7,-3.4)
\psline[linecolor=colour0, linewidth=0.05](12.9,-2.2)(13.2,-2.2)(13.2,-2.2)
\psdots[linecolor=black, fillstyle=solid, dotstyle=o, dotsize=0.49939746](12.7,-2.2)
\psdots[linecolor=black, dotstyle=otimes, dotsize=0.49939746](12.7,-2.2)
\rput(7.5,-2.05){\textcolor{colour0}{\Huge $\longmapsto$}}
\end{pspicture}
}
\end{center}
The third step removes all internal edges starting at a \cerny-vertex and
ending at a \tlusty-vertex by allowing \cerny-vertices to penetrate
through \tlusty-vertices as in:
\[
\raisebox{-2em}{}
\psscalebox{.5 .5}
{
\begin{pspicture}(5,-2.2)(14.735774,0)
\definecolor{colour0}{rgb}{0.03137255,0.003921569,0.003921569}
\pscustom[linecolor=black, linewidth=0.05]
{
\newpath
\moveto(8.9,-0.6)
}
\pscustom[linecolor=black, linewidth=0.05]
{
\newpath
\moveto(21.5,0.7)
}

\pscustom[linecolor=black, linewidth=0.05]
{
\newpath
\moveto(6.5633454,-1.7997204)
\lineto(6.6630845,-1.4815247)
\curveto(6.712954,-1.3224268)(6.9409304,-1.004231)(7.1190357,-0.84513307)
\curveto(7.2971416,-0.68603516)(7.5286794,-0.4951172)(7.688975,-0.39965865)
}
\psline[linecolor=black, linewidth=0.02](5.023917,0.0)(5.023917,-4.0)(5.023917,-3.9499903)
\psline[linecolor=black, linewidth=0.05, arrowsize=0.05291666666666672cm 6.0,arrowlength=2.0,arrowinset=0.0]{<-}(3.0982163,-1.9996121)(6.325617,-1.9996121)
\pscustom[linecolor=black, linewidth=0.05]
{
\newpath
\moveto(6.6025167,-2.1993248)
\lineto(6.7022557,-2.517516)
\curveto(6.7521253,-2.6766114)(6.9801006,-2.994801)(7.158207,-3.1538966)
\curveto(7.336313,-3.3129919)(7.56785,-3.5039062)(7.728146,-3.5993638)
}
\psdots[linecolor=black, fillstyle=solid, dotstyle=o, dotsize=0.49939746](15.7,-2.2)
\psdots[linecolor=black, dotstyle=otimes, dotsize=0.49939746](15.7,-0.6)
\pscustom[linecolor=black, linewidth=0.05]
{
\newpath
\moveto(6.6225758,-2.060248)
\lineto(6.7188935,-2.0899072)
\curveto(6.767052,-2.104737)(6.987631,-2.143313)(7.1600494,-2.1670594)
\curveto(7.332468,-2.1908057)(7.5566297,-2.2205396)(7.7118564,-2.2385025)
}
\psdots[linecolor=black, dotstyle=otimes, dotsize=0.49833098](15.7,-2.2)
\psdots[linecolor=black, dotsize=0.49939746](14.0,-2.0)
\psline[linecolor=colour0, linewidth=0.08, linestyle=dotted, dotsep=0.10583334cm](7.282551,-0.9)(7.282551,-1.9)(7.282551,-1.8)
\pscustom[linecolor=black, linewidth=0.05]
{
\newpath
\moveto(6.6225758,-2.1602478)
\lineto(6.7188935,-2.2731006)
\curveto(6.767052,-2.3295276)(6.987631,-2.4763086)(7.1600494,-2.5666625)
\curveto(7.332468,-2.6570165)(7.5566297,-2.7701538)(7.7118564,-2.8385026)
}
\pscustom[linecolor=black, linewidth=0.05]
{
\newpath
\moveto(14.063345,-1.7997204)
\lineto(14.248713,-1.4815247)
\curveto(14.341396,-1.3224268)(14.765093,-1.004231)(15.096106,-0.84513307)
\curveto(15.427119,-0.68603516)(15.857434,-0.4951172)(16.155346,-0.39965865)
}
\psline[linecolor=black, linewidth=0.02](15.723917,0.0)(15.723917,-4.0)(15.723917,-3.9499903)
\psline[linecolor=black, linewidth=0.05, arrowsize=0.05291666666666672cm 6.0,arrowlength=2.0,arrowinset=0.0]{<-}(11.798217,-1.9996121)(13.825617,-1.9996121)
\pscustom[linecolor=black, linewidth=0.05]
{
\newpath
\moveto(14.136145,-2.1993248)
\lineto(14.321512,-2.517516)
\curveto(14.414197,-2.6766114)(14.8378935,-2.994801)(15.168905,-3.1538966)
\curveto(15.499917,-3.3129919)(15.930234,-3.5039062)(16.228146,-3.5993638)
}
\pscustom[linecolor=black, linewidth=0.05]
{
\newpath
\moveto(14.173426,-2.060248)
\lineto(14.290537,-2.0899072)
\curveto(14.349093,-2.104737)(14.617292,-2.143313)(14.826935,-2.1670594)
\curveto(15.036577,-2.1908057)(15.309134,-2.2205396)(15.497871,-2.2385025)
}
\psline[linecolor=colour0, linewidth=0.08, linestyle=dotted, dotsep=0.10583334cm](15.4,-0.9)(15.4,-1.9)(15.4,-1.8)
\pscustom[linecolor=black, linewidth=0.05]
{
\newpath
\moveto(14.173426,-2.1602478)
\lineto(14.352434,-2.28974)
\curveto(14.441938,-2.354486)(14.851885,-2.5229077)(15.172329,-2.6265833)
\curveto(15.492773,-2.7302587)(15.90938,-2.8600762)(16.197872,-2.9385026)
}
\psdots[linecolor=black, fillstyle=solid, dotstyle=o, dotsize=0.49939746](15.7,-0.6)
\psdots[linecolor=black, fillstyle=solid, dotstyle=o, dotsize=0.49939746](15.7,-3.4)
\psdots[linecolor=black, fillstyle=solid, dotstyle=o, dotsize=0.49939746](15.7,-2.8)
\psdots[linecolor=black, dotstyle=otimes, dotsize=0.49939746](15.7,-0.6)
\psdots[linecolor=black, dotstyle=otimes, dotsize=0.49939746](15.7,-2.8)
\psdots[linecolor=black, dotstyle=otimes, dotsize=0.49939746](15.7,-3.4)
\psline[linecolor=colour0, linewidth=0.05](15.9,-2.2)(16.2,-2.2)(16.2,-2.2)
\psdots[linecolor=black, fillstyle=solid, dotstyle=o, dotsize=0.49939746](15.7,-2.2)
\psdots[linecolor=black, dotstyle=otimes, dotsize=0.49939746](15.7,-2.2)
\psdots[linecolor=black, dotstyle=otimes, dotsize=0.49939746](5.0,-2.0)
\psdots[linecolor=colour0, fillstyle=solid, dotstyle=o, dotsize=0.5064466](5.0,-2.0)
\psdots[linecolor=black, dotstyle=otimes, dotsize=0.49939746](5.0,-2.0)
\psdots[linecolor=black, dotsize=0.49939746](6.5,-2.0)
\rput(10.0,-2.0){\textcolor{colour0}{\Huge $\longmapsto$}}
\end{pspicture}
}
\]
In the final step we contract all edges connecting two horizontal
vertices and remove horizontal vertices of arity $1$. The result
is the image $w(\beta)$. 

We leave as an exercise to show that if we apply the above
steps to the tree $\beta$ in
Figure~\ref{fig:1},  we get the tree $w(\beta)$ in Figure~\ref{w-redukce}.
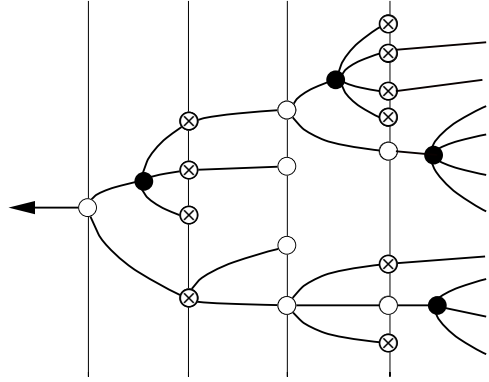
\begin{figure}[t]
\centering 
\psscalebox{.5 .5} 
{
\begin{pspicture}(6.5,-5.75)(18.502516,4.9)
\definecolor{colour0}{rgb}{0.03137255,0.003921569,0.003921569}
\psline[linecolor=black, linewidth=0.02](2.597499,1.7506206)(2.597499,1.7506206)
\pscustom[linecolor=black, linewidth=0.05]
{
\newpath
\moveto(8.9,1.15)
}
\pscustom[linecolor=black, linewidth=0.05]
{
\newpath
\moveto(21.5,2.45)
}

\pscustom[linecolor=black, linewidth=0.05]
{
\newpath
\moveto(13.113208,-3.849814)
\lineto(13.22968,-3.996884)
\curveto(13.287915,-4.0704193)(13.429346,-4.2174892)(13.512539,-4.291024)
\curveto(13.595733,-4.364559)(13.895232,-4.5116296)(14.111536,-4.585165)
\curveto(14.327839,-4.6587)(14.843642,-4.761649)(15.742138,-4.849891)
}
\pscustom[linecolor=black, linewidth=0.05]
{
\newpath
\moveto(13.113208,-3.849814)
\lineto(13.22968,-3.6880364)
\curveto(13.287915,-3.607148)(13.429346,-3.4453712)(13.512539,-3.3644824)
\curveto(13.595733,-3.2835937)(13.895232,-3.1218164)(14.111536,-3.0409276)
\curveto(14.327839,-2.9600391)(14.843642,-2.846795)(15.742138,-2.7497284)
}
\pscustom[linecolor=black, linewidth=0.05]
{
\newpath
\moveto(13.113208,1.3505896)
\lineto(13.22968,1.1888123)
\curveto(13.287915,1.1079236)(13.429346,0.94614655)(13.512539,0.86525786)
\curveto(13.595733,0.78436923)(13.895232,0.62259215)(14.111536,0.54170346)
\curveto(14.327839,0.46081483)(14.843642,0.3475708)(15.742138,0.25050429)
}
\psline[linecolor=black, linewidth=0.05](13.113208,-3.8498137)(13.113208,-3.8498137)
\psline[linecolor=black, linewidth=0.05](13.113208,-3.8498137)(16.871069,-3.8498137)(16.871069,-3.8498137)
\psline[linecolor=black, linewidth=0.05](15.742138,-2.7497284)(18.371069,-2.549713)(18.371069,-2.549713)
\pscustom[linecolor=black, linewidth=0.05]
{
\newpath
\moveto(16.871069,0.15818115)
\lineto(17.006544,0.45190856)
\curveto(17.074282,0.598772)(17.383938,0.8924997)(17.625858,1.0393634)
\curveto(17.867779,1.1862271)(18.182274,1.3624637)(18.4,1.4505819)
}
\pscustom[linecolor=black, linewidth=0.05]
{
\newpath
\moveto(16.97107,-3.849814)
\lineto(17.097683,-3.6907105)
\curveto(17.16099,-3.6111584)(17.450394,-3.4520557)(17.67649,-3.3725038)
\curveto(17.902588,-3.2929523)(18.196514,-3.1974902)(18.4,-3.1497595)
}
\pscustom[linecolor=black, linewidth=0.05]
{
\newpath
\moveto(16.947515,0.15818115)
\lineto(17.076218,0.28056762)
\curveto(17.140568,0.34176087)(17.434742,0.46414733)(17.664566,0.52534056)
\curveto(17.894388,0.5865338)(18.19316,0.6599652)(18.4,0.6966814)
}
\pscustom[linecolor=black, linewidth=0.05]
{
\newpath
\moveto(14.542139,2.1506672)
\lineto(14.633309,2.0824714)
\curveto(14.678894,2.0483735)(14.887285,1.9801776)(15.050092,1.9460797)
\curveto(15.212898,1.9119818)(15.424544,1.8710638)(15.571069,1.8506051)
}
\pscustom[linecolor=black, linewidth=0.05]
{
\newpath
\moveto(9.263346,-0.34972045)
\lineto(9.377826,-0.03152466)
\curveto(9.435066,0.12757324)(9.696737,0.44576904)(9.901168,0.6048669)
\curveto(10.105598,0.76396483)(10.371359,0.9548828)(10.555346,1.0503414)
}
\pscustom[linecolor=black, linewidth=0.05]
{
\newpath
\moveto(16.97107,-3.849814)
\lineto(17.097683,-4.1452913)
\curveto(17.16099,-4.29303)(17.450394,-4.588507)(17.67649,-4.736246)
\curveto(17.902588,-4.8839846)(18.196514,-5.061272)(18.4,-5.1499147)
}
\pscustom[linecolor=black, linewidth=0.05]
{
\newpath
\moveto(16.871069,0.15818115)
\lineto(17.006544,-0.18450074)
\curveto(17.074282,-0.3558423)(17.383938,-0.6985242)(17.625858,-0.8698651)
\curveto(17.867779,-1.041206)(18.182274,-1.2468152)(18.4,-1.3496199)
}
\pscustom[linecolor=black, linewidth=0.05]
{
\newpath
\moveto(16.947515,0.15818115)
\lineto(17.076218,0.03579468)
\curveto(17.140568,-0.02539856)(17.434742,-0.14778504)(17.664566,-0.20897827)
\curveto(17.894388,-0.27017152)(18.19316,-0.34360352)(18.4,-0.38031924)
}
\psline[linecolor=black, linewidth=0.05](16.97107,-3.8498137)(18.4,-4.149837)(18.4,-4.149837)
\psline[linecolor=black, linewidth=0.02](7.8239174,4.250776)(7.8239174,-5.75)(7.8239174,-5.6249905)
\psline[linecolor=black, linewidth=0.02](10.484277,4.250776)(10.484277,-5.75)(10.484277,-5.6249905)
\psline[linecolor=black, linewidth=0.02](13.113208,4.250776)(13.113208,-5.75)(13.113208,-5.6249905)
\psline[linecolor=black, linewidth=0.02](15.842138,4.250776)(15.842138,-5.75)(15.842138,-5.6249905)
\psdots[linecolor=black, fillstyle=solid, dotstyle=o, dotsize=0.010382021](10.5,1.15)
\psdots[linecolor=black, fillstyle=solid, dotstyle=o, dotsize=0.49939746](15.8,3.65)
\psdots[linecolor=black, fillstyle=solid, dotstyle=o, dotsize=0.49939746](13.1,1.35)
\psdots[linecolor=black, fillstyle=solid, dotstyle=o, dotsize=0.49939746](15.8,0.25)
\psdots[linecolor=black, fillstyle=solid, dotstyle=o, dotsize=0.49939746](15.8,-4.85)
\psdots[linecolor=black, fillstyle=solid, dotstyle=o, dotsize=0.502276](15.8,2.85)
\psdots[linecolor=black, fillstyle=solid, dotstyle=o, dotsize=0.49939746](15.8,-2.75)
\psdots[linecolor=black, dotstyle=otimes, dotsize=0.49939746](15.8,0.25)
\psdots[linecolor=black, dotstyle=otimes, dotsize=0.49939746](15.8,-2.75)
\psdots[linecolor=black, dotstyle=otimes, dotsize=0.49939746](15.8,-4.85)
\psdots[linecolor=black, dotstyle=otimes, dotsize=0.49939746](15.8,2.85)
\psdots[linecolor=black, dotstyle=otimes, dotsize=0.49939746](15.8,-3.85)
\psdots[linecolor=black, fillstyle=solid, dotstyle=o, dotsize=0.49939746](13.1,-3.85)
\psline[linecolor=black, linewidth=0.05, arrowsize=0.05291666666666672cm 6.0,arrowlength=2.0,arrowinset=0.0]{<-}(5.6982164,-1.2496121)(7.725617,-1.2496121)
\psdots[linecolor=black, fillstyle=solid, dotstyle=o, dotsize=0.49939746](10.5,-3.65)
\psdots[linecolor=black, dotstyle=otimes, dotsize=0.49939746](10.5,-3.65)
\psdots[linecolor=black, fillstyle=solid, dotstyle=o, dotsize=0.49939746](13.1,-2.25)
\psdots[linecolor=black, dotsize=0.49939746](14.4,2.15)
\psdots[linecolor=black, dotsize=0.49939746](17.0,0.15)
\psdots[linecolor=black, dotsize=0.49939746](17.1,-3.85)
\pscustom[linecolor=black, linewidth=0.05]
{
\newpath
\moveto(10.542139,-3.449333)
\lineto(10.7485,-3.199315)
\curveto(10.85168,-3.0743067)(11.323361,-2.8242884)(11.691864,-2.6992798)
\curveto(12.060367,-2.5742712)(12.539417,-2.4242601)(12.871069,-2.3492553)
}
\pscustom[linecolor=black, linewidth=0.05]
{
\newpath
\moveto(9.236145,-0.5493249)
\lineto(9.350626,-0.48112914)
\curveto(9.4078665,-0.44703126)(9.669538,-0.37883544)(9.873968,-0.34473756)
\curveto(10.078399,-0.31063965)(10.344159,-0.2697217)(10.528146,-0.24926297)
}
\pscustom[linecolor=black, linewidth=0.05]
{
\newpath
\moveto(9.236145,-0.749325)
\lineto(9.350626,-0.93115234)
\curveto(9.4078665,-1.0220654)(9.669538,-1.2038928)(9.873968,-1.2948059)
\curveto(10.078399,-1.3857197)(10.344159,-1.4948157)(10.528146,-1.5493639)
}
\psdots[linecolor=black, fillstyle=solid, dotstyle=o, dotsize=0.49939746](10.5,-1.45)
\psline[linecolor=black, linewidth=0.05](10.542138,-0.24933279)(13.171069,-0.14932503)(13.171069,-0.14932503)
\psdots[linecolor=black, fillstyle=solid, dotstyle=o, dotsize=0.49939746](10.5,-0.25)
\psdots[linecolor=black, fillstyle=solid, dotstyle=o, dotsize=0.49939746](13.1,-0.15)
\psdots[linecolor=black, dotstyle=otimes, dotsize=0.010382021](10.5,1.15)
\psdots[linecolor=black, dotstyle=otimes, dotsize=0.49939746](10.5,-0.25)
\psdots[linecolor=black, dotstyle=otimes, dotsize=0.49939746](10.5,-1.45)
\psdots[linecolor=black, fillstyle=solid, dotstyle=o, dotsize=0.49939746](15.8,-3.85)
\psdots[linecolor=black, fillstyle=solid, dotstyle=o, dotsize=0.49939746](15.8,0.25)
\psdots[linecolor=black, fillstyle=solid, dotstyle=o, dotsize=0.49939746](10.5,1.05)
\psdots[linecolor=black, dotstyle=otimes, dotsize=0.49939746](10.5,1.05)
\pscustom[linecolor=black, linewidth=0.05]
{
\newpath
\moveto(7.860071,-1.449325)
\lineto(8.087773,-1.9493335)
\curveto(8.201625,-2.1993384)(8.722088,-2.699347)(9.128698,-2.9493518)
\curveto(9.53531,-3.199356)(10.063905,-3.4993615)(10.429855,-3.6493638)
}
\pscustom[linecolor=black, linewidth=0.05]
{
\newpath
\moveto(7.887272,-1.0497204)
\lineto(8.001753,-0.93607056)
\curveto(8.058993,-0.8792456)(8.320664,-0.7655945)(8.525095,-0.70876956)
\curveto(8.729526,-0.6519446)(8.995284,-0.58375365)(9.179273,-0.54965866)
}
\pscustom[linecolor=black, linewidth=0.05]
{
\newpath
\moveto(10.671069,-3.749325)
\lineto(10.8862915,-3.772061)
\curveto(10.993901,-3.783429)(11.485837,-3.8061652)(11.870161,-3.817533)
\curveto(12.254486,-3.8289013)(12.754108,-3.8425426)(13.1,-3.8493638)
}
\psdots[linecolor=black, fillstyle=solid, dotstyle=o, dotsize=0.49939746](13.1,-3.85)
\psdots[linecolor=black, fillstyle=solid, dotstyle=o, dotsize=0.49939746](10.5,-3.65)
\psdots[linecolor=black, dotstyle=otimes, dotsize=0.49939746](15.8,1.15)
\psdots[linecolor=black, dotstyle=otimes, dotsize=0.49939746](10.5,-3.65)
\pscustom[linecolor=black, linewidth=0.05]
{
\newpath
\moveto(10.6421385,1.0506672)
\lineto(10.848499,1.1188666)
\curveto(10.951679,1.1529663)(11.423362,1.2211658)(11.791864,1.2552655)
\curveto(12.160366,1.2893653)(12.639418,1.3302851)(12.971069,1.3507448)
}
\psdots[linecolor=black, fillstyle=solid, dotstyle=o, dotsize=0.49939746](15.8,1.15)
\psdots[linecolor=black, fillstyle=solid, dotstyle=o, dotsize=0.49939746](13.1,1.35)
\psdots[linecolor=black, fillstyle=solid, dotstyle=o, dotsize=0.49939746](10.5,1.05)
\psdots[linecolor=black, dotstyle=otimes, dotsize=0.49939746](10.5,1.05)
\psdots[linecolor=black, fillstyle=solid, dotstyle=o, dotsize=0.49939746](15.8,1.85)
\pscustom[linecolor=black, linewidth=0.05]
{
\newpath
\moveto(13.242139,1.4506671)
\lineto(13.342171,1.6097757)
\curveto(13.392186,1.6893299)(13.620831,1.8484387)(13.799458,1.9279928)
\curveto(13.9780855,2.007547)(14.210302,2.1030123)(14.371069,2.1507447)
}
\pscustom[linecolor=black, linewidth=0.05]
{
\newpath
\moveto(14.442139,2.250667)
\lineto(14.542171,2.5461395)
\curveto(14.592186,2.6938758)(14.82083,2.9893475)(14.999458,3.1370838)
\curveto(15.178086,3.2848198)(15.410303,3.4621031)(15.571069,3.5507448)
}
\pscustom[linecolor=black, linewidth=0.05]
{
\newpath
\moveto(14.542139,2.250667)
\lineto(14.633309,2.3870482)
\curveto(14.678894,2.4552393)(14.887285,2.5916204)(15.050092,2.659811)
\curveto(15.212898,2.7280016)(15.424544,2.8098304)(15.571069,2.8507447)
}
\pscustom[linecolor=black, linewidth=0.05]
{
\newpath
\moveto(14.542139,2.050667)
\lineto(14.633309,1.8461078)
\curveto(14.678894,1.7438282)(14.887285,1.5392681)(15.050092,1.4369885)
\curveto(15.212898,1.3347088)(15.424544,1.211973)(15.571069,1.1506052)
}
\psline[linecolor=colour0, linewidth=0.05](16.0,2.95)(18.4,3.15)(18.4,3.15)
\psline[linecolor=colour0, linewidth=0.05](16.0,1.85)(18.3,2.15)(18.3,2.15)
\psdots[linecolor=black, dotstyle=otimes, dotsize=0.49939746](15.8,3.65)
\psdots[linecolor=black, dotstyle=otimes, dotsize=0.49939746](15.8,1.85)
\psdots[linecolor=black, dotstyle=otimes, dotsize=0.49939746](15.8,1.15)
\psdots[linecolor=black, dotsize=0.49939746](9.3,-0.55)
\psdots[linecolor=black, fillstyle=solid, dotstyle=o, dotsize=0.49939746](7.8,-1.25)
\psdots[linecolor=black, fillstyle=solid, dotstyle=o, dotsize=0.49939746](13.1,1.35)
\psline[linecolor=colour0, linewidth=0.05](16.0,0.25)(17.1,0.15)
\end{pspicture}
}
\caption{
\label{w-redukce}
The $w$-image of the tree $\beta \in \LTr$ from Figure~\ref{fig:1}.}
\end{figure}
The following lemma describes all $\beta$'s in $\LTr$ with the same
$w$-image.

\begin{lemma} 
\label{elementary-moves}
  Let $\beta', \beta''\in \LTr$.  Then $w(\beta') =
  w(\beta'')$ if and only if 
$\beta''$ is obtained from $\beta'$ 
by a finite sequence of the following elementary moves and
  their inverses:
  \begin{itemize}
\item[(i)] 
introducing a new level of horizontal vertices of arity  one,
\item[(ii)]
choosing two adjacent levels of horizontal vertices 
and contracting all edges connecting vertices
in these two chosen levels, creating one level of horizontal vertices, 
\item[(iii)] 
replacing an arity-$1$ vertical vertex followed by an arity-$n$ horizontal 
vertex with an arity-$n$ vertical vertex followed by an arity  $1$ horizontal
vertex:
\begin{center}
\psscalebox{.5 .5}
{
\begin{pspicture}(5,-4.0)(14.735774,0)
\definecolor{colour0}{rgb}{0.03137255,0.003921569,0.003921569}
\pscustom[linecolor=black, linewidth=0.05]
{
\newpath
\moveto(8.9,-0.6)
}
\pscustom[linecolor=black, linewidth=0.05]
{
\newpath
\moveto(21.5,0.7)
}
\rput(10.0,-2.0){\textcolor{colour0}{\Huge $\longmapsto$}}

\pscustom[linecolor=black, linewidth=0.05]
{
\newpath
\moveto(6.5633454,-1.7997204)
\lineto(6.6630845,-1.4815247)
\curveto(6.712954,-1.3224268)(6.9409304,-1.004231)(7.1190357,-0.84513307)
\curveto(7.2971416,-0.68603516)(7.5286794,-0.4951172)(7.688975,-0.39965865)
}
\psline[linecolor=black, linewidth=0.02](5.023917,0.0)(5.023917,-4.0)(5.023917,-3.9499903)
\psline[linecolor=black, linewidth=0.05, arrowsize=0.05291666666666672cm 6.0,arrowlength=2.0,arrowinset=0.0]{<-}(3.0982163,-1.9996121)(6.325617,-1.9996121)
\pscustom[linecolor=black, linewidth=0.05]
{
\newpath
\moveto(6.6025167,-2.1993248)
\lineto(6.7022557,-2.517516)
\curveto(6.7521253,-2.6766114)(6.9801006,-2.994801)(7.158207,-3.1538966)
\curveto(7.336313,-3.3129919)(7.56785,-3.5039062)(7.728146,-3.5993638)
}
\pscustom[linecolor=black, linewidth=0.05]
{
\newpath
\moveto(6.6225758,-2.060248)
\lineto(6.7188935,-2.0899072)
\curveto(6.767052,-2.104737)(6.987631,-2.143313)(7.1600494,-2.1670594)
\curveto(7.332468,-2.1908057)(7.5566297,-2.2205396)(7.7118564,-2.2385025)
}
\psdots[linecolor=black, dotsize=0.49939746](15.7,-2.2)

\pscustom[linecolor=black, linewidth=0.05]
{
\newpath
\moveto(6.6225758,-2.1602478)
\lineto(6.7188935,-2.2731006)
\curveto(6.767052,-2.3295276)(6.987631,-2.4763086)(7.1600494,-2.5666625)
\curveto(7.332468,-2.6570165)(7.5566297,-2.7701538)(7.7118564,-2.8385026)
}
\pscustom[linecolor=black, linewidth=0.05]
{
\newpath
\moveto(14.063345,-1.7997204)
\lineto(14.248713,-1.4815247)
\curveto(14.341396,-1.3224268)(14.765093,-1.004231)(15.096106,-0.84513307)
\curveto(15.427119,-0.68603516)(15.857434,-0.4951172)(16.155346,-0.39965865)
}
\psline[linecolor=black, linewidth=0.02](6.523917,0.0)(6.523917,-4.0)(6.523917,-3.9499903)
\psline[linecolor=black, linewidth=0.05, arrowsize=0.05291666666666672cm 6.0,arrowlength=2.0,arrowinset=0.0]{<-}(11.798217,-1.9996121)(13.825617,-1.9996121)
\pscustom[linecolor=black, linewidth=0.05]
{
\newpath
\moveto(14.136145,-2.1993248)
\lineto(14.321512,-2.517516)
\curveto(14.414197,-2.6766114)(14.8378935,-2.994801)(15.168905,-3.1538966)
\curveto(15.499917,-3.3129919)(15.930234,-3.5039062)(16.228146,-3.5993638)
}
\pscustom[linecolor=black, linewidth=0.05]
{
\newpath
\moveto(14.173426,-2.060248)
\lineto(14.290537,-2.0899072)
\curveto(14.349093,-2.104737)(14.617292,-2.143313)(14.826935,-2.1670594)
\curveto(15.036577,-2.1908057)(15.309134,-2.2205396)(15.497871,-2.2385025)
}
\psline[linecolor=colour0, linewidth=0.08, linestyle=dotted, dotsep=0.10583334cm](15.4,-0.9)(15.4,-1.9)(15.4,-1.8)
\psline[linecolor=colour0, linewidth=0.08, linestyle=dotted, dotsep=0.10583334cm](7.282551,-0.9)(7.282551,-1.9)(7.282551,-1.8)
\pscustom[linecolor=black, linewidth=0.05]
{
\newpath
\moveto(14.173426,-2.1602478)
\lineto(14.352434,-2.28974)
\curveto(14.441938,-2.354486)(14.851885,-2.5229077)(15.172329,-2.6265833)
\curveto(15.492773,-2.7302587)(15.90938,-2.8600762)(16.197872,-2.9385026)
}
\psdots[linecolor=black, dotstyle=otimes, dotsize=0.49939746](15.7,-2.8)
\psdots[linecolor=black, dotstyle=otimes, dotsize=0.49939746](15.7,-3.4)
\psline[linecolor=colour0, linewidth=0.05](15.9,-2.2)(16.2,-2.2)(16.2,-2.2)
\psdots[linecolor=colour0, fillstyle=solid, dotstyle=o, dotsize=0.5064466](5.0,-2.0)
\psdots[linecolor=black, dotstyle=otimes, dotsize=0.49939746](5.0,-2.0)
\psdots[linecolor=black, dotsize=0.49939746](6.5,-2.0)
\psdots[linecolor=black, dotsize=0.49939746](15.7,-0.6)
\psdots[linecolor=black, dotsize=0.49939746](15.7,-3.4)
\psdots[linecolor=black, dotsize=0.49939746](15.7,-2.8)
\psline[linecolor=black, linewidth=0.02](14.023917,0.0)(14.023917,-4.0)(14.023917,-3.9499903)
\psline[linecolor=black, linewidth=0.02](15.723917,0.0)(15.723917,-4.0)(15.723917,-3.9499903)
\psdots[linecolor=colour0, dotstyle=o, dotsize=0.5064466](14.0,-2.0)
\psdots[linecolor=black, dotstyle=otimes, dotsize=0.49939746](14.0,-2.0)
\end{pspicture}
}
\end{center}
\item[(iv)] 
replacing an arity-$1$ horizontal vertex followed by an
  arity-$n$ vertical vertex with an arity-$n$ horizontal vertex
  followed by an arity $1$ vertical vertex:
\begin{center}
\psscalebox{.5 .5}
{
\begin{pspicture}(5,-4.0)(14.735774,0) 
\definecolor{colour0}{rgb}{0.03137255,0.003921569,0.003921569}
\pscustom[linecolor=black, linewidth=0.05]
{
\newpath
\moveto(8.9,-0.6)
}
\pscustom[linecolor=black, linewidth=0.05]
{
\newpath
\moveto(21.5,0.7)
}

\pscustom[linecolor=black, linewidth=0.05]
{
\newpath
\moveto(6.5633454,-1.7997204)
\lineto(6.6630845,-1.4815247)
\curveto(6.712954,-1.3224268)(6.9409304,-1.004231)(7.1190357,-0.84513307)
\curveto(7.2971416,-0.68603516)(7.5286794,-0.4951172)(7.688975,-0.39965865)
}
\psline[linecolor=black, linewidth=0.02](5.023917,0.0)(5.023917,-4.0)(5.023917,-3.9499903)
\psline[linecolor=black, linewidth=0.05, arrowsize=0.05291666666666672cm 6.0,arrowlength=2.0,arrowinset=0.0]{<-}(3.0982163,-1.9996121)(6.325617,-1.9996121)
\pscustom[linecolor=black, linewidth=0.05]
{
\newpath
\moveto(6.6025167,-2.1993248)
\lineto(6.7022557,-2.517516)
\curveto(6.7521253,-2.6766114)(6.9801006,-2.994801)(7.158207,-3.1538966)
\curveto(7.336313,-3.3129919)(7.56785,-3.5039062)(7.728146,-3.5993638)
}
\pscustom[linecolor=black, linewidth=0.05]
{
\newpath
\moveto(6.6225758,-2.060248)
\lineto(6.7188935,-2.0899072)
\curveto(6.767052,-2.104737)(6.987631,-2.143313)(7.1600494,-2.1670594)
\curveto(7.332468,-2.1908057)(7.5566297,-2.2205396)(7.7118564,-2.2385025)
}
\psline[linecolor=colour0, linewidth=0.08, linestyle=dotted, dotsep=0.10583334cm](7.282551,-0.9)(7.282551,-1.9)(7.282551,-1.8)
\pscustom[linecolor=black, linewidth=0.05]
{
\newpath
\moveto(6.6225758,-2.1602478)
\lineto(6.7188935,-2.2731006)
\curveto(6.767052,-2.3295276)(6.987631,-2.4763086)(7.1600494,-2.5666625)
\curveto(7.332468,-2.6570165)(7.5566297,-2.7701538)(7.7118564,-2.8385026)
}
\pscustom[linecolor=black, linewidth=0.05]
{
\newpath
\moveto(14.063345,-1.7997204)
\lineto(14.248713,-1.4815247)
\curveto(14.341396,-1.3224268)(14.765093,-1.004231)(15.096106,-0.84513307)
\curveto(15.427119,-0.68603516)(15.857434,-0.4951172)(16.155346,-0.39965865)
}
\psline[linecolor=black, linewidth=0.02](6.523917,0.0)(6.523917,-4.0)(6.523917,-3.9499903)
\psline[linecolor=black, linewidth=0.05, arrowsize=0.05291666666666672cm 6.0,arrowlength=2.0,arrowinset=0.0]{<-}(11.798217,-1.9996121)(13.825617,-1.9996121)
\pscustom[linecolor=black, linewidth=0.05]
{
\newpath
\moveto(14.036145,-2.1993248)
\lineto(14.221513,-2.517516)
\curveto(14.314197,-2.6766114)(14.737893,-2.994801)(15.068905,-3.1538966)
\curveto(15.399917,-3.3129919)(15.830235,-3.5039062)(16.128145,-3.5993638)
}
\pscustom[linecolor=black, linewidth=0.05]
{
\newpath
\moveto(14.173426,-2.060248)
\lineto(14.290537,-2.0899072)
\curveto(14.349093,-2.104737)(14.617292,-2.143313)(14.826935,-2.1670594)
\curveto(15.036577,-2.1908057)(15.309134,-2.2205396)(15.497871,-2.2385025)
}
\psline[linecolor=colour0, linewidth=0.08, linestyle=dotted, dotsep=0.10583334cm](15.4,-0.9)(15.4,-1.9)(15.4,-1.8)
\pscustom[linecolor=black, linewidth=0.05]
{
\newpath
\moveto(14.173426,-2.1602478)
\lineto(14.352434,-2.28974)
\curveto(14.441938,-2.354486)(14.851885,-2.5229077)(15.172329,-2.6265833)
\curveto(15.492773,-2.7302587)(15.90938,-2.8600762)(16.197872,-2.9385026)
}
\psline[linecolor=colour0, linewidth=0.05](15.9,-2.2)(16.2,-2.2)(16.2,-2.2)
\psdots[linecolor=colour0, fillstyle=solid, dotstyle=o, dotsize=0.5064466](6.6,-2.0)
\psdots[linecolor=black, dotstyle=otimes, dotsize=0.49939746](6.6,-2.0)
\psdots[linecolor=black, dotsize=0.49939746](5.0,-2.0)
\psdots[linecolor=black, dotsize=0.49939746](14.0,-2.0)
\psline[linecolor=black, linewidth=0.02](14.023917,0.0)(14.023917,-4.0)(14.023917,-3.9499903)
\psline[linecolor=black, linewidth=0.02](15.723917,0.0)(15.723917,-4.0)(15.723917,-3.9499903)
\psdots[linecolor=colour0, dotstyle=o, dotsize=0.5064466](15.7,-2.8)
\psdots[linecolor=colour0, dotstyle=o, dotsize=0.5064466](15.7,-3.4)
\psdots[linecolor=colour0, dotstyle=o, dotsize=0.5064466](15.7,-2.2)
\psdots[linecolor=colour0, dotstyle=o, dotsize=0.5064466](15.7,-0.6)
\psdots[linecolor=black, dotstyle=otimes, dotsize=0.49939746](15.7,-3.4)
\psdots[linecolor=black, dotstyle=otimes, dotsize=0.49939746](15.7,-2.8)
\psdots[linecolor=black, dotstyle=otimes, dotsize=0.49939746](15.7,-0.6)
\psdots[linecolor=black, dotstyle=otimes, dotsize=0.49939746](15.7,-2.2)
\rput(10.0,-2.0){\textcolor{colour0}{\Huge $\longmapsto$}}
\end{pspicture}
}
\end{center}
\end{itemize}
\end{lemma}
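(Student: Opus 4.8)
The plan is to establish the two implications of the lemma separately. The implication ``$\Leftarrow$'' is the easy one: it suffices to check that each of the four elementary moves (i)--(iv), as well as each of their inverses, leaves the isomorphism class $w(\beta)\in\Tm^\bbN_2$ unchanged. For this one runs the four-step description of $w$ recorded just after Lemma~\ref{tam-as-tree}. Moves (i) and (ii) only create, resp.\ merge, type-(ii) levels, and the horizontal level structure is discarded in the very first step of the reduction, so the two trees already agree after that step. For move (iii) one observes that, after the second and fourth steps (which split fat vertical vertices and then absorb the resulting unary horizontal vertices), the two local pictures become literally identical; move (iv) is handled by the third step, in which a horizontal vertex is permitted to slide through a vertical one. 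In each case the two trees have the same image under the appropriate partial reduction, hence the same $w$-image.

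The implication ``$\Rightarrow$'' is the substantive part; it amounts to showing that $w(\beta)$ determines $\beta$ up to the equivalence relation $\sim$ generated by the moves (i)--(iv). Using the description of $\Tm^\bbN_2$ as the set $\pr$ of trees from Lemma~\ref{tam-as-tree}, the functor $w$ becomes an explicit surjection $\LTr\to\pr$, and we must prove that each fibre $w^{-1}(\xi)$ is a single $\sim$-class. We do this by reversing the four reduction steps one at a time and isolating exactly the information each destroys:
\begin{itemize}
\item[(a)] reversing step four (contracting chains of horizontal vertices, deleting unary ones) is possible only up to insertion and merging of levels of unary horizontal vertices, i.e.\ up to moves (i) and (ii);
\item[(b)] reversing step three (pushing a horizontal vertex through a vertical one) introduces precisely the ambiguity of move (iv);
\item[(c)] reversing step two (splitting a vertical vertex of arity $n\ge 2$ into a unary horizontal vertex followed by $n$ unary vertical vertices) introduces the ambiguity of move (iii), possibly combined with (i) and (iv); and
\item[(d)] reversing step one (forgetting the type-(ii) level structure) is controlled again by moves (i) and (ii), since any two admissible ways of organising a fixed family of horizontal vertices into type-(ii) levels are connected by introductions and contractions of such levels.
\end{itemize}
Concatenating these reversals shows that any two preimages of $\xi$ differ by a finite sequence of moves (i)--(iv) and their inverses, which is the assertion. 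To keep the bookkeeping honest, the argument is organised as an induction on the number of vertices of $\xi$ (equivalently of $\beta$), peeling off the level structure one level at a time from the root, in the same spirit as the proof of Lemma~\ref{tam-as-tree}; as there, the geometric pictures make the individual reductions transparent while a fully formal treatment would require a straightforward but lengthy combinatorial induction.

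The main obstacle is the interaction between the levels carried by the horizontal vertices and the arities of the vertical vertices: a single vertical vertex of high arity in one representative may correspond, in another, to a horizontal vertex of the same arity sitting on an adjacent level above a row of unary vertical vertices, and untangling this correspondence is exactly what forces us to use moves (iii) and (iv), and to apply them in the right order. Verifying that these four moves already generate the entire fibre relation --- that no further, more global rearrangement is ever needed --- is the delicate point that the induction is designed to settle.
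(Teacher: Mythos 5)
Your proposal is correct in outline, and your easy direction matches the paper exactly, but for the substantive implication you take a genuinely different route. The paper introduces a \emph{canonical form}: a leveled tree is canonical if it has no level consisting solely of arity-one horizontal vertices and no internal edge of either forbidden shape in~(\ref{eq:31}). It then makes two observations: every $\beta \in \LTr$ can be brought to canonical form by a finite sequence of the moves (i)--(iv) and their inverses, and two canonical trees with the same $w$-image are literally equal in $\LTr$. Injectivity of $w$ on canonical forms plus reducibility to canonical form immediately gives that each fibre of $w$ is a single equivalence class. Your plan instead reverses the four reduction steps of $w$ one at a time and tracks the ambiguity created at each stage. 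This can be made to work, but it is heavier: the intermediate objects obtained after undoing only some of the steps are not themselves objects of $\LTr$ (they lack a full level structure, or contain vertices violating the $\LTr$ constraints), so to keep the bookkeeping honest you must either define the equivalence relation on each intermediate species of tree and check compatibility, or recast each partial reversal inside $\LTr$ --- and you must then verify that the composite of the four stage-wise ambiguities is generated by (i)--(iv), which is precisely the delicate point you flag at the end. The canonical-form argument buys a single confluence-type statement in place of four interacting ones, at the cost of having to guess the normal form in advance; your argument is more constructive about \emph{where} each move is needed, but requires more scaffolding to be made rigorous.
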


Notice that moves (iii) and (iv) are `local' in that they do not
change the level structure of $\beta$ and that one can be obtained from
the other by interchanging the r\^oles of  $\tlusty$ and $\cerny$.

\begin{proof}[Proof of Lemma~\ref{elementary-moves}]
It is immediate to see that none of the moves changes the $w$-images. Therefore,
if $\beta'$ and $\beta''$ differ by a sequence of the moves
and their inverses, $w(\beta') =  w(\beta'')$.

To prove the opposite implication, let us say that a leveled tree $\beta \in
\LTr$ is in the {\em canonical form\/}, if $\beta$ has no levels
consisting only of \cerny-vertices of arity $1$, and no internal edges
as in~(\ref{eq:31}). It is obvious that, if  $\beta',\beta'' \in
\LTr$ are in the canonical form, then  $w(\beta') =  w(\beta'')$ if and
only if $\beta' = \beta''$ in $\LTr$. The proof is finished by observing
that each $\beta \in \LTr$ can be brought to the canonical form by a
finite sequence of moves (i)--(iv) and their inverses.
\end{proof}

\begin{lemma} 
  Let $\scA$ be a multiplicative $1$-operad $\scA$ in a duoidal category
  $\duo$. If two leveled trees $\beta',\beta'' \in \LTr$ differ by a
  finite sequence of elementary moves listed in
  Lemma~\ref{elementary-moves}, then the
  structure morphisms
\[
\Xi_{\beta'}(I) : 
\scA^{\frT}\to \scA(n) \ \mbox {  and } \ 
\Xi_{\beta''}(I): \scA^{\frT}\to \scA(n),
\]
where $\frT := \Omega(\beta') = \Omega(\beta'')$ and $\Xi = \{\Xi_\beta\}_{\beta \in
 \LTr}$ is the morphism~(\ref{eq:33}), 
 are equal.
\end{lemma}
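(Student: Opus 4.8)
The plan is to unwind~(\ref{eq:33}) and then reduce, by induction, to a single elementary move, each of which will turn out to be an instance of a $1$-operad axiom. By the construction in the proof of Theorem~\ref{etiopska_restaurace}, the morphism $\Xi_\beta(I) : \scA^\frT \to \scA(n)$ is the composite $\scA^\frT \stackrel{\theta_\beta}\lra \scA^\beta \stackrel{\omega_\beta}\lra \scA(\beta) \stackrel{\gamma_\beta}\lra \scA(n)$, where $\theta_\beta$ is the map~(\ref{eq:tttt}), i.e.\ the composite $\scA^\frT \cong \overline\scA^\beta \stackrel\varpi\lra \scA^\beta$, and $\Psi_\beta = \gamma_\beta \circ \omega_\beta$. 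Since ``$\beta''$ is obtained from $\beta'$ by a finite sequence of the moves of Lemma~\ref{elementary-moves} and their inverses'' is a symmetric and transitive relation, induction on the length of the sequence reduces the claim to the case in which $\beta''$ arises from $\beta'$ by a single move among~(i)--(iv). In each such case $\Omega(\beta') = \Omega(\beta'') = \frT$, because the functor $\Omega$ of~(\ref{eq:21}) ignores horizontal vertices altogether and remembers vertical vertices only through the level pattern they occupy, not their arities; so the two morphisms in question genuinely share source $\scA^\frT$ and target $\scA(n)$.

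Consider first the level-changing moves~(i) and~(ii). The key point is that in the reduced power $\overline\scA^\beta$ each type-(ii) level contributes a single copy of the unit $e$ of $(\duo,\boxx_0,e)$, so the canonical isomorphism $\scA^\frT \cong \overline\scA^\beta$ is ``the same'' for $\beta'$ and $\beta''$, up to the $\boxx_0$-unit constraint that inserts or contracts the extra copy of $e$. The entire difference between $\Xi_{\beta'}(I)$ and $\Xi_{\beta''}(I)$ is thereby confined to the part of $\varpi$, $\omega_\beta$ and $\gamma_\beta$ acting on those extra factors: $\varpi$ expands $e$ into a $\boxx_1$-power of $e$ via the comonoid comultiplication~(\ref{eq:34}), $\omega_\beta$ sends each such $e$ to $\alpha_e = j : e \to \scA(1)$ --- the unit of $\scA$, since $\alpha$ is a morphism of $1$-operads and the unit of $\uAss$ is the canonical map $e \to v$ --- and $\gamma_\beta$ grafts these arity-$1$ operations by the structure maps~(\ref{morph}). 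By the unit and associativity axioms of the $1$-operad $\scA$ this grafting is trivial, and by the counit and coassociativity of the comonoid $e$ in $(\duo,\boxx_1,v)$ the expansion followed by the unit-constraint collapse is the identity; hence the two composites coincide (for move~(ii) one uses, in addition, associativity to identify the composition along two adjacent all-unit levels with that along the merged one).

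The local moves~(iii) and~(iv) leave the level structure of $\beta'$ unchanged, so $\overline\scA^{\beta'} = \overline\scA^{\beta''}$ and the isomorphism $\scA^\frT \cong \overline\scA^\beta$ is literally the same; only the pieces of $\varpi$, $\omega_\beta$ and $\gamma_\beta$ near the affected vertices can differ. Using the interchange morphism~(\ref{dl}) together with the associativity and unit constraints of $\boxx_0$ and $\boxx_1$ to factor the global composite $\gamma_\beta\circ\omega_\beta\circ\theta_\beta$ into an unaffected part $\boxx$-tensored with a small pasting fragment, the comparison localizes to that fragment, in which the arity is merely transferred between a vertical vertex (labelled by $v$ and routed through $\alpha_v$) and an adjacent horizontal vertex (labelled by $e$ and routed through $\alpha_e = \alpha_v\circ(e\to v)$). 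Since every component of $\uAss$ equals $v$ and $\alpha:\uAss\to\scA$ is a morphism of $1$-operads, both composites equal the image under $\alpha$, followed by the unchanged remainder of $\gamma_\beta$, of one and the same composite in $\uAss$; their equality is an instance of the unit and associativity axioms of the $1$-operad $\uAss$. Move~(iv) is the same argument after interchanging the roles of the units $e$ and $v$ (hence of the map $e\to v$ and of the two unit constraints). The step I expect to be the main obstacle is exactly this localization --- splitting the global composite into an unaffected factor tensored with a pasting fragment, and then matching the resulting fragment identity with a single $1$-operad axiom for $\scA$ or, via $\alpha$, for $\uAss$; this is the bookkeeping that the category $\LTr$ was engineered to trivialise, so once the dictionary between the elementary moves and the operad axioms is in place the verification becomes mechanical.
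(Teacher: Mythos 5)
Your proposal is correct and follows essentially the same route as the paper: reduce to a single elementary move, factor $\Xi_\beta(I)$ as $\gamma_\beta\circ\omega_\beta\circ\theta_\beta$, handle move~(i) by the unit axiom of the $1$-operad, and handle moves~(ii)--(iv) by localizing to the affected vertices and invoking the compatibilities among the comonoid $e$, the monoid $v$, the map $e\to v$ and the morphism $\alpha:\uAss\to\scA$ (the paper records these as the small diagrams~(\ref{eq:35})--(\ref{eq:37}) and likewise dismisses them as consequences of the duoidal axioms). The only cosmetic difference is that you phrase the verification for moves~(iii)--(iv) as an equality of composites in $\uAss$ pushed forward along $\alpha$, whereas the paper writes out the corresponding unit diagrams in $\duo$ directly; these are the same computation.
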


\begin{proof}
Expanding the definitions we see that we must establish
that the diagram  
\begin{equation}
\label{eq:10}
\xymatrix@C = +3em@R = .1em{
& \scA^{\beta'} \ar[r]^{\omega_{\beta'}}  & \scA(\beta') \ar[rd]^{\gamma_{\beta'}}  &
\\
\scA^\frT    \ar[ur]^{\theta_{\beta''}} \ar[dr]^{\theta_{\beta'}} &&& \scA(n)
\\
& \scA^{\beta''} \ar[r]^{\omega_{\beta''}}  & \scA(\beta'') \ar[ur]^{\gamma_{\beta''}}  &
}
\end{equation}
in which $\theta_{\beta'},\theta_{\beta''}$ are the 
maps~(\ref{eq:tttt}), $\omega_{\beta'}, \omega_{\beta''}$ the
maps~(\ref{eq:20}) and $\gamma_{\beta'},\gamma_{\beta''}$ the operad
compositions,  commutes for
each move of Lemma~\ref{elementary-moves}.

\noindent 
{\em Move~(i).\/}
Assume that $\beta''$ is obtained from $\beta'$ by adding a level of
horizontal vertices of arity~ $1$. It follows from the defining
formula~(\ref{eq:15}) that there are some $\scA^{\beta'}_l, \scA^{\beta'}_r
\in \duo$ such that 
\[
\scA^{\beta'} = \scA^{\beta'}_l \boxx_0 \scA^{\beta'}_r\ \mbox { while
}\ \scA^{\beta''} = \scA^{\beta'}_l \boxx_0 (e\boxx_1 \cdots \boxx_1 e) \boxx_0
\scA^{\beta'}_r.
\]
Likewise, it follows from~(\ref{eq:17}) that there are some
$\scA_l(\beta'),\scA_r(\beta') \in \duo$ such that 
\[
\scA(\beta') = \scA_l(\beta')\boxx_0  \scA_r(\beta') \ \mbox { while }
\scA(\beta'') = 
\scA_l(\beta')\boxx_0\big(\scA(1)\boxx_1 \cdots \boxx_1 \scA(1)\big)\boxx_0
\scA_r(\beta').
\]
The unitality axiom \cite[Def.~4.1]{batanin-markl:Adv} for $1$-operads
then implies the commutativity of the diagram
\[
\xymatrix@C = +1.8em@R = 1em{ \scA^{\beta'}_l \boxx_0 \scA^{\beta'}_r
\ar[d]^\cong  \ar[r]^\omega & \scA_l(\beta') \boxx_0  \scA_r(\beta')  \ar[rd]^\gamma &
  \\
\ar[d]   \scA^{\beta'}_l \boxx_0 e \boxx_0 \scA^{\beta'}_r&& \scA(n)
  \\
  \scA^{\beta'}_l \boxx_0 (e\boxx_1 \cdots \boxx_1 e) \boxx_0 \scA^{\beta'}_r
  \ar[r]^{\omega \hskip 3em} &\scA_l(\beta')\boxx_0\big(\scA(1)\boxx_1 \cdots \boxx_1
  \scA(1)\big)\boxx_0  \scA_r(\beta')   \ar[ur]^\gamma & }
\]
which, along with the obvious commutativity of
\[
\xymatrix@C = +3.8em@R = 1em{&   \scA^{\beta'}_l \boxx_0 \scA^{\beta'}_r
\ar[d]^\cong 
  \\
\scA^\frT  \ar[ur]^{\theta_{\beta'}} \ar[dr]^{\theta_{\beta''}}   &
\ar[d]   \scA^{\beta'}_l \boxx_0 e \boxx_0 \scA^{\beta'}_r
  \\
&  \scA^{\beta'}_l \boxx_0 (e\boxx_1 \cdots \boxx_1 e) \boxx_0 \scA^{\beta'}_r
}
\]
implies  the commutativity of~(\ref{eq:10}).

\noindent 
{\em Move (ii).\/} By analyzing the definitions of the objects
involved in~(\ref{eq:10}), we easily see that its commutativity would
follow from the commutativity of the diagram
\begin{equation}
\label{eq:35}
\xymatrix{
(e\boxx_1 \cdots \boxx_1 e)\boxx_0 e \ar[d] \ar[r] & 
(v\boxx_1 \cdots \boxx_1v)\boxx_0 v \ar[d]^\cong
\cr
e \ar[r] & v
}
\end{equation}
whose left vertical arrow is constructed in
\cite[Example~4.5]{batanin-markl:Adv}, and the remaining arrows are
induced by the monoid structure of $v$ and by the canonical map $e \to v$. 

\noindent 
{\em Move (iii).\/}
The commutativity of~(\ref{eq:10}) would in this case follow from the
commutativity of
\begin{equation}
\label{eq:36}
\xymatrix{(e\boxx_1 \cdots \boxx_1 e)\boxx_0 v \ar[r] & 
(v\boxx_1 \cdots \boxx_1v)\boxx_0 v \ar[d]^\cong
\cr
e \boxx_0 v \ar[r]\ar[u] & v \boxx_0 v
}
\end{equation}
whose maps are induced by the comonoid structure of $e$, monoid
structure of $v$ and the  canonical map $e \to v$.

\noindent 
{\em Move (iv).\/}
The commutativity of~(\ref{eq:10}) would follow from the
commutativity of the diagram
\begin{equation}
\label{eq:37}
\xymatrix{(v\boxx_1 \cdots \boxx_1 v)\boxx_0 e \ar[r]\ar[d]_\cong & 
(v\boxx_1 \cdots \boxx_1v)\boxx_0 v \ar[d]^\cong
\cr
v \boxx_0 e \ar[r] & v \boxx_0 v
}
\end{equation}
whose arrows are given by the module structure of $v$ and the
canonical map $e \to v$. The commutativity of
diagrams~(\ref{eq:35})--(\ref{eq:37}) above is however an easy
consequence of
general properties of duoidal categories.
\end{proof}

\section{Duoidal Deligne's conjecture}
\label{Geminiany}
Let $\duo$ be a complete $V$-category and $\delta : \ttDelta \to V$ a
cosimplicial object in $V$.  Recall
\cite[\S5.2]{batanin-markl:Adv} that the $\delta$-{\em totalization\/} of a
cosimplicial object $\phi : \ttDelta \to \duo$ is the $V$-enriched end
\[
\Tot_\delta(\phi) := \int_{n \in \ttDelta} \phi(n)^{\delta(n)} \in \duo.
\]
By Proposition 5.2 of \cite{batanin-markl:Adv}, any multiplicative
$1$-operad $\scA$ in $\duo$ bears a canonical structure of a
cosimplicial object $\cosA = \{\scA(n),n\ge 0\}$ in $\duo$. In
Definition 5.3 of \cite{batanin-markl:Adv} we introduced the {\em
Hochschild $\delta$-object\/} of a $\scA$ as the $\delta$-totalization
\[
\CH_\delta(\scA) := \Tot_\delta(\cosA).
\] 

We claim that the canonical cosimplicial structure on $\scA$ is
induced by the action~(\ref{eq:23}) of the $2$-operad $\opTm^\bbN_2$. 
By this we mean 
that the underlying category $\ttU(\opTm^\bbN_2)$ is the
simplicial category
$\ttDelta$ and that  $\cosA =\liota^*(\scA)$,
where $\liota^*$ is the functor~(\ref{Jarka_slusne_prudi}) with $\calP
= \opTm^\bbN_2$ and $\ttC = \duo$.  

It follows from definition that the objects of the underlying category
$\ttU(\opTm^\bbN_2)$ are natural numbers. Its hom-sets are
\[
\ttU(\opTm^\bbN_2)(i,n) = \Tm^\bbN_2\big(\EuU_2(i,n)\big),
\ i,n \in \bbN,
\]
where $\EuU_2(i,n)$ is the terminal $2$-tree $\EuU_2$ with its unique
$2$-leaf colored by $i$ and the root by $n$. Morphisms in
$\ttU(\opTm^\bbN_2)(i,j)$ are thus represented by trees as in
Lemma~\ref{tam-as-tree} with one $\bily$-vertex of arity $i$, no
$\tlusty$-vertices, and $n$ leaves.  It is obvious from this description that
$\opTm^\bbN_2$ and $\Lat^{(2)}$ have isomorphic underlying categories,
while $\ttU(\Lat^{(2)}) \cong \ttDelta$ by
\cite[Lemma~2.5]{batanin-berger}.  The identification $\cosA =
\liota^*(\scA)$ 
is now a simple exercise. We conclude that
\[
\CH_\delta(\scA) \cong \Tot_{\delta}(\scA),
\]
where $\Tot_{\delta}(\scA)$ is given by~(\ref{totalization}).

As we already recalled from~\cite[Prop.~4.9]{batanin-markl:Adv}, the
endomorphism $1$-operad $\End_\ttM$ of a monoid $\ttM$ in a
$\duo$-monoidal category is multiplicative. The Hochschild
$\delta$-object of $\End_\ttM$ was called the {\em $\delta$-center\/}
of $\ttM$ and denoted
\[
\CH_\delta(\ttM,\ttM) := \CH_\delta(\End_\ttM).
\]

Since $V$ is a cocomplete symmetric monoidal category it has $Set$-tensors. For a set $S$ the tensor $S\otimes X$ is equal to the coproduct $\coprod_S X$  of $S$-copies of $X.$ For any operadic category $\ttO$ and any  $\ttO$-operad $\calP$ in $Set$ we can construct then an  enrichment of $\calP$ in $V$  which on an object $T\in \ttO$ takes value $\calP(T)\otimes I.$ By abusing notations we will denote such an enriched $\ttO$-operad by the same letter $\calP.$  In particular we will consider the  operad 
$\opTm^\bbN_2$ as a colored $2$-operad in $V$ for any cocomplete $V.$ 

Proposition \ref{v_patek_do_Prahy} immediately
gives:

\begin{theorem} 
  Let $\delta$ be a cosimplicial object in $V.$ Then there is a
  canonical action of the $2$-operad $\Coend^{\sopTm^\bbN_2}_{\delta}$ on the
  Hochschild object $\CH_{\delta}(\scA)$ of a multiplicative $1$-operad
  $\scA.$ In particular, there is a canonical action of $\Coend^{\sopTm^\bbN_2}_{\delta}$ on the
  $\delta$-center $\CH_{\delta}(\ttM,\ttM)$ of any 
monoid $\ttM$ in any $\duo$-monoidal category
\end{theorem}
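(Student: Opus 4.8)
The plan is to derive the theorem as a direct instance of the condensation Proposition~\ref{v_patek_do_Prahy}, once the relevant pieces are matched up. First I would fix the ambient data: take $\ttO := \ttOmega_2$ and $\frC := \bbN$, so that $\ttO^\frC = \OmegaN$ and the colored convolution/condensation of Section~\ref{Jaruska_ma_streptokoka} becomes the one relevant here. The enriching category is $\ttC := \duo$, which is complete as a $V$-category by hypothesis and which, being also cocomplete, carries by Proposition~\ref{omegaendos} the strong $\ttOmega_2$-multicotensor $D := \Boxx$; since the comonad of Example~\ref{one} associated to the (unique) connected component of $\ttOmega_2$ is the identity, $\duo$ is a colax $\ttOmega_2$-monoidal $V$-category in the sense of Definition~\ref{Jarka_ma_streptokoka}. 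Finally, as explained in the paragraph preceding the theorem, we regard the Tamarkin operad $\opTm^\bbN_2$ --- a priori an $\OmegaN$-operad in $\Set$, see Definition~\ref{musim_objet_Ryn} --- as the $\OmegaN$-operad $\calP := \opTm^\bbN_2$ in $V$ obtained via the $\Set$-tensors of $V$.

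Next I would identify the $\calP$-algebra. A multiplicative $1$-operad $\scA$ in $\duo$ is canonically a $\opTm^\bbN_2$-algebra in $\ttC = \duo$: Theorem~\ref{lode_na_Rynu} produces the natural action $\opTm^\bbN_2 \to \End_\scA^{\OmegaN}$ of~(\ref{eq:23}), and, unwinding Definition~\ref{dnes_obed_s_Magdou} for the multicotensor $D = \Boxx$ and its color-forgetting extension $\Boxx^\bbN$, such an action \emph{is} precisely a $\opTm^\bbN_2$-algebra structure on $\scA$ in $\duo$.

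With these identifications Proposition~\ref{v_patek_do_Prahy} applies directly. Because $\ttOmega_2$ is connected, the resulting $\pi_0(\ttOmega_2)$-collection $\Tot_\delta(\scA)$ is a single object of $\duo$, and the proposition endows it with a natural algebra structure over the $\delta$-condensation $\Coend^{\opTm^\bbN_2}_\delta$, an $\ttOmega_2$-operad, i.e.\ a $2$-operad, in $V$; this is the operad denoted $\Coend^{\sopTm^\bbN_2}_\delta$ in the statement. It then remains only to recognise $\Tot_\delta(\scA)$ as the Hochschild object $\CH_\delta(\scA)$, for which I would invoke the two facts established in the discussion preceding the theorem: the underlying category $\ttU(\opTm^\bbN_2)$ is the simplicial category $\ttDelta$, and under this identification the functor $\liota^*$ of~(\ref{Jarka_slusne_prudi}) sends $\scA$ to the canonical cosimplicial object $\cosA$; hence the totalization~(\ref{totalization}) becomes $\Tot_\delta(\scA) = \int_{n\in\ttDelta}\scA(n)^{\delta(n)} = \CH_\delta(\scA)$. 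The ``in particular'' clause then follows by specialising $\scA$ to the endomorphism $1$-operad $\End_\ttM$ of a monoid $\ttM$ in a $\duo$-monoidal category, which is multiplicative by~\cite[Prop.~4.9]{batanin-markl:Adv}, together with the definition $\CH_\delta(\ttM,\ttM) = \CH_\delta(\End_\ttM)$.

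Since the genuinely hard content --- constructing the $\opTm^\bbN_2$-action, namely Theorem~\ref{lode_na_Rynu}, which itself rests on Theorem~\ref{etiopska_restaurace} and the $\LTr$-combinatorics --- has already been carried out, this last proof is pure assembly. The only point demanding care is the bookkeeping: verifying that feeding $\ttO = \ttOmega_2$, $\frC = \bbN$, $D = \Boxx$ into the colored condensation of Section~\ref{Jaruska_ma_streptokoka} yields exactly $\Coend^{\sopTm^\bbN_2}_\delta$, and that the completeness (and ambient cocompleteness) assumptions on $\duo$ genuinely suffice for all the ends and coends entering the construction.
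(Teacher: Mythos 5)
Your proposal is correct and follows exactly the route the paper takes: the paper deduces this theorem "immediately" from Proposition~\ref{v_patek_do_Prahy} after the same identifications ($\ttU(\opTm^\bbN_2)\cong\ttDelta$, $\cosA=\liota^*(\scA)$, $\CH_\delta(\scA)\cong\Tot_\delta(\scA)$, and the $\Set$-tensor enrichment of $\opTm^\bbN_2$ in $V$), with the $\opTm^\bbN_2$-algebra structure on $\scA$ supplied by Theorem~\ref{lode_na_Rynu}. Your write-up merely makes explicit the bookkeeping that the paper leaves implicit, including the (correctly flagged) point that $\duo$ must be cocomplete as well as complete for the strong multicotensor $\Boxx$ and the ends/coends to exist.
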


When $\ttV$ is the category of chain complexes,
$\Coend^{\sopTm^\bbN_2}_{\delta}$ is the chain $2$-operad $\hbox{\O}$ considered by
Tamarkin in \cite[\S5.2]{tamarkin:CM07}.
Let $I$ be the constant cosimplicial 
object whose all terms equal the unit object $I\in \ttV$.

\begin{proposition}
The $2$-operad $\Coend^{\sopTm^\bbN_2}_{I}$ is isomorphic to
the canonical $2$-operad $\bfone^{\ttOmega_2}.$
\end{proposition}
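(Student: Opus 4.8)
The plan is to unwind the $\delta$-condensation for $\delta=I$ and reduce the whole statement to computing a single coend in $\Set$. I take $\ttC=\ttV$ and, as the strong $\ttOmega_2$-multicotensor, $D=\bigodot^{\ttOmega_2}$, the tensor-product multicotensor of Example~\ref{odot}, so that $D_\EuT(I,\dots,I)=I\otimes\dots\otimes I=I$ for every $\EuT\in\ttOmega_2$. Recall from Section~\ref{Geminiany} that $\ttU(\opTm^\bbN_2)$ is the simplicial category $\ttDelta$, and that $\opTm^\bbN_2$ is viewed as a $\ttV$-operad by tensoring its $\Set$-valued components with $I$. For $\EuT\in\ttOmega_2$, $k:=|\EuT|$, Lemma~\ref{Opicka} applied to the constant weight $I\in\ttV^\ttDelta$ together with the convolution formula~(\ref{convolut}) then give, using $\ttV(I,-)\cong\id$, $I^{\otimes k}\cong I$ and the fact that $\otimes$ preserves colimits, that $\Coend^{\sopTm^\bbN_2}_{I}(\EuT)=\int_{n\in\ttDelta}\E\odot^{\sopTm^\bbN_2}_\EuT(I,\dots,I)(n)$ with
\[
\E\odot^{\sopTm^\bbN_2}_\EuT(I,\dots,I)(n)=\Bigl(\int^{(m_1,\dots,m_k)\in\ttDelta^{\times k}}\opTm^\bbN_2\bigl(\EuT(m_1,\dots,m_k;n)\bigr)\Bigr)\otimes I,
\]
the inner coend being computed in $\Set$.

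So everything reduces to the claim that, for every $\EuT\in\ttOmega_2$ and $n\in\bbN$, the $\Set$-valued coend $\int^{(m_1,\dots,m_k)}\opTm^\bbN_2(\EuT(m_1,\dots,m_k;n))$ is a one-point set. Granting it, the functor $n\mapsto\int^{\vec m}\opTm^\bbN_2(\EuT(\vec m;n))$ is the terminal functor $\ttDelta\to\Set$, hence $n\mapsto\E\odot^{\sopTm^\bbN_2}_\EuT(I,\dots,I)(n)$ is the constant functor at $I$, and $\Coend^{\sopTm^\bbN_2}_{I}(\EuT)=\int_{\ttDelta}I=I$ because $\ttDelta$ is nonempty and connected. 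Thus $\Coend^{\sopTm^\bbN_2}_{I}$ is componentwise $I$; its units are identities and its structure maps are the coherence isomorphisms of $\ttV$, inherited via Lemma~\ref{Opicka} and Proposition~\ref{Musim_na_prohlidku} from the purely combinatorial multitensor structure of $\bigodot^{\ttOmega_2}$. Equivalently, the computation identifies $\Coend^{\sopTm^\bbN_2}_{I}$ with the $I$-tensoring of the terminal — hence unit — $\ttOmega_2$-operad in $\Set$, namely $\bfone^{\ttOmega_2}$.

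The remaining work is the one-point claim. For nonemptiness I would use Remark~\ref{pristi_tyden_posledni}: $\opTm^\bbN_2=t_!(\bfone^{\Tm^\bbN_2})$, so $\opTm^\bbN_2(\EuT(\vec m;n))=\{\xi\in\Tm^\bbN_2:\ t(\xi)=\EuT(\vec m;n)\}$; since $\qOmega:\LTr\to\OmegaN$ of~(\ref{eq:21}) is objectwise surjective (one builds a leveled tree with only type-(i) levels realizing any prescribed colored $2$-tree) and $t\circ w=\qOmega$ for $w:\LTr\to\Tm^\bbN_2$ of~(\ref{eq:27}), this set is nonempty for a suitable~$\vec m$. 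For connectedness, note that in the coend $(\vec m,\xi)$ is identified with $(\vec m{}',\xi\circ(\phi_1,\dots,\phi_k))$ whenever $\phi_\ell\in\ttU(\opTm^\bbN_2)(m{}'_\ell,m_\ell)=\opTm^\bbN_2(\EuU_2(m{}'_\ell,m_\ell))$ is substituted into the $\ell$-th $\bily$-vertex of $\xi$; by Lemma~\ref{tam-as-tree} these $\phi_\ell$ are precisely the one-$\bily$-vertex trees, and one checks from that description that such substitutions leave the underlying $\ttOmega_2$-tree and the complementary order on $\Wh(\xi)$ unchanged, while they allow the arity of any single white vertex to be changed at will and the horizontal vertices \cerny\ adjacent to it to be rearranged and absorbed. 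Iterating, every $(\vec m,\xi)$ is brought to a fixed normal form (for instance the one attached by $w$ to a chosen $\beta\in\LTr$ with $w(\beta)$ realizing $\EuT$ and horizontal vertices in canonical position), so the coend is a single point.

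I expect the hard part to be precisely this connectedness argument: it needs the tree bookkeeping of Lemma~\ref{tam-as-tree} together with a careful check of how the $\circ_i$-substitution in $\opTm^\bbN_2$ — with its ``contract adjacent \cerny-vertices'' clause — interacts with the domination condition that cuts $\opTam^\bbN_2$ out of the desymmetrisation of $\Lat^{(2)}$. Everything else is formal (co)end manipulation together with the observation, used throughout the paper, that limits of constant diagrams over the connected category $\ttDelta$ are trivial.
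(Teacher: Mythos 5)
Your proposal follows essentially the same route as the paper's proof: reduce the statement to showing that $E_\EuT^{\sopTm^\bbN_2}(I,\ldots,I)$ is the constant cosimplicial object $I$, which in turn amounts to the colimit (i.e.\ $\pi_0$) of the $k$-simplicial set $\opTm^\bbN_2(\EuT)(\bullet,\ldots,\bullet;n)$ being a point, verified by a zig-zag/normal-form argument on the trees. The paper phrases the last step via the two face operators $\partial^i_0,\partial^i_1$ acting by an explicit local surgery at level $(0,\ldots,0)$ rather than your general substitution relations, but this is only a presentational difference, and both leave the final combinatorial induction at the same level of sketch.
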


\begin{proof}
  It is sufficient to observe that the value of the multitensor
  $E_\EuT^{\sopTm^\bbN_2}(I,\ldots,I)$ is the constant cosimplicial
  object $I$ for any $\EuT\in {\tt \Omega}_2$ . Indeed, if it is so,
  then clearly
\[
\Coend^{\sopTm^\bbN_2}_{I} = \Nat(I,I) = \ttV(I,I) \cong I
\]
where  $\Nat(I,I)$ means the space of natural transformations (i.e.\
cosimplicial maps) between the constant cosimplicial objects $I$.

It is clear that for each $n\ge 0$ the coend
$E_\EuT^{\sopTm^\bbN_2}(I,\ldots,I)(n)$ in~(\ref{convolut}) 
equals  the colimit of the $k$-simplicial object
$
\opTm^\bbN_2(\EuT)(\bullet,\ldots,\bullet;n) \ot I
$, 
$k := |\EuT|$, so it is enough to check that the colimit of the $k$-simplicial set
$\opTm^\bbN_2(\EuT)(\bullet,\ldots,\bullet;n)$ is a one point
set. This boils down to verification that the equivalence relation
generated by the simplicial operators on
$\opTm^\bbN_2(\EuT)(\rada00;n)$ has only one equivalence class.

Notice that, by the definition~(\ref{Ralph}), 
\[
\opTm^\bbN_2(\EuT)(\Rada i1k;n) = 
\opTm^\bbN_2(\bfT^{\Rada i1k}_n),
\]
where $\bfT^{\Rada i1k}_n \in \ttOmega^\bbN_2$ is the $2$-tree $\EuT \in \ttOmega_2$ with
its $2$-leaves
colored by $\rada{i_1}{i_k} \in \bbN$ and the root by~$n$.
The elements of $\opTm^\bbN_2(\EuT)(\rada 00;n)$ are thus represented by
$\EuT$-labelled leveled trees whose $\bily$-vertices have no incoming
edges. In this description, the two simplicial operators 
\begin{equation}
\label{Kasparek}
\partial^i_0,\partial^i_1 : \opTm^\bbN_2(\bfT^{\rada 01,\ldots,0}_n) 
\to \opTm^\bbN_2(\bfT^{\rada 00}_n), \ 1 \leq i \leq k,
\end{equation}
in $i$-th direction are local operations acting on a tree $\xi \in
\opTm^\bbN_2(\bfT^{\rada 00}_n)$ 
by the following surgery: `cut off' the unique
incoming edge of the $i$th $\bily$-vertex of $\xi$ and then `glue' it to the
outcoming edge of this vertex in two possible ways introducing a new
$\cerny$-vertex, as indicated in
Figure~\ref{two-ways} --  
compare with the differential in the brace operad 
\cite[Example~5.8]{batanin-berger-markl}. 
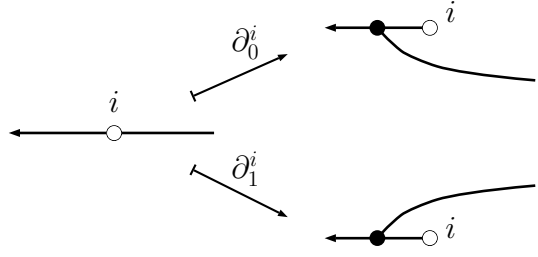
\begin{figure}
\centering
\psscalebox{.7 .7} 
{
\begin{pspicture}(2,-4.575521)(11.002144,.375521)
\pscustom[linecolor=black, linewidth=0.05]
{
\newpath
\moveto(8.9,-0.024479065)
}
\pscustom[linecolor=black, linewidth=0.05]
{
\newpath
\moveto(21.5,1.2755209)
}

\pscustom[linecolor=black, linewidth=0.05]
{
\newpath
\moveto(8.0,-4.424479)
\lineto(8.132912,-4.2774205)
\curveto(8.199368,-4.203891)(8.36076,-4.056832)(8.455696,-3.9833024)
\curveto(8.550632,-3.9097733)(8.892405,-3.762714)(9.13924,-3.689185)
\curveto(9.386076,-3.615656)(9.974683,-3.512714)(11.0,-3.424479)
}
\pscustom[linecolor=black, linewidth=0.05]
{
\newpath
\moveto(8.0,-0.42447907)
\lineto(8.132912,-0.5715378)
\curveto(8.199368,-0.64506745)(8.36076,-0.7921262)(8.455696,-0.86565584)
\curveto(8.550632,-0.9391849)(8.892405,-1.0862442)(9.13924,-1.1597732)
\curveto(9.386076,-1.2333024)(9.974683,-1.3362442)(11.0,-1.424479)
}
\psline[linecolor=black, linewidth=0.05](3.1,-2.424479)(4.9,-2.424479)(4.9,-2.424479)
\psdots[linecolor=black, dotsize=0.3](8.0,-4.424479)
\psdots[linecolor=black, dotsize=0.3](8.0,-0.42447907)
\psline[linecolor=black, linewidth=0.05, arrowsize=0.05291666666666668cm 2.0,arrowlength=1.399997,arrowinset=0.0]{<-}(1.0,-2.424479)(2.9,-2.424479)
\psline[linecolor=black, linewidth=0.05, arrowsize=0.05291666666666668cm 2.0,arrowlength=1.399997,arrowinset=0.0]{<-}(7.0,-4.424479)(8.9,-4.424479)
\psline[linecolor=black, linewidth=0.05, arrowsize=0.05291666666666668cm 2.0,arrowlength=1.399997,arrowinset=0.0]{<-}(7.0,-0.42447907)(8.9,-0.42447907)
\psdots[linecolor=black, dotstyle=o, dotsize=0.3](9.0,-4.424479)
\psdots[linecolor=black, dotstyle=o, dotsize=0.3](9.0,-0.42447907)
\psdots[linecolor=black, dotstyle=o, dotsize=0.3](3.0,-2.424479)
\rput[b](5.5,-1.024479){\Large $\partial^i_0$}
\rput[b](5.5,-3.4244792){\Large $\partial^i_1$}
\rput[b](3.0,-2.0244792){\Large $i$}
\rput[b](9.4,-0.32447907){\Large $i$}
\rput[b](9.4,-4.424479){\Large $i$}
\psline[linecolor=black, linewidth=0.04, tbarsize=0.02cm 5.0,arrowsize=0.05291666666666667cm 2.0,arrowlength=1.4,arrowinset=0.0]{|*->}(4.5,-1.7244791)(6.3,-0.92447907)
\psline[linecolor=black, linewidth=0.04, tbarsize=0.02cm 5.0,arrowsize=0.05291666666666667cm 2.0,arrowlength=1.4,arrowinset=0.0]{|*->}(4.5,-3.124479)(6.3,-4.024479)
\end{pspicture}
}
\caption{The local surgery defining the operators $\partial^i_0,
\partial^i_1$ in~(\ref{Kasparek}).\label{two-ways}}
\end{figure}
It is simple to prove by induction that any two trees from
$\opTm^\bbN_2(\EuT)(\rada 00;n)$ can be connected by a zig-zag of such
elementary surgery operations, so the colimit of
$\opTm^\bbN_2(\EuT)(\bullet,\ldots,\bullet;n)$ is a one-point set as claimed. 
\end{proof}

The center of a monoid $\ttM$ in a
$\duo$-monoidal category $\calK$, 
defined as $Z(M) :=
\CH_{I}(\ttM,\ttM)$ or, more explicitly, as the
equalized~(\ref{equalizer}), has a canonical structure of a duoid in $\duo$ 
by \cite[Theorem 5.6]{batanin-markl:Adv}. On the other hand, according to
\cite[Example~11.15]{batanin-markl:Adv}, duoids in $\duo$ are the same
as $\bfone^{\ttOmega_2}$-algebras in $\duo$.\footnote{The operad
  $\bfone^{\ttOmega_2}$ was denoted $\fAss_2$ in 
\cite{batanin-markl:Adv}.}
The following proposition is easy to prove.

\begin{proposition}
The action of the operad $\Coend^{\sopTm^\bbN_2}_{I} \cong \bfone^{\Omega_2}$ equips $Z(\ttM)$ with its
canonical duoid structure.
\end{proposition}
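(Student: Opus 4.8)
The plan is to compare two $\bfone^{\ttOmega_2}$-algebra structures on $Z(\ttM)=\CH_I(\End_\ttM)=\Tot_I(\End_\ttM)$: the one produced by Proposition~\ref{v_patek_do_Prahy} together with the isomorphism $\Coend^{\sopTm^\bbN_2}_I\cong\bfone^{\ttOmega_2}$ of the preceding proposition, and the one obtained from the canonical duoid structure of \cite[Theorem~5.6]{batanin-markl:Adv} under the identification of duoids in $\duo$ with $\bfone^{\ttOmega_2}$-algebras in $\duo$ of \cite[Example~11.15]{batanin-markl:Adv}. Since a duoid is reconstructed from its two underlying monoid structures by the interchange law, two such structures on $\scD\in\duo$ coincide as soon as their $\boxx_0$- and $\boxx_1$-monoid structures do; hence it suffices to match four items, the two multiplications $\scD\boxx_0\scD\to\scD$, $\scD\boxx_1\scD\to\scD$ and the two units $e\to\scD$, $v\to\scD$. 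On the $\bfone^{\ttOmega_2}$-algebra side these are the action operations $\Boxx_\EuT\bigl(Z(\ttM),\dots,Z(\ttM)\bigr)\to Z(\ttM)$ of the $\ttOmega_2$-multicotensor $\Boxx$ of Proposition~\ref{omegaendos} attached, respectively, to the $2$-trees $\tau_0=(2\to2)$ (for which $\Boxx_{\tau_0}(X,Y)\cong X\boxx_0 Y$), $\tau_1=(2\to1)$ (for which $\Boxx_{\tau_1}(X,Y)\cong X\boxx_1 Y$), $z^2\EuU_0=(0\to0)$ (for which $\Boxx=e$) and $z(0)=(0\to1)$ (for which $\Boxx=v$).

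Next I would unwind these four operations. By the preceding proposition $\Coend^{\sopTm^\bbN_2}_I(\EuT)=I$ for every $\EuT\in\ttOmega_2$, so in the action~(\ref{Dnes_setkani_reditelu}) the $\Coend$-tensor factor is trivial and the operation reduces to the composite built from $\mho_\EuT$ of~(\ref{action}) and the map~(\ref{Zitra_vedeni}); because $\delta=I$ is the constant cosimplicial object, all coends entering this composite collapse to single summands — just as in the surgery argument of the preceding proof — leaving the morphism induced on the equalizer~(\ref{equalizer}) by the canonical action $\opTm^\bbN_2\to\End^{\OmegaN}_{\End_\ttM}$ of Theorem~\ref{lode_na_Rynu}. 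That action was assembled, through diagram~(\ref{eq:27}), the Beck--Chevalley isomorphism~(\ref{eq:patek}) and Lemmas~\ref{pelmene}--\ref{golovokruzenie}, from the $\LTr$-operad morphism $\Xi$ of~(\ref{eq:33}), whose components are the \emph{tautological} maps $\Psi_\beta=\gamma_\beta\circ\omega_\beta$ glued from the structure morphisms~(\ref{morph}) of $\scA=\End_\ttM$ and the maps $\alpha_v:v\to\scA(n)$, $\alpha_e:e\to\scA(n)$ of~(\ref{eq:19}). Pushing $\tau_0$, $\tau_1$ and the two nullary trees back along the functor $u$ of~(\ref{eq:functoru}) and the operadic Grothendieck construction, one reads off that the four resulting operations on $Z(\ttM)$ are precisely those obtained by evaluating, on the equalizer~(\ref{equalizer}), the operad composition~(\ref{morph}) of $\scA=\End_\ttM$, its unit $j:e\to\scA(1)$ and its left $v$-module structure $v\boxx_0\scA(0)\to\scA(0)$. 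Since $\End_\ttM$ and its multiplicative structure are built from the monoid $\ttM$ as in \cite[Def.~4.7]{batanin-markl:Adv} and \cite[Prop.~4.9]{batanin-markl:Adv}, these are exactly the two products and two units defining the canonical duoid structure of \cite[Theorem~5.6]{batanin-markl:Adv}, so the two $\bfone^{\ttOmega_2}$-algebra structures agree.

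The genuine difficulty is not any computation but the bookkeeping: one must transport the four distinguished trees coherently through the pullback~(\ref{eq:11}), the equality $\opTm^\bbN_2=t_!(\bfone^{\Tm^\bbN_2})$, the inversion of $w^*$ supplied by Lemma~\ref{pelmene} and the condensation of Proposition~\ref{v_patek_do_Prahy}, and one must reconcile the several reversals of factor order and the $e\leftrightarrow v$, $\boxx_0\leftrightarrow\boxx_1$ conventions that differ between the present paper and \cite{batanin-markl:Adv}. To conclude that the \emph{full} $\bfone^{\ttOmega_2}$-algebra structures agree — and not merely the two binary products and the two units — one uses that for a general $\EuT\in\ttOmega_2$ the operation $\Boxx_\EuT$ is the evident iterated $\boxx_0$/$\boxx_1$-product, and that the corresponding action operation is the same iterated composite of the generating ones, by the associativity and unitality axioms of a $\bfone^{\ttOmega_2}$-algebra; thus the duoid coherence invisible on the generators is automatically matched. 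Once the conventions are aligned, nothing further is required, since for $\delta=I$ every coend in sight is trivial and all operations reduce to the tautological composites that the category $\LTr$ was designed to encode.
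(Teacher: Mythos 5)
The paper offers no proof of this proposition at all --- it is prefaced only by ``The following proposition is easy to prove'' --- so there is no argument of the authors' to compare yours against. Your proposal is a sound realization of what is evidently intended: reducing a duoid (equivalently, a $\bfone^{\ttOmega_2}$-algebra, where agreement on the four trees $(2\to2)$, $(2\to1)$, $(0\to0)$, $(0\to1)$ propagates to all of $\ttOmega_2$ by the algebra axioms and an induction on tree size) to its two multiplications and two units, observing that for $\delta=I$ every coend and cotensor in Proposition~\ref{v_patek_do_Prahy} collapses, and tracing the resulting operations back through the $\LTr$-machinery to the tautological maps $\gamma_\beta\circ\omega_\beta$ built from the composition, unit and $v$-module structure of $\End_\ttM$, which are exactly the ingredients of the canonical duoid structure on $Z(\ttM)$. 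The only caveat is that your final ``reading off'' step is asserted rather than executed, but that is no less detail than the paper itself supplies.
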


Let now $V$ be a monoidal model category and $\delta$ be a
\emph{standard system of simplices} for  $\ttV$ in the sense of
\cite[Definition A.6]{berger-moerdijk:BV}. 
Recall that this means that
\begin{itemize}
\item[(i)]
$\delta$ is cofibrant for the Reedy model structure on
  $V^{\ttDelta}$,
\item[(ii)] 
$\delta^0$ is the unit object $I$ of $V$ and the
  simplicial operators $[m]\to[n]$ act via weak equivalences
  $\delta^m\to\delta^n$ in $V$, and
\item[(iii)]
the simplicial realization
  functor $|\dash|_\delta=(\dash)\otimes_\ttDelta\delta:V^{\ttDelta^{\op}}\to
  V$ is a symmetric monoidal functor whose structural maps
  \[
|X|_\delta\otimes_V |Y|_\delta\to|X\otimes_V Y|_{\delta}
\]
are weak
  equivalences for Reedy-cofibrant objects $X,Y \in V^{\ttDelta^{\op}}$.\end{itemize}

Since $E_\EuT^{\sopTm^\bbN_2}(I,\ldots,I) = I$, the 
canonical map of cosimplicial objects $\delta\to I$ induces a map of $2$-operads
\[
\Coend^{\sopTm^\bbN_2}_{\delta}\lra \Coend^{\sopTm^\bbN_2}_{I}.
\]

\begin{theorem}
\label{contractiblecoendomorphism} 
Let $\delta$ be a standard system of simplices for a monoidal model
category $V$ such that the lattice path operad is strongly
$\delta$-reductive in the sense of\/
\cite[Definition~3.7]{batanin-berger}.  Then the canonical morphism of
$2$-operads
\begin{equation}\label{deltatoI}
\Coend^{\sopTm^\bbN_2}_{\delta}\lra \Coend^{\sopTm^\bbN_2}_{I}
\cong \bfone^{\ttOmega_2}
\end{equation} 
is a weak equivalence.  In other words, the $2$-operad
$\Coend^{\sopTm^\bbN_2}_{\delta}$ is {contractible}.
\end{theorem}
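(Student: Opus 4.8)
The plan is to reduce the statement to a known contractibility result about the lattice path operad, using the machinery of condensation developed in Part~1. First I would unwind the definition of $\Coend^{\sopTm^\bbN_2}_{\delta}$ via the convolution formula~(\ref{convolut}): for each $\EuT \in \ttOmega_2$ with $k = |\EuT|$, the component $\Coend^{\sopTm^\bbN_2}_{\delta}(\EuT)$ is the end over $\ttU(\opTm^\bbN_2) \cong \ttDelta$ of $V\big(\delta(i), E^{\sopTm^\bbN_2}_\EuT(\delta,\ldots,\delta)(i)\big)$, where $E^{\sopTm^\bbN_2}_\EuT(\delta,\ldots,\delta)(i)$ is itself a coend of $\opTm^\bbN_2(\EuT)(\rada{i_1}{i_k};i) \otimes \delta(i_1)\otimes\cdots\otimes\delta(i_k)$. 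By the identification $\opTm^\bbN_2(\EuT)(\rada{i_1}{i_k};i) = \opTm^\bbN_2(\bfT^{\rada{i_1}{i_k}}_i)$ and the fact that $\opTm^\bbN_2 = p^*\opTam^\bbN_2$ is a desymmetrisation of a suboperad of $\Lat^{(2)}$, each of these sets is, up to the labelling data, a component of the Tamarkin--Tsygan operad. Thus $\Coend^{\sopTm^\bbN_2}_{\delta}(\EuT)$ is built out of the same pieces as Berger--Berger's $\delta$-condensation of $\Lat^{(2)}$, only reorganised according to the operadic category $\ttOmega_2$ rather than $\sFSet$.

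The key step is then to invoke the hypothesis that the lattice path operad is \emph{strongly $\delta$-reductive} in the sense of \cite[Definition~3.7]{batanin-berger}. In the classical setting this hypothesis is exactly what guarantees that the $\delta$-condensation of $\Lat^{(2)}$ is weakly equivalent to its $I$-condensation, which by the preceding proposition is $\bfone^{\ttOmega_2}$. Concretely, strong $\delta$-reductivity means that for each fixed arity the simplicial (here $k$-simplicial) sets $\opTm^\bbN_2(\EuT)(\bullet,\ldots,\bullet;n)$ have contractible realisations after tensoring with $\delta$, not merely that their colimits are points; the latter was already proved in the previous proposition by the elementary-surgery zig-zag argument. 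So I would argue: (a) for each $\EuT$, the $V^{\ttDelta^{\op}\times\cdots\times\ttDelta^{\op}}$-object $n \mapsto \opTm^\bbN_2(\EuT)(\bullet,\ldots,\bullet;n)$ is Reedy-cofibrant (this follows from the cellular structure of $\Lat^{(2)}$ used in \cite{batanin-berger}, transported along the pullback and pruning), and (b) its $\delta$-realisation is weakly equivalent to the $\delta$-realisation of the constant diagram $I$, by strong $\delta$-reductivity combined with property~(iii) of a standard system of simplices (monoidality of $|\dash|_\delta$ up to weak equivalence on Reedy-cofibrant objects, which lets us pass the realisation through the $\boxx_0$/$\boxx_1$-type products implicit in the coend). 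Taking ends over $\ttDelta$ and using that $\delta^0 = I$ with all simplicial operators acting by weak equivalences, one concludes that~(\ref{deltatoI}) is a componentwise weak equivalence, hence a weak equivalence of $2$-operads.

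The main obstacle, as I see it, is step~(b): matching the coend appearing in our convolution formula~(\ref{convolut}) with the homotopy-colimit-type expression to which the strong $\delta$-reductivity hypothesis literally applies. The definition in \cite{batanin-berger} is phrased for the condensation of $\Lat^{(2)}$ along the classical constructions (substitudes, Day--Street convolution), whereas here we have reorganised everything through an operadic category $\ttOmega_2$ and its multicotensor $\Boxx$. I would handle this by an explicit cofinality/Fubini argument: the end over $\ttU(\opTm^\bbN_2)$ of a coend is, after unpacking, the totalisation of the cosimplicial object $n \mapsto \opTm^\bbN_2(\EuT)(\rada nn\,;\bullet)$-weighted realisation, and I would show this agrees with the totalisation used in \cite[\S3]{batanin-berger} by a direct comparison of indexing categories. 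One must be slightly careful about the colours: the $\bbN$-grading and the output colour $n$ range over all of $\bbN$, and the reduction must be compatible with this, but since $\opTm^\bbN_2$ is pulled back from $\opTam^\bbN_2$ which is pruned, the colour-$0$ (empty) components behave correctly. A secondary, more bookkeeping-level point is verifying Reedy-cofibrancy in the multi-simplicial setting; this should follow formally once one knows the analogous fact for $\Lat^{(2)}$ and notes that pullback and pruning preserve the relevant cellular filtrations, but it deserves to be stated. Once these two points are in place, the theorem follows by the standard two-out-of-three argument over the weak equivalence $\delta \to I$.
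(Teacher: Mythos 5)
Your proposal is correct and follows essentially the same route as the paper: both reduce the statement to Theorem~3.8 of \cite{batanin-berger} via the strong $\delta$-reductivity hypothesis, after identifying the components of $\opTm^\bbN_2$ with those of the lattice path operad. The paper's own proof is in fact terser than yours --- it notes only that the single new input needed is that the map of $0$-objects $f^0:(E^{\sopTm^\bbN_2}_\EuT(\delta,\ldots,\delta))^0\to I$ coincides with the map $\xi_{(\mu,\sigma)}(\delta)^0\to I$ for $(\mu,\sigma)=a_\EuT\in\opK^{(2)}$, already shown to be a weak equivalence at the end of the cited proof, and that one may skip the colimit over the complete graph operad --- so the comparison-of-indexing and Reedy-cofibrancy points you flag are subsumed in that citation.
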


\begin{proof} The proof  follows closely the proof of
Theorem 3.8 from \cite{batanin-berger}, with the simplification that we do
not need to take a colimit over the complete graph operads. The only fact we
should know is that the  map of $0$-objects 
\[
f^0:(E^{Tam_2^\bbN}_\EuT(\delta,\ldots,\delta))^0\to I^0 = I
\]
 is a weak equivalence for every $\EuT.$ 
In the notation used in the proof of \cite[Theorem
3.8]{batanin-berger},
the object on the left side is the same as
$\xi_{(\mu,\sigma)}(\delta)^0$ for $(\mu,\sigma)$ equal to $a_\EuT\in \opK^{(2)}.$
It is shown at the end of the proof of
\cite[Theorem 3.8]{batanin-berger} that the map $f^0$ is a weak equivalence.  
\end{proof}

The map $\delta \to I$ induces a canonical map
\begin{equation}\label{centertocenter}
Z(M) = \CH_{I}(\ttM,\ttM) \lra \CH_{\delta}(\ttM,\ttM),
\end{equation}
The map (\ref{deltatoI}) equips the duoid $Z(\ttM)$ with a structure
of $\Coend^{\sopTm^\bbN_2}_{\delta}$-algebra such that
(\ref{centertocenter}) becomes a map of
$\Coend^{\sopTm^\bbN_2}_{\delta}$-algebras.

Let $F(\ttM)$ be a fibrant replacement of $\ttM$ in the category of
monoids with the projective model structure, and $\delta$ a
standard system of simplices for $V$. The $\delta$-center
$\CH_{\delta}\big(F(\ttM),F(\ttM)\big)$ of $F(\ttM)$ was called in
\cite{batanin-markl:Adv} the {\it homotopy center} of $\ttM$. The
above considerations imply the central result of our paper:

\begin{corollary}[Duoidal Deligne's conjecture]\label{duoidalDeligne}
  Under the assumptions of Theorem \ref{contractiblecoendomorphism},
  the Hochschild $\delta$-object of a multiplicative $1$-operad in a
  duoidal category $\duo$ admits an action of a contractible
  $2$-operad.

The homotopy center of a monoid $\ttM$ in a multiplicative
$\duo$-category admits an action of a contractible
$2$-operad that lifts the duoid structure on the center $Z(\ttM)$.
\end{corollary}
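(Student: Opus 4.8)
The plan is to assemble the corollary from the pieces already developed, so the proof is essentially a bookkeeping exercise. First I would invoke Theorem~\ref{lode_na_Rynu}, which supplies for any multiplicative $1$-operad $\scA$ in $\duo$ the natural action $\opTm^\bbN_2 \to \End_\scA^{\OmegaN}$ of the Tamarkin $2$-operad. Composing this with the condensation machinery, namely feeding $\calP = \opTm^\bbN_2$, $\ttC = \duo$ and a standard system of simplices $\delta$ into Proposition~\ref{v_patek_do_Prahy}, produces an action of the $2$-operad $\Coend^{\sopTm^\bbN_2}_{\delta}$ on $\Tot_\delta(\scA) \cong \CH_\delta(\scA)$, where the identification $\CH_\delta(\scA) \cong \Tot_\delta(\scA)$ is the one established in Section~\ref{Geminiany} via $\ttU(\opTm^\bbN_2) \cong \ttDelta$ and $\cosA = \liota^*(\scA)$. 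Finally Theorem~\ref{contractiblecoendomorphism} asserts, under the hypotheses on $V$ and on the lattice path operad, that $\Coend^{\sopTm^\bbN_2}_{\delta}$ is contractible, i.e.~the canonical map~(\ref{deltatoI}) to $\bfone^{\ttOmega_2}$ is a weak equivalence. This gives the first assertion verbatim.

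For the second assertion I would specialize $\scA := \End_{\ttM}$ for a monoid $\ttM$ in a $\duo$-monoidal category $\calK$; this operad is multiplicative by \cite[Prop.~4.9]{batanin-markl:Adv}, so everything above applies, and $\CH_\delta(\End_{\ttM}) = \CH_\delta(\ttM,\ttM)$ by definition. To pass from $\ttM$ to its homotopy center I would replace $\ttM$ by a fibrant replacement $F(\ttM)$ in the projective model structure on monoids, so that $\CH_\delta\big(F(\ttM),F(\ttM)\big)$ is the homotopy center; the action of the contractible $2$-operad $\Coend^{\sopTm^\bbN_2}_{\delta}$ is transported along this replacement with no extra work, since the action is natural in the multiplicative operad. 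The clause about lifting the duoid structure on $Z(\ttM)$ follows by combining three facts already in the excerpt: $Z(\ttM) = \CH_I(\ttM,\ttM)$ has its canonical duoid structure by \cite[Theorem~5.6]{batanin-markl:Adv}; duoids in $\duo$ are precisely $\bfone^{\ttOmega_2}$-algebras by \cite[Example~11.15]{batanin-markl:Adv}, and the Proposition identifying this $\bfone^{\ttOmega_2}$-action with the canonical duoid structure; and the map $\delta \to I$ induces the compatible maps of $\Coend^{\sopTm^\bbN_2}_{\delta}$-algebras $Z(\ttM) \to \CH_\delta(\ttM,\ttM)$ recorded in~(\ref{centertocenter}), so the $\Coend^{\sopTm^\bbN_2}_{\delta}$-action on the homotopy center restricts along the weak equivalence~(\ref{deltatoI}) to the $\bfone^{\ttOmega_2}$-action, i.e.~to the duoid structure, on $Z(\ttM)$.

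I do not expect any genuinely new difficulty here: the corollary is a summary statement and the substance lives in Theorems~\ref{lode_na_Rynu} and~\ref{contractiblecoendomorphism} and Proposition~\ref{v_patek_do_Prahy}. The one point requiring a little care is the naturality bookkeeping for the homotopy center --- verifying that the fibrant replacement $\ttM \to F(\ttM)$ of monoids induces, through $\ttM \mapsto \End_{\ttM}$ and the whole chain of constructions, a map of $\Coend^{\sopTm^\bbN_2}_{\delta}$-algebras $\CH_\delta(\ttM,\ttM) \to \CH_\delta\big(F(\ttM),F(\ttM)\big)$, and that this is compatible with the duoid comparison on centers. This is routine given the functoriality statements already proved, but it is the only step where one must actually chase the constructions rather than cite a boxed result. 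I would therefore phrase the proof as: "The first statement is Theorem~\ref{contractiblecoendomorphism} combined with the theorem of Section~\ref{Geminiany} on the action of $\Coend^{\sopTm^\bbN_2}_{\delta}$; the second follows by taking $\scA = \End_{F(\ttM)}$ and using naturality together with \cite[Theorem~5.6]{batanin-markl:Adv} and \cite[Example~11.15]{batanin-markl:Adv} for the statement about the duoid structure," leaving the elementary verifications to the reader as is done elsewhere in the paper.
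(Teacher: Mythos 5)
Your proposal is correct and follows essentially the same route as the paper, which presents the corollary as an immediate consequence of the preceding results (the action of $\Coend^{\sopTm^\bbN_2}_{\delta}$ on $\CH_\delta(\scA)$ via Proposition~\ref{v_patek_do_Prahy}, Theorem~\ref{contractiblecoendomorphism} for contractibility, and the compatibility of the map~(\ref{centertocenter}) with the duoid structure on $Z(\ttM)$) with the phrase ``the above considerations imply the central result of our paper.'' Your explicit assembly, including the remark that the only nontrivial bookkeeping is the naturality of the whole chain under the fibrant replacement $\ttM \to F(\ttM)$, matches the paper's intent and is if anything more detailed than the original.
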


The assumptions of Theorem
\ref{contractiblecoendomorphism} are satisfied for instance when  
$V$ is the category of compactly generated topological
spaces or chain complexes over a commutative ring, and $\delta$ the
cosimplicial space of topological simplices or normalized cellular
chains on topological simplices, respectively, see  
\cite[Examples 3.10(a),(c)]{batanin-berger}. It is also not
difficult to show that these assumptions are satisfied for $V={\tt Cat}$ with
the Joyal-Tirney model structure and $\delta$ the cosimplicial chaotic
groupoid on finite sets, cf. \cite[Example 5.10]{batanin-markl:Adv}.

On the other hand, it was shown in \cite[Example
3.10(b)]{batanin-berger} that for the category of simplicial sets and
$\delta = \delta_{\it Yon}$ the cosimplicial simplicial set of
representables, the assumption of strongly $\delta$-reductivity of the
lattice operad fails. We however believe that the second part of
Corollary \ref{duoidalDeligne} remains true even without this
assumption, because taking fibrant replacement of $\ttM$ should
counterweight the poor homotopical property of $\delta.$ We leave this
refined version of Deligne's conjecture as a subject for a future
work.


\def\cprime{$'$}\def\cprime{$'$}

\end{document}